\documentclass[reqno]{amsart}
\usepackage[utf8]{inputenc}
\usepackage{enumitem}
\usepackage{mathdots}
\usepackage{amsmath,amssymb,amsbsy,amsfonts,amsthm,latexsym,amsopn,amstext,mathtools,leftidx,
            amsxtra,euscript,amscd,verbatim,epsf,colordvi,graphics}
\usepackage{ytableau}
\usepackage{color}
\usepackage{pgf,tikz}
\usetikzlibrary{decorations.pathreplacing}
\usetikzlibrary{arrows}
\usepackage{mathrsfs}
\usepackage{graphicx}

\usepackage{hyperref}
\usepackage{cleveref}
\usepackage[margin=11pt,font=footnotesize,labelfont=bf]{caption}
\newtheoremstyle{tplain}{3pt}{3pt}{\rmfamily}{}{\bfseries}{.}{0.5em}{}
\theoremstyle{tplain}

\usepackage{pgf,tikz}
\usepackage{tcolorbox}
\usepackage[enableskew]{youngtab}
\definecolor{darkgreen}{cmyk}{1,0,1,0}
\newtheorem{thm}{Theorem}
\newtheorem{lem}{Lemma}
\newtheorem{ex}{Example}
\newtheorem{cor}{Corollary}
\newtheorem{prop}{Proposition}
\newtheorem{obs}{Remark}
\newtheorem{defi}{Definition}
\def \cal{\mathcal}
\def \rm {\mathrm}
\def \mbf {\mathbf}

\def\rev{{\rm rev}}
\def\Z { \mathbb{Z}}
\def\C { \mathbb{C}}
\def\Z { \mathbb{Z}}
\def\C { \mathbb{C}}
\def\LRAII {\mathsf{LR}}
\def \r {\mathbf{r}}
\def \t {\mathbf{t}}
\def \redu {\mathrm{red}}
\def\c { \mathrm{c}}
\def \k {\mathfrak{k}}
\def\wt { \mathrm{wt}}
\def\shape { \mathrm{shape}}
\def\rem { \mathrm{rem}}
\def\suc { \mathrm{suc}}

\newcommand{\bro}{\color{brown}}

\newcommand{\ora}{\color{orange}}

\newcommand{\oo}{\color{blue}}
\newcommand{\blue}{\color{blue}}
\newcommand{\red}{\color{red}}
\newcommand{\green}{\color{green}}

\makeatletter
\newcommand*\bigcdot{\mathpalette\bigcdot@{.5}}
\newcommand*\bigcdot@[2]{\mathbin{\vcenter{\hbox{\scalebox{#2}{$\m@th#1\bullet$}}}}}
\makeatother

\newcommand*\circled[1]{\tikz[baseline=(char.base)]{
            \node[shape=circle,draw,inner sep=1pt] (char) {#1};}}

\newcommand{\YT}[3]{
\vcenter{\hbox{
\begin{tikzpicture}[x={(0in,-#1)},y={(#1,0in)}] % matrix coordinate
\foreach \rowi [count=\i] in {#3} {
 \foreach \e [count=\j] in \rowi {
  \draw (\i,\j) rectangle +(-1,-1);
  \draw (\i-0.5,\j-0.5) node {$#2\e$};
 }
}
\end{tikzpicture}
}}
}

\title[The recording tableaux in the quantum Littlewood-Richardson map] {The recording tableaux in  the quantum \\ Littlewood-Richardson map, \\ the orthogonal transpose symmetry map, and\\   the computation of $\mathfrak{k}$-highest weight tableaux }

\author{Olga Azenhas}
\address{ University of Coimbra, CMUC, Department of Mathematics, Portugal}
\email{oazenhas@mat.uc.pt}

\keywords{Symplectic tableau,  branching rule, quantum Littlewood-Richardson map}

\subjclass[2000]{05E05, 05E10, 05E14, 17B37, 68Q17}

\begin{document}

\begin{abstract} Recently Watanabe has given an algorithm to compute a  bijection, that he calls (quantum) Littlewood-Richardson (LR) map (or quantum LR rule of type AII), between semi-standard Young tableaux of  shape a partition with at most  $2n$ parts and pairs of tableaux consisting of a symplectic tableau with shape a partition with at most $n$ parts, and a recording tableau  of skew-shape given by the two previous shapes.  The recording tableaux   in that algorithm  are shown  to be equinumerous to Littlewood-Richardson-Sundaram tableaux  whose injectivity is shown combinatorially while the surjectivity is concluded via  representation theory of a quantum symmetric pair of  type AII$_{2n-1}$. Henceforth,  the algorithm to compute the quantum LR map provides a new branching model for the branching multiplicities from $GL_{2n}(\C)$ to $Sp_{2n}(\C)$.  Here, as morally suggested  by Watanabe, one provides a combinatorial proof for the surjectivity of the quantum LR map which in turn  exhibits the restriction of the LR orthogonal transpose symmetry map to LR-Sundaram tableaux. The surjectivity is exhibited via the reverse Schensted insertion on the quantum recording tableaux, ruled by the slack data, followed with the inverse of the reduction map on the bumped entries that we explicitly compute for certain families of symplectic columns. The explicit inverse  reduction map has been recently generalized by the author in arXiv:2606.24840v2 to any symplectic column.
As an application of the explicit surjectivity and henceforth of the inverse of the quantum LR map, we compute and characterize  a family of $\k$- highest  weight semi-standard tableaux
in the recent proof of the Naito-Sagaki conjecture using the Watanabe's
branching rule based on the crystal basis theory for $\imath$quantum
groups of type AII$_{2n-1}$. More precisely, that family of tableaux is generated either  by quantum recording tableaux with one vertical strip or with $1$-$0$-slack sequences. For a given $\k$-highest weight symplectic tableau, they satisfy certain linear inequalities between   multiplicities of  columns in that symplectic tableau and  multiplicities of the columns obtained by the inversion of the reduction map on the bumped numbers from the given $\k$-highest weight symplectic tableau.
\end{abstract}
\maketitle

\tableofcontents

\section{Introduction}
The quantum Littlewood-Richardson map by Watanabe  \cite{watanabe} for the pair $(GL_{2n}(\mathbb{C}),Sp_{2n}(\mathbb{C}))$ can be seen  as a generalization of the branching rule, known as the Littlewood-Richardson rule, for the pair $(GL_m(\mathbb{C})\times GL_m(\mathbb{C}), GL_m(\mathbb{C}))$. More precisely,  a generalization of the G.P. Thomas \cite{tho78} bijection providing the Littlewood-Richardson rule \cite{LR}
  \begin{align}\label{tho0}SST_m(\mu)\times SST_m(\nu) \overset{\sim}\rightarrow\bigsqcup_{\begin{smallmatrix}\lambda\in Par_{\le m}\\
T\in LR(\lambda/\mu,\nu)\end{smallmatrix}}SST_{m}(\lambda) \times \{T\},\end{align}
 to a bijection providing the quantum Littlewood-Richardson rule \cite{watanabe}
 \begin{align}\label{wat0}\displaystyle SST_{2n}(\lambda) \overset{\sim}\rightarrow\bigsqcup_{\begin{smallmatrix}\mu\in Par_{\le n}\\
\mu\subseteq\lambda\\
Q\in Rec_{2n}(\lambda/\mu)\end{smallmatrix}}SpT_{2n}(\mu) \times \{Q\},
\end{align}
where  $SpT_{2n}(\mu)$ denotes the set of symplectic semi-standard tableaux of shape $\mu$, with entries in $[1,2n]$, and the sets of Littlewood-Richardson-Sundaram tableaux $LRS_{2n}(\lambda/\mu) \subseteq LR_{2n}(\lambda/\mu)$ \cite{sundaram} and of recording tableaux $Rec_{2n}(\lambda/\mu)$ in \eqref{wat0} satisfy $LRS_{2n}(\lambda/\mu)\overset{\sim}\longrightarrow Rec_{2n}(\lambda/\mu)$ via a natural bijection.

\subsection{The crystal version of the Littlewood-Richardson rule} For the sake of comparison, the G.P. Thomas bijection \eqref{tho0} asserts
\begin{align*}&SST_m(\mu)\times SST_m(\nu)\overset{\sim}\longrightarrow \bigsqcup_{\begin{smallmatrix}\lambda\in Par_{\le m}\\{}
\end{smallmatrix}} SST_m(\lambda)\times LR(\lambda/\mu,\nu).
%&\qquad\qquad  (U, V)\mapsto (P(U\otimes V),Q(U\otimes V)),\;
\end{align*}
where the recording tableau of the Schensted column insertion of a tableau pair in $SST_m(\mu)\times SST_m(\nu)$ is stored in the form of an LR tableau in  $LR(\lambda/\mu,\nu)$ for some partition $\lambda$. Very importantly is that
this bijection lifts to a $\mathfrak{gl}_m$-crystal isomorphism \cite{kwon09} as in \eqref{crystalthomas} by showing how the tensor product of two $Gl_m$-irreducible representations  decomposes into irreducible $GL_m$-representations,
while giving simultaneously the crystal version of the original Littlewood-Richardson (LR) rule \cite{littlewood},   the Berenstein-Gelfand-Zelevinsky LR rules \cite{GZ85,gzpolyed,BZphy} on left and right Gelfand-Tsetlin patterns \cite{gt50}, and notably the integer Knutson-Tao hives \cite[Appendix]{kt1}, \cite{fultonbuch} as the interlocking of those Gelfand-Tsetlin patterns \cite{KTT1,KTT2},
\begin{align}\label{crystalthomas}
B(\mu,m)\otimes B(\nu,m)\simeq \bigoplus_{\begin{smallmatrix}T\in LR(\lambda/\mu,\nu)\\
\lambda\in Par_{\le m}\end{smallmatrix}} B(\lambda,m)\times \{T\}\simeq \bigoplus_{\begin{smallmatrix}
\lambda\in Par_{\le m}\end{smallmatrix}} B(\lambda,m)^{c_{\mu,\nu}^\lambda}.
\end{align}
Notably, the highest  and lowest weights of the connected component $B(\lambda,m)\times \{T\}$, determined by the recording LR tableau $T$, exhibit the right respectively left companions of the LR tableau $T$, and their interlocking the integer hive of boundary $(\lambda,\mu,\nu)$. The multiplicity $c_{\mu,\nu}^\lambda$ of the crystal $B(\lambda,m)$ in this decomposition shows that $LR(\lambda/\mu,\nu)$  is equinumerous to the  right respectively left companions in the form of Gelfand-Tsetlin patterns in the Berenstein-Gelfand-Zelevinsky LR rules as well as the number of  integer hives with boundary $(\mu,\nu,\lambda)$ (see \cite{akt16} for details).

It remains to say that beyond the previous information packed in the crystal isomorphism \eqref{crystalthomas}, $c_{\mu,\nu}^\lambda$ is also the number of integral points of a hive polytope with integral boundary $(\mu,\nu,\lambda)$  (that is, the number of integral hives describing different coupling of three irreducible representations), a Knutson-Tao new description of the Berenstein-Zelevinski polytopes (where integral points are Berenstein-Zelevinsky patterns) \cite{bz}. It is then immediate that LR coefficients   can be modelled via the Ehrhart (quasi)polynomial \cite{ehrhart} of a rational polytope whose polynomiality has been proved by Derksen-Weyman and Rassart in \cite{lrpolynomial,rassartlrpolynomial}. (For advances in the Ehrhart theory model for $c_{\mu,\nu}^\lambda$, the reader is referred, for instance, to \cite{alexandersson} and \cite{lrvolume} and references therein.)

From \cite{kwon18,leclen,sathishtorres,az26} and the Henriques-Kamnitzer $\mathfrak{gl}_m$-crystal commuter  \cite{HenKam,HK2}, it turns out that the decomposition \eqref{crystalthomas} has a refinement to include LR-Sundaram (LRS)-tableaux \cite {sundaram,sundaram90} and their   companion pairs whose interlocking exhibits a flagged hive as in \cite{sathishtorres}. For a fixed $n\in \mathbb{N}$, let $m=2n$, $\mu\in Par_{\le n}$ a partition  with at most $n$ parts,  and $\nu, \lambda \in Par_{\le 2n}$ partitions with at most $2n$ parts such that   $\nu$ has even length columns. Let  $LRS_{2n}(\lambda/\mu,\nu)\subseteq LR(\lambda/\mu,\nu)$ be the set of LR-Sundaram tableaux  (or LR symplectic tableaux $LRT^{Sp}_{2n}(\lambda/\mu)$ in \cite{watanabe}) of shape $\lambda/\mu$ and weight $\nu$, and let $ SpT_{2n}(\mu)\subseteq SST_{2n}(\mu)$ be the set of symplectic semi-standard tableaux of shape $\mu$ \cite{watanabe}, in natural bijection with the symplectic Geland-Testlin patterns \cite{az26}. Then \eqref{crystalthomas} refines as

%\iffalse
\begin{align*}%\label{crystalthomasref}
&B(\mu,2n)\otimes B(\nu,2n)\simeq \\
&\bigoplus_{\begin{smallmatrix}T\in LRS(\lambda/\mu,\nu)\\
G^{sp}_\mu \in SpT_{2n}(\mu)\\
\mbox{\tiny left companion of $T$}\\
\lambda\in Par_{\le 2n}\end{smallmatrix}} B( G^{sp}_\mu\otimes Y(rev\nu) )\times \{T\} \bigsqcup\bigoplus_{\begin{smallmatrix}T\in LR(\lambda/\mu,\nu)\setminus LRS(\lambda/\mu,\nu)\\
G_\mu\in  SST_{2n}\\ \mbox{ \tiny left companion of $T$}\\
\lambda\in Par_{\le 2n}\end{smallmatrix}} B( G_\mu\otimes Y(\rm rev\,\nu) )\times \{T\}\\
&\simeq \bigoplus_{\begin{smallmatrix}
G^{Sp}_\mu\otimes Y(rev\nu)\simeq Y(rev\lambda)\\
\lambda\in Par_{\le 2n}\end{smallmatrix}} B(\lambda,2n)^{spc_{\mu,\nu}^\lambda}\bigsqcup\bigoplus_{\begin{smallmatrix}
G_\mu\otimes Y(rev\nu)\simeq Y(rev\,\lambda)\\
G_\mu\in SST_{2n}(\lambda)\setminus SpT_{2n}(\mu)\\
\lambda\in Par_{\le 2n}\end{smallmatrix}} B(\lambda,2n)^{c_{\mu,\nu}^\lambda}
\end{align*}
%\fi
where $spc_{\mu,\nu}^\lambda$ is the cardinality of $LRS_{2n}(\lambda/\mu,\nu)$ or the number  of symplectic semi-standard tableaux $G^{Sp}_\mu\in SpT_{2n}(\mu)$ with  weight $rev(\lambda-\nu)$ such that  $G^{sp}_\mu\otimes Y(rev\nu)\simeq Y(rev\lambda)$.

\subsection{The quantum Littlewood-Richardson rule}
Let $\lambda \in Par_{\le 2n}$ be a partition with at most $2n$ parts. The Littlewood-Richardson map $LR^{AII}$ \cite[Theorem 3.1.4]{watanabe}
is an one-to-one
assignment of a semi-standard tableau $T$ of shape $\lambda\in Par_{\le 2n}$ to a pair $(P^{II}(T), Q^{II}(T)) $ for some shape $ \mu\in Par_{\le n}$,   consisting of a symplectic tableau in $ SpT_{2n}(\mu)$  respectively  a recording tableau of skew shape $\lambda/\mu$ in $Rec_{2n}(\lambda/\mu)$,
\begin{align} \label{lrII}
LR^{AII}: SST_{2n}(\lambda) \overset{\sim}\longrightarrow
\bigsqcup_{\begin{smallmatrix}\mu\in Par_{\le n}\\
\mu\subseteq\lambda\end{smallmatrix}}SpT_{2n}(\mu) \times Rec_{2n}(\lambda/\mu).
\end{align}
Furthermore, for each $\mu \in Par_{\le n}$ with $\mu\subset \lambda$, the set of recording tableaux $Rec_{2n}(\lambda/\mu)$ in the Littlewood-Richardson map $LR^{AII}$ are in natural bijection with the set of LR-Sundaram tableaux  $LRS_{2n}(\lambda/\mu)$,
\begin{align}\label{record}Rec_{2n}(\lambda/\mu) \overset{\sim}\longrightarrow LRS_{2n}(\lambda/\mu).
\end{align}
%where $LRS_{2n}(\lambda/\mu)$ is the set of LR-Sundaram tableaux of shape $\lambda/\mu$ (or LR symplectic tableaux $LRT^{Sp}_{2n}(\lambda/\mu)$ in \cite{watanabe}).
Henceforth, the algorithm defining the map $LR^{AII}$ gives rise to
a bijection between the set of semi-standard tableaux of  given shape, say $\lambda$, and the disjoint union
of several copies of the sets of symplectic tableaux for each   shape $\mu\subseteq \lambda$ with at most $n$ parts, where the multiplicity $c_\mu^\lambda$ of  each shape  $\mu$ equals to the cardinality of $LRS(\lambda/\mu)$, the number of LR-Sundaram tableaux of shape $\lambda/\mu$,
\begin{align} \label{lrsII}
 SST_{2n}(\lambda) \overset{\sim}\longrightarrow
\bigsqcup_{\begin{smallmatrix}\mu\in Par_{\le n}\\
\mu\subseteq\lambda\\
Q\in Rec_{2n}(\lambda/\mu)\end{smallmatrix}}SpT_{2n}(\mu) \times \{Q\}.
\end{align}
Thereby, the  algorithm defining the map $LR^{AII}$ gives a new branching rule for the pair  $(Gl_{2n}(\mathbb{C}), Sp_{2n}(\mathbb{C})$. It is then demanding to have a complete combinatorial proof of this new branching rule. As suggested in  \cite{watanabe} the bijections \eqref{lrII} and \eqref{record} might  be proved in the realm of combinatorics.

 The injectivity of the Littlewood-Richardson map \eqref{lrII} and   the injectivity of the  bijection \eqref{record} are  proved combinatorially in \cite[Proposition 7.1.3]{watanabe} and respectively in  \cite[Lemma8.3.1, Lemma 8.3.2]{watanabe} by showing,
 $$Rec_{2n}(\lambda/\mu)\subseteq \widetilde Rec_{2n}(\lambda/\mu) \overset{\sim}\hookrightarrow  LRS_{2n}(\lambda/\mu).$$

  The surjectivity  of \eqref{lrII} and \eqref{record} is concluded   from the representation theory of a quantum symmetric pair of type $\rm{AII}_{2n-1}$ in \cite[Section 7.2]{watanabe} respectively \cite[Theorem 8.2]{watanabe}. More precisely, the quantum
symmetric pair of type $\rm{AII}_{2n-1}$ consists of the Drinfeld-Jimbo quantum group $\mathbf{U}$ and the Letzter $\imath$quantum group $\mathbf{U}\imath$ \cite{letzter} (a coideal subalgebra of $\mathbf{U}$) of the universal
enveloping algebras of $\mathfrak{gl}_{2n}(\mathbb{C})$ respectively $\mathfrak{sp}_{2n}(\mathbb{C})$.  For details on symmetric quantum pairs, we refer the reader to the survey paper \cite{wang}.
  %As suggested in  \cite{watanabe} the bijections can be proved in the realm of combinatorics.
\iffalse
From \cite{BZphy}  and \cite{az26}, we still have the refinement of \eqref{record}
\begin{align*}
Rec_{2n}(\lambda/\mu)&\overset{\sim}\longrightarrow  \bigsqcup_{\begin{smallmatrix}\nu\in Par_{\le 2n} \\
\mbox{ $\nu$ even}
\end{smallmatrix}}{}^-LRS^\lambda_{\mu,\nu}&\overset{\sim}\longrightarrow \bigsqcup_{\begin{smallmatrix}\nu\in Par_{\le 2n} \\
\mbox{ $\nu$ even}
\end{smallmatrix}}{}^-LR^\lambda_{\mu,\nu}\cap Sp\cal{GT}^{rev(\lambda-\nu)}_{\mu,2n}\\
  &T&\mapsto  T^{\mathfrak{tr}}&\mapsto G_\mu&\mapsto GT~G_\mu\\
\end{align*}
\fi
The Littlewood-Richardson map \eqref{lrII} has a $q$ analogue isomorphism by
 showing how the irreducible $\textbf{U}$-module $V (\lambda)$ decomposes into irreducible $\textbf{U}^\imath$-submodules $V^\imath(\mu)$ \cite{molevq} at
$q=\infty$ which allows to conclude the surjectivity  of \eqref{lrII}
\begin{align*}
LR^{AII}:V(\lambda)\overset{\sim}\rightarrow \bigoplus_{\mu\in Par\le n} V^\imath(\mu)\otimes\mathbb{Q}Rec_{2n}(\lambda/\mu),
\end{align*}
where $\mathbb{Q}Rec_{2n}(\lambda/\mu)$ denotes the $\mathbb{Q}$-vector space with basis $Rec_{2n}(\lambda/\mu)$.

Similarly to the G.P. Thomas bijection \eqref{tho0} whose recording tableaux are Littlewood-Richardson that determine the highest and lowest weight  tableaux of a connected component in the tensor product decomposition, the recording tableaux in the quantum Littlewood-Richardson map \eqref{wat0} determine the
$\mathfrak{k}$-highest and lowest weight semi-standard tableaux in $SST_{2n}(\lambda)$ as defined in the
 recent proof of the Naito–Sagaki conjecture \cite{nsw} via the Watanabe  branching rule for $\imath$quantum groups. Here \cite{nsw} $\mathfrak{k}$ means a certain subalgebra of $\mathfrak{sl}_{2n}(\mathbb{C})$ that is isomorphic to $\mathfrak{sp}_{2n}(\mathbb{C})$ and $\textbf{U}^\imath$ can be seen as a $q$-deformation of $U(\mathfrak{k})$ with a natural embedding in $\textbf{U}$.
It is then useful to have an explicit surjectivity which allows to compute explicitly those highest or lowest $\k$-weight  semi-standard tableaux.
We note that the first complete proof of the Naito-Sagaki conjecture and that does not  not rely on $\imath$quantum groups was provided by Schumann–Torres in \cite{schumanntorres}. Recently another combinatorial proof of the Naito–Sagaki conjecture, which does not rely on either  or this paper, appeared in \cite{muniz}.
%where
%e apply our results to the explicit computation of $\mathfrak{k}$-highest and lowest weight tableaux in the
 %proof of the Naito–Sagaki conjecture via the branching rule for $\imath$quantum groups \cite{nsw}.
%In these works, the question
%is considered by using a different deformation of $sp_{2n}(C)$, built as a coideal subalgebra
%of $U_q(gl_2n)$ to have a natural embedding

%"See introduction of the paper under review"
 Our main results assert as follows.

\noindent \textbf{Theorem} \textbf{A} (Theorem \ref{surjection})[The surjectivity of the map in \cite[Lemma 8.3.2]{watanabe}  Let $T\in LRS_{2n}(\lambda/\mu)$ of even weight $\nu$ and $\nu^t$ its conjugate partition. Let $J_1,\dots,J_{\nu_1}$ be the decomposition of $T$ into vertical strips where each strip is filled with $1,2,\dots, |J_i|=\nu^t_i\in 2\mathbb{Z}$ for $i=1,\dots,\nu_1$.
%Then $\ell(J_i)\in 2\mathbb{Z}$.
Let $\lozenge$ be the map that relabels each vertical strip $J_i$ % =\mu^{(i-1)}/\mu^{(i)}$
with $\nu^t_i$ $i$'s, for $i=1,\dots,\nu_1$. Then we get a new tableau $Q\in \widetilde Rec_{2n}(\lambda/\mu)$ of weight $\nu^t$ and $\lozenge$ is a bijection between $LRS_{2n}(\lambda/\mu)$ and $\widetilde Rec_{2n}(\lambda/\mu)$. Equivalently, $\lozenge$ is the inverse  of the map in \cite[Lemma 8.3.2]{watanabe}.

\medskip
\noindent \textbf{Corollary} (Section \ref{sec:lozenge}) The
  $LR$ orthogonal transpose symmetry map $\blacklozenge$ \cite{azkoma25} restricts to LR-Sundaram tableaux as
\begin{align*}\blacklozenge: LRS_{2n}(\mu,\nu,\lambda)\overset{\lozenge}\longrightarrow \widetilde Rec_{2n}(\mu,\nu^t,\lambda)\underset{\pi\circ t}\hookrightarrow LR({\lambda^t},\nu^t,\mu^t): T\mapsto %\blacklozenge(T)=\picirc t)\lozenge (T)
 \lozenge T=Q\mapsto Q^{\pi\circ t}=\blacklozenge T
\end{align*}
where $Q^{\pi\circ t}$ means the $\pi$-rotation (transposition) followed with transposition (rotation) of the Young diagram $D(\lambda)$.
In other words, the bijection $\lozenge$, and therefore the injection in \cite[Lemma 8.3.2]{watanabe}, exhibits the restriction of the LR orthogonal transpose symmetry map $\blacklozenge$, $c_{\mu,\nu,\lambda}=c_{\lambda^t, \nu^t,\mu^t}$, to LR Sundaram tableaux.

\medskip

 Let $l\in [0,2n]$. In \cite[Proposition 4.3.6]{watanabe} it is shown that the reduction map on $SST_{2n}(\varpi_l)$ is the injective assignment \eqref{redumap}
\begin{align*}\redu:SST_{2n}(\varpi_l)\rightarrow\bigsqcup_{\begin{smallmatrix} 0\le t\le min\{l,2n-l\}\\
l-t\in 2\mathbb{Z}
\end{smallmatrix}}SpT_{2n}(\varpi_t), \;\;
\mathbf{a}\mapsto \redu( \mathbf{a})=\mathbf{a}\setminus \rem(\mathbf{a}).
%\label{redumap}
\end{align*}

\noindent \textbf{Theorem} \textbf{B} %(Theorem \ref{reductionsurj})
[The reduction map, $\redu$, on $SST_{2n}(\varpi_l)$ \cite[Proposition 4.3.6]{watanabe}  is surjective]
Let $l\in [0,2n]$ and $t\in[0,n]$ such that  $0\le t\le min\{l,2n-l\}$ and $l-t\in 2\Z$. Let $\mathbf{a} = (a_1, \dots , a_t)$ be a column  in $SpT_{2n}(\varpi_t)$.  Lemma \ref{lem:reductionsurj}, Theorem \ref{lem:reductionsurj2} and Theorem \ref {thm:nofactors} exhibit a procedure to explicitly compute  the inverse of the reduction map in the cases where the symplectic column decomposes into non empty factors $\bf A_i$ of even length  consisting of consecutive integers  starting with an odd number, respectively when the symplectic column is such that  consecutive integers occur only as an even number followed with an odd number which are within the patterns of the symplectic $\k$-highest (lowest) weight tableaux considered in the last section. An explicit full formula for the inverse of the reduction map has recently been provided by the author in \cite{azreduction}

\medskip
The next theorem is a consequence of the previous one and the reverse Schensted insertion ruled by the slack data \cite{azslack} (see subsections \ref{subsec:1verticalstrip} and \ref{sec:slackdata}). It gives the constructive fundamental step for the unwinding of the sequence of successors
\cite[Lemma 8.3.1]{watanabe}  in the algorithm to computing the quantum Littlewood-Richardson map. The unwind procedure goes from the last to the first successor, and  Theorem \textbf{C} below gives explicitly ${\LRAII^{AII}}^{-1}(S,Q)$ for the unwind of the last successor  where $Q\in \widetilde Rec_{2n}(\lambda/\mu)$ with $\mu\subset_{vert}$ and  $S\in SpT_{2n}(\mu)$.
\medskip

\noindent \textbf{Theorem} \textbf{C} (Theorem \ref{thm:verygeneralstrip})[The inclusion $Rec_{2n}(\lambda/\mu)\supseteq \widetilde Rec_{2n}(\lambda/\mu)$  for \cite[Lemma 8.3.1]{watanabe}]
Let $S\in SpT_{2n}(\mu)$ and $Q\in  \widetilde Rec_{2n}(\lambda/\mu)$, $\mu\subset_{vert} \lambda$,  $l=\ell(\lambda)$, has slack row index vector  $\mathbf{r}=\{r_1<\dots< r_{t_0}\}$. Then, $\ell(\mu)\le l$, $0\le l- t_0\in 2\Z$, $l\le 2n-t_0$, and one has the following assertion:

\begin{align}\label{inverseint}{\LRAII^{AII}}^{-1}(S,Q)&= \c \circ (\rm{red}_{t_0}^{-1},\rm{id})\circ (\underset{{\r}}\leftarrow S)\nonumber\\
&=\c\circ (\rm{red}_{t_0}^{-1},\rm{id})((a_1,\dots,a_{t_0}),S^1 )\nonumber\\
&=\c\circ (\rm{red}_{t_0}^{-1}(a_1,\dots,a_{t_0}), S^1) \nonumber\\
&=\redu_{t_0}^{-1}((a_1,\dots,a_{t_0}))\bigcdot S^1=:S^\r\in SST_{2n}(\lambda).
\end{align}
where
$\redu_{t_0}^{-1}((a_1,\dots,a_{t_0}))=T_0(a_1)T_1\cdots (a_{t_0})T_{t_0}\in SST_{2n}(\varpi_{\ell(\lambda)})$ in Lemma \ref{lem:reductionsurj}, or Theorem \ref{lem:reductionsurj2}, or Theorem \ref {thm:nofactors} or more generally in \cite{azreduction},
and $S^1\in SpT_{2n}({\mu^{(1)}}')$  with ${\mu^{(1)}}'=\mu^{(1)}-\delta_\r$,   and  $\lambda=\mu^{(1)}-\delta_\r+\varpi_{\ell(\lambda)}$, $\mu^{(1)}:=\mu$.

\medskip

As an application of the explicit surjectivity  and consequently the explicit inverse $\LRAII^{AII}$ of the quantum Littlewood-Richardson map, we characterize in Corollary \ref{lem:H-L}, Theorem \ref{thm:n=oddslack1}, Theorem \ref{thm:n=evenslack1} and their corollaries for $n=1,3$ respectively $n=2,4$, the family of $\k$-highest wight tableaux generated either  by quantum record tableaux with one vertical strip or with $1$-$0$-slack sequences.
Consider the  sequence $\{u_i\}_{i=1}^n$ of numbers depending on the parity of $n$
\begin{align}\{u_i\}_{i=1}^n=\begin{cases}\{2,3,6,7,10,\dots, 2(n-1)-1,2n\},& n\notin 2\Z,\\
\{2,3,6,7,\dots, 2(n-1),2n-1\},& n\in 2\Z\end{cases}\label{numbers:uuint}
\end{align}
A symplectic tableau in $SpT_{2n}(\mu)$ is said to be $\k$-highest weight of shape $\mu$ if  row $i$ has only numbers $u_i$ for $i=1,\dots,n$.
For $n$ odd, one has the following assertion.

\noindent \textbf{Theorem} \textbf{D} (Theorem \ref{thm:n=oddslack1})[$\k$-highest weight tableaux produced by $1$-$0$-slack sequence quantum recording tableaux for $n$ odd] Let $S^{H,\mu}$  be the $\mathfrak{k}$-highest weight tableau
in $SpT_{2n}(\mu)$, and   $Q\in  Rec_{2n}(\lambda/\mu)$ with  $1$-$0$-slack sequence of the form $\underline \t=(1,\dots,1,0^M)$ and slack vector sequence $\underline\r$ with slack incidence matrix $\delta_{\underline\r}$. Then, for some $0\le k\le n$, ${\LRAII^{AII}}^{-1}(S^{H,\mu},Q)$ returns   the $\mathfrak{k}$-highest  weight tableau in
$SST_{2n}(\lambda)$, with ${\mathfrak{k}}$-weight  $\mu$, of the form
\begin{align}
&Y(M^{2n})\bigcdot\nonumber\\
&\bigcdot\displaystyle\circledcirc_{j=n}^{k+1}\big(\redu_1^{-1}({u_j})\big)^{m_{\redu^{-1}(u_j)}}\bigcdot
\big(\redu_1^{-1}({u_{k}})\big)^{m_{\redu^{-1}(u_k)}}\bigcdot
{\displaystyle\circledcirc_{j=k}^{1}\big(\redu_1^{-1}({u_j})\setminus\{u_n\}\big)^{m_{\redu^{-1}(u_i)\setminus \{2n\}}}}\bigcdot S^{H,\mu'},
\end{align}
where $S^{H,\mu'}$ is the $\k$-highest weight tableau in $SpT(\mu')$, $\mu'= \mu-\delta_{\underline \r^+}$, satisfying the identity on the multiplicities
$$\displaystyle \sum_{j=n}^{k+1} m_{\redu^{-1}(u_j)}+m_{\redu^{-1}(u_k)}+\sum_{j=1}^k m_{\redu^{-1}(u_i)\setminus \{2n\}}=|\delta_{\underline\r}|,$$
and  linear inequalities on the multiplicities of the columns

\begin{align*} &\redu^{-1}(u_i),~~  \redu^{-1}(u_i)\setminus \{u_n\}, \mbox{ $i=2,\dots,n$, on the LHS of $S^{H,\mu'}$},\\
&\mbox{ and the symplectic columns  $u_1u_2\cdots u_i\in SpT_{2n}(\varpi_i) \eqref{noddsymphw-hw}$, $i=1,\dots,n$,}\nonumber\\
&u_1u_2\cdots u_n,~~ u_1u_2\cdots u_{n-1},\dots,u_1u_2,~~u_1
\end{align*} as follows

\begin{align*}
&m_{\redu^{-1}(u_i)}+m_{\redu^{-1}(u_i)\setminus \{2n\}}\le m_{u_1\cdots u_{i-1}}, ~i=2,\dots,n,\\
&0\le m_{\redu^{-1}(u_n)}-\sum_{i=1}^{n-1}m_{\redu^{-1}(u_i)\setminus\{u_n\}}\le m_{u_1\cdots u_{n-1}}.
\end{align*}

With the same setting as above and   $n$ even, one has

\medskip

\noindent \textbf{Theorem} \textbf{E} (Theorem \ref{thm:n=evenslack1})[$\k$-highest weight tableaux produced by $1$-$0$-slack sequence quantum recording tableaux for $n$ even]
For some $0\le k\le n$,
 ${\LRAII^{AII}}^{-1}(S^{H,\mu},Q)$ returns   the $\mathfrak{k}$-highest  weight tableau in
$SST_{2n}(\lambda)$ with ${\mathfrak{k}}$-weight  $\mu$,    in either  form:

\begin{align*}
&Y(M^{2n})\bigcdot \big(\redu_1^{-1}(2n-1))^{m_{\redu^{-1}(2n-1) }}\bigcdot
{\displaystyle\circledcirc_{j=n-1}^{1}\big(\redu_1^{-1}(u_j)\big)^{m_{\redu^{-1}(u_j) }}}
\bigcdot S^{H,\mu'}\\
=&Y(M^{2n})\bigcdot \big((12\cdots (2n-3).(2n-2).(2n-1)\big)^{m_{12\cdots (2n-3).(2n-2).2n-1 }}\bigcdot
{\displaystyle\circledcirc_{j=n-1}^{1}\big(\redu_1^{-1}(u_j)\big)^{m_{\redu^{-1}(u_j) }}}
\bigcdot S^{H,\mu'},
\end{align*}

or

\begin{align*}
&Y(M^{2n})\bigcdot \big(\redu_1^{-1}(2n-1)\big)^{m_{\redu^{-1}(2n-1) }}
\bigcdot\big(12\cdots (2n-2)\cdot 2n\big)^{m_{12\cdots 2n-2\cdot 2n}}
\bigcdot
\big(12\cdots (2n-2)\big)^{m_{(12\cdots (2n-2)) }}\bigcdot
\nonumber\\
&
\bigcdot
{\displaystyle\circledcirc_{j=n-1}^{1}\big(\redu_1^{-1}(u_j)\setminus\{2n\}\big)^{m_{\redu^{-1}(u_j)\setminus\{2n\} }}}
\bigcdot S^{H,\mu'}\\
=&Y(M^{2n})
\bigcdot\big((12\cdots (2n-3).(2n-2).(2n-1)\big)^{m_{12\cdots (2n-3).(2n-2).2n-1 }}\bigcdot\big(12\cdots (2n-2)\cdot 2n\big)^{m_{12\cdots 2n-2\cdot 2n}}\nonumber\\
&\bigcdot
\big(12\cdots (2n-2)\big)^{m_{12\cdots (2n-2) }}\bigcdot
{\displaystyle\circledcirc_{j=n-1}^{1}\big(\redu_1^{-1}(u_j)\setminus\{2n\}\big)^{m_{\redu^{-1}(u_j)\setminus\{2n\} }}}
\bigcdot S^{H,\mu'}
\end{align*}

or

\begin{align*}
&Y(M^{2n})\bigcdot \big(\redu_1^{-1}(2n-1)\big)^{m_{\redu^{-1}(2n-1) }}
\bigcdot\big(12\cdots (2n-2)\cdot 2n\big)^{m_{12\cdots 2n-2\cdot 2n}}
\bigcdot\nonumber\\
&\bigcdot\displaystyle\circledcirc_{j=n-1}^{k+1}\big(\redu_1^{-1}({u_j})\big)^{m_{\redu^{-1}(u_j)}}
\bigcdot\big(\redu_1^{-1}({u_{k}})\big)^{m_{\redu^{-1}(u_k)}}\bigcdot\big(\redu_1^{-1}(u_k)\setminus\{2n\}\big)^{m_{\redu^{-1}(u_k)\setminus\{2n\} }}\nonumber\\
&\bigcdot
{\displaystyle\circledcirc_{j=k-1}^{1}\big(\redu_1^{-1}(u_j)\setminus\{2n\}\big)^{m_{\redu^{-1}(u_j)\setminus\{2n\} }}}
\bigcdot S^{H,\mu'},
\end{align*}
where the multiplicities satisfy the identity $\displaystyle \sum_{j=n}^{k+1} m_{\redu^{-1}(u_j)}+m_{\redu^{-1}(u_k)}+\sum_{j=k}^1 m_{\redu^{-1}(u_i)\setminus \{2n\}}=|\underline\delta_\r|-M$,
and
 linear inequalities on the multiplicities of the columns

\begin{align*} &\redu^{-1}(u_i),~~  \redu^{-1}(u_i)\setminus \{2n\}, \mbox{ $i=1,2,\dots,n$, } \mbox{ on LHS of $S^{H,\mu'},$}\nonumber\\
&\mbox{ and the symplectic columns  $u_1u_2\cdots u_i\in SpT_{2n}(\varpi_i) \eqref{nevensymphw-hw}$, $i=1,\dots,n$,}\nonumber\\
&u_1u_2\cdots u_n,~ u_1u_2\cdots u_{n-1},\dots,u_1u_2,~~u_1\nonumber
\end{align*} as follows

\begin{align*}
&m_{\redu^{-1}(u_i)}+m_{\redu^{-1}(u_i)\setminus \{2n\}}\le m_{u_1\cdots u_{i-1}}, ~i=2,\dots,n\\
 & m_{12\cdots (2n-2)2n}
 =\sum_{i=1}^{n-1} m_{\redu^{-1}(u_i)\setminus\{2n\}}.
\end{align*}

\medskip

\medskip
We hope that a geometrical object will emerge from the characterization of the $\k$-highest (lowest) weight tableaux by linear inequalities obtained so far  to be  attached to the quantum LR map as it happens in the Thomas LR map \cite{tho78}. The  coupling of the left and right companions of a recording LR tableau in \cite{tho78} (determining also the lowest and highest weights of a connected component in the  crystal tensor product decomposition \eqref{crystalthomas}) led to hives \cite{kt1,knutson}.

\medskip
\medskip
\subsection{Organization}
This paper is organized in five sections. Section \ref{sec:pre} introduces the relevant notation on semistandard tableaux and recalls the Schensted column insertion and its reverse including the relevant properties on the bumping  and reverse bumping routes.  Section \ref{sec:symmetrylrtilde} proves the surjectivity of the map
$\widetilde Rec_{2n}(\lambda/\mu) \overset{\sim}\hookrightarrow  LRS(\lambda/\mu)$ in Theorem \ref{surjection} (Theorem A in the Introduction) and how it exhibits the restriction of the  LR symmetry $c_{\mu,\nu,\lambda}=c_{\lambda^t, \nu^t,\mu^t}$, to LR Sundaram tableaux in Subsection \ref{sec:lozenge}, Corollary 1 in the Introduction. Section \ref{sec:containmentmain} recalls the relevant operations of the algorithm that computes the quantum Littlewood-Richardson map and their properties. Lemma \ref{lem:reductionsurj}, Theorem \ref{lem:reductionsurj2}  and Theorem \ref{thm:nofactors} (Theorem B in the Introduction) on the surjectivity of the reduction map are ones of the main results. These results on the inverse reduction map has been recently  generalized to any symplectic column by the author in \cite{azreduction} as referred in Subsection \ref{sec:inversereduction}.   With this on hand  we get prepared for the fundamental unwind step of the sequence of successors of that algorithm  to prove the surjectivity of the map
$$\displaystyle SST_{2n}(\lambda) \overset{\sim}\hookrightarrow\bigsqcup_{\begin{smallmatrix}\mu\in Par_{\le n}\\
\mu\subset\lambda\\
Q\in \widetilde Rec_{2n}(\lambda/\mu)\end{smallmatrix}}SpT_{2n}(\mu) \times \{Q\},$$
providing we know that $\widetilde Rec_{2n}(\lambda/\mu) \overset{\sim}\rightarrow  LRS(\lambda/\mu)$. The unwind steps are determined by $Q\in \widetilde Rec_{2n}(\lambda/\mu)$ with   input a symplectic tableau of shape $\mu$, and consist of reverse column insertion ruled by the slack data introduced in  \cite{azslack} and recalled and expanded in Subsection \ref{sec:slackdata} for any quantum recording tableau, followed with reverse removal or the inverse of the reduction map  and concatenation in the plactic monoid.  The fundamental unwind step is explicitly exhibited in Theorem \ref{thm:verygeneralstrip} (Theorem C in the Introduction) and its iteration ruled by the slack data appears in theorems \ref{lem:recordhole1} and \ref{cor:generalhole1}.
In the last Section \ref{sec:last}, the inverse of the quantum Littlewood-Richardson map is applied  to characterize  a family of $\mathfrak{k}$-highest weight tableaux in the Naito-Sagaki conjecture recent proof \cite{nsw} based on the  Watanabe  branching rule for $\imath$quantum groups \cite{watanabe}: Corollary \ref{lem:H-L}, Theorem \ref{thm:n=oddslack1}, Theorem \ref{thm:n=evenslack1}, also in the Introduction above, and their corollaries for $n=1,3$ respectively $n=2,4$, provide the family of $\k$-highest wight tableaux generated either  by quantum record tableaux with one vertical strip respectively with  $1$-$0$-slack sequences.

\section*{Acknowledgements}
The author acknowledges financial support by the Centre for Mathematics of the University of Coimbra (CMUC, https://doi.org/10.54499/UID/00324/2025) under the Portuguese Foundation for Science and Technology (FCT), Grants UID/00324/2025 and UID/PRR/00324/2025.

\section{Preliminaries}\label{sec:pre}

A \emph{partition} $\gamma$ is a weakly decreasing sequence of nonnegative integers $\gamma_1\ge \gamma_2 \ge \cdots$
such that $\gamma_k =0$ for some $k \ge 1$. The   maximal $i$ such that $\gamma_i> 0$ is called the \emph{number}
of \emph{parts} or \emph{length} of $\gamma$, denoted $\ell(\gamma)$. For each $m\ge 0$, the set of partitions of length at most $m$ is denoted by \emph{$Par_{\le m}$}. We assume the inclusion $Par_{\le m}\subseteq Par_{\le k}$ whenever $k\ge m$. Thus we often write the partition $\gamma$ as a vector
 $\gamma = (\gamma_1, \gamma_2, \dots,\gamma_k)$ for $k \ge  \ell(\gamma)$. The empty partition is the empty sequence $()$ and is regarded as the unique partition of length zero. Given $m\in\Z_{\ge 0}$, $\varpi_m$ denotes the partition of length $m$ whose parts are all $1$, that is, $\varpi_m =(\underbrace{1,\dots,1}_{m})=:(1^m)$.

 The partition $\gamma$ is said to be \emph{even} if $\gamma_{2i-1}=\gamma_{2i}$ for all $i\ge 1$. In other words, all columns of $\gamma$ have even length and necessarily the length of $\gamma$ is even.

 A partition $\gamma$ is identified
with its \emph{Young diagram} $D(\gamma)$ which is a left and top justified collection of boxes (or cells)
with $\gamma_k$ many boxes in the $k$th row for all $k \in \mathbb{Z}_> 0$. In particular, the empty Young diagram and  the  partition $()$ are identified. The number of cells of $D(\gamma)$ is  the sum of the parts of $\gamma$ and is denoted by $|\gamma|$.
The boxes or cells of the Young diagram of $\gamma$ are identified by its  coordinates $(i,j)$ in the matrix style, that is, $1\le i\le \ell(\gamma)$ and $1\le j\le \gamma_i$.

Let $\gamma$, $\mu$  partitions   with $\mu \subseteq \gamma$, that is,  $\mu_i\le  \gamma_i$ for all $ i\in\mathbb{Z}_>0 $, or the Young diagram of $\mu$ is a subset of
the Young diagram of $\gamma$. The skew-diagram (or skew Young diagram) $\gamma/\mu$ is defined to be $D(\gamma)\setminus D(\mu)$.
For $\lambda\subseteq\gamma$ partitions, we write $\lambda\subseteq_{vert} \gamma$ to mean that  $\gamma/\lambda$  is a \emph{vertical} \emph{strip}, that is, the skew-diagram $\gamma/\lambda$ has at most one box in each row. The number of cells of $\gamma/\lambda$ is $|\gamma/\lambda|:=|\gamma|-|\lambda|.$

 A \emph{
tableau} $T$ of (skew) shape $\gamma/\mu$ is a map (or a filling of $D(\gamma)$)
 $$T:D(\gamma)\rightarrow \mathbb{Z}_\ge0, \; (i,j)\mapsto T(i,j),$$
assigning a positive integer to each box of $D(\gamma)\setminus D(\mu)$ and   $0$  to the boxes of $D(\mu)$. Denote by $\rm{Tab}(\gamma/\mu)$ the set of tableaux of shape $\gamma/\mu$.  We say that the tableau $T$ is \emph{semi-standard} if an addition the assignment is
such that it is weakly increasing as we go from left to right along a row and strictly
increasing as we go from top to bottom along a column excluding the boxes in $\mu$,
\begin{align}&T(i,j)\le T(i,j+1),\; T(i,j)<T(i+1,j), \mbox{ for all $(i,j)\in D(\gamma)\setminus D(\mu)$,}\nonumber\\
&\mbox{ and }  T(i,j)=0, \mbox{ for $(i,j)\in  D(\mu)$,}
\nonumber
\end{align}
where we set $T(a,b):=\infty$ if $(a, b) \notin D(\gamma)$.
 Usually $T(i,j)$ is just referred as
the entry in the box $(i,j)$ and we omit the zeroes  in the boxes of $\mu$. A positive integer
$m\ge \ell(\gamma)$ will be fixed and $[0, m]:=\{0, 1, \dots, m\}$ will be used as a co-domain for the map $T$. We call
$[m]:=\{1, \dots, m\}$ the alphabet of the semi-standard tableau $T$.
 In this case, we will denote the \emph{set} of \emph{semi-standard} \emph{tableaux} of \emph{shape} $\gamma/\mu$ by
$SST_m(\gamma/\mu)$. When $\mu=()$, we just write $SST_m(\gamma)$. The \emph{weight} or \emph{content} of $T$ is the nonnegative vector $\mathrm{wt}(T)=(T[1],\dots,  T[m])$, where $T[i]:=\#\{(a,b)\in D(\gamma):T(a,b)=i\}$ for $i\in[m]$, that is, $T[i]$ is  the number of occurrences of $i$ in the tableau $T$.

The {\emph{reverse}} {\emph{row}} \emph{word} of a semi-standard tableau
$T$, denoted $w(T)=w_1\cdots w_l$, with $l$ the number of non zero entries of $T$,  is obtained by reading the entries of its rows (excluding the entry 0) right to left
starting from the top row and proceeding downwards. The \emph{column} \emph{word} of a semi-standard tableau $T$ denoted $w^{col}(T)=w'_1\cdots w_l'$  is the sequence of entries obtained by  reading the entries (excluding the entry $0$) of its columns
 from bottom to top and left to right.
The \emph{reverse} \emph{column} \emph{word}  of $T$, $w'_l\cdots w'_1$.
 %denoted $w^{col}(T)=w'_1\cdots w_l'$
is obtained by reading the entries of its columns (excluding the entry 0) right to left
starting from the top and proceeding downwards.
The \emph{weight} of the \emph{word}  $w(T)$ is the weight of $T$.

 A \emph{Yamanouchi} \emph{word} is a word $u_1 \cdots u_l$ such
that, for each $1 \le k \le l$,  the  weight of the subword  $u_1 \cdots u_k$ is a partition. A Yamanouchi tableau of shape $\mu$, denoted $Y(\mu)$, is a semi-standard tableau of shape and weight $\mu$. That is,  its  reverse column word is Yamanouchi and $Y(i,j)=i$ for all $1\le i\le \ell(\mu)$ and $1\le j\le\mu_i$.

\subsection{ Schensted column insertion and reverse}\label{subsec:insert}
The Schensted \emph{column} \emph{insertion} or
\emph{column} \emph{bumping} \cite{fulton,stanley},  takes a positive integer $x$ and a tableau $T\in SST(\gamma)$ and
puts $x$ in a new box at the bottom of the first column (counting from left to right)  if  it is strictly
larger than all the entries of the column. If not, it \emph{bumps} the  \emph{smallest}
\emph{entry}
in the \emph{column} that is \emph{larger} \emph{than} or \emph{equal} to $x$. The bumped entry moves to the
next column, going to the end if possible, and bumping an element to the next column
otherwise. The process terminates when  the bumped entry  goes to the bottom of the next
column, or until it becomes the only entry of a new column. The returned tableau is
denoted $x\bigcdot T:=x \rightarrow T$, that is, $x\bigcdot T$ is the result of column inserting $x$ into $T$, $x \rightarrow T$. We call to $ x \bigcdot T$ the multiplication of $x$ and $T$ in the plactic monoid.

The \emph{reverse Schensted column insertion } takes a tableau $U\in SST(\gamma)$ and an entry $y$ at the bottom of a column of $U$, whose cell has  row coordinate, say $r_y$,  and bumps it while deletes its box. If the column is the first, the new tableau is obtained by deleting the bottom box of the first column of $U$. If not, the bumped $y$ moves  to the next column on its left and \emph{bumps} the  \emph{largest}
\emph{entry} in the \emph{column} that is \emph{smaller} than or \emph{equal} to $y$. In both cases, if the bumped entry belongs to the first column the process ends and returns a pair $(y', ~\underset{ r_y}\leftarrow U)$ consisting of the bumped entry $y'\le y$ from the first column of $U$ together with a new  tableau denoted $\underset{r_y}\leftarrow U$, with one box less than $U$. If not, the process continues until an entry is bumped from the first column of $U$.
\begin{obs}\label{routes}\cite[Appendix A.2, Exercise 3]{fulton}  Let us consider two successive column-insertions of $ x < x'$ into a tableau $T$. First column-inserting $x$ in $T$ and then column-inserting $x'$ in
the resulting tableau $x\rightarrow T$, yielding  to two bumping routes $R$ and $R'$, and
two new boxes $B$ and $B'$. Then $R'$ lies strictly below $R$,
and $B'$ is Southwest of $ B$.

If $x\ge x'$, $R'$ lies weakly above $R$ and $B'$ Northeast of $B$.

\end{obs}
Since column insertion is reversible, one has the following \emph{reverse bumping route} property for \emph{reverse} \emph{column} \emph{insertion}.

\begin{obs}\label{revroutes}
Let $ y$ and   $y'$  be two bottom column entries of a tableau $T$, with $y'$  strictly Southwest of $y$ (not in the same row), or let $y<y'$ be the two bottom entries of a column of  $T$. Let $r_y$ and ${r_{y'}}$ be the corresponding cell row coordinates.

 First consider the reverse column-inserting of $y'$  and then reverse column-inserting of $y$ in
the resulting tableau $(\underset{r_{y'}}\leftarrow T)$. This yields   two reverse bumping routes $Z'$ respectively $Z$ such that $Z'$ lies strictly below $Z$.  In particular, the
two new entries $b'$ respectively $b$ in the first column of $T$ are such that $(\underset{r_{y'}}\leftarrow T)(i',1)=b'> (\underset{r_{y}}\leftarrow(\underset{r_{y'}}\leftarrow T))(i,1)=b$ and $i'>i$.

 In case $y'$  strictly Southwest of $y$ (not in the same row) and we first consider reverse column-inserting of $y$  followed with the  reverse column-inserting $y'$ in
the resulting tableau $(\underset{r_{y}}\leftarrow T)$ then the corresponding reverse bumping routes $Z$ and $Z'$ are such that $Z'$ lies weakly below $Z$.
The two new entries $b$ respectively $b'$ in the first column of $T$, $(\underset{r_{y'}}\leftarrow(\underset{r_{y}}\leftarrow T))(i',1)=b'>(\underset{r_{y}}\leftarrow T)(i,1)=b$ with $i'>i$, or $i=i'$ and $(\underset{r_{y}}\leftarrow T)(i,1)=b\le (\underset{r_{y'}}\leftarrow(\underset{r_{y}}\leftarrow T))(i,1)=b'$.

 If two or more columns have the bottom entries in a same row, let $y'$ be the right most bottom entry in that row.  Consider the reverse column-inserting of $y'$ in $T$ and then again the reverse column-inserting of $y'$ in
the resulting tableau $y'\leftarrow T$. This yields   two bumping routes $Z_1$ respectively $Z_2$ and two new entries $b_1$ respectively $b_2$ in the first column of $T$, such that  $(\underset{r_{y'}}\leftarrow(\underset{r_{y'}}\leftarrow T))(i_2,1)=b_2> (\underset{r_{y'}}\leftarrow T))(i_1,1)=b_1$ with $i_2>i_1$, or $i_1=i_2$ and $(\underset{r_{y'}}\leftarrow T)(i_1,1)=b_1\le (\underset{r_{y'}}\leftarrow(\underset{r_{y'}}\leftarrow T))(i_1,1)=b_2$.

\end{obs}
\medskip

 %For $\lambda\subseteq\gamma$ partitions, we write $\lambda\subset_{vert} \gamma$ to mean that  $\gamma/\lambda$  is a \emph{vertical} %\emph{strip}, that is, the skew-diagram $\gamma/\lambda$ has at most one box in each row. The \emph{length} of $\gamma/\lambda$ is %$|\gamma/\lambda|:=|\gamma|-|\lambda|.$

 Let $\lambda\in Par_{\le m}$ and $k\in [0,m]$.
 The following assignment \cite{watanabe} is the combinatorial Pieri's rule, defined by the column insertion of a column tableau into a tableau is a bijection

\begin{align}\bigcdot:SST_m(\varpi_k)\times SST_m(\lambda)&\rightarrow\bigsqcup_{\begin{smallmatrix}
\gamma\in Par_{\le m}\\
\lambda\subseteq_{vert} \gamma,~|\gamma/\lambda|=k
\end{smallmatrix}}
SST_m(\gamma) \nonumber\\
(S,T)&\mapsto S\bigcdot T:=w_k\rightarrow(\cdots\rightarrow( w_2 \rightarrow(w_1\rightarrow T))\cdots)
\end{align}
where $S=(w_1<w_2<\dots<w_k)$ is a column of length $k$ with entries in $[1,m]$. In particular, if $S(i,1)\le T(i,1)$ for every $1\le i\le m$, then $S\bigcdot T$ is the concatenation of $S$ and $T$.

The process $S\bigcdot T=w_k\rightarrow(\cdots( w_2 \rightarrow(w_1\rightarrow T))$ is reversible
\begin{align}\label{reverse}(S, T)=(\underset{\gamma/\lambda}\leftarrow T):=(\underset{r_{w'_{1}}}\leftarrow(\cdots( \underset{r_{w'_{k-1}}} \leftarrow(\underset{r_{w'_k}}\leftarrow S\bigcdot T))\cdots))\end{align}
where the  entries of the vertical strip $\gamma/\lambda$ from bottom to top define the word $w'_k\cdots w'_1$ and $r_{w'_i}$ indicates the row index of $w'_i$, $1\le i\le k$. For $U\in SST_m(\gamma)$ such that $\lambda\subseteq_{vert}\gamma$ the  inverse map is defined by the reverse column insertion by taking the entries of $U$ in the vertical strip $\lambda/\gamma$  from bottom to top, $(\underset{\gamma/\lambda}\leftarrow U)\in SST_m(\varpi_k)\times SST_m(\lambda)$.

\section{ The linear time bijection between the sets $ \widetilde Rec_{2n}(\lambda/\mu)$ and  LR-Sundaram tableaux $LRS_{2n}(\lambda/\mu)$}\label{sec:symmetrylrtilde}
Recall  a partition $\nu$ is said to be even if  all columns of $\nu$ have even length and necessarily the length of $\nu$ is even. In other words, $\nu_{2i-1}=\nu_{2i}$ for all $i\ge 1$. In this case, $\nu^t$ the transpose or conjugate of $\nu$ has all rows of even length.
We start by recalling the definition of Littlewood-Richardson-Sundaram tableau (or symplectic Littlewood-Richardson tableau).
Fix $n\in\mathbb{N}$. Let $\lambda \in Par_{\le 2n}$ and $\mu \in Par_{\le n}$.

\begin{defi}\label{def:lrs} Let $\lambda \in Par_{\le 2n}$ and $\mu \in Par_{\le n }$ be such that $\mu\subset\lambda$. A semi-standard tableau $T \in Tab(\lambda/\mu) $ is said to be
an $n$-symplectic Littlewood-Richardson tableau or Littlewood-Richardson-Sundaram tableau if it satisfies the following.
\begin{enumerate}
\item[$(1)$] $T \in SST_{2n}(\lambda/\mu)$.

\item [(2)] %Let $(w_1, \dots,w_N)$ denote the column-word $w^{col}(T)$ of $T$. Then,
The reversed column word of $T$
%$(w_N, \dots,w_1)$
is a Yamanouchi word.
% For each $r \in [1,N]$ and $k \in [1, 2n -1]$,
%the number of occurrences of $k$ in the subsequence $(w_N, \dots ,w_r)$ is greater than or
%equal to that of $k +1$.

\item [$(3)$] The sequence $\rm{wt}(T) = (T[1], T[2],\dots, T[2n])$ is an even partition,

\item [$(4)$] If $T(i, j) = 2k + 1$ for some $(i, j) \in  D(\lambda/\mu)$ and $k \in\mathbb{Z}_{\ge 0}$ then we have
$i \le n + k$.

\end{enumerate}
\end{defi}

For $\lambda \in Par_{\le 2n}$ and $\mu \in Par_{\le n}$, the subset of $SST_{2n}(\lambda/\mu)$ satisfying conditions $(2)$, $(3)$ and  $(4)$ in Definition \ref{def:lrs} is denoted by $LRS_{2n}(\lambda/\mu)$.

\begin{obs} Condition $(4)$ above can be replaced by  $T(n + i, 1) \ge 2i$ for every $i\ge 0$.
\end{obs}

\begin{obs}  Let $T\in LRS_{2n}(\lambda/\mu)$ of content $\nu$. Let $N:=\nu_1$ and $\nu^t=(\nu_1^t,\dots,\nu^t_{N})$ the transpose of $\nu$. Put  $\mu^{(0)}:=\lambda$ and $\mu^{(N)}:=\mu$. For $k=1,\dots,N$,
 %let $T$ restricted to the shape $\mu^{(k-1)}$, and
 define $\mu^{(k)}$ the shape obtained  by erasing from NE to SW in $T_{|\mu^{(k-1)}}$, $T$ restricted to the shape  $\mu^{(k-1)}$, the first rightmost $\nu^t_k$ cells filled with $1,2,\dots, \nu^t_k$. Hence

 \begin{align}\mu^{(0)}=\lambda\supset_{vert} \mu^{(1)}\supset_{vert} \cdots\supset_{vert}\mu^{(N)} =\mu \label{nested1}
 \end{align}
This decomposes the shape $\lambda/\mu$ into $N$ vertical strips which completely defines $T$

\begin{align}J_1:=\mu^{(0)}/\mu^{(1)},J_2:= \mu^{(1)}/\mu^{(2)}, J_i=\mu^{(i-1)}/\mu^i,\dots, J_{N}=\mu^{(N-1)}/\mu^{(N)}\label{nested2}
\end{align}

%Let $\nu^t=(\nu_1^t,\dots,\nu^t_{N})$ be the transpose of $\nu$.
 Then, for $k=1,\dots,N$,
the vertical strip $J_k=\mu^{(k-1)}/\mu^{(k)}$ has  $\nu^t_k=|\mu^{(k-1)}|-|\mu^{(k)}|$ cells and  is filled in  $T$, top to bottom,  with
$12\cdots \nu_k^t$. %Often we call \emph{string} to  $J_k$ when seen filled with its
%reverse column-word $12\cdots \nu_k^t$, for $k\ge 1$.

Concatenating those column words as $12\cdots \nu^t_{N}\bigcdot\cdots\bigcdot 12\bigcdot\cdots\bigcdot\nu^t_2\bigcdot 12\cdots \nu^t_{1}$ gives the reverse-column word of the  Yamanouchi tableau of shape $\nu$ which is also obtained by rectification of the word of $T$.
 For an illustration, see Example \ref{ex:reclrs}.

%The length of the vertical strip $J_k=\mu^{(k-1)}/\mu^{(k)}$ is
%$Q[k]$,  $\ell(J_k)=\ell(\mu^{(k-1)}/\mu^{(k)})=Q[k]$, $k=1,2,3$.
\end{obs}

%Let $\lambda \in Par_{\le 2n}$ and $\mu \in Par_{\le n }$ be such that $\mu\subset\lambda$. A tableau $Q$
%of shape $\lambda/\mu$ is said to be a \emph{recording} \emph{tableau} if there exists $T \in SST_{2n}(\lambda)$ such that
%$Q^{AII}(T) = Q$. Let $Rec_{2n}(\lambda/\mu)$ denote the set of \emph{recording} \emph{tableaux} \emph{of} \emph{shape} $\lambda/\mu$.

We now recall  the definition of the set $\widetilde Rec_{2n}(\lambda/\mu)$.

\begin{defi}\label{def:tilde}\cite{watanabe}\label{def:tilderec} Let $\lambda \in Par_{\le 2n}$ and $\mu \in Par_{\le n }$ be such that $\mu\subset\lambda$. Let $\widetilde Rec_{2n}(\lambda/\mu)$ denote the set of
tableaux $Q$ of shape $\lambda/\mu$ satisfying the following:
\begin{enumerate}
\item[$(R1)$] The entries of $Q$ strictly decrease along the rows from left to right.

\item[$(R2)$] The entries of $ Q$ weakly decrease along the columns from top to bottom.

\item[$(R3)$] For each $k > 0$, the number $Q[k]$ of occurrences of $k$  is even.

\item[$(R4)$] For each $k > 0$, it holds that

$$Q[k] \ge 2(\ell(\mu^{(k-1)})- n),$$
\noindent where $\mu^{(k-1)}$ is the partition such that
$$D(\mu^{(k-1)}) = D(\mu) \cup\{(i, j) \in D(\lambda/\mu) | Q(i, j) \ge  k\}.$$

\item[$(R5)$] For each $r, k >0 $, let $Q_{\le r}[k]$ denote the number of occurrences of $k$ in $Q$ in the
$r$-th row or above. Then, the following inequality holds:
$$Q_{\le r}[k + 1] \le Q_{\le r}[k].$$

\end{enumerate}

\end{defi}

\medskip
\begin{obs} Let $N$ be the largest entry in $Q$ where we are assuming the blank boxes filled with $0$. When $N=0$ one has $\lambda=\mu \Leftrightarrow\lambda/\lambda=()\Leftrightarrow \rm{wt}(Q)=()$. Let $N\ge 1$.
Since $Q[k]\in 2\mathbb{Z}$ and  $Q[k+1]\le Q[k]\le \ell(\lambda)\le 2n$, for any $k>0$,
 the weight of $Q$ is   the partition $\rm{wt}(Q)=(Q[1], \dots, Q[N])$
 and  its  transpose or conjugate is an even partition $\nu=(\nu_1=N,\dots,\nu_{Q[1]})$.
Thus
 $Q$ is also defined by the sequence of nested partitions %in \eqref{nested}

\begin{align}\mu^{(0)}=\lambda\supset_{vert} \mu^{(1)}\supset_{vert} \cdots\supset_{vert}\mu^{(N)} =\mu,~~
\end{align}

\noindent  where $\mu^{(i-1)}/\mu^{(i)}$ is a vertical strip of even length $Q[i]\ge 2$ satisfying $(R4)$ and $(R5)$ conditions, for $i=1,\dots,N$.
%These vertical strips are denoted by $J_i:=\mu^{(i-1)}/\mu^{(i)}$ to emphasize that as vertical strips of $Q$ they are %filled with $i$'s in $Q$, $i=1,\dots,N$.

\iffalse

Since $Q[k]\in 2\mathbb{Z}$ and  $Q[k+1]\le Q[k]\le \ell(\lambda)\le 2n$, for any $k>0$,
 the weight of $Q$ is  defined to be the partition $\rm{wt}(Q)=(Q[1], \dots, Q[\lambda_1])$. Its  transpose or conjugate is an even partition $\nu=(\nu_1,\dots,\nu_{Q([1]})$.
Thus $Q$ is also defined by the sequence of nested partitions
%$$\mu^{(0)}=\lambda\supseteq \mu^{(1)}=(3,2,2,1,0^2)\supseteq \mu^{(2)}=(3,1,1,0^3)\supseteq\mu^{(3)} =\mu$$

$$\mu^{(0)}=\lambda\supseteq_{vert} \mu^{(1)}\supseteq_{vert} \cdots\supseteq_{vert}\mu^{(\nu_1)} =\mu$$

\noindent where $\mu^{i-1}/\mu^i$ are vertical strips of even length $Q[i]$ satisfying $(R4)$ and $(R5)$ conditions, for $i=1,\dots,\nu_1$.  As before these vertical strips are denoted by $J_i:=\mu^{i-1}/\mu^i$ to emphasize that as vertical strips of $Q$ they are filled with $i$'s in $Q$., $i=1,\dots,\nu_1$.
\fi

\end{obs}

\begin{prop} Given $Q\in \widetilde Rec_{2n}(\lambda/\mu)$ and  $k>0$, the following conditions  hold
\begin{enumerate}
%\item[$(a)$] $Q[k]\in 2\mathbb{Z}$ and  $Q[k+1]\le Q[k]\le \ell(\lambda)\le 2n$.

%Q_{\ell(\mu^{(r-1)})}[r]\le R_{\le i}[r]+(\ell(\mu^{(r-1)}-i)$

\item[$(a)$] $Q[k]=Q_{\le \ell(\mu^{(k-1)})}[k]$.

\item[$(b)$] If $1\le i\le \ell(\mu^{(k-1)})$ %is such that $Q(i,j)=k$,
then $Q[k]=Q_{\le \ell(\mu^{(k-1)})}[k]\le Q_{\le i}[k]+(\ell(\mu^{(k-1)})-i)$.

\item[$(c)$] $Q[k] \ge 2(\ell(\mu^{(k-1)})- n)$ (condition $(R4)$) if and only if
$Q_{\le i}[k] \ge 2(i- n)$, for all
 $1\le i\le \ell(\mu^{(k-1)})$.

%\item[$(c)$] $Q\in \widetilde Rec_{2n}(\lambda/\mu)$  $ LRS(\lambda/\mu$.
\end{enumerate}
\end{prop}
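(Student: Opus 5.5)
The plan is to read off all three items from the geometry of the vertical strip $J_k=\mu^{(k-1)}/\mu^{(k)}$. This strip consists exactly of the cells of $Q$ filled with $k$. The key observation is that, by $(R1)$ and $(R2)$, the set $\{(i,j)\in D(\lambda/\mu): Q(i,j)\ge k\}$ together with $D(\mu)$ is a Young diagram $D(\mu^{(k-1)})$. The cells with entry exactly $k$ are the cells of $D(\mu^{(k-1)})\setminus D(\mu^{(k)})$, and $(R1)$ forces at most one such cell per row. Hence every occurrence of $k$ in $Q$ lies in a row of index at most $\ell(\mu^{(k-1)})$, with at most one occurrence per row.

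Item $(a)$ is then immediate. Since no $k$ appears below row $\ell(\mu^{(k-1)})$, counting the occurrences of $k$ in rows $\le \ell(\mu^{(k-1)})$ counts all of them, so $Q[k]=Q_{\le \ell(\mu^{(k-1)})}[k]$.

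For $(b)$, fix $1\le i\le \ell(\mu^{(k-1)})$ and split the count at row $i$:
\[
Q_{\le \ell(\mu^{(k-1)})}[k]=Q_{\le i}[k]+\#\{\text{$k$'s in rows } i+1,\dots,\ell(\mu^{(k-1)})\}.
\]
The second term is at most $\ell(\mu^{(k-1)})-i$, because each of those rows contains at most one $k$. Combining this with $(a)$ gives $(b)$.

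For $(c)$, the direction ``$\Leftarrow$'' follows by taking $i=\ell(\mu^{(k-1)})$ and applying $(a)$. For ``$\Rightarrow$'', assume $Q[k]\ge 2(\ell(\mu^{(k-1)})-n)$. Then $(b)$ gives
\[
Q_{\le i}[k]\ge Q[k]-(\ell(\mu^{(k-1)})-i)\ge 2\ell(\mu^{(k-1)})-2n-\ell(\mu^{(k-1)})+i=\ell(\mu^{(k-1)})-2n+i .
\]
We want this to be at least $2(i-n)$, that is, $\ell(\mu^{(k-1)})\ge i$, and that holds by the choice of $i$. So the inequality holds for every $1\le i\le \ell(\mu^{(k-1)})$.

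The only step that needs any care is the justification that $k$'s form a vertical strip lying within rows $\le \ell(\mu^{(k-1)})$. This rests on $(R1)$, $(R2)$ and the Remark above on the nested partitions $\mu^{(0)}\supset_{vert}\cdots\supset_{vert}\mu^{(N)}$. Everything else is elementary counting.
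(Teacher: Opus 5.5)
Your proof is correct and takes essentially the same route as the paper. The $k$'s form the vertical strip $J_k=\mu^{(k-1)}/\mu^{(k)}\subseteq D(\mu^{(k-1)})$, which gives $(a)$. Splitting the count at row $i$ gives $(b)$. Item $(c)$ follows from the same chain $Q_{\le i}[k]\ge Q[k]-(\ell(\mu^{(k-1)})-i)\ge \ell(\mu^{(k-1)})-2n+i\ge 2(i-n)$. The differences are cosmetic: the paper proves the forward implication of $(c)$ by contradiction and leaves the converse (take $i=\ell(\mu^{(k-1)})$) implicit, whereas you argue directly, state both directions, and justify the vertical-strip property from $(R1)$ and $(R2)$.
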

\begin{proof} $(a)$ Evident because  $J_k:=\mu^{(k-1)}/\mu^{(k)}=\{(i,j)| Q(i,j)=k\}\subseteq \mu^{(k-1)}$ and $\ell(J_k)=Q[k]$. Therefore $$Q(i,j)=k\Rightarrow 1\le i\le \ell(\mu^{(k-1)}).$$

$(b)$ Let   $1\le i\le \ell(\mu^{(k-1)})$. Then
\begin{align*}&J_k:=\mu^{(k-1)}/\mu^{(k)}=\{(s,j)| Q(s,j)=k, s\le i\}\cup \{(s,j)| Q(s,j)=k, s> i\}\\
&\subseteq \{(s,j)| Q(s,j)=k, s\le i\}\cup \{i+1,\dots, \ell(\mu^{(k-1)})\}
\end{align*}
Therefore $\ell(J_k)=Q[k]\le Q_{\le i}[k]+ (\ell(\mu^{(k-1)}-i).$

$(c)$ $1\le i\le \ell(\mu^{(k-1)})$. From $(R4)$, $Q[k]\ge 2(\mu^{(k-1)}-n)$. By contradiction suppose, $Q_{\le i}[k] < 2(i- n)$. Then
\begin{align*} Q[k]&\le Q_{\le i}[k]+ (\ell(\mu^{(k-1)}-i)\\
&< 2(i- n)+(\ell(\mu^{(k-1)}-i)\\
&=\ell(\mu^{(k-1)}-2n+i\\
&\le\ell(\mu^{(k-1)}-2n+\ell(\mu^{(k-1)}=2(\ell(\mu^{(k-1)}-n).\\
\end{align*}
Hence $Q[k]<2(\ell(\mu^{(k-1)}-n)$ a contradiction with $(R4)$.
\end{proof}

\begin{cor}
$(a)$ For $\lambda=\mu \in Par_{\le n }$, $\widetilde Rec_{2n}(\lambda/\lambda)=\{D(\lambda)\}$.

$(b)$ $ \label{recolumn}\widetilde Rec_{2n}(\varpi_l)\neq \emptyset$ if and only if $l$ even and $l\le 2n$.
In this case,

\begin{align*} \widetilde Rec_{2n}(\varpi_l)=\{\YT{0.15in}{}{
 {{1}},
 {{1}},
 {\vdots},
 {{1}},
 {{1}},
}\}
\end{align*}
\end{cor}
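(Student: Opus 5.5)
Both parts follow by unwinding Definition \ref{def:tilderec}; the only real work is in (b), where the shape is a single column.

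For $(a)$, take $\lambda=\mu$. The skew shape $\lambda/\lambda$ has no cells, so the only tableau of this shape is the empty filling, i.e. $D(\lambda)$ with every box of $D(\mu)$ labelled $0$. We check that it satisfies $(R1)$--$(R5)$. Conditions $(R1)$, $(R2)$ and $(R5)$ are vacuous, and $Q[k]=0$ is even for every $k$, so $(R3)$ holds. For $(R4)$, the partition $\mu^{(k-1)}$ is defined by $D(\mu^{(k-1)})=D(\mu)\cup\emptyset$, so $\mu^{(k-1)}=\mu$ for every $k>0$. Since $\mu\in Par_{\le n}$, we get $2(\ell(\mu)-n)\le 0=Q[k]$. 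Hence $\widetilde Rec_{2n}(\lambda/\lambda)=\{D(\lambda)\}$.

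For $(b)$, take $\lambda=\varpi_l$ and $\mu=()$; we may assume $l\ge 1$, since the case $l=0$ is the empty column, which is covered by $(a)$. The shape $\varpi_l/()$ is a single column of length $l$, and we argue in three steps.
\begin{enumerate}
\item[(i)] Since $\mu=()$, every cell carries a positive entry, so the largest entry $N$ satisfies $N\ge 1$.
\item[(ii)] Suppose $Q\in\widetilde Rec_{2n}(\varpi_l)$. By $(R2)$ the column is weakly decreasing from top to bottom. Apply $(R5)$ with $r=1$. The top row contains a single cell, and its entry is the maximum $N$, so $Q_{\le 1}[N]=1$. If $N\ge 2$, then $(R5)$ gives $Q_{\le 1}[N]\le Q_{\le 1}[N-1]=0$, which is a contradiction. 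Hence $N=1$, and every entry of the column equals $1$.
\item[(iii)] With all entries equal to $1$, we have $Q[1]=l$. Condition $(R3)$ forces $l$ to be even. For $(R4)$ with $k=1$, the partition $\mu^{(0)}$ is the whole shape $\varpi_l$, so the condition reads $l\ge 2(l-n)$, i.e. $l\le 2n$. This bound is also automatic from $\lambda\in Par_{\le 2n}$. For $k\ge 2$ we have $\mu^{(k-1)}=()$, and $(R4)$ holds trivially.
\end{enumerate}

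Conversely, suppose $l$ is even and $l\le 2n$, and let $Q$ be the column of $l$ ones. We verify each condition directly. $(R1)$ is vacuous, since each row has one cell. $(R2)$ holds because the entries are constant down the column. $(R3)$ holds because $Q[1]=l$ is even and $Q[k]=0$ for $k\ge 2$. $(R4)$ for $k=1$ is the inequality $l\ge 2(l-n)$ checked in (iii), and for $k\ge 2$ it is trivial. $(R5)$ holds because $Q_{\le r}[2]=0$ for every $r$.

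So $\widetilde Rec_{2n}(\varpi_l)$ is nonempty exactly when $l$ is even and $l\le 2n$, and in that case it consists of the single all-ones column. The only non-mechanical step is (ii), which uses $(R5)$ at $r=1$ to force every entry to be $1$.
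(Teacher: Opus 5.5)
Your proof is correct and follows the paper's route: evenness of $l=Q[1]$ from $(R3)$, and $l\le 2n$ from $(R4)$ at $k=1$. You also supply what the paper's one-line proof leaves implicit, namely that $(R5)$ at $r=1$ forces every entry of the column to be $1$ (this step is genuinely needed, since e.g.\ the column $2,2,1,1$ satisfies $(R1)$--$(R4)$ when $n\ge 3$), together with the converse verification and part $(a)$.
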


\begin{proof} From $(R3)$, $l=Q[1]$ is even and from $(R4)$, $Q[1]=l\ge 2(l-n)\Leftrightarrow l\le 2n$.
\end{proof}

Let $\nu$ be an even partition. Then the $\nu_1$ columns are of even  length and  its transpose or conjugate partition $\nu^t=(\nu^t_1,\dots,\nu^t_{\nu_1})$ is such that $\nu_i^t\in 2\Z$, for all $i\ge 1$.

The following theorem asserts that the conditions on a LR-Sundaram tableau $T$ translate to conditions on the lengths of their vertical strips $J_i$ \eqref{nested1}, \eqref{nested2}  and gives the right inverse of the map in \cite[Lemma 8.3.2]{watanabe}.

\begin{thm} \label{surjection} Let $T\in LRS_{2n}(\lambda/\mu)$ of weight $\nu$ and $J_1,\dots,J_{\nu_1}$ its decomposition into vertical strips each strip filled in $1,2,\dots, |J_i|=\nu^t_i\in 2\mathbb{Z}$ for $i=1,\dots,\nu_1$.
%Then $\ell(J_i)\in 2\mathbb{Z}$.
Relabel each vertical strip $J_i=\mu^{(i-1)}/\mu^{(i)}$ with $\nu^t_i$ $i$'s, for $i=1,\dots,\nu_1$. We get a new tableau $Q\in \widetilde Rec_{2n}(\lambda/\mu)$ of weight $\nu^t$. We denote this map by $\lozenge$.
\end{thm}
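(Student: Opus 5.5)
We verify the five conditions $(R1)$--$(R5)$ of Definition \ref{def:tilderec} for $Q=\lozenge T$ directly from the vertical strip decomposition \eqref{nested1}, \eqref{nested2} of $T$. By construction $Q$ has shape $\lambda/\mu$ and is constant equal to $k$ on $J_k=\mu^{(k-1)}/\mu^{(k)}$. Consequently
$$D(\mu^{(k-1)})=D(\mu)\cup\{(i,j)\in D(\lambda/\mu): Q(i,j)\ge k\},$$
so the partitions $\mu^{(k-1)}$ of $(R4)$ are exactly the ones in \eqref{nested1}, and $Q[k]=|J_k|=\nu^t_k$.

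\emph{Conditions $(R1)$--$(R3)$.} $(R3)$ holds since $\nu$ is even by condition $(3)$, so each $\nu^t_k\in 2\Z$. For $(R1)$ and $(R2)$, use that the strips are nested: $J_k$ consists of outer corners of $\mu^{(k-1)}$ forming a vertical strip, and $J_{k+1}\subseteq\mu^{(k)}$. Take a cell $(i,j)\in J_k$. The cell $(i,j+1)$, if it lies in $\lambda/\mu$, was removed at an earlier step, so $Q(i,j+1)<k$. Since $J_k$ is a vertical strip, $Q(i,j+1)\neq k$, which gives strict decrease along rows. The cell $(i+1,j)$, if present, was removed at the same step or earlier, so $Q(i+1,j)\le k$, which gives weak decrease down columns.

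\emph{Condition $(R4)$.} By part $(c)$ of the Proposition, it suffices to prove $Q_{\le i}[k]\ge 2(i-n)$ for $1\le i\le \ell(\mu^{(k-1)})$; I would instead prove $(R4)$ directly. The strip $J_k$ is filled in $T$, top to bottom, with $1,2,\dots,\nu^t_k$. Its lowest cell lies in row $\ell(\mu^{(k-1)})$. The reason is that $\mu^{(k)}\supseteq\mu$ and $\ell(\mu)\le n$, and at step $k$ the cells removed from $T_{|\mu^{(k-1)}}$ are the rightmost chain $1,\dots,\nu^t_k$. The bottom cell of the first column of $\mu^{(k-1)}$ outside $\mu$ carries the largest entry in that column, so it must belong to $J_k$. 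That largest entry is $\nu^t_k$, placed in row $\ell(\mu^{(k-1)})$, giving
$$T(\ell(\mu^{(k-1)}),1)=\nu^t_k.$$
If $\ell(\mu^{(k-1)})\le n$, then $(R4)$ is trivial. Otherwise write $\ell(\mu^{(k-1)})=n+i$ with $i\ge 1$; the Remark following Definition \ref{def:lrs} gives $T(n+i,1)\ge 2i$. Hence
$$Q[k]=\nu^t_k\ge 2i=2(\ell(\mu^{(k-1)})-n).$$
This is the main obstacle: justifying that the bottom of the first column is always in the strip currently being peeled. I would argue it from the Yamanouchi property of the reverse column word, which forces the peeled chain $1,2,\dots,\nu^t_k$ to end at the largest available label. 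I would also double check it on Example \ref{ex:reclrs}. If that claim fails in general, the fallback is condition $(4)$ applied to the cell of $J_k$ filled with the largest odd value $\nu^t_k-1$. That cell sits in some row $r\ge \ell(\mu^{(k-1)})-1$, and $(4)$ gives $r\le n+(\nu^t_k-2)/2$, which yields the same inequality.

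\emph{Condition $(R5)$.} In row $r$ or above, the cells of $J_{k+1}$ carry the labels $1,\dots,Q_{\le r}[k+1]$ in $T$, and those of $J_k$ carry $1,\dots,Q_{\le r}[k]$. Strip $J_{k+1}$ lies weakly left of $J_k$ row by row (strictly left within a row, by $(R1)$). Therefore its $s$-th cell from the top is weakly below the $s$-th cell of $J_k$, by semistandardness of $T$ comparing the entries equal to $s$. This yields $Q_{\le r}[k+1]\le Q_{\le r}[k]$.

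Finally, the weight of $Q$ is $\nu^t$, as noted above.
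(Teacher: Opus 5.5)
Your treatment of (R1)--(R3) is fine. The argument for (R4), however, rests on a false claim. It is not true that $J_k$ always reaches the last row of $\mu^{(k-1)}$, nor that it contains the bottom cell of the first column of $\mu^{(k-1)}$.

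The paper's own Example \ref{ex:reclrs} is a counterexample. For $k=2$ one has $\mu^{(1)}=(3,2,2,1)$ and $\ell(\mu^{(1)})=4>n=3$. Yet $J_2=\{(2,2),(3,2)\}$, while the bottom cell $(4,1)$ belongs to $J_3$. The Yamanouchi heuristic does not rescue the claim. The chain does end at the largest label $\nu^t_k$, but when that label occurs several times the peeling takes its rightmost (north-east) occurrence, not the one at the bottom of column $1$.

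Your fallback fails on the same example: the cell of $J_2$ with $T$-value $\nu^t_2-1=1$ is $(2,2)$, which lies in row $2<\ell(\mu^{(1)})-1$. Even the numerical identity $T(\ell(\mu^{(k-1)}),1)=\nu^t_k$ can fail. Take $n=3$, $\mu=(1,1)$, $\lambda=(2,2,2,2)$, fill column $2$ with $1,2,3,4$, and set $T(3,1)=1$, $T(4,1)=2$. This tableau lies in $LRS_6(\lambda/\mu)$ and $J_1$ is column $2$, but $T(4,1)=2\neq 4=\nu^t_1$.

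What is missing is a comparison with the later strips. The paper supplies this through its induction on $\nu_1$. It bounds the part of $J_1$ in rows $\le \ell(\mu^{(1)})$ via $Q_{\le \ell(\mu^{(1)})}[1]\ge Q_{\le \ell(\mu^{(1)})}[2]\ge 2(\ell(\mu^{(1)})-n)$, using (R5) and the induction hypothesis. It then invokes the Sundaram condition only for the cells of $J_1$ in column $1$ below row $\ell(\mu^{(1)})$.

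Your idea can be repaired cheaply along these lines:
\begin{enumerate}
\item Suppose $\ell(\mu^{(k-1)})>n$, and let $j\in\{k,\dots,\nu_1\}$ be maximal with $\ell(\mu^{(j-1)})=\ell(\mu^{(k-1)})$.
\item Then $\ell(\mu^{(j)})<\ell(\mu^{(j-1)})$; when $j=\nu_1$ this holds because $\ell(\mu^{(\nu_1)})=\ell(\mu)\le n$.
\item Hence the last row of $\mu^{(j-1)}$ lies entirely in the vertical strip $J_j$. It is therefore the single cell $(\ell(\mu^{(j-1)}),1)$, which is the bottom cell of $J_j$ and has $T$-value $\nu^t_j$.
\item The Remark after Definition \ref{def:lrs} gives $\nu^t_j\ge 2(\ell(\mu^{(k-1)})-n)$.
\item Finally $Q[k]=\nu^t_k\ge\nu^t_j$, since $\nu^t$ is a partition.
\end{enumerate}

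Separately, your (R5) paragraph is only a sketch. Semistandardness and the row-by-row position of the strips do not by themselves force the $s$-th cell of $J_{k+1}$ to lie weakly below that of $J_k$. You also need the fact that the peeling always takes the rightmost available occurrence of each label. The paper likewise just asserts (R5) from the LR property, so this is the lesser issue.
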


\begin{proof}
We have to prove that $Q$ satisfies $(R4)$,
$Q[r] \ge 2(\ell(\mu^{(r-1)})- n),$ for each $r=1,\dots,\nu_1$.

 Let $r>0$,  $n< i\le \ell(\mu^{(r-1)})$ and  $(i,j)$ a cell  in  the vertical strip $J_r$ such that $T(i,j)=2k+1$, for some $k>0$. Since $T$ is an LR-Sundaram tableau  it implies $n+k\ge i$.
 One then has $Q(i,j)=r$ and $Q_{\le i}[r]=2k+1$ and necessarily
$$ Q_{\le i}[r]=2k+1> 2(i-n)\Leftrightarrow Q_{\le i}[r]=2k+1\ge 2(i-n)+1$$

Otherwise one obtains
\begin{align} 2k+1\le 2(i-n)\Leftrightarrow 2k+2n\le 2i-1\Leftrightarrow k+n\le i-1/2\Rightarrow k+n<i
\end{align}
A contradiction with the Sundaram condition.

If $T(i+1,j')=2(k+1)$ for some $j'\le j$ and the cell $(i+1,j') \in J_r$ then $Q(i+1,j')=r$ and
$$Q_{\le i+1}[r]=2(k+1)=2k+2\ge  2(i-n)+1+1=2(i+1-n)$$

 \medskip
 By induction on $\nu_1$. If $\nu_1=1$, then $\nu=(1^m, 0^{2n-m})$ with $m$ even. %let $T^1$ obtained from $T$ by erasing all strings $J_1,\dots, J_{\nu_1-1}$.
 Then
$T$ is the vertical strip $\lambda/\mu$ filled with $\{1,\dots,m$ and $Q[1]=m$. If $\ell(\lambda)- n\le 0$ there is nothing to prove, $Q[1]=m>2(\ell(\lambda)-n)$. Otherwise we have to check the $\ell(\lambda)-n$ entries $T(n+1,1)=m-(\ell(\lambda)-n)+1,\dots, T(\ell(\lambda),1)=m$.

If $\ell(\lambda)-n$ is even the entries form a sequence of $\frac{\ell(\lambda)-n}{2}$ pairs of odd and even pairs in consecutive rows in the same column and this case has been already studied above. In particular $Q[1]=Q_{\le \ell(\lambda)}[1]=m>2(\ell(\lambda)-n)$

If $\ell(\lambda)-n$ is odd we have just to check $T(n+1,1)=even\le m$ because the remaining  $\ell(\lambda)-n-1$ entries were already studied above. In particular $Q[1]=Q_{\le \ell(\lambda)}[1]=m>2(\ell(\lambda)-n)$.

Indeed, $Q_{\le n+1}[1]=T(n+1,1)=even\ge 2(n+1-m)=2$

For $\nu_1>1$, let $T^{(1)}$ be the LRS tableau of shape $\mu^{(1)}$ obtained from $T$ by suppressing the string $J_1$. Then $T^1\in LRS(\mu^1/\mu)$, of weight  the even partition $\nu^{(1)}=\nu-(1^{\ell(\nu)},0^{2n-\ell(\nu)})$, that is, its columns are still even and
its conjugate partition is $(\nu'_2,\dots,\nu'_{\nu_1})$ and with strings $J_2,\dots J_{\nu_1}$. By induction one has

$$Q[r] \ge 2(\ell(\mu^{(r-1)})- n), \mbox{ for each $r=2,\dots,\nu_1$}$$
and for each $r=2,\dots,\nu_1$
$$Q_{\le i}[r] \ge 2(i- n), \mbox{ for $1\le i \le \ell(\mu^{(r-1)})$}$$

On the other hand because $T$ is an LR tableau

$$Q_{\le r}[k + 1] \le Q_{\le r}[k], \mbox{ for $r\ge 1, k>0$}.$$

We want to show that

$$Q[1] \ge 2(\ell(\mu^{(0)})- n)=2(\ell(\lambda)- n)$$

  One has $\ell(\lambda)\ge \ell(\mu^{(1)})$ and $\mu^{(1)}\subseteq \lambda$, henceforth
  $$ Q_{\le \ell(\mu^{(1)})}[1]\ge Q_{\le \ell(\mu^{(1)})}[2]\ge  2(\ell(\mu^{(1)})- n)$$

  If $\ell(\lambda)= \ell(\mu^{(1)})$, it is done. Otherwise, $\lambda=(\mu^{(1)},1^{\ell(\lambda)- \ell(\mu^{(1)})})$ (we  are omitting the zero entries in $\lambda$ and $\mu^{(1)}$). It remains to check the $m=\ell(\lambda)-\ell(\mu^{(1)})(\Leftrightarrow \ell(\lambda)=\ell(\mu^{(1)})+m)$ entries of $T$,

  $T(\ell(\mu^{(1)})+1,1)=Q_{\le \ell(\mu^{(1)})}[1]+1,\dots, T(\ell(\lambda),1)=Q_{\le \ell(\mu^{(1)})}[1]+m=Q[1]$.

  As in the case $\nu_1=1$ it remains to study the case $m=odd\Leftrightarrow T(\ell(\mu^{(1)})+1,1)=Q_{\le \ell(\mu^{(1)})}[1]+1=even\Leftrightarrow Q_{\le \ell(\mu^{(1)})}[1]=odd$.
  If  $Q_{\le \ell(\mu^{(1)})}[1]=odd$, then

  $$ Q_{\le \ell(\mu^{(1)})}[1]=odd\ge Q_{\le \ell(\mu^{(1)})}[2]\ge  2(\ell(\mu^{(1)})- n)=even\Rightarrow Q_{\le \ell(\mu^{(1)})}[1]+1\ge 2(\ell(\mu^{(1)})+1- n)$$

  Hence
  $$Q_{\le \ell(\mu^{(1)})}[1]\ge 2(\ell(\mu^{(1)})- n)+1\Rightarrow Q_{\le \ell(\mu^{(1)})}[1]+1\ge 2(\ell(\mu^{(1)})- n)+2=2((\ell(\mu^{(1)})+1- n)$$

  Finally, since the cell $(\ell(\mu^{(1)})+1,1) \in J_1$ it means that $Q(\ell(\mu^{(1)})+1,1)=1$ and
  $$Q_{\le \ell(\mu^{(1)})+1}[1]=Q_{\le \ell(\mu^{(1)})}[1]+1\ge 2((\ell(\mu^{(1)})+1- n)$$
  as desired. (We have assumed that $n\le \ell(\mu^{(1)})$ which is the worst case.)

\end{proof}

\begin{ex}\label{ex:reclrs} Let $n=3$, $\lambda=(4,3,2,2,1,0)$, $\mu=(3,1,0)$, $\nu=(3,3,1,1,0^2)$, $\nu^t=(4,2,2)$ and $T\in LRS(\lambda/\mu, \nu)$ and $Q\in \widetilde Rec_{6}(\lambda/\mu,\nu^t)$
\begin{align} Q=\YT{0.15in}{}{
 {{},{},{},{1}},
 {{},{2},{1}},
 {{3},{2}},
 {{3},{1}},
 {{1}},
}\in \widetilde Rec_{6}(\lambda/\mu,\nu^t)\underset{\sim}{\overset{\lozenge}\longleftarrow} T=\YT{0.15in}{}{
 {{},{},{},{1}},
 {{},{1},{2}},
 {{1},{2}},
 {{2},{3}},
 {{4}},
}\in LRS_6(\lambda/\mu,\nu)
\end{align}
with $\rm{wt}(Q)=(4,2,2,0^3)=\nu^t=(Q[1],Q[2],Q[3])$ and the conjugate partition is the even partition $\nu=(3,3,1,1,0^2)$.
The tableaux $Q$ and $T$  are both defined by the sequence of nested partitions where $N=3=\nu_1$,

$$\mu^{(0)}=\lambda\supset_{vert} \mu^{(1)}=(3,2,2,1,0^2)\supset_{vert} \mu^{(2)}=(3,1,1,1,0^3)\supset_{vert}\mu^{(3)} =\mu$$

%$$\mu^{(0)}=\lambda\supseteq \mu^{(1)}\supseteq \cdots\supseteq\mu^{(\nu_1)} =\mu$$

 %Let $\mu^{(0)}=\lambda$. For $k=1,\dots,\nu_1$,
 %define $\mu^{(k)}$ the shape obtained  by erasing in row $i$ of $T$ restricted to the shape  $\mu^{(k-1)}$, the rightmost cell filled with $i$ %if any.
This decomposes $\lambda/\mu$ into vertical strips $$J_1:=\mu^{(0)}/\mu^{(1)},J_2:= \mu^{(1)}/\mu^{(2)}, J_3=\mu^{(2)}/\mu$$

The size of the vertical strip $J_k=\mu^{(k-1)}/\mu^{(k)}$ is $Q[k]$,  $|J_k|=|\mu^{(k-1)}/\mu^{(k)}|=|\mu^{(k-1)}|-|\mu^{(k)}|=Q[k]$, $k=1,2,3$.
The vertical strip $J_i$ in $T$ is filled with the word $12\dots \nu_i^t=Q[i]$  and in $Q$ with $\nu_i^t=Q[i]$ $i$'s.
%We write $J_{k}\ge J_{k+1}$ to mean
One has $Q[k]\ge Q[k+1]$,
 condition $(R5)$ is verified $$Q_{\le r}[k + 1] \le Q_{\le r}[k], \mbox{ for $r, k>0$}.$$
%We write $J_{k}\ge_{sp} J_{k+1}$ if $J_k\ge J_{k+1}$ and in
as well as  condition $(R4)$ is verified.
The  filling of $J_3$, $J_2$ and $J_1$ in $T$  are respectively the column words $12$,$12$ and $12345$.
 Furthermore concatenating the reverse column words of the strings $J_3,J_2, J_1$ in $T$ gives the Yamanouchi tableau
 $$Y(\nu)=\YT{0.15in}{}{
 {{1},1,1},
 {{2},2,2},
 {{3}},
 {{4}},
}
$$
\end{ex}
\noindent the rectification of the word of $T$.

\subsection{The restriction of the LR orthogonal transpose linear time map to LR-Sundaram tableaux}\label{sec:lozenge}
In \cite{azkoma25} a complete account has been provided on the LR-symmetries under the action of the dihedral group   $\mathbb Z_2\times { D}_3$. In particular, the LR-symmetry maps are exhibited  on Littlewood-Richardson  tableaux as well as on the companion pairs. The LR symmetry maps restricted to  LR-Sundaram tableaux  do not  need to return  LR-Sundaram tableaux for the same $n$ but its characterization  play an important role on  branching rules from $Gl_{2n}(\C)$ to $Sp_{2n}(\C)$.  Such restriction, in the case of the fundamental symmetry map, $c_{\mu,\nu,\lambda}=c_{\nu,\mu,\lambda}$, has been studied by Kumar-Torres in \cite{sathishtorres} via hives   which in turn reduces to the restriction of right and left companions as discussed  in \cite{az18v5,az26}. The restriction of the orthogonal-transpose  linear map $\blacklozenge$ \cite{acm09,azkoma25} exhibiting $c_{\mu,\nu,\lambda}=c_{\lambda^t,\nu^t,\mu^t}$ for LR tableaux, to LR-Sundaram tableaux has not been studied before but surfaces in the branching rule defined by the quantum Littlewood-Richardson map.
It turns out that the $Rec_{2n}(\lambda/\mu)=\widetilde Rec_{2n}(\lambda/\mu$ characterizes explicitly the restriuction of the LR orthogonal transpose  symmetry map on LR-Sundaram tableaux. For convenience let us consider our partitions $\mu,\nu,\lambda$ inside of a fixed rectangle and define $\check{\lambda}$ the complement of  the Young diagram of $\lambda$ inside the given rectangle. With this convention $LRS_{2n}(\mu,\nu,\lambda)$ denotes the set of LR-Sundaram tableaux of shape ${\lambda^\vee}/\mu$ and weight $\nu$ and similarly for $\widetilde Rec_{2n}(\mu,\nu^t,\lambda)$. See Example \ref{ex:symmetry}.

 Denote the bijection in Theorem \ref{surjection} by $\lozenge$. Then
 the $LR$ orthogonal transpose symmetry map $\blacklozenge$ \cite{azkoma25} is defined by
\begin{align*}\blacklozenge:& LRS_{2n}(\mu,\nu,\lambda)&\overset{\lozenge}\longrightarrow &\widetilde Rec_{2n}(\mu,\nu^t,\lambda)&\underset{\pi\circ t}\hookrightarrow& LR({\lambda^t},\nu^t,\mu^t)&\\
&T&\mapsto& Q&\mapsto& Q^{\pi\circ t}=\blacklozenge T&
\end{align*}
where $Q^{\pi\circ t}$ means the $\pi$-rotation (transposition) followed with transposition (rotation) of $D(\lambda)$. The extra  condition $(R3)$,  Definition \ref{def:tilderec}, on $Q$ characterizes the LR tableau $\blacklozenge T$, the image of the LRS tableau $T$ by the orthogonal transpose map $\blacklozenge$.  The bijection $\lozenge$ and its inverse is so natural that  we can intertwine LR-Sundaram tableaux  with their $Q$ presentations in $\widetilde Rec_{2n}(\mu,\nu^t,\lambda)$.
For notation and further details we refer the reader to \cite{az99, acm09,azkoma25}.
We resume to Example \ref{ex:reclrs}.

\begin{ex} \label{ex:symmetry}Let $n=3$, $\lambda^\vee=(4,3,2,2,1)$, $\lambda=(3,2,2,1)$ $\mu=(3,1,0)$, $\mu^t=(2,1,1,0)$, $\nu=(3,3,1,1,0^2)$, $\nu^t=(4,2,2)$ and $T\in LRS_6(\mu, \nu,\lambda)$
\begin{align}  T=\YT{0.15in}{}{
 {{},{},{},{1}},
 {{},{1},{2},{}},
 {{1},{2},{},{}},
 {{2},{3},{},{}},
 {{4},{},{},{}},
}\in LRS_6(\mu,\nu,\lambda)\mapsto Q=\YT{0.15in}{}{
 {{},{},{},{1}},
 {{},{2},{1},{}},
 {{3},{2},{},{}},
 {{3},{1},{},{}},
 {{1},{},{},{}},
}\in \widetilde Rec_{6}(\mu,\nu^t,\lambda)\mapsto
Q^{\pi\circ t}=\YT{0.15in}{}{
 {{},{},{},{},{1}},
 {{},{},{},{1},{}},
 {{},{1},{2},2,{}},
 {{1},{3},{3},{},{}},
}
=\blacklozenge T\nonumber\\
\blacklozenge T \in LR({\lambda^t},\nu^t,\mu^t).\nonumber
\end{align}
\end{ex}

The action of the symmetry map $\blacklozenge$ restricted to $ LRS_{2n}(\mu, \nu,\lambda)$ returns tableaux in  $LR({\lambda^t},\nu^t,\mu^t)$ satisfying the following conditions on the parts of  $\nu^t$:  $\nu_k^t \in 2\Z$, and $\nu_k^t\ge 2(\ell(\mu^{(k-1)})-n)$, for all $k\ge 1$,
where $\mu^{(k-1)}$ is the partition such that
$$D(\mu^{(k-1)}) = D(\mu) \cup\{(i, j) \in D(\lambda/\mu) | Q(i, j) \ge  k\}.$$

\section{The containment of the set $\widetilde Rec_{2n}(\lambda/\mu)$ into the set of recording tableaux $Rec_{2n}(\lambda/\mu) $}\label{sec:containmentmain}

\subsection{Symplectic tableaux}
%We fix $\lambda\in Par_{\le 2n}$.
From now on we fix $n\in\mathbb{N}$.
\begin{defi}\cite{king76} \label{def:symp}A semistandard tableau $G \in SST_{2n}(\lambda)$ is said to
be symplectic if
$$G(k, 1) \ge 2k-1, \mbox{  for all $  k \in  [ \ell(\lambda)]$}.$$
Let $SpT_{2n}(\lambda)$ denote the set of all symplectic tableaux of shape $\gamma$ on the alphabet $[2n]$.
\end{defi}

\begin{prop} \label{prop:symplectic1}  %Let  $\gamma$ be a partition with length $\ell(\gamma)\le 2n-1$.
Let $G \in SST_{2n}(\lambda)$.
\begin{enumerate}
\item\cite{watanabe} If $SpT_{2n}(\lambda)\neq\emptyset$ then $\ell(\lambda)\le n$.
\item If $G=(a_1,\dots,a_l)\in SpT_{2n}(\varpi_l)$ then $l\le n$ and
$l\le \lfloor\frac{a_l+1}{2}\rfloor$.
\item \cite{watanabe}If $G$ is not symplectic, then there exists a unique
$i \in[2, 2n]$ such that
\begin{align}G(i, 1) < 2i -1 \mbox{ and } G(k, 1) \ge 2k -1  \mbox{ for all $k \in [1, i -1 ]$}.\label{G1}\end{align}
Moreover, we have
\begin{align}G(i - 1, 1) = 2i - 3=2(i-1)-1  \mbox{  and $G(i, 1) = 2i -2=2(i-1)$ }.\label{G2}\end{align}
\end{enumerate}
\end{prop}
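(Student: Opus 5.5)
Each part is proved by comparing the strict increase of the first column of $G$ with the lower bounds $2k-1$ of Definition \ref{def:symp} and the upper bound $2n$ coming from the alphabet $[2n]$. No earlier result of the paper is needed beyond the definitions.

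For $(1)$, I would take $G\in SpT_{2n}(\lambda)$ and $l=\ell(\lambda)$, and apply the symplectic condition to the bottom cell of the first column: $2l-1\le G(l,1)\le 2n$. Since $l$ is an integer, $2l-1\le 2n$ forces $2l\le 2n$, that is, $l\le n$.

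Part $(2)$ is the same argument for a single column $G=(a_1<\dots<a_l)$. The condition $G(l,1)=a_l\ge 2l-1$ gives $l\le \frac{a_l+1}{2}$. Since $l$ is an integer, this yields $l\le\lfloor\frac{a_l+1}{2}\rfloor$. Combined with $a_l\le 2n$, it gives $l\le\lfloor\frac{2n+1}{2}\rfloor=n$.

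For $(3)$, I would take $i$ to be the \emph{minimal} row index with $G(i,1)<2i-1$. Such an index exists because $G$ is not symplectic. Every row index satisfies $i\le\ell(\lambda)\le 2n$. Moreover $i\ge 2$, because $G(1,1)\ge 1=2\cdot 1-1$ always holds. By minimality, $G(k,1)\ge 2k-1$ for $k\in[1,i-1]$, which is \eqref{G1}. This also gives uniqueness: an index satisfying \eqref{G1} is exactly the first failure of the symplectic condition, so it is unique.

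For \eqref{G2}, I would squeeze $G(i-1,1)$ and $G(i,1)$ between two bounds. Minimality gives $G(i-1,1)\ge 2(i-1)-1=2i-3$. Failure at $i$ gives $G(i,1)\le 2i-2$. Strict increase down the first column of a semistandard tableau gives $G(i-1,1)<G(i,1)$. Together,
$$2i-3\le G(i-1,1)<G(i,1)\le 2i-2,$$
and the only integers satisfying this are $G(i-1,1)=2i-3$ and $G(i,1)=2i-2$.

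I do not expect a real obstacle. The only points needing care are the integrality steps: the floor in $(2)$, and using the strict column inequality to pin both values in $(3)$.
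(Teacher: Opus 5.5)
Your proposal is correct. For (2) it matches the paper's argument, which likewise reads $l\le\lfloor\frac{a_l+1}{2}\rfloor$ off $a_l\ge 2l-1$; the paper gets $l\le n$ by invoking (1), while you get it directly from $a_l\le 2n$, which is the same inequality. The paper gives no proof of (1) and (3) and cites Watanabe instead; your bottom-cell bound $2l-1\le G(l,1)\le 2n$ for (1), and your first-failure argument with the squeeze $2i-3\le G(i-1,1)<G(i,1)\le 2i-2$ for (3), supply valid self-contained proofs of both.
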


\begin{proof} (2) follows from $(1)$ and since $G$ is symplectic, $a_l\ge 2l-1\Leftrightarrow l\le \frac{a_l+1}{2}\Rightarrow l\le \lfloor\frac{a_l+1}{2}\rfloor$.
\end{proof}

Conditions \eqref{G1} and \eqref{G2} say that the first symplectic fail needs to be checked  in the first column of $G$ among consecutive integer entries consisting of an odd number followed with an even number, and if the fail occurs it happens for the first time in a such even entry.
In other words, if the first column has no consecutive integers consisting of an odd number followed with an even number it is symplectic.

\begin{cor}\label{symplecticcolumn} The following holds:
\begin{enumerate}
\item[(a)] For $0\le l\le n$, $Sp_{2n}(\varpi_l)\neq \emptyset$. Namely,  $Sp_{2n}(())=\{()\}$, and,  for $1\le l\le n$, $G=(1,3, \dots,2l-1)\in Sp_{2n}(\varpi_l)$ or $H=(2,4,\dots, 2l)\in Sp_{2n}(\varpi_l)$. In particular, $Sp_{2n}(\varpi_1)=SST_{2n}(\varpi_1)$.
\item [(b)]  $SpT_{2n}(\lambda)\neq\emptyset$ if and only if  $\ell(\lambda)\le n$.

\item [(c)] If $G'$ is obtained from $G\in  Sp_{2n}(\varpi_l)$ by suppressing
$0\le t_0\le l$ entries then $G'\in Sp_{2n}(\varpi_{l-t_0})$ and the suppressed part $G''\in Sp_{2n}(\varpi_{t_0})$.

\item [(d)] Let $\mu\in Par_{\le n}$. If $G'$ is obtained from $G\in  Sp_{2n}(\mu)$ by reverse column insertion, then $G'$ is still symplectic
 and $G'\in Sp_{2n}(\mu')$ for some partition $\mu'\subset_{vert}\mu$.
 \item [(e)]
  Let $\lambda\in Par_\le n$,
 $$SpT_{2n}(\lambda)=\{S\in SST_{2n}(\lambda)| (S(1,1),\cdots,S(\ell(\lambda),1)\in SpT_{2n}(\varpi_{\ell(\lambda)}) \}.$$
 \end{enumerate}
\end{cor}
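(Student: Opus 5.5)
The plan is to prove the five items of Corollary \ref{symplecticcolumn} in turn. Each reduces to the defining condition $G(k,1)\ge 2k-1$ in Definition \ref{def:symp} together with Proposition \ref{prop:symplectic1}. The one item that needs a real argument is (d).

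For (a), I would check the two columns directly.
\begin{itemize}
\item For $G=(1,3,\dots,2l-1)$ we have $G(k,1)=2k-1$.
\item For $H=(2,4,\dots,2l)$ we have $H(k,1)=2k\ge 2k-1$.
\end{itemize}
Both are strictly increasing columns with entries in $[2n]$, provided $l\le n$. A single box $a\ge 1$ is always symplectic, which gives $SpT_{2n}(\varpi_1)=SST_{2n}(\varpi_1)$. The empty column is symplectic vacuously.

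For (b), one direction is Proposition \ref{prop:symplectic1}(1). For the converse, let $\ell(\lambda)\le n$ and fill row $k$ of $D(\lambda)$ entirely with $2k-1$. This is semistandard, since rows are constant and columns strictly increase, and it is symplectic by (a).

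For (e), the symplectic condition only involves the first column of $S$. So $S$ is symplectic exactly when its first column, read as an element of $SST_{2n}(\varpi_{\ell(\lambda)})$, is symplectic.

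For (c), I would argue by monotonicity of positions. Write $G=(a_1<\dots<a_l)$ with $a_k\ge 2k-1$.
\begin{itemize}
\item After suppressing $t_0$ entries, the $k$-th surviving entry is $a_j$ for some $j\ge k$. Then $a_j\ge 2j-1\ge 2k-1$, so $G'$ is symplectic.
\item The same holds for the suppressed part $G''$: its $k$-th entry is some $a_j$ with $j\ge k$.
\item Both are increasing columns of lengths $l-t_0$ and $t_0$, so $G'\in SpT_{2n}(\varpi_{l-t_0})$ and $G''\in SpT_{2n}(\varpi_{t_0})$.
\end{itemize}

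For (d), a single reverse column insertion into $G$ changes the first column in one of two ways. If the route starts in the first column, the bottom box is deleted, and symplecticity persists by (c). Otherwise the route ends by bumping out some entry $G(i,1)=b'$ and replacing it with an entry $y\ge b'$ coming from column $2$. The rule is that the largest entry $\le y$ is the one bumped, so the first column stays strictly increasing. All other entries of the first column are unchanged. Since $y\ge b'\ge 2i-1$, the condition $G(k,1)\ge 2k-1$ still holds at every row. The shape loses one corner box, so it stays a partition. Iterating over the boxes of a vertical strip, as in \eqref{reverse}, removes a vertical strip and gives $\mu'\subset_{vert}\mu$. Finally $\ell(\mu')\le\ell(\mu)\le n$.

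The main point of care is (d). I need to confirm that the entry entering the first column is at least the entry it replaces, which follows from the bumping rule "largest entry $\le y$". I also need to confirm that the position $i$ of the replaced entry is unchanged, so that the inequality $2i-1$ is compared at the same row.
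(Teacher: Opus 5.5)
Your proposal is correct and follows essentially the same route as the paper. Items (a)--(c) and (e) are direct checks of the first-column condition $G(k,1)\ge 2k-1$. For (d), the paper likewise observes that a reverse bumping route either deletes the bottom box of the first column or ends by replacing a first-column entry, in place, with a weakly larger one, so the condition persists. Your version is only more explicit, for instance the index argument $a_j\ge 2j-1\ge 2k-1$ in (c) and the check that entries stay in $[2n]$ when $l\le n$.
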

\begin{proof}  $(a)$
 The $i$th entry of $G$ is $2i-1$, and the $i$th entry of $H$ is $2i>2i-1$, for $i=1,\dots,l$.

 $(b)$ The "only if part" follows from Proposition \ref{prop:symplectic1}.

 The "if part" is a consequence of $(a)$. Let $\ell(\lambda)=l$ and let $T$ with first  column $G$ or $H$ and the remain columns of $T$  added according to the semistandard-ness and with entries not exceeding $2n$. Then $T\in Sp_{2n}(T)$.

 $(c)$ Straightforward from Definition \ref{def:symp}.

 $(d)$ During the reverse bumping route a box is deleted from the bottom of a column. If the column is the first the process stops and returns a symplectic tableau with one less box. If not, the bumped entry proceeds to the column on its left and looks for the largest entry which  smaller or equal and replaces while bumping it. The procedure terminates when an entry is bumped while being replaced by an equal or larger entry. Therefore, the returned tableau $G'$ is symplectic and $G'\in Sp_{2n}(\mu')$ for some partition $\mu'\subset_{vert}\mu$.
 %If the reverse column insertion is iterated, providing the iteration goes from SW to NE,  from Remark \ref{routes},
 %The returned
 %$G'\in Sp_{2n}(\mu')$ for some partition $\mu'\subset_{vert}\mu$.
\end{proof}

\iffalse
\begin{obs} While the column $S$  formed by the bumped elements from a symplectic tableau $G$ by the reverse column insertion is symplectic because $S$ is contained in the first column of $G$, the returned tableau  by this operation does not need to be symplectic. For instance,
$$\YT{0.2in}{}{
 {{1},2},
 {{3}},
}\in Sp_4((2,1))$$
and the reverse column insertion applied to $2$ returns
$$\YT{0.2in}{}{
 {{1}},
 {{2}},
}\notin Sp_4((1,1)).$$

\end{obs}
\fi

\begin{ex} Let $G\in SST_{10}(1^6)$ be the column $(1,3,4,6,8,9)$. For the first symplectic fail we consider the pair $3,4$ where $3=2\times (3-1)-1$ and  symplectic fail occurs in the entry $4=2(3-1)$. Removing the entries $3,4$ it remains the symplectic column $(1,6,8,9)\in Sp_{10}(1^6)$.

 Let $G\in SST_{6}(1^6)$  with column $(1,2,4,5,6)$. The first fail occurs in the entry $2=2(2-1)$.  Removing the entries $1,2$ it remains the symplectic column $(4,5,6)\in Sp_6(1^3)$

 Let $G\in SST_{14}(1^{10})$ be the column $(1,3,4,5,6,7,11,12,13,14)$. The first symplectic fail occurs in the entry $4$. Removing the pair $3,4$ it remains $(1,5,6,7,11,12,13,14)$ still not symplectic, it fails in the entry $14=2(8-1)$ .
\end{ex}

We fix $\lambda\in Par_{\le 2n}$.
The Littlewood–Richardson map of type AII, ${\LRAII}^{AII}$ is the
algorithm  \cite[Section 3.1]{watanabe,watanabe25} which takes $T \in  SST_{2n}(\lambda)$ as input, and returns a pair of tableaux $(P^{II}(T),Q^{II}(T))\in SpT_{2n}(\mu)\times \widetilde Rec_{2n}(\lambda/\mu)$.

Set
\begin{align}\label{rec=rec2}Rec_{2n}(\lambda/\mu) := \{Q^{II}(T) | T \in SST_{2n}(\lambda) \mbox{ such that $sh(P^{II}(T)) = \mu $}\}.
\end{align}

An explicit description of the set $Rec_{2n}(\lambda/\mu) $ is given in [Wat25, Theorem 3.1.4 (2)],
$$Rec_{2n}(\lambda/\mu)=\widetilde Rec_{2n}(\lambda/\mu)
$$
by providing a combinatorial proof for the inclusion $Rec_{2n}(\lambda/\mu)\subset \widetilde Rec_{2n}(\lambda/\mu)$ while the  inclusion
$Rec_{2n}(\lambda/\mu)\supset \widetilde Rec_{2n}(\lambda/\mu)$ is concluded via representation theory.
In turn it is shown in \cite{watanabe} that $$\widetilde Rec_{2n}(\lambda/\mu)\overset{\sim}\rightarrow LRS_{2n}$$
whose combinatorial proof for the surjectivity we have given in Theorem \ref{surjection}. Our goal now is to prove combinatorially the inclusion
$Rec_{2n}(\lambda/\mu)\supseteq \widetilde Rec_{2n}(\lambda/\mu)$. This is equivalent to provide an algorithm to compute ${{\LRAII}^{AII}}^{-1}$.

\subsection{ Surjectivity of the reduction map and  its inverse} For the reader convenience this section recalls several properties of the removal and reduction maps in \cite{watanabe}.
We fix $l \in [0, 2n]$ and $\mathbf{a} = (a_1, \dots , a_l)$ a column  in $SST_{2n}(\varpi_l)$.
We often regard $\mathbf{a}$ as a set.

The \emph{removal} \emph{subword }of $\mathbf{a}$ \cite{watanabe} is defined to be the subword $\mathrm{rem}(\mathbf{a})$ of  $\mathbf{a}$ obtained by the following recursive formula:

\begin{align}\label{removal}
\mathrm{rem}(\mathbf{a}) :=
\begin{cases}
\emptyset,& \mbox{ if } l\le 1,\\
\mathrm{rem}(a_l,\dots, a_{l-2})  (a_{l-1},a_l),& \mbox{ if } l\ge 2, a_l\in   2\mathbb{Z}, a_{l-1} = a_l-1,\\
 &\mbox{ and }
a_l < 2l -|\rem(a_1,\dots,a_{l-2})|-1,\\
\rem(a_1,\dots,a_{l-1})& \mbox{ otherwise}.
\end{cases}
\end{align}

\begin{defi}\cite{watanabe} \label{def:rem} For the column
$\mathbf{a}$, the new column $\mathrm{red}(\mathbf{a})$, reduction of $\mathbf{a}$, is defined to be the one obtained from $\mathbf{a}$ by removing the entries in
the set $\mathrm{rem}(\mathbf{a})$,
$$\redu( \mathbf{a})=\mathbf{a}\setminus \rem(\mathbf{a})\in SST_{2n}(\varpi_{l-|\rem(\mathbf{a})|}).$$
\end{defi}
In fact we shall see in Proposition \ref{prop:redu} \cite{watanabe} that $\redu( \mathbf{a})\in SpT_{2n}(\varpi_{l-|\rem(\mathbf{a})|})$ where $l-|\rem(\mathbf{a})|$ satisfy further conditions.

Some useful properties of removable entries in $\mathbf{a}$.

\begin{prop}\label{prop:removcontain}\cite[Proposition 2.5.3]{watanabe}
\begin{enumerate}
\item[(1)] If $ a_l \in \rem(\mbf{a})$, then $a_l\in 2\mathbb{Z}$.

\item[(2)] If $a_l\notin \rem(\mbf{a})$ then $ \rem(\mbf{a})= rem(a_1,\dots, a_{l-1})$.

\item[(3)] If $a_l$ is odd then $ a_l\notin \rem(a)$, and $\rem(\mbf{a})= \rem(a_1,\dots, a_{l-1})$.

\item[(4)] If $a_l \in \rem(\mbf{a})$, then $a_l\in 2\mathbb{Z}$ and $\rem(a_1,\dots, a_{l-1})= \rem(a_1,\dots, a_{l-2})$.

\item [(5)] $\rem(a_1,\dots, a_k) \subset \rem(\mbf{a})$ for all $k \in[0,l]$.

%\item [(6)] $|rem(a)|\in 2\mathbb{Z}$.
\end{enumerate}
\end{prop}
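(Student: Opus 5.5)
The plan is to argue directly from the recursive definition \eqref{removal} of $\rem(\mathbf{a})$, by strong induction on the length $l$. Throughout, $\rem(\mathbf{a})$ is viewed as a concatenation of disjoint pairs $(a_{j-1},a_j)$ with $a_j$ even and $a_{j-1}=a_j-1$. Each pair is added by the second clause, and it is always appended at the end, relative to a prefix of $\mathbf{a}$. The base cases $l\le 1$ are immediate, since then $\rem(\mathbf{a})=\emptyset$ and every item holds vacuously, or trivially for (5).

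Items (1), (2) and (4) come from reading off which clause of \eqref{removal} applies to $\mathbf{a}$. If the second clause applies, then $\rem(\mathbf{a})=\rem(a_1,\dots,a_{l-2})\,(a_{l-1},a_l)$ and $a_l\in 2\Z$. If the third clause applies, then $\rem(\mathbf{a})=\rem(a_1,\dots,a_{l-1})$, which is a set of entries with indices at most $l-1$, so $a_l\notin\rem(\mathbf{a})$. So $a_l\in\rem(\mathbf{a})$ forces the second clause, and hence $a_l$ even, which is (1). Conversely, $a_l\notin\rem(\mathbf{a})$ forces the third clause, which gives (2). Item (3) then follows from (1) and (2).

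For (4), suppose $a_l\in\rem(\mathbf{a})$. Then $a_{l-1}=a_l-1$ is odd, so applying (3) to the shorter column $(a_1,\dots,a_{l-1})$ gives $\rem(a_1,\dots,a_{l-1})=\rem(a_1,\dots,a_{l-2})$. Note that (3) for length $l-1$ is available by the induction hypothesis, or simply by the same clause analysis.

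Item (5) is proved by downward induction on $k$; it suffices to show $\rem(a_1,\dots,a_{l-1})\subset\rem(\mathbf{a})$ and then iterate along prefixes. If the third clause applies to $\mathbf{a}$, the two sets are equal. If the second clause applies, then $a_{l-1}$ is odd, and (3) for length $l-1$ gives $\rem(a_1,\dots,a_{l-1})=\rem(a_1,\dots,a_{l-2})$. This set is contained in $\rem(a_1,\dots,a_{l-2})\,(a_{l-1},a_l)=\rem(\mathbf{a})$.

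The only subtle point is the well-foundedness of the mixed recursion. The second clause calls $\rem$ on the prefix of length $l-2$, both in the output and in the threshold condition $a_l<2l-|\rem(a_1,\dots,a_{l-2})|-1$, so the induction has to be strong induction on $l$. Beyond that, the argument only uses the fact that pairs are appended at the end and that the removal word is always a subword of a prefix. I expect no real obstacle: the main care is to keep the typo ``$\rem(a_l,\dots,a_{l-2})$'' in \eqref{removal} interpreted as $\rem(a_1,\dots,a_{l-2})$.
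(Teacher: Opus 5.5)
Your proof is correct. The paper gives no argument of its own for this proposition; it quotes it directly from Watanabe's Proposition 2.5.3. So there is no in-paper proof to compare with, and your clause-by-clause reading of the recursion \eqref{removal} works as a complete, self-contained verification.

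The argument uses only two facts:
\begin{enumerate}
\item[(i)] $\rem(a_1,\dots,a_k)$ is a subword of $(a_1,\dots,a_k)$. Since the column is strictly increasing, the value $a_l$ therefore cannot occur in $\rem(a_1,\dots,a_{l-1})$.
\item[(ii)] Whenever the second clause applies, $a_{l-1}=a_l-1$ is odd. This is what lets (3), applied to the shorter column, give (4) and the one-step inclusion behind (5).
\end{enumerate}

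Two cosmetic remarks. In (2), the case $l=1$ falls under the first clause rather than the third, but both sides are then empty. Also, strong induction is needed only for the subword property and for well-definedness. Beyond that, items (1)--(4) follow directly from the case split, with no induction hypothesis.
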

Points $(2)$ and $(3)$ together in the next proposition are an alternative to  Definition \ref{def:rem} to compute $\rem(\bf a)$ and allowing its computation easily.

For each $x\in\mathbb{Z}$, set \cite{watanabe} \begin{align}\label{s} s(x)=\begin{cases}
x+1, &  \mbox{ if } x\notin 2\mathbb{Z}, \\
x-1, &  \mbox{ if }  x\in 2\mathbb{Z}.
\end{cases}
\end{align}
Indeed $s^2(x)=x$, for each $x\in \Z$. We call it the parity swapping involution which has the following properties
\begin{align}\label{minmax}
min(a_i,s(a_i))\notin 2\Z,~~min(a_i,s(a_i))-1\in 2\Z, \\
 max(a_i,s(a_i))\in 2\Z, ~~ max(a_i,s(a_i))+1\notin 2\Z.
\end{align}

\begin{prop}\cite[Proposition 4.2.2, 4.2.7, Corollary 4.2.8]{watanabe}\label{prop:rem}
\begin{enumerate}

\item[(1)] If $i\in [1,l]$ and $a_j\notin \rem(\mbf{a})$, for $j\in[i,l]$ then $\rem(\mbf{a})=\rem(a_1,\dots,a_{i-1})$.

\item[(2)] for each $i\in [1,l]$, $a_i\in \rem(\mbf{a})$ if and only if one of the following holds
\begin{enumerate}
\item[(a)] $a_i$ odd, $i<l$, $a_{i+1}=a_i+1$ and $a_i<2i-|\rem(a_1,\dots,a_{i-1})|$

\item[(b)] $a_i$ even, $i>1$, $a_{i}=a_{i-1}+1$ and $a_i<2i-|\rem(a_1,\dots,a_{i-2})|-1$
\end{enumerate}

\item [(3)]  $a_i \in \rem(\mbf{a})$ if and only if $s(a_i) \in \rem(\mbf{a})$.
Consequently, $|\rem(\mbf{a})|\in 2\mathbb{Z}$.

%\item[(4)] $0\le l-|rem(\mbf{a})|\le min(2n-l,l)$. In particular, $|rem(\mbf{a})|\ge 2(l-n)$.

\item[(4)] For $i\in[1,l]$,
\begin{enumerate}
\item[(a)] $a_i$ odd, then $|\rem(a_1,\dots,a_{i-1})|\ge 2i-a_i-1$

\item[(b)] $a_i$ even, then $|\rem(a_1,\dots,a_{i})|\ge 2i-a_i$
\end{enumerate}
\end{enumerate}

\end{prop}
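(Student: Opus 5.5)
We argue directly from the recursive definition \eqref{removal}, reading its second line as $\rem(a_1,\dots,a_{l-2})\,(a_{l-1},a_l)$, which is what the condition on $a_l$ indicates. We also use Proposition~\ref{prop:removcontain}, in particular the monotonicity (5): $\rem(a_1,\dots,a_k)\subseteq\rem(a_1,\dots,a_{k'})$ for $k\le k'$. The key structural observation is the following. Passing from the prefix $(a_1,\dots,a_{k-1})$ to $(a_1,\dots,a_k)$ either leaves $\rem$ unchanged, or sets it equal to $\rem(a_1,\dots,a_{k-2})\cup\{a_{k-1},a_k\}$, where $a_k$ is even and $a_{k-1}=a_k-1$. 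In the second case $a_{k-1}\notin\rem(a_1,\dots,a_{k-1})$, because $a_{k-1}$ is odd and, by Proposition~\ref{prop:removcontain}(1), a last entry that is removed must be even. Hence $\rem(a_1,\dots,a_{k-1})=\rem(a_1,\dots,a_{k-2})$, and the step genuinely adds the pair. So $\rem$ grows only by adjoining the pairs $\{a_{k-1},a_k\}=\{a_k,s(a_k)\}$, and the entry $a_i$ can only enter at step $k=i$ (as the even member) or at step $k=i+1$ (as the odd member).

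For (1), we apply Proposition~\ref{prop:removcontain}(2) repeatedly, peeling off $a_l,a_{l-1},\dots,a_i$ one at a time.

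For (2), by the observation and monotonicity, $a_i\in\rem(\mbf a)$ iff $a_i$ was adjoined at step $i$ or at step $i+1$.
\begin{itemize}
\item If $a_i$ is even, step $i+1$ cannot adjoin it, since that would need $a_{i+1}=a_i+1$ to be even. Step $i$ adjoins it exactly under the condition in \eqref{removal} with $l=i$, which is (b).
\item If $a_i$ is odd, step $i$ cannot adjoin it. Step $i+1$ adjoins it iff $a_{i+1}=a_i+1$ and $a_i+1<2(i+1)-|\rem(a_1,\dots,a_{i-1})|-1$, which is exactly (a). Here we use that $\rem(a_1,\dots,a_{i-1})=\rem(a_1,\dots,a_i)$ for odd $a_i$, by Proposition~\ref{prop:removcontain}(3).
\end{itemize}
Statement (3) is then immediate, since each adjoined pair is $\{x,s(x)\}$ with $x$ odd; in particular $|\rem(\mbf a)|$ is even.

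For (4) we proceed by induction on $i$, using strictness $a_{i-1}<a_i$ and the evenness of all $|\rem|$ to round bounds up. The base case $i=1$ is trivial: $a_1\ge1$ gives $2-a_1-1\le0$, and $a_1\ge2$ for $a_1$ even gives $2-a_1\le0$.

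\emph{Case (a), $a_i$ odd.}
\begin{itemize}
\item If $a_{i-1}$ is odd, then $a_{i-1}\le a_i-2$. Induction gives $|\rem(a_1,\dots,a_{i-1})|\ge|\rem(a_1,\dots,a_{i-2})|\ge 2(i-1)-a_{i-1}-1\ge 2i-a_i-3$. Since $2i-a_i-1$ and $|\rem|$ are both even, this rounds up to $\ge 2i-a_i-1$.
\item If $a_{i-1}$ is even, then $a_{i-1}\le a_i-1$, and (b) at $i-1$ gives $\ge 2(i-1)-a_{i-1}\ge 2i-a_i-1$.
\end{itemize}

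\emph{Case (b), $a_i$ even.}
\begin{itemize}
\item If $a_{i-1}\le a_i-2$, use monotonicity together with the inductive bound at $i-1$: (b) if $a_{i-1}$ is even, or (a) if $a_{i-1}$ is odd, in which case $a_{i-1}\le a_i-3$. Both give $\ge 2i-a_i$.
\item If $a_{i-1}=a_i-1$ and the pair is removed, then $|\rem(a_1,\dots,a_i)|=|\rem(a_1,\dots,a_{i-2})|+2$. By (a) at $i-1$ this is at least $2i-a_i-2+2=2i-a_i$.
\item If $a_{i-1}=a_i-1$ and the pair is not removed, the failed inequality in \eqref{removal} gives $|\rem(a_1,\dots,a_{i-2})|\ge 2i-a_i-1$. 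Parity then upgrades this to $\ge 2i-a_i$.
\end{itemize}

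The main obstacle is (2). We must check carefully that no entry can be adjoined at any step other than $i$ or $i+1$, and that the two possible adjoining mechanisms never conflict. This is exactly where the structural observation and Proposition~\ref{prop:removcontain}(1),(3),(4),(5) are used. Once (2) is settled, (3) and (4) are bookkeeping.
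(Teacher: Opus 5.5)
The paper does not prove this proposition; it quotes it from \cite{watanabe}. Your argument is a self-contained derivation from the recursive definition \eqref{removal}, and it is the right one. Its key observation is that passing from $(a_1,\dots,a_{k-1})$ to $(a_1,\dots,a_k)$ either changes nothing or adjoins exactly the pair $\{a_k-1,a_k\}=\{a_k,s(a_k)\}$. Consequently $a_i$ can only enter at step $i$ (if even) or at step $i+1$ (if odd). This gives (1), (2) and (3) correctly and quickly. The simultaneous induction for (4) is also sound in structure. All of case (4)(b) is correct, including the parity round-up in its last sub-case, where the failed inequality yields the odd bound $|\rem(a_1,\dots,a_{i-2})|\ge 2i-a_i-1$.

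One step is wrong as written. In (4)(a) with $a_{i-1}$ odd, you write $2(i-1)-a_{i-1}-1\ge 2i-a_i-3$ and then round up to $2i-a_i-1$ by parity. That inference is invalid: $2i-a_i-3$ and $2i-a_i-1$ are both even, so an even number that is at least $2i-a_i-3$ need not be at least $2i-a_i-1$. The intermediate bound is simply a miscalculation. Since $a_{i-1}$ and $a_i$ are both odd, $a_{i-1}\le a_i-2$, hence $2(i-1)-a_{i-1}-1\ge 2i-3-(a_i-2)=2i-a_i-1$ directly, with no parity argument needed. With this correction the proof is complete.

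A minor point: in (2)(a) you do not need the identity $\rem(a_1,\dots,a_{i-1})=\rem(a_1,\dots,a_i)$. The defining condition at length $i+1$ already involves $\rem(a_1,\dots,a_{(i+1)-2})=\rem(a_1,\dots,a_{i-1})$.
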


\begin{ex} %$a=(1,3,4,5,6,7,11,12,13,14)$, $rem(a)=(3,4,5,6,13,14)$ and  $red(a)=(1,7,11,12)$

Let $n=4$,

 $l=6$, $\rem(123456)=(123456)$,  $\redu(123456)=()$.

{ $l=4$, $\rem(1256)=(1,2)$ since $5>2\times 3-2$, $\redu(1256)=(56)$.}

$l=5$, $\rem(4,5,6,7,8)=(7,8)$, $\redu(4,5,6,7,8)=(4,5,6)$.

$l=5$, $\rem(1,2,3,4,8)=(1,2,3,4)$, $\redu(1,2,3,4,8)=(8)$.

$\rem(1245678)=(125678)$, $7<2\times 6-4$, $4=6-2$

$\rem(12478)=(12)$, $7\nless 2\times 4-2=6$
\end{ex}
The next corollary  follows from the definition of removal subword in \eqref{removal} and item $(2)$ in the previous proposition.

\medskip

\begin{cor} \label{empty0} For $l \in [0, 2n]$,

\begin{enumerate}
\item[(1)] $l$ even $\Rightarrow \rem(1,2,\dots,l)=(1,2,\dots,l)=\redu(1,2,\dots,l)=\emptyset$.

\item[(2)] $l$ odd $\Rightarrow \rem(1,2,\dots,l)=\rem(1,2,\dots,l-1)= (1,2,\dots,l-1)\Rightarrow \redu(1,2,\dots,l)=\{l\}$.

\item[(3)] $\rem(\mbf{a})=\mbf{a}$ if and only if $l$ is even and $\mbf{a}=(1,2,\dots,l)$.

\item[(4)] $\redu(\mathbf{a})=\emptyset$ if and only if $l$ is even and $\mathbf{a}=(1,2,\dots,l)$.

\end{enumerate}
\end{cor}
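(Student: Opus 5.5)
The plan is to argue directly from the recursive definition \eqref{removal} of $\rem$, together with Proposition \ref{prop:removcontain} and Proposition \ref{prop:rem}, by induction on $l$. Items (1) and (2) are computations on the specific column $(1,2,\dots,l)$. Item (3) is a rigidity statement that forces $\mathbf{a}=(1,\dots,l)$. Item (4) is just a restatement of (3) via Definition \ref{def:rem}.

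For (1), I induct on even $l$; the case $l=0$ is trivial. For even $l\ge 2$, write $\mathbf{a}=(1,\dots,l)$. Then $a_l=l\in 2\Z$ and $a_{l-1}=l-1=a_l-1$. By induction $|\rem(1,\dots,l-2)|=l-2$, so the remaining condition in \eqref{removal} reads
$$l<2l-(l-2)-1=l+1,$$
which holds. Hence $\rem(1,\dots,l)=\rem(1,\dots,l-2)(l-1,l)=(1,\dots,l)$, and therefore $\redu(1,\dots,l)=\emptyset$.

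For (2), with $l$ odd the last entry $a_l=l$ is odd. Proposition \ref{prop:removcontain}(3) then gives $\rem(1,\dots,l)=\rem(1,\dots,l-1)$, which equals $(1,\dots,l-1)$ by (1). So $\redu(1,\dots,l)=\{l\}$.

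For (3), the "if" direction is (1). For "only if", suppose $\rem(\mathbf{a})=\mathbf{a}$. Proposition \ref{prop:rem}(3) gives $|\rem(\mathbf{a})|\in 2\Z$, so $l$ is even. By Proposition \ref{prop:rem}(2), every removed entry pairs with its neighbour under the involution $s$. Since all entries are removed and the column strictly increases, the entries split into consecutive pairs $(a_{2k-1},a_{2k})$ with $a_{2k-1}$ odd and $a_{2k}=a_{2k-1}+1$.

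I then show $a_{2k-1}=2k-1$ by induction on $k$. Assume $a_i=i$ for $i\le 2k-2$, so $|\rem(a_1,\dots,a_{2k-2})|=2k-2$ by (1) and Proposition \ref{prop:removcontain}(5). Condition (2)(a) of Proposition \ref{prop:rem} gives
$$a_{2k-1}<2(2k-1)-(2k-2)=2k.$$
Strict increase gives $a_{2k-1}>a_{2k-2}=2k-2$, hence $a_{2k-1}=2k-1$ and $a_{2k}=2k$.

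The step I expect to need the most care is justifying the pairing structure in (3). One must check that a removed odd $a_i$ is always matched with $a_{i+1}$ and not with $a_{i-1}$, so the pairs really are $(a_{2k-1},a_{2k})$. This follows from Proposition \ref{prop:rem}(2): an odd removed entry is always case (a), and an even removed entry is always case (b). Reading the column from the top then forces the pairs to start at odd positions.

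Finally, (4) follows immediately: $\redu(\mathbf{a})=\mathbf{a}\setminus\rem(\mathbf{a})$ is empty exactly when $\rem(\mathbf{a})=\mathbf{a}$, so (3) applies.
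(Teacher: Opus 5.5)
Your proof is correct and takes essentially the same route as the paper. For the ``only if'' direction of (3), the paper also runs an induction on consecutive pairs using Proposition~\ref{prop:rem}(2): it excludes an even removed entry after $a_k=k$ by the same parity argument, and bounds the next odd entry by $2(k+1)-k=k+2$, exactly as you do. You additionally write out (1), (2) and (4) explicitly, which the paper leaves as immediate from \eqref{removal} and Proposition~\ref{prop:rem}(2).
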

\begin{proof} "Only if part of (3)". For $l=0$, there is nothing to prove, $\mbf{a}=\emptyset$ and $rem(\emptyset)=\emptyset$. If $l=1$, it follows from \eqref{removal}, $rem(a)=\emptyset$. Let $l\ge 2$, and $\mbf{a}=(a_1,\dots,a_l)$. Let us prove
that $a_1,a_2\in rem(a) \Leftrightarrow a_1=1$ and $a_2=2$. From Proposition \ref{prop:rem}, (2),

$$a_1\in \rem(a)\Leftrightarrow 1\le a_1 \mbox{ odd},  a_{2}=a_1+1 \mbox{  and } a_1<2-|\rem(\emptyset)|=2$$

This implies  $a_1=1$ and $a_2=2$

$$a_2=2\in \rem(a)\Leftrightarrow   2=a_{2}=a_1+1=1+1 \mbox{  even and } 2<2\time 2-|\rem(\emptyset)|-1=4-1=3$$
%Therefore $rem(a_1,a_2)=(a_1,a_2)\Leftrightarrow a=(1,2)$

%By induction assume that $a_1,\dots,a_k\in
%Let $l= k\ge 2$, $a=(a_1,\dots,a_k)$ and suppose  the claim true for $k$, that is $rem(a_1,\dots,a_k)=a$ if and only if $k$ is even and $a=(1,2,\dots,k)$. Let us prove for $k+1$ and $k+2$ and consider $a=(a_1,\dots,a_k,a_{k+1},a_{k+2})$.

By induction assume $a_1,\dots, a_k\in \rem(a_1,\dots,a_k,\dots, a_{l})\Leftrightarrow $ $k$ even and $a_i=i$, for $i=1,\dots, k$. Then from Proposition \ref{prop:rem}, (2), $a_{k+1}\in \rem(a)\Leftrightarrow$ $a_{k+1}$ odd, $k+1<l$, $a_{k+2}=a_{k+1}+1$ and $$k=a_k~even<a_{k+1}~odd <2{k+1}-|\rem(a_1,\dots,a_{k})|=2(k+1)-k=k+2$$
(Note that $a_{k+1}$ even $\Rightarrow a_{k+1}=a_{k}+1$ odd which is absurd.)

Therefore $a_{k+1}=k+1$ odd and $a_{k+2}=k+2$ even. Also $a_{k+2}=k+2\in \rem(a)$ since $a_{k+2}$ even, $k+2>1$, $k+2=a_{k+2}=a_{k+1}+1$ and $k+2=a_{k+2}<2(k+2)-|\rem(a_1,\dots,a_{k})|-1=2(k+2)-k-1=k+3$.
\end{proof}

Some properties of the reduction and successor maps   follow. More precisely, the reduction map on columns is the injective assignment as shown in the next proposition.

\iffalse
\begin{defi}\cite{watanabe} For the column
$\mathbf{a}$, the new column $\mathrm{red}(\mathbf{a})$ is defined to be the one obtained from $\mathbf{a}$ by removing the entries in
the set $\mathrm{rem}(\mathbf{a})$,
$$\redu( \mathbf{a})=\mathbf{a}\setminus \rem(\mathbf{a}).$$
\end{defi}
\fi

\begin{prop} \label{prop:redu}\cite{watanabe,nsw} Let $l\in [0,2n]$ and  $\mathbf{a}=(a_1,\dots,a_l) \in SST_{2n}(\varpi_l)$.
Then
\begin{enumerate}

\item[(1)] $\redu(\mathbf{a})$ is symplectic.

    \item[(2)] $\redu(\mathbf{a})=\mathbf{a}$ if and only if $\mathbf{a}$ is symplectic.

\item[(3)] \cite[Proposition 4.3.6]{watanabe} The reduction map $\redu$ on $SST_{2n}(\varpi_l)$ is an injective assignment

\begin{align}\redu:SST_{2n}(\varpi_l)&\rightarrow\bigsqcup_{\begin{smallmatrix} 0\le t\le min\{l,2n-l\}\\
l-t\in 2\mathbb{Z}
\end{smallmatrix}}SpT_{2n}(\varpi_t)\nonumber\\
\mathbf{a}&\mapsto \redu( \mathbf{a})=\mathbf{a}\setminus \rem(\mathbf{a}).
\label{redumap}
\end{align}
\end{enumerate}
\end{prop}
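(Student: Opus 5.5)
We plan to prove the three items in order, using only the recursive definition \eqref{removal} of $\rem$ and the properties listed in Proposition \ref{prop:removcontain} and Proposition \ref{prop:rem}.

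\textbf{Item (1).} Write $\redu(\mathbf a)=(b_1,\dots,b_t)$ and fix $k$. Let $i$ be the index with $b_k=a_i$. Since $a_i\notin\rem(\mathbf a)$, Proposition \ref{prop:removcontain} gives $\rem(a_1,\dots,a_i)=\rem(a_1,\dots,a_{i-1})$, and therefore $k=i-|\rem(a_1,\dots,a_i)|$. Now apply Proposition \ref{prop:rem}(4).
\begin{itemize}
\item If $a_i$ is odd, then $|\rem(a_1,\dots,a_{i-1})|\ge 2i-a_i-1$. Hence
\[2k-1=2i-2|\rem|-1\le 2i-|\rem|-1\le a_i .\]
\item If $a_i$ is even, then $|\rem(a_1,\dots,a_i)|\ge 2i-a_i$. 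Hence
\[2k-1\le a_i-|\rem|-1\le a_i .\]
\end{itemize}
In both cases $b_k\ge 2k-1$, so Definition \ref{def:symp} shows that $\redu(\mathbf a)$ is symplectic.

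\textbf{Item (2).} The implication ``$\Rightarrow$'' is item (1). For ``$\Leftarrow$'' we induct on $l$ using \eqref{removal}. Every prefix of a symplectic column is symplectic, so by induction its removal word is empty. The removal case at step $l$ would then require $a_l<2l-0-1=2l-1$, which contradicts $a_l\ge 2l-1$. Therefore $\rem(\mathbf a)=\emptyset$.

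\textbf{Item (3), target set.} Put $t=l-|\rem(\mathbf a)|$.
\begin{itemize}
\item $l-t$ is even by Proposition \ref{prop:rem}(3).
\item $t\le l$ trivially, and $t\le n$ by Proposition \ref{prop:symplectic1} together with item (1).
\item For $t\le 2n-l$ we need $|\rem(\mathbf a)|\ge 2(l-n)$. Apply Proposition \ref{prop:rem}(4) at $i=l$ and use $a_l\le 2n$.
  \begin{itemize}
  \item If $a_l$ is odd, then $|\rem(\mathbf a)|=|\rem(a_1,\dots,a_{l-1})|\ge 2l-2n-1$. Since $|\rem(\mathbf a)|$ is even, this improves to $2l-2n$.
  \item If $a_l$ is even, the bound $|\rem(\mathbf a)|\ge 2l-a_l\ge 2l-2n$ is immediate.
  \end{itemize}
\end{itemize}

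\textbf{Item (3), injectivity.} This is the main obstacle. Suppose $\mathbf a,\mathbf a'\in SST_{2n}(\varpi_l)$ satisfy $\redu(\mathbf a)=\redu(\mathbf a')=\mathbf b$. We induct on $l$ and compare the last entries.
\begin{itemize}
\item If neither $a_l$ nor $a'_l$ is removed, Proposition \ref{prop:removcontain}(2) shows both equal $b_t$. Deleting the last entry reduces to length $l-1$ with the same reduction $(b_1,\dots,b_{t-1})$.
\item If both are removed, each column ends in a removed pair $(2j-1,2j)$ by Proposition \ref{prop:rem}(2),(3). We then need to show the two values of $j$ agree, and afterwards reduce to $l-2$ via Proposition \ref{prop:removcontain}(4).
\end{itemize}
The delicate cases are the mixed case (say $a_l$ removed but $a'_l=b_t$ kept) and the equality of the two last removed pairs. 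In both, the plan is to play the removal inequality $a_l<2l-|\rem(a_1,\dots,a_{l-2})|-1$ for the removed pair against the lower bounds from Proposition \ref{prop:rem}(4) satisfied by the kept entry $b_t$. The counts $|\rem|$ on the two sides are tied to the common $t$, since both equal $l-t$. We expect this to force a contradiction in the mixed case.

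If that direct comparison becomes too tangled, we will instead describe $\rem(\mathbf a)$ canonically from $(\mathbf b,l)$. The removed pairs are consecutive pairs $(2j-1,2j)$ inserted into the gaps of $\mathbf b$, and their number in each gap is dictated by the slack of the symplectic inequalities of the following kept entries. This description gives an explicit left inverse, which is the route later developed in Lemma \ref{lem:reductionsurj}.
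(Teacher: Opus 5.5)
The paper gives no proof of this proposition; it quotes it from \cite{watanabe,nsw}. Your derivations of (1), (2), and of the fact that $\redu$ lands in the stated disjoint union are correct consequences of Propositions \ref{prop:removcontain} and \ref{prop:rem}.

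\textbf{The gap is injectivity, which is the real content of (3).} You set up the right induction on $l$ and settle the case where neither last entry is removed. The two remaining cases carry all the content, and you leave them as ``we expect''. These are the mixed case and the equality of the two last removed pairs. Your fallback does not close the gap either. Lemma \ref{lem:reductionsurj} and the theorems after it construct a $T$ with $\redu(T)=\mathbf a$. That is a right inverse, so it proves surjectivity. Turning it into a left inverse requires showing that every preimage equals that $T$, which is exactly the uniqueness statement you have not proved.

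\textbf{How to close it.} The inequalities you already used for the range of $\redu$ suffice. Let $t$ be the common length of $\mathbf b=\redu(\mathbf a)=\redu(\mathbf a')$. Then $|\rem(\mathbf a)|=|\rem(\mathbf a')|=l-t$ and $l+t$ is even.

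First, suppose $a_l=2j$ is removed. Then $|\rem(a_1,\dots,a_{l-2})|=l-2-t$, so the removal condition reads $2j<2l-(l-2-t)-1$, i.e.\ $2j\le l+t$.

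Second, suppose $a'_l=b_t$ is kept. Then $|\rem(a'_1,\dots,a'_{l-1})|=l-t$, and Proposition \ref{prop:rem}(4) at $i=l$ gives $b_t\ge l+t-1$.

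In the mixed case, $b_t$ occurs in $\mathbf a$ strictly above $a_{l-1}=2j-1$. Hence $b_t\le 2j-2\le l+t-2$, a contradiction.

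In the case where both last entries are removed and $l\ge 3$, apply Proposition \ref{prop:rem}(4) at $i=l-2$. Here $a_{l-2}$ is either kept, or removed and then necessarily even, and $|\rem(a_1,\dots,a_{l-2})|=l-2-t$. This gives $a_{l-2}\ge l+t-3$. Together with $2j-1>a_{l-2}$, $2j\le l+t$ and parity, this forces $2j=l+t$. For $l=2$ the equality is immediate, since $t=0$ and $2j<3$. The same argument applies to $\mathbf a'$. So both columns end in the pair $(l+t-1,l+t)$, and $\redu(a_1,\dots,a_{l-2})=\redu(a'_1,\dots,a'_{l-2})=\mathbf b$ lets the induction on $l-2$ finish.
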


\begin{defi}\cite{watanabe} Let $S \in SST_{2n}$. Let $S_1$ denote the first
column of $S$, and $S_{\ge 2}$ the rest of the tableau $S$.  The new tableau $\suc(S)$ is defined to be

\begin{align} \mathrm{suc}(S) := \redu(S_1) \bigcdot S_{\ge 2} \mbox{ the Schensted column insertion of  $\mathrm{red}({S_1})$ in $S_{\ge 2}$}.
\end{align}
In addition the \emph{successor} \emph{map} on $SST_{2n}$ is the assignment
\begin{align}SST_{2n}(\lambda)&\rightarrow \bigsqcup_{\begin{smallmatrix}
\mu\in Par_{\le 2n}
\end{smallmatrix}} SST_{2n}(\mu)\\
S&\mapsto  \suc(S)=\mathrm{red}(S_1) \bigcdot S_{\ge 2}
\end{align}
\end{defi}

In particular, if $S$ is a column $\mathrm{suc}(S)=\mathrm{red}(S)$. From \cite[Corollary 4.4.4]{watanabe}  the successor map is also injective.

\begin{prop} \cite[Proposition 4.3.6]{watanabe}, \cite{nsw} Let $T\in SST_{2n}(\lambda)$ and let $\mu$ be the shape of $\suc(T)$. Then
\begin{enumerate}
\item[(a)] $\mu\subset_{vert} \lambda$, $\lambda/\mu$ is a vertical strip.

\item[(b)] $\lambda=\mu$ if and only if $T=\suc(T)$.

\item[(c)] $T$ is symplectic if and only if $\suc(T)=T$.

\item [(d)]The $\mathrm{suc}$ map on $SST_{2n}(\lambda)$ is injective.
\end{enumerate}
\end{prop}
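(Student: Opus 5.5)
We prove the four items in the order (a), (c), (b), (d). All four reduce to the structure $\suc(T)=\redu(T_1)\bigcdot T_{\ge 2}$ together with Proposition \ref{prop:redu} on columns and the Pieri bijection $\bigcdot$.

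\emph{Item (a).} Write $T=T_1\cdot T_{\ge 2}$, where $T_1$ is the first column, of length $l=\ell(\lambda)$, and $T_{\ge2}$ has shape $\bar\lambda$. Here $\bar\lambda$ is obtained from $\lambda$ by deleting its first column. Since $\redu(T_1)$ is a column of length $t=l-|\rem(T_1)|\le l$, the Pieri rule gives that $\suc(T)$ has shape $\mu$ with $\bar\lambda\subseteq_{vert}\mu$ and $|\mu/\bar\lambda|=t$. A vertical strip added to $\bar\lambda$ places at most one box in each row. So $\mu_i\le\bar\lambda_i+1$ for every $i$, and $\mu_i=0$ whenever $\bar\lambda_i=0$ and $i>t$. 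Because $t\le l$, this gives $\mu_i\le\lambda_i$, hence $\mu\subseteq\lambda$. Moreover $\lambda_i-\mu_i\in\{0,1\}$ for every $i$, so $\lambda/\mu$ is a vertical strip.

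\emph{Item (c).} Suppose $T$ is symplectic. Then $T_1$ is symplectic by Corollary \ref{symplecticcolumn}(e), so Proposition \ref{prop:redu}(2) gives $\redu(T_1)=T_1$. Since $T$ is semistandard, $T_1(i,1)\le T_{\ge2}(i,1)$ for all $i$, and the final statement of the Pieri rule shows that $T_1\bigcdot T_{\ge2}$ is just the concatenation, which is $T$. Conversely, suppose $\suc(T)=T$. Then the shapes agree, so $|\redu(T_1)|=l$ and $\rem(T_1)=\emptyset$. Thus $\redu(T_1)=T_1$, so $T_1$ is symplectic by Proposition \ref{prop:redu}(2), and $T$ is symplectic by Corollary \ref{symplecticcolumn}(e).

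\emph{Item (b).} If $T=\suc(T)$, then the shapes coincide, so $\lambda=\mu$. For the converse, assume $\lambda=\mu$. Then $|\redu(T_1)|=|\lambda|-|\bar\lambda|=l$, so again $\rem(T_1)=\emptyset$ and $T_1$ is symplectic. By the argument of (c), this forces $\suc(T)=T$. This computation of the sizes is the only step of (b) that needs care.

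\emph{Item (d).} Suppose $\suc(T)=\suc(T')=U$, of shape $\mu$, with $T,T'\in SST_{2n}(\lambda)$. Both tableaux have the same $\bar\lambda$, so the inner shape $\bar\lambda$ of the vertical strip $\mu/\bar\lambda$ is determined. Reverse Pieri insertion \eqref{reverse} along $\mu/\bar\lambda$ then recovers the pair $(\redu(T_1),T_{\ge2})$ uniquely from $U$. Hence $T_{\ge2}=T'_{\ge2}$ and $\redu(T_1)=\redu(T'_1)$. The two columns $T_1,T'_1$ both have length $l$, so the injectivity of $\redu$ on $SST_{2n}(\varpi_l)$ (Proposition \ref{prop:redu}(3)) gives $T_1=T'_1$, and therefore $T=T'$.

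The main obstacle is in (d): we must know that $\bar\lambda$ is recoverable from $U$ and $\lambda$ alone, so that the reverse insertion is well defined. This holds because $\lambda$ is fixed in the domain. The injectivity of $\redu$ itself is imported from \cite[Proposition 4.3.6]{watanabe}.
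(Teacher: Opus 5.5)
The paper does not prove this proposition; it imports it from \cite[Proposition 4.3.6]{watanabe} and \cite{nsw}, so your argument is judged on its own. Items (b), (c) and (d) are correct. In (b) and (c), the size count $|\suc(T)|=|\lambda|-l+t$ forces $\rem(T_1)=\emptyset$, after which Proposition~\ref{prop:redu}(2) and the concatenation case of the Pieri rule finish the argument. In (d), you correctly use that $\bigcdot$ is invertible once the inner shape $\bar\lambda$ is fixed, together with the injectivity of $\redu$ on $SST_{2n}(\varpi_l)$.

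The gap is in (a). The claim ``$\mu_i=0$ whenever $\bar\lambda_i=0$ and $i>t$'' does not follow from the Pieri rule. The boxes of $\mu/\bar\lambda$ below row $\ell(\bar\lambda)$ all lie in the first column, in rows $\ell(\bar\lambda)+1,\ell(\bar\lambda)+2,\dots$. So the Pieri rule only gives $\ell(\mu)\le\ell(\bar\lambda)+t$, and this can exceed $l$. For instance, column-inserting the one-box column $(3)$ into the column $(1,2)$ of shape $\bar\lambda=(1,1)$ gives shape $(1,1,1)\not\subseteq(2,2)=\lambda$, even though $t=1\le l=2$.

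So the Pieri rule plus $t\le l$ cannot give $\mu\subseteq\lambda$. What is missing is that the inserted letters are entries of $T_1$, and $T_1$ sits weakly to the left of $T_{\ge 2}$ row by row. With this, either of the following closes the gap.
\begin{itemize}
\item \emph{Schensted's theorem.} $\suc(T)$ is the insertion tableau of the word obtained from a reading word of $T=T_1\bigcdot T_{\ge2}$ by deleting the letters of $\rem(T_1)$. The number of rows equals the length of a longest strictly decreasing subsequence, and deleting letters cannot lengthen that, so $\ell(\mu)\le\ell(\lambda)=l$.
\item \emph{Direct induction.} Write $T_1=(a_1<\dots<a_l)$ and $\redu(T_1)=(a_{i_1}<\dots<a_{i_t})$. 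Check that the current first column $(e_1<\dots<e_m)$ always satisfies $m\le l$ and $e_r\ge a_r$. Indeed, $a_{i_j}$ lands in row $p=1+\#\{r:e_r<a_{i_j}\}$. Since $e_r<a_{i_j}$ forces $a_r<a_{i_j}$, i.e.\ $r<i_j$, we get $p\le i_j\le l$ and $a_{i_j}\ge a_p$.
\end{itemize}
Combined with $\bar\lambda_i\le\mu_i\le\bar\lambda_i+1$, either route gives $\lambda_i-\mu_i\in\{0,1\}$ for all $i$, which is (a).
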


For $T\in SST_{2n}(\lambda)$define $\suc^k(T):=\suc(\suc^{k-1}(T))$, for $k\ge 1$. This sequence stabilizes in a finite number of iterations, that is, there is $N\ge 0$ such that $\suc^{N+1}(T)=\suc^{N}(T)$. In other words,  $\suc^{N}(T)$ is  symplectic, for some $N\ge 0$.

\begin{defi}\label{def:successors}\cite{watanabe} Let $T\in SST_{2n}(\lambda)$ and let $N$ be a nonnegative integer satisfying  $suc^{N+1}(T)=\suc^{N}(T)$.
Set $P^{AII}(T)=\suc^N(T)$ and $Q^{AII}(T)$ to be the tableau that records the process of transformations from $T$ to $P^{AII}(T)$. Let the shapes of $T$, $\suc^1(T)$, $\suc^2(T),\dots,\suc^N(T)$ be $\lambda^0=\lambda\supset_{vert}\lambda^1\supset_{vert}\lambda^2\supset_{vert}\dots\supset_{vert}\lambda^N=sh(P^{AII}(T))$ respectively, and define $ Q^{AII}(T)$ as the tableau obtained by placing the  entry $j$ in the vertical strip $\lambda^{(j-1)}/\lambda^j$ for $j=1,2,\dots,N$
\end{defi}

%For $\lambda\in Par_{\le 2n}$ and $\mu\in Par_{\le n}$, set
%\begin{align}\label{rec=rec3}Rec_{2n}(\lambda/\mu) := \{Q^{AII}(T) | T \in SST_{2n}(\lambda) \mbox{ such that $sh(P^{AII}(T)) = \mu $}\}.
%\end{align}

With the setting in the previous definition, Watanabe has shown combinatorially in \cite{watanabe} that $ Rec_{2n}(\lambda/\mu)\subset \widetilde Rec_{2n}(\lambda/\mu)$ and thus that
\begin{align} \label{lrIIsurj}
{\LRAII^{AII}}:
SST_{2n}(\lambda) &\longrightarrow
\bigsqcup_{\begin{smallmatrix}\mu\in Par_{\le n}\\
\mu\subset\lambda\end{smallmatrix}}SpT_{2n}(\mu) \times \widetilde Rec_{2n}(\lambda/\mu)\nonumber\\
T&\mapsto (P^{AII}(T)=suc^{N}(T),Q^{AII}(T))
\end{align}
is an injection. In particular, $SpT_{2n}(\lambda)\subset SST_{2n}(\lambda)$ and $LR^{AII}(S)=(\mathrm{suc}^1(S),Q(S)=(S,D(\lambda/\lambda))=(S,\emptyset)$ for all $S\in SpT_{2n}(\lambda)$.

\subsubsection{Expanding map}\label{sec:inversereduction}

We now show that the reduction map \eqref{redumap} is also surjective. This section is updated following the recent author preprint \cite{azreduction} where a complete explicit formula for the inverse reduction map is given. For a fixed $n\in\mathbb{N}$, let $l\in [0,2n]$ and $t\in[0,n]$ such that  $0\le t\le min\{l,2n-l\}$ and $l-t\in 2\Z$. Then we define the expanding map
%Let $\mathbf{a} = (a_1, \dots , a_t)\in SpT_{2n}(\varpi_t)$ such that, for every $1\le i< t$,
%$$a_i\notin 2\Z\Rightarrow a_{i+1}>a_i+1~~~( \mbox{ that is,  } a_i\notin 2\mathbb{Z}, ~~~ a_{i+1}\in 2\mathbb{Z}\Rightarrow a_{i+1}-a_i\ge 3).$$ %Then

\begin{align}\label{exp}
\rm{exp}_t:SpT_{2n}(\varpi_t)&\rightarrow SST_{2n}(\varpi_l)\quad\\
\mathbf{a}&\mapsto\rm{exp}(\mathbf{a}):=T,\nonumber
\end{align}
such that $\redu(\rm{exp}(\bf a))=a$. That is, $\rm{red}_{t}^{-1}:=\redu^{-1}_{|SpT_{2n}(\varpi_t)}=\rm{exp}_t$.

%The theorems in this section show that the reduction map, $\redu$, \eqref{redumap}, \cite{watanabe,nsw},  is surjective and explicitly exhibits %its inverse in the cases where the symplectic column decomposes into non empty factors $\bf A_i$ of even length  consisting of consecutive integers  starting with an odd number, and 
The next lemma characterizes the symplectic columns when 
the symplectic column is such that  consecutive integers may occur only as an even number followed with an odd number. For example, 
%for the first case, we mean $ (5678)\in SpT_{10}(\varpi_4)$ or $ (78)( 13, 14, 15, 16)\in SpT_{20}(\varpi_6)$ or $(7,8)(13,14,15,16) (19,20$, and for the second case 
$(2,3,6,7)$, or $(1,4,5,7,10,12)$, or $(1,4,7,10,12)$ where consecutive integers do not occur.
%%%%%%%%%%%%%%%%%%%%%%%%%%%%%%%%%%%%%%%%%%%%%%%%%%%%%%%%%%%%%%%%%%%%%%

\begin{lem}\label{lem:swapping}\cite{azreduction}Let $\mathbf{a} = (a_1, \dots , a_t)\in SST_{2n}(\varpi_t)$. The following are pairwise equivalent
%\begin{enumerate}

\begin{enumerate}
\item for every $1\le i\le t$, $s(a_i)\notin \bf{a}$
\item ${\bf{a}}\cap s(\bf{a})=\emptyset$
 \item  $[x, y]$ is an interval factor $\subseteq \bf a $ $\Rightarrow$ $x=y$  $ \vee$ ($x\in 2\Z, y=x+1\notin 2\Z$). 
 \item for every $1\le i<t$,
$a_i\notin 2\Z\Rightarrow a_{i+1}-a_i\ge 2$ ( $a_{i+1}\ge s(a)+1$)
%\item for every $1\le i<t$,  $  a_i\notin 2\mathbb{Z}, ~~~ a_{i+1}\in 2\mathbb{Z}\Rightarrow a_{i+1}-a_i\ge 3.$
%\item ${\bf{a}}\cap s(\bf{a})=\emptyset$
%\item %$x\in[2n]\wedge  s(x)\
\item for every $1< i\le t$,
$a_i\in 2\Z\Rightarrow a_{i}-a_{i-1}\ge 2$ ( $s(a)\ge a_{i-1}+ 1$)
%\end{enumerate}
%\item if ${\bf{a}}\cap s(\bf{a})=\emptyset$, it holds
%\begin{enumerate}
\item for every $1\le i<t$,
\begin{align*}
&(  a_i\notin 2\mathbb{Z}, ~~ a_{i+1}\in 2\mathbb{Z}\Rightarrow a_{i+1}-a_i\ge 3)\\
&\qquad\qquad\wedge\\
&( a_i,~ ~a_{i+1}\notin 2\mathbb{Z} \mbox{ or } a_i, ~~a_{i+1}\in 2\mathbb{Z}\Rightarrow a_{i+1}-a_i\ge 2)\\
 &\qquad\qquad\wedge\\
 &( a_i\in 2\mathbb{Z},~ ~a_{i+1}\notin 2\mathbb{Z}\Rightarrow a_{i+1}-a_i\ge 1).
 \end{align*}
\end{enumerate}
%\end{enumerate}

\end{lem}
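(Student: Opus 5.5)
The plan is to prove the equivalences through the involution $s$ from \eqref{s}. The key observation is that $s$ pairs each odd $x$ with $x+1$ and each even $x$ with $x-1$, and that $\mathbf a$ is a strictly increasing set. Because $s$ is an involution, $s(\mathbf a)=\{s(a_i)\}$, so (1) and (2) are literally the same statement: $\mathbf a\cap s(\mathbf a)\neq\emptyset$ if and only if some $s(a_i)\in\mathbf a$. I will then organise everything around the single reformulation
\[
(\ast)\qquad \text{there is no } i \text{ with } a_i \text{ odd and } a_i+1\in\mathbf a .
\]
The condition $s(a_i)\in\mathbf a$ means either that $a_i$ is odd and $a_i+1\in\mathbf a$, or that $a_i$ is even and $a_i-1\in\mathbf a$; the latter is the former read from the odd element $a_i-1$. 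Hence (1) is equivalent to $(\ast)$.

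Next I translate $(\ast)$ into the local conditions. Since the entries strictly increase, $a_i+1\in\mathbf a$ if and only if $a_{i+1}=a_i+1$. So $(\ast)$ says that $a_i$ odd implies $a_{i+1}\ge a_i+2$, which is (4). Symmetrically, $a_i-1\in\mathbf a$ if and only if $a_{i-1}=a_i-1$, so $(\ast)$ phrased from the even side is exactly (5).

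For (6) I split into the four parity cases of the pair $(a_i,a_{i+1})$.
\begin{enumerate}
\item Odd followed by even: the difference is odd, so excluding difference $1$ forces a difference of at least $3$.
\item Same parity: the difference is even and positive, hence at least $2$ automatically.
\item Even followed by odd: the difference is at least $1$ automatically.
\end{enumerate}
So (6) reduces exactly to (4). Conversely, (6) implies (4) because an odd $a_i$ has $a_{i+1}-a_i\ge 2$ in both of its subcases.

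For (3) I read ``interval factor'' as a maximal run of consecutive integers inside $\mathbf a$. Under $(\ast)$, no odd element is followed by its successor, so a run can contain at most one step, and that step must go from an even $x$ to the odd $x+1$. The next step would start at an odd number, which $(\ast)$ forbids. Hence every run is a singleton or has the form $\{x,x+1\}$ with $x$ even. Conversely, if every run has this form, then no odd $x$ has $x+1\in\mathbf a$, since such a pair would lie in a run containing an odd-to-even step.

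The main obstacle is not mathematical. It is pinning down the intended meaning of ``interval factor'' in (3), maximal versus arbitrary, and making sure the $x=y$ and $x$-even cases exhaust the possibilities. If the reading is arbitrary subintervals, the argument is the same, since every subinterval of an allowed run is itself a singleton or an even–odd pair. I will also remark that the boundary indices, $i=t$ in (4) and $i=1$ in (5), are vacuous, so no edge cases arise.
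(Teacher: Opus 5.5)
Your proof is correct. Every item is reduced to the single condition $(\ast)$ that no odd entry $a_i$ has $a_i+1\in\mathbf a$, and each item is then checked against $(\ast)$ using only the strict increase of the column. This settles all six equivalences, including the vacuous boundary indices in (4) and (5) and both readings (maximal or arbitrary) of ``interval factor'' in (3).

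The paper gives no proof of this lemma and only quotes it from \cite{azreduction}, so there is no argument to compare with. Your direct parity argument is the natural one.
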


\begin{thm}\label{thm:nofactors} Let $l\in [0,2n]$ and $t\in[0,n]$ such that  $0\le t\le min\{l,2n-l\}$ and $l-t\in 2\Z$. Let $\mathbf{a} = (a_1, \dots , a_t)\in SpT_{2n}(\varpi_t)$ such that, for every $1\le i\le t$, $s(a_i)\notin \bf{a}$. Then,
%$$a_i\notin 2\Z\Rightarrow a_{i+1}>a_i+1\Leftrightarrow a_i\notin 2\Z~~~(\mbox{ that is,  } a_i\notin 2\mathbb{Z}, ~~~ a_{i+1}\in %2\mathbb{Z}\Rightarrow a_{i+1}-a_i\ge 3),$$ and

\begin{align}\label{redu:nofactors}
\rm{red}_{t}^{-1}:=\redu^{-1}_{|SpT_{2n}(\varpi_t)}:SpT_{2n}(\varpi_t)&\rightarrow SST_{2n}(\varpi_l)\quad\\
\mathbf{a}&\mapsto\rm{red}_{t}^{-1}(\mathbf{a}):=T_0(a_1)T_1\cdots(a_t) T_t,\nonumber
\end{align}
where \begin{align}\label{inverseredu:nofactors}
&T_0=(1,2,\dots l_1),\\
&(a_i)T_i=
\begin{cases}
 ({a}_i)( a_i+1,\dots, a_i+l_{i+1}),
  &
 \mbox{if } a_i\in 2\mathbb{Z}\\
 ({a}_i)( a_i+1+1,a_i+1+2,\dots, a_i+1+l_{i+1}),
 &\mbox{if } a_i\notin 2\mathbb{Z}\\
\end{cases},& 1\le i\le t, \label{inverseredu:nofactors2}
\end{align}
and
$l_1,\dots, l_t$, $l_{t+1}$ are nonnegative even numbers defined by
\begin{align}\label{l:nofactors}
&l_1=
\begin{cases} min\{a_1-2, l-t\},& \mbox{if } a_1\in 2\mathbb{Z}\\
min\{ a_1-1, l-t\},&\mbox{if } a_1\notin 2\mathbb{Z},\\
\end{cases}
\end{align}
\begin{align}
&l_{i+1}=
\begin{cases}min\{a_{i+1}-a_i-2, l-t-\sum_{k=1}^i l_k\}, &\mbox{if }
a_i, a_{i+1}\in 2\mathbb{Z} \mbox{ or } a_i, a_{i+1}\notin 2\mathbb{Z}
\\
min\{{ a_{i+1}-a_i-1},~~ l-t-\sum_{k=1}^i l_k\},&\mbox{if }
a_i\in 2\mathbb{Z}, a_{i+1}\notin 2\mathbb{Z} \\
min\{a_{i+1}-a_i-3,  l-t-\sum_{k=1}^i l_k\},&\mbox{if }
 a_i\notin 2\mathbb{Z}, ~~~ a_{i+1}\in 2\mathbb{Z},
\end{cases},\quad \mbox{ for } 1\le i<t,\label{ells}
\end{align}
and $l_{t+1}=l-t-\sum_{i=1}^{t}l_i.$
\end{thm}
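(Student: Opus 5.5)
The plan is to verify three things directly for $T:=T_0(a_1)T_1\cdots(a_t)T_t$: that it is a strictly increasing column of length $l$ with entries in $[1,2n]$, i.e. $T\in SST_{2n}(\varpi_l)$; that $\rem(T)$ is exactly the set of inserted entries $T_0\cup T_1\cup\cdots\cup T_t$; and hence that $\redu(T)=\mathbf a$. Injectivity of $\redu$ (Proposition \ref{prop:redu}(3)) then identifies $T$ as $\redu_t^{-1}(\mathbf a)$, so only existence needs proof.

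\emph{Step 1 (well-definedness and length).} The gap bounds in \eqref{l:nofactors} and \eqref{ells} are chosen so that each block $T_i$ fits strictly between $a_i$ and $a_{i+1}$. When $a_i$ is even, $T_i=(a_i+1,\dots,a_i+l_{i+1})$ starts at an odd number. When $a_i$ is odd, $T_i$ skips $s(a_i)=a_i+1$ and starts at the odd number $a_i+2$. In both cases $T_i$ ends at the even number $\max(a_i,s(a_i))+l_{i+1}$, and this stays below $\min(a_{i+1},s(a_{i+1}))$. Lemma \ref{lem:swapping}(6) guarantees that all the gap quantities are nonnegative. Parity of $a_{i+1}-a_i$ makes them even after the min is taken, provided $l-t$ is even, which it is; if a gap value were odd one would round down, and I would check this case by case.

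The subtle point is $l_{t+1}=l-t-\sum l_i$. It must satisfy $\max(a_t,s(a_t))+l_{t+1}\le 2n$. I would prove this by counting: the free "pair slots" $\{2j-1,2j\}$ in $[1,2n]$ disjoint from $\mathbf a\cup s(\mathbf a)$ number $n-t$, by Lemma \ref{lem:swapping}(2). We need $(l-t)/2\le n-t$, which is exactly $l\le 2n-t$. Since earlier blocks are greedy, whatever is not used before $a_t$ fits after it.

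\emph{Step 2 (computing $\rem$).} By construction, every block $T_i$ is a union of consecutive pairs $(2j-1,2j)$, and no $a_i$ forms such a pair with its neighbour. I would then run the recursion \eqref{removal}, or equivalently the criterion of Proposition \ref{prop:rem}(2), scanning upward, and show by induction on position that:
\begin{itemize}
\item each pair entry $2j$ of $T$ sitting at position $p$ satisfies $2j<2p-|\rem(\text{prefix})|-1$;
\item no $a_i$ is removable.
\end{itemize}
The second point is immediate from Proposition \ref{prop:rem}(2), because $s(a_i)\notin T$. For the first, write $p=(\#a\text{'s before})+(\#\text{inserted entries up to }2j)$. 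The prefix removal is precisely the inserted entries before the pair, since it contains no $a$'s. The inequality then reduces to $2j\le 2k+m_j$, where $m_j$ counts the inserted entries up to that pair and $k$ counts the preceding $a_i$. This holds because the inserted entries are packed at the smallest available pair-slots, and the symplectic condition $a_k\ge 2k-1$ bounds how many slots the $a$'s occupy.

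\emph{Main obstacle.} I expect Step 2's inequality to be the hard part: it is where minimality (the greedy choice of $l_i$) and symplecticity of $\mathbf a$ interact. I would try to prove it by tracking the defect $2p-1-T(p)$ along the column and showing it stays positive at each removed even entry. This should be doable since each packed pair raises $p$ by 2 and $T(p)$ by 2, while the $a$-entries cost at most what symplecticity allows.
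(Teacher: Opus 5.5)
Your proposal is correct and follows essentially the same direct verification the paper uses for the companion Lemma \ref{lem:reductionsurj} (this theorem itself is stated without a separate proof): check that $T$ is a column in $SST_{2n}(\varpi_l)$ with entries at most $2n$, then compute $\rem(T)$ pair by pair via Proposition \ref{prop:rem} to obtain $\redu(T)=\mathbf a$. The step you flag as the main obstacle is in fact immediate. Greediness together with $\mathbf a\cap s(\mathbf a)=\emptyset$ gives $j-1=k+m_j/2-1$, so $2j=2k+m_j$ holds with equality; symplecticity of $\mathbf a$ plays no role, since it already follows from $\mathbf a\cap s(\mathbf a)=\emptyset$; and all gap values in \eqref{l:nofactors}--\eqref{ells} are automatically even, so no rounding is ever needed.
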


%%%%%%%%%%%%%%%%%%%%%%%%%%%%%%%%%%%%%%%%%%%%555CHANGE%%%%%%%%%%%%%%%%%%%%%%%%%%%%%%%%%%%%%%%%%%%%%%%%%%%%%%%%%%%%%%%%%%%%%%%%%%%%%%%
\begin{lem}\label{lem:reductionsurj} Let $l\in [0,2n]$ and $t\in[n]\cap 2\Z$ such that  $2\le t\le min\{l,2n-l\}$ and $l-t\in 2\Z$. Let $a_1\notin 2\Z$,  and   $\mathbf{a} = (a_1, a_1+1,\dots , a_1+t-2,a_1+t-1)\in SpT_{2n}(\varpi_t)$. Then
  $a_1\ge t+1$,  and
\begin{align}\label{lem:inverseredx}
\rm{red}_{t}^{-1}:=\redu^{-1}_{|SpT_{2n}(\varpi_t)}:SpT_{2n}(\varpi_t)&\rightarrow SST_{2n}(\varpi_l)\quad\\
\mathbf{a}&\mapsto\rm{red}_{t}^{-1}(\mathbf{a})=T,\nonumber
\end{align}
where \begin{align}T=(1,2,\dots, l_1)\mathbf{a}(a_1+t-1+1,\dots, a_1+t-1+l_{t+1})\label{lem:T},
\end{align}
and $l_1$, $l_{t+1}$  are nonnegative even numbers defined by
\begin{align}\label{lem0}
l_1=min\{a_1-t-1,l-t\},\quad
l_{t+1}=l-t-l_1.
\end{align}
\end{lem}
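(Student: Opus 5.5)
The plan is to verify directly that the column $T$ in \eqref{lem:T} lies in $SST_{2n}(\varpi_l)$ and that $\redu(T)=\mathbf{a}$. The injectivity of $\redu$ on $SST_{2n}(\varpi_l)$ (Proposition \ref{prop:redu}(3)) then identifies $T$ as \emph{the} preimage, so $\rm{red}_t^{-1}$ is well defined.

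First, the bound $a_1\ge t+1$. Since $\mathbf{a}$ is symplectic, its last entry satisfies $a_1+t-1\ge 2t-1$, so $a_1\ge t$. As $a_1$ is odd and $t$ is even, this forces $a_1\ge t+1$. In particular $a_1-t-1\ge 0$, and it is even (odd minus even minus one). Since $l-t$ is even as well, $l_1$ and $l_{t+1}$ are nonnegative even integers, and $T$ has exactly $l_1+t+l_{t+1}=l$ entries.

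Second, $T$ is a column in $SST_{2n}(\varpi_l)$. Strict increase holds because $l_1\le a_1-t-1<a_1$ and the tail starts at $a_1+t$. For the bound on entries, I would split on which term realizes $l_1$. If $l_1=l-t$, then $l_{t+1}=0$ and the largest entry is $a_1+t-1\le 2n$. Otherwise $l_1=a_1-t-1$, and the largest entry is $a_1+t-1+l-t-(a_1-t-1)=l+t\le 2n$, using the hypothesis $t\le 2n-l$.

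Third, compute $\rem(T)$ using Proposition \ref{prop:rem}(2) together with Proposition \ref{prop:removcontain}, scanning $T$ from top to bottom in three blocks. Here $i$ denotes the position of an even entry and $|\rem|$ is that of the prefix ending two positions earlier.
\begin{itemize}
\item[(i)] \emph{The prefix $(1,\dots,l_1)$.} It has even length, so by Corollary \ref{empty0} it is entirely removed, and its removal set is $\{1,\dots,l_1\}$.
\item[(ii)] \emph{The block $\mathbf{a}$.} Consider the consecutive pairs $(a_1+2j,a_1+2j+1)$ for $0\le j\le t/2-1$, and assume inductively that the prefix has removal set of size $l_1$. The even entry sits at position $i=l_1+2j+2$. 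It would be removed only if $a_1+2j+1<2i-l_1-1=l_1+4j+3$, that is, only if $a_1<l_1+2j+2$. But $l_1+2j+2\le (a_1-t-1)+t=a_1-1$, so no entry of $\mathbf{a}$ is removed.
\item[(iii)] \emph{The tail $(a_1+t,\dots)$.} It consists of pairs (odd, even) starting with the odd number $a_1+t$. Inductively the prefix removal set has size $l_1+2k$, and the even entry $a_1+t+2k+1$ sits at position $l_1+t+2k+2$. The removal condition becomes $a_1<l_1+t+2$. If $l_{t+1}>0$, then necessarily $l_1=a_1-t-1$, i.e. $a_1=l_1+t+1$, so the condition holds and every tail pair is removed. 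By Proposition \ref{prop:rem}(3) the odd partners are removed too.
\end{itemize}
Hence $\rem(T)=T\setminus\mathbf{a}$ and $\redu(T)=\mathbf{a}$.

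I expect step three to be the main obstacle. The inductive bookkeeping of $|\rem(a_1,\dots,a_{i-2})|$ across the three blocks is delicate: in particular, once a pair inside $\mathbf{a}$ is \emph{not} removed, the later counts must stay at exactly $l_1$ (and then $l_1+2k$ in the tail). I would handle this by a single induction on the position, invoking Proposition \ref{prop:removcontain}(2),(4) at each step to pass from prefix to prefix.
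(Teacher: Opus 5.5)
Your proof is correct and follows essentially the same route as the paper: you derive $a_1\ge t+1$ from the symplectic condition on the last entry, check $T\in SST_{2n}(\varpi_l)$ by splitting on which term realizes $l_1$, and compute $\rem(T)$ block by block via Corollary \ref{empty0} and Proposition \ref{prop:rem}(2). Your version is slightly more careful than the paper's. The paper writes the prefix as $(1,\dots,a_1-t-1)$ throughout the removal computation, whereas your bound $l_1+2j+2\le a_1-1$ also covers the case $l_1=l-t<a_1-t-1$.
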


\begin{proof} If ${\bf a}\in  SpT_{2n}(\varpi_t)$ then $a_1+t-1\ge 2t-1\Leftrightarrow a_1\ge t\Leftrightarrow a_1\ge t+1$ since $a_1\notin 2\Z$ and $t\in 2\Z$.
This condition guarantees  %column $\bf a$ is in fact symplectic.
that,  for $0\le j\le t-1$,
$$a_1+j\ge t+1+j\ge j+2+j=2(j+1)>2(j+1)-1.$$
The entries of column $\bf a$ are bounded by $2n$.  In the worst case when $t=n$, one has  for $a_1=t+1$, $a_1+t-1=t+1+t-1=2t= 2n$. Henceforth in the worst case,  for $a_1=t+1$, ${\bf a}\in SpT_{2n}(\varpi_t)$.

We now show that $T\in SST_{2n}(\varpi_l)$. Since $0\le l_1\le l-t$, it follows that $l_{t+1}=l-t-l_1\ge l-t-(l-t)\ge 0$ is a non negative even number such that $l_{t+1}=0$ when $l_1=l-t$. Therefore $\ell (T)=l_1+t+l_{t+1}=l$, and, in addition the entries are bounded by $2n$. In fact, if $l_1=a_1-t-1<l-t$, one has
 $$a_1+t-1+l_{t+1}=a_1+t-1+l-t-(a_1-t-1)=l+t\le 2n.$$

We next  show that $\redu(T)=\bf a$.
Note $a_1+t-2\notin 2\Z, a_1+t-1\in 2\Z$ and
\begin{align}&T=(1,2,\dots, a_1-t-1)\mathbf{a}(a_1+t-1+1,\dots, a_1+t-1+l_{t+1})\nonumber\\
=&(1,2,\dots, a_1-t-1)(a_1,a_1+1,\dots, a_1+t-2)(a_1+t-1,a_1+t-1+1,\dots, a_1+t-1+l_{t+1}).
\end{align}

From Corollary \ref{empty0}, one has $\rem(1,2,\dots, a_1-t-1)=(1,2,\dots, a_1-t-1)$. From Proposition \ref{prop:removcontain}, $(5)$,and Proposition \ref{prop:rem}, one has $$\rem(1,2,\dots, a_1-t-1)(a_1, a_1+1,\dots , a_1+t-2,a_1+t-1)=(1,2,\dots, a_1-t-1)$$
because for $0\le j\le t-2$ and $j\in 2\Z$,
\begin{align*}a_1+j\nless 2(j+1+a_1-t-1)-(a_1-t-1)-j=a_1+j-t.
\end{align*}

Then, when $l_{t+1}=l-t-l_1>0\Leftrightarrow l_1=a_1-t-1<l-t$, one has
\begin{align*}&\rem((1,2,\dots, a_1-t-1)(a_1,a_1+1,\dots, a_1+t-2)(a_1+t-1,a_1+t-1+1,\dots, a_1+t-1+l_{t+1}))\\
&=(1,2,\dots, a_1-t-1)\cup\rem(a_1+t-1+1,\dots, a_1+t-1+l_{t+1}) \\
&=(1,2,\dots, a_1-t-1)(a_1+t-1+1,\dots, a_1+t-1+l_{t+1}).
\end{align*}
In fact, for $1\le j\le l_{t+1}$ and $j\notin 2\Z$,
\begin{align*}a_1+t-1+j< 2(a_1-1+j)-(a_1-t-1)-(j-1)=2a_1-2+2j-a_1+t+1-j+1=a_1+t+j.
\end{align*}
Hence $\redu(T)=\bf a$.
\end{proof}

\begin{ex} Let $n=7$ and $t=4\in[6]\cap 2\Z$
\begin{enumerate}
\item $0\le l=8\le 14$ such that  $2\le 4\le min\{l=8,14-8\}$ and $l-t=8-4\in 2\Z$. Let $a_1=7\notin 2\Z$, and $a_1=7\ge t+1=5$ such that   $\mathbf{a} = (7, 8, 9,10)$ is  a column  in $SpT_{12}(\varpi_4)$. Then $l_1=a_1-4-1=2$, $l_5=8-4-2=2$, and
    $$T=(1,2)(7,8,9,10)(11,12)\in SST_{12}(\varpi_8)$$ and $\redu(T)=\mathbf{a}$.
    \item $0\le l=10\le 14$ such that  $2\le 4\le min\{l=10,14-10\}$ and $l-t=10-4\in 2\Z$, and $a_1=7\ge t+1=5$ such that   $\mathbf{a} = (7, 8, 9,10)$ is  a column  in $SpT_{14}(\varpi_4)$. Then $l_1=a_1-4-1=2$, $l_5=10-4-2=4$, and $$T=(1,2)(7,8,9,10)(11,12,13,14)\in SST_{14}(\varpi_8)$$ and $\redu(T)=\mathbf{a}$. Note $13<2\times  9-4=14$.
        \end{enumerate}
\end{ex}

\begin{thm}\cite{azreduction}\label{lem:reductionsurj2} Let $l\in [0,2n]$ and   $ t_i\neq 0\in 2\Z$, $i=1,\dots,k$, such that $t=t_1+\cdots+t_k\in[n]$,
$$2\le t=\sum_{i=1}^kt_i\le min\{l,2n-l\}$$ and $l-t\in 2\Z$.
Let
\begin{align*}\mathbf{a} &=\bigoplus_{i=1}^k{\bf A_i}\in SpT_{2n}(\varpi_{t})
\end{align*}
where for $i=1,\dots,k$, ${a_i}\notin 2\Z$ and
\begin{align*}&{\bf A_i}=({a_i}, a_i+1,\dots , a_i+t_i-2,a_i+t_i-1).
\end{align*} Then $a_i\ge 2(t_1+\cdots+t_{i-1})+t_i+1$, $1\le i\le k$, and

\begin{align*}%\label{thm: inversefactors}
\rm{red}_{t}^{-1}:=\redu^{-1}_{|SpT_{2n}(\varpi_t)}:SpT_{2n}(\varpi_t)&\rightarrow SST_{2n}(\varpi_l)\quad\\
\mathbf{a}&\mapsto\rm{red}_{t}^{-1}(\mathbf{a}):=T,\nonumber
\end{align*}
where \begin{align*}T&=(1,\dots, l_1){\bf {A_1}},( a_1+ 1+t_1-1,\dots,  a_1+ 1+t_1-1+l_2){\bf{A_2}}
( a_2+ 1+t_2-1,\dots,  a_2+ 1+t_2-1+l_3)\\
&\cdots{\bf A_{k-1}}( a_{k-1}+ 1+t_{k-1}-1,\dots,  a_{k-1}+ 1+t_{k-1}-1+l_{k})\\
&{\bf {A_k}}( {a_k}+ 1+t_k-1,\dots,  a_k+ 1+t_k-1+l_{t+1})
\end{align*}
with
$l_1,\dots, l_k$, $l_{t+1}$  nonnegative even numbers recursively defined for $i=1,\dots,k$, by
\begin{align}%\label{prop0}
&l_i=min\{~\displaystyle (a_j-2\sum_{m=1}^{j-1}t_m-t_{j}-1)-\sum_{m=1}^{i-1}l_m,~ i\le j\le k,~~l-t-\sum_{m=1}^{i-1}l_m\},\\
& \mbox{ and}\nonumber\\
%&l_1=min\{a_j-2\sum_{i=1}^{j-1}t_i-t_j-1, ~1\le j\le k,~~l-t\},~~\\
%&l_2=min\{~\displaystyle (a_j-2\sum_{i=1}^{j-1}t_i-t_{j}-1)-l_1,~ 2\le j\le k,~~l-t-l_1\}\\
%&\vdots\nonumber\\
%&l_k=min\{\displaystyle a_k-2\sum_{i=1}^{k-1}t_i-t_k-1-(l_1+\cdots +l_{k-1}),~~  l-t-\sum_{i=1}^{k-1}l_i \}\\
%&l_j=min\{a_j-a_{j-1}-t_{j-1}-2,l-t-\sum_{i=1}^{j-1}l_i\}, ~j=2,\dots,k,\quad\\
&l_{t+1}=l-t-\sum_{i=1}^{k}l_i.
\end{align}
\end{thm}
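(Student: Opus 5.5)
The plan is to generalize the proof of Lemma \ref{lem:reductionsurj} factor by factor. There are three tasks: the lower bound on $a_i$, the claim $T\in SST_{2n}(\varpi_l)$, and the identity $\redu(T)=\mathbf a$.

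\emph{Step 1: the lower bound.} Since each $t_i$ is even, the last entry $a_i+t_i-1$ of ${\bf A_i}$ sits in position $p_i:=t_1+\cdots+t_i$ of $\mathbf a$. The symplectic condition (Definition \ref{def:symp}) at that position gives $a_i+t_i-1\ge 2p_i-1$, that is, $a_i\ge 2(t_1+\cdots+t_{i-1})+t_i$. Because $a_i$ is odd and the right-hand side is even, this improves to $a_i\ge 2(t_1+\cdots+t_{i-1})+t_i+1$. Conversely, this bound gives the symplectic inequality at every position inside ${\bf A_i}$, exactly as in Lemma \ref{lem:reductionsurj}.

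\emph{Step 2: $T$ is a column of length $l$.} Each padding block $B_i$ inserted after ${\bf A_{i-1}}$ (with $B_1=(1,\dots,l_1)$) consists of $l_i$ consecutive integers starting right after the last entry of ${\bf A_{i-1}}$. We must check that $B_i$ stays strictly below $a_i$, and this is exactly what the minimum defining $l_i$ guarantees. With $L_{i-1}=l_1+\cdots+l_{i-1}$, the quantity $a_j-2\sum_{m<j}t_m-t_j-1-L_{i-1}$ counts the free room left below $a_j$. I will check this by an induction on $i$, with the induction statement that the last entry of $B_i$ is at most $a_i-1$ and that equality forces the next block to restart its room count. The parity of $l_i$ follows since $a_j$ is odd and $t_m$ and the $l_m$ are even. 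Finally, $l_{t+1}\ge0$ because each $l_i\le l-t-L_{i-1}$. The bound $2n$ on the largest entry follows as in the lemma from $t\le 2n-l$: when $l_{t+1}>0$ the last entry equals $l+t$ or less.

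\emph{Step 3: $\redu(T)=\mathbf a$.} I will compute $\rem(T)$ by scanning left to right, using Proposition \ref{prop:rem}(2) together with Proposition \ref{prop:removcontain}(5). The claim to prove inductively is
\[\rem(T)=B_1\cup B_2\cup\cdots\cup B_k\cup B_{t+1}.\]
Each $B_i$ is a run of even length of consecutive integers beginning at an odd number: $1$, or the successor of an even number $a_{i-1}+t_{i-1}-1$. So each odd–even pair $(x,x+1)$ in $B_i$ is removable as soon as $x<2\,\mathrm{pos}(x)-|\rem(\text{prefix})|$. Here the prefix removal count is $L_{i-1}$ plus the earlier pairs of $B_i$, so the condition reduces to $x<x+(\text{positive})$, as in the $l_{t+1}$ computation of Lemma \ref{lem:reductionsurj}.

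The main obstacle is showing that no pair $(a_i+2j,a_i+2j+1)$ inside ${\bf A_i}$ is removed. For the position of $a_i+2j$, with prefix removal count $L_i$, the condition $a_i+2j<2(\mathrm{pos})-L_i$ must fail. Here $\mathrm{pos}=L_i+2\sum_{m<i}t_m\cdot\tfrac12\cdot 2/2+\dots$, which is precisely $L_i+\sum_{m<i}t_m+j+1$. The failure therefore becomes $a_i\ge L_i+2\sum_{m<i}t_m+2$. This follows from $l_i\le a_i-2\sum_{m<i}t_m-t_i-1-L_{i-1}$ (the case $j=i$ of the minimum), together with $t_i\ge 2$. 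The junction pair (last entry of $B_i$, $a_i$) is never an odd–even pair, so it needs no check. Finally, Lemma \ref{lem:swapping}-type parity bookkeeping shows that the count $|\rem|$ stays $L_i$ through ${\bf A_i}$. This yields $\redu(T)=\mathbf a$.
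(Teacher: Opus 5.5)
Your plan is the right template, and Step 1 is correct: it bounds $a_i$ via the symplectic condition at the end of each block, then scans $\rem(T)$ from left to right to show that exactly the padding is removed. The gap is in Step 3, where you say each padding block is removed because the test reduces to $x<x+(\text{positive})$. That holds only for $B_1$.

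Write $P_i=t_1+\cdots+t_i$, $L_i=l_1+\cdots+l_i$ and $c_j=a_j-2P_{j-1}-t_j-1$. The block $B_{i+1}$ after $\mathbf{A}_i$ begins with $a_i+t_i$ at position $L_i+P_i+1$, with $L_i$ removals before it. By Proposition \ref{prop:rem}(2) it is removed iff $a_i+t_i<L_i+2P_i+2$. The margin $L_i+2P_i+2-(a_i+t_i)$ equals $L_i-c_i+1$, and the same margin recurs for every pair of $B_{i+1}$. Protecting $\mathbf{A}_i$ requires $L_i\le c_i$, so a nonempty $B_{i+1}$ is removed exactly when $L_i=c_i$.

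Your argument never uses more than $L_i\le c_i$. It would therefore equally ``prove'' that $(5,6,7,8)$ reduces to $(5,6)$ for $n=4$, $l=4$: there the padding sits after the block and $L_1=0\le c_1=2$. In fact $\rem(5,6,7,8)=\emptyset$, and the correct preimage is $(1,2,5,6)$.

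The same hole is in Step 2. Neither the claim that $B_{i+1}$ ends below $a_{i+1}$, nor the claim that the last entry is $l+t\le 2n$ when $l_{t+1}>0$, follows from $L_i\le c_i$ alone.

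The missing observation is that the recursion telescopes to $L_i=\min\{c_i,c_{i+1},\dots,c_k,\,l-t\}$. Hence $0\le L_{i-1}\le L_i\le c_i$, and $l_{i+1}=L_{i+1}-L_i>0$ forces $L_i=c_i$. With this:
\begin{itemize}
\item the removal margin for every pair of $B_{i+1}$ is $1$;
\item $l_{i+1}\le c_{i+1}-c_i=a_{i+1}-a_i-t_i-t_{i+1}$, so $B_{i+1}$ ends at or below $a_{i+1}-t_{i+1}-1$;
\item for $i=k$, the last entry of $T$ is $l+t$.
\end{itemize}
This is the multi-block version of the step in the proof of Lemma \ref{lem:reductionsurj} where $l_{t+1}>0$ is used to get $l_1=a_1-t-1$. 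Citing that tail computation therefore presupposes exactly the fact you omitted.

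There is also a smaller slip. The position of $a_i+2j$ is $L_i+P_{i-1}+2j+1$, not $L_i+P_{i-1}+j+1$. The correct non-removal condition is $a_i\ge L_i+2P_{i-1}+2j+2$. At $j=t_i/2-1$ this is, by parity, exactly $L_i\le c_i$, which is the term $j=i$ of the minimum. So your conclusion survives, but the criterion you derived, $a_i\ge L_i+2P_{i-1}+2$, is only the $j=0$ case, and the appeal to $t_i\ge 2$ just patches the miscount.

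The appeal to Lemma \ref{lem:swapping} is also misplaced: that lemma concerns columns with $\mathbf a\cap s(\mathbf a)=\emptyset$, the opposite of the blocks here. The count $|\rem|$ stays $L_i$ through $\mathbf{A}_i$ simply because no pair of $\mathbf{A}_i$ is removed. Also, every junction between a padding block and an $\mathbf{A}$-block starts with an even entry, so it is never a removable odd--even pair.
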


\begin{thm}\cite{azreduction}\label{thm:xAy} Let $l\in [0,2n]$ and $t_1+p_0+p_1\in[n]$, $t\in 2\Z$ such that  $2\le t_1+p_0+p_1\le min\{l,2n-l\}$ and $l-(t_1+p_0+p_1)\in 2\Z$. Let
$$\mathbf{a} = x_1x_2\cdots x_{p_0} {\bf A_1}y_1\cdots y_{p_1}\in SpT_{2n}(\varpi_{t_1+p_0+p_1})$$
 where $a_1\notin 2\Z$, ${\bf A_1}=(a_1, a_1+1,\dots , a_1+t_1-2,a_1+t_1-1)$ and
 $$s(x_1x_2\cdots x_{p_0}y_1\cdots y_{p_1})\cap\mathbf{a}=\emptyset .$$ Then
  $a_1\ge t_1+2p_0+1$,  and
\begin{align}%\label{lem:inverseredx}
\rm{red}_{t_1+p_0+p_1}^{-1}:=\redu^{-1}_{|SpT_{2n}(\varpi_{t_1+p_0+p_1})}:SpT_{2n}(\varpi_{t_1+p_0+p_1})&\rightarrow SST_{2n}(\varpi_l)\nonumber\\
\mathbf{a}&\mapsto\rm{red}_{t_1+p_0+p_1}^{-1}(\mathbf{a})=T,\nonumber
\end{align}
where \begin{align}T=T_0(x_1)T_1\cdots  (x_{p_0})T_{p_0}\mathbf{A_1}Y_0(y_1)Y_1\cdots (y_{p_1})Y_{p_1}%\label{lem:T}.
\end{align}
%with $\tilde l_1$ %, $\tilde l_{t+r+1}$  are
%the nonnegative even number defined by
%\begin{align}%\label{lem0}
%\tilde l_1=min\{a_1-t_1-2p_0-1,l-t_1-p_0-p_1\},\quad
%\tilde l_{t+r+1}=l-t-r-p-( l_1+\cdots+l_{r+1}).
%\end{align}
is such that
\begin{enumerate}
\item  $\tilde l_1$ %, $\tilde l_{t+r+1}$  are
is the nonnegative even number defined by
\begin{align}%\label{lem0}
\tilde l_1=min\{a_1-t_1-2p_0-1,l-t_1-p_0-p_1\},\quad
%\tilde l_{t+r+1}=l-t-r-p-( l_1+\cdots+l_{r+1}).
\end{align}

\begin{align}\label{inverseredu:nofactors+++}
&T_0=(1,2,\dots l_1),\\
&({x}_j)T_j=
\begin{cases}
 ({x}_j)( x_j+1,\dots, x_j+l_{j+1}),
  &
 \mbox{if } x_j\in 2\mathbb{Z}\\
 ({x}_j)( x_j+1+1,x_j+1+2,\dots, x_j+1+l_{j+1}),
 &\mbox{if } x_j\notin 2\mathbb{Z}\\
\end{cases},& 1\le j\le p_0, \label{inverseredu:nofactors2++++}
\end{align}
with
$l_1,\dots, l_{p_0}$, $l_{p_0+1}$  nonnegative even numbers defined by
\begin{align}\label{l:nofactors++++}
&l_1=
\begin{cases} min\{x_1-2, \tilde l_1\},& \mbox{if } x_1\in 2\mathbb{Z}\\
min\{ x_1-1, \tilde l_1\},&\mbox{if } x_1\notin 2\mathbb{Z},\\
\end{cases}
\end{align}
\begin{align}
&l_{i+1}=
\begin{cases}min\{x_{j+1}-x_j-2, \tilde l_1-\sum_{m=1}^j l_m\}, &\mbox{if }
x_j, x_{j+1}\in 2\mathbb{Z} \mbox{ or } x_j, x_{j+1}\notin 2\mathbb{Z}
\\
min\{{ x_{j+1}-x_j-1},~~ \tilde l_1-\sum_{m=1}^j l_m\},&\mbox{if }
x_j\in 2\mathbb{Z}, x_{j+1}\notin 2\mathbb{Z} \\
min\{x_{j+1}-x_j-3,  \tilde l_1-\sum_{m=1}^j l_m\},&\mbox{if }
 x_j\notin 2\mathbb{Z}, ~~~ x_{j+1}\in 2\mathbb{Z},
\end{cases},\quad \mbox{ for } 1\le j<p_0,\label{ells+++}
\end{align}
and \begin{align}
 l_{p_0+1}=\tilde l_1-\sum_{m=1}^{p_0}l_m;\end{align}

\item put $y_0:=a_1+t_1-1\in 2\Z$, and define
\begin{align}\tilde l_{2}:=l-t_1-p_0-p_1- \sum_{m=1}^{p_0+1}l_m,
\end{align}

\begin{align}&Y_0=(y_0+1,y_0+2,\dots, y_0+ l_{ t_1+p_0+1}),\\
&({y}_j)Y_j=
\begin{cases}
 ({y}_j)( y_j+1,\dots, y_j+ l_{t_1+p_0+j+1}),
  &
 \mbox{if } y_j\in 2\mathbb{Z}\\
 ({y}_j)( y_j+1+1,y_j+1+2,\dots, y_j+1+ l_{t_1+p_0+j+1}),
 &\mbox{if } y_j\notin 2\mathbb{Z}\\
\end{cases},& 1\le j\le p_1, \label{inverseredu:nofactors2+++}
\end{align}
with
$ l_{t_1+p_0+j+1}$, $0\le j\le p_1$,
 nonnegative even numbers defined by
\begin{align}\label{l:nofactors+++}
& l_{t_1+p_0+1}=
\begin{cases} min\{y_1-y_0-2, \tilde l_2\},& \mbox{if } y_1\in 2\mathbb{Z}\\
min\{ y_1-y_0-1, \tilde l_2\},&\mbox{if } y_1\notin 2\mathbb{Z},\\
\end{cases}
\end{align}
\begin{align}
& l_{t_1+p_0+j+1}=
\begin{cases}min\{y_{j+1}-y_j-2, \tilde l_2-\sum_{m=1}^j  l_{t_1+p_0+m}\}, &\mbox{if }
y_j, y_{j+1}\in 2\mathbb{Z} \mbox{ or } y_, y_{j+1}\notin 2\mathbb{Z}
\\
min\{{ y_{j+1}-y_j-1},~~ \tilde l_2-\sum_{m=1}^j  l_{t_1+p_0+m}\},&\mbox{if }
y_j\in 2\mathbb{Z}, y_{j+1}\notin 2\mathbb{Z} \\
min\{y_{j+1}-y_j-3,  \tilde l_2-\sum_{m=1}^j  l_{t_1+p_0+m}\},&\mbox{if }
 y_j\notin 2\mathbb{Z}, ~~~ ,y_{j+1}\in 2\mathbb{Z},
\end{cases},\quad \mbox{ for } 1\le j<p_1,\label{ells++++}
\end{align}
and

\begin{align} l_{t_1+p_0+p_1+1}=\tilde l_2-\sum_{m=1}^{p_1}l_{t_1+p_0+m}.\end{align}
 \qquad\qquad\qquad\qquad\qquad\qquad\qquad\qquad\qquad\qquad\qquad\qquad\qquad\qquad\qquad\qquad\qquad\qquad\qquad
 \qquad$\Box$
\end{enumerate}
\end{thm}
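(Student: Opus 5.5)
The statement to prove is Theorem \ref{thm:xAy}. The plan is to glue the proofs of Theorem \ref{thm:nofactors} and Lemma \ref{lem:reductionsurj}. We treat the column $T$ as three blocks: the prefix $T_0(x_1)T_1\cdots(x_{p_0})T_{p_0}$, the middle factor $\mathbf{A_1}$, and the suffix $Y_0(y_1)Y_1\cdots(y_{p_1})Y_{p_1}$. In each block, $\rem$ is computed left to right using the criterion of Proposition \ref{prop:rem}(2) together with Proposition \ref{prop:removcontain}(5). The goal is to show that the removed entries are exactly the inserted runs $T_j$, $Y_0$ and $Y_j$, while the $x_j$, the entries of $\mathbf{A_1}$ and the $y_j$ all survive. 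This gives $\redu(T)=\mathbf a$.

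\textbf{Step 1 (bounds).} Since $s(x_j)\notin\mathbf a$ and $\mathbf a$ is symplectic, the position of $a_1$ in $\mathbf a$ is $p_0+1$. The last entry of $\mathbf A_1$ sits at position $p_0+t_1$, so $a_1+t_1-1\ge 2(p_0+t_1)-1$. Because $a_1$ is odd and $t_1$ is even, this gives $a_1\ge t_1+2p_0+1$ as in Lemma \ref{lem:reductionsurj}. Hence $\tilde l_1\ge 0$. All the $l$'s are even minima of nonnegative even quantities; their evenness comes from the parity cases in \eqref{ells+++}, exactly as in Theorem \ref{thm:nofactors}. We then check that $T$ has length $l$, is strictly increasing, and has entries at most $2n$. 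For the last entry, when the final minimum is not the gap term, the telescoping computation of Lemma \ref{lem:reductionsurj} gives a top entry of at most $l+(t_1+p_0+p_1)\le 2n$.

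\textbf{Step 2 (prefix).} The prefix is exactly the column produced by Theorem \ref{thm:nofactors} for $(x_1,\dots,x_{p_0})$ with target length $p_0+\tilde l_1$. We invoke that theorem (or its proof) to get that $\rem$ of the prefix is $T_0\cup T_1\cup\cdots\cup T_{p_0}$, which has size $\tilde l_1$, and that the $x_j$ survive.

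\textbf{Step 3 (middle factor).} Each odd $a_1+j$ ($j$ even) has position $i=\tilde l_1+p_0+j+1$ and, by Step 2, $|\rem(\text{earlier})|=\tilde l_1$. We must show that the inequality in Proposition \ref{prop:rem}(2a) fails, i.e.\ $a_1+j\ge 2i-\tilde l_1=\tilde l_1+2p_0+2j+2$. This follows from $\tilde l_1\le a_1-t_1-2p_0-1$ and $j\le t_1-2$. The even entries of $\mathbf A_1$ then survive by Proposition \ref{prop:rem}(3).

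\textbf{Step 4 (suffix).} The run $Y_0$ starts at $y_0+1$, which is odd, and it consists of consecutive pairs. Each odd element satisfies the strict inequality of (2a); this is the same computation as in Lemma \ref{lem:reductionsurj}, now shifted by $\tilde l_1$ and $p_0$. So $Y_0$ is removed. After that we repeat the Theorem \ref{thm:nofactors} argument for the $y_j$, with the current removal count substituted.

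\textbf{Main obstacle.} The delicate point is the interface cases. When $\tilde l_1=l-t_1-p_0-p_1$ is attained as the minimum, the suffix runs are empty and nothing further needs checking. When instead the gap term $a_1-t_1-2p_0-1$ is attained, the suffix must absorb the rest. Here we must verify two things: that a $y_j$ which is odd and adjacent to $Y_{j-1}$ is not accidentally removed together with a neighbour, and that the removal inequalities still hold. The hypothesis $s(y_j)\notin\mathbf a$ together with the ``$-1$/$-2$/$-3$'' gap choices in \eqref{ells++++} should exclude this. I would first settle it by a careful parity case split at each junction $x_{p_0}\mid T_{p_0}\mid a_1$ and $y_0\mid Y_0\mid y_1$.
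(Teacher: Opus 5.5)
The paper states this result without proof; it is quoted from \cite{azreduction}. Your plan is the natural one: compute $\rem(T)$ from left to right with Proposition~\ref{prop:rem}(2), use Theorem~\ref{thm:nofactors} on the prefix, and use the computation of Lemma~\ref{lem:reductionsurj} on $\mathbf A_1$ and $Y_0$. Steps 1--3 are sound. For Step 2, note that Theorem~\ref{thm:nofactors} applies because $2p_0+\tilde l_1\le a_1-t_1-1<2n$. Note also that the last prefix entry is not newly removed in $T$, since its successor $a_1$ is odd.

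There is, however, a gap. The two checks you postpone to the ``main obstacle'' carry the content, and the tool you propose for them, a local parity case split at each junction, cannot settle them. The test $z<2i-|\rem(\text{first } i-1 \text{ entries})|$ in Proposition~\ref{prop:rem}(2)(a) involves the global position $i$ and the cumulative removal count, not just the neighbours of $z$. Concretely:
\begin{itemize}
\item[(i)] $l_{p_0+1}$ is a pure remainder, not capped by any gap, so nothing local keeps $T_{p_0}$ below $a_1$. Theorem~\ref{thm:nofactors} only bounds the prefix entries by $2n$. The hypotheses even allow $x_{p_0}=a_1-1$ (even, with $s(x_{p_0})=a_1-2\notin\mathbf a$). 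Then $T_{p_0}$ would begin at $a_1$, and only a count forces $l_{p_0+1}=0$.
\item[(ii)] For $Y_1,\dots,Y_{p_1}$, ``repeat the argument of Theorem~\ref{thm:nofactors} with the current removal count'' presupposes that the inequality is as tight after $y_0$ as at the top of a column. You have not established this.
\end{itemize}

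The missing idea is a conserved quantity. For an entry $z$ at position $i$ of $T$, put $h=z-i$, the number of values below $z$ absent from $T$. Put $k=i-1-|\rem(\text{first } i-1 \text{ entries})|$, the number of surviving entries above $z$. Then Proposition~\ref{prop:rem}(2)(a) says exactly $h\le k$. The quantity behaves as follows:
\begin{itemize}
\item A removed pair $(z,z+1)$ changes neither $h$ nor $k$.
\item Passing a kept $x_j$ or $y_j$ after a full run raises both by one. This is because the gap terms $-1,-2,-3$ leave exactly one hole, namely $s(x_j)$ resp.\ $s(y_j)$.
\item Once a run is cut by the remainder, all later runs of that block are empty.
\end{itemize}
Since $h=k=0$ at the top, every nonempty prefix run starts with $h=k$ and is removed. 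If $T_{p_0}\neq\emptyset$, its last entry equals $(p_0+\tilde l_1)+p_0\le a_1-t_1-1$, which settles (i).

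Along $\mathbf A_1$ one has $h-k=a_1-1-2p_0-\tilde l_1-j>0$ at $a_1+j$; this is your Step 3. At the first entry $y_0+1$ of $Y_0$ one has $h-k=(a_1-t_1-2p_0-1)-\tilde l_1$, which is $0$ whenever $\tilde l_2>0$. Conservation then removes every $Y_j$, settling (ii), and gives the top entry $l+t_1+p_0+p_1\le 2n$. What remains is that no $x_j$ or $y_j$ is removed together with a neighbour, and that is indeed the local check you describe.
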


A complete explicit formula generalizing the previous theorem is given in \cite{azreduction}.
\begin{ex}\begin{enumerate}
\item Let $n=12$, $l=12$, $t=6\le min\{12, 24-12\},$ $l-t=6$, and ${\bf a}=(9,10)(17, 18, 19, 20)\in SpT_{24}(\varpi_6)$, $t_1=2<t_2=4$, $\tilde l_1=a_2-t_2-t_1-5=17-6-5=6$. Then $l_1=\tilde l_1=6\le a_1-3=6$ and $l_2=0$, $T=(123456)(9,10)(17,18,19,20)\in SST_{24}(\varpi_{12})$.
    \item For ${\bf a}=(7,8)(17, 18, 19, 20)\in SpT_{24}(\varpi_6)$, one has $\tilde l_1=6> a_1-3=4$, and

     $T=(1234)(76)(9,10)(17,18,19,20).$
\end{enumerate}
\end{ex}

\begin{ex} Let $n=12$.
\begin{enumerate}

\item Let $t=6\le n$, $l=12$, $6\le min\{12, 2\times 2\times 12-12\}$, $l-t=6$ and $t_1=4>t_2=2$ and  $t_1=4>t_2=2$. Let  $${\bf a}=(9,10,11,12)(17,18)\in SpT_{24}(\varpi_6),$$ $a_2-a_1-t_1-2=17-9-4-2=2$, $a_1-t_1-1=4$. Then $$T=(1,2,3,4)(9,10,11,12)(13,14)(17,18)\in SST_{24}(\varpi_{12})$$ and $\redu(T)=\bf a$. Note $13<2\times 9-4=14$, and $17\nless 2\times 11-6=16$.

\item  Let $t=8$, $t_1=4=t_2=4$, $l=12$,  $8\le min\{12, 2\times 2\times 12-12\}$, $l-t=4$. Let  $${\bf a}=(9,10,11,12)(17,18, 19, 20)\in SpT_{24}(\varpi_8),$$ $a_2-a_1-t_1-2=17-9-4-2=2$, $l_1=a_1-t_1-1=4$, and $l_2=min\{2, l-t-l_1=4-4=0\}=0$. Then $$T=(1,2,3,4)(9,10,11,12)(17,18, 19,20)\in SST_{24}(\varpi_{12})$$ and $\redu(T)=\bf a$.
    \end{enumerate}
\end{ex}
%The next two theorems show that the reduction map, $\redu$, \eqref{redumap}, \cite{watanabe,nsw},  is surjective and explicitly exhibits its %inverse in the cases where the symplectic column decomposes into non empty factors $\bf A_i$ of consecutive integers of even length  starting %with an odd number, and when the symplectic column only has  consecutive integers consisting of an even number followed with an odd number.
%\iffalse

%%%%%%%%%%%%%%%%%%%%%%%%%%%%%%%%%%%%%%%%%%%%%%%CHANGE CHANGE%%%%%%%%%%%%%%%%%%%%%%%%%%%%%%%%%%%%%%%%%%%%55555

 \begin{ex} { Let $n=3$.}

  \begin{enumerate}
  \item
  $t=2$, $l=4\le 6$, $2\le min\{4,6-4\}$, $l-t=2$ even. Let $a=(56)\in SpT_{6 }(\varpi_2)$. Then one has
  \begin{align}\nonumber
\rm{red}_{t}^{-1}:=\redu^{-1}_{|SpT_{6}(\varpi_2)}:SpT_{6}(\varpi_2)&\rightarrow SST_{6}(\varpi_4)\quad\\
(56)&\mapsto\rm{red}_{2}^{-1}(56):=T_0T_1 T_{2}=(12)(5)(6)\nonumber
\end{align}

   Then the inverse reduction with respect $l=4$, is $\redu^{-1}_2(56)=(1256)=T_0T_1T_2\in SST_6(\varpi_4)$ with $T_0=12$, $T_1=5$, $T_2=6$. One has $\redu(1256)=(56)$, $\rem(1256)=(12)$.

\item $t=2$, $l=2\le 6$, $2\le min\{2,6-2\}$, $l-t=0$ even. Let $a=(56)\in SpT_{6 }(\varpi_2)$. Then the inverse reduction with respect $l=2$, $\redu^{-1}_2(56)=(56)=T_0T_1T_2\in SST_6(\varpi_2)$ with $T_0=()$, $T_1=5=T_2=6$,
 Then one has
  \begin{align}\nonumber
\rm{red}_{t}^{-1}:=\redu^{-1}_{|SpT_{6}(\varpi_2)}:SpT_{6}(\varpi_2)&\rightarrow SST_{6}(\varpi_2)\quad\\
(56)&\mapsto\rm{red}_{2}^{-1}(\mathbf{56}):=T_0T_1 T_{2}=()(5)(6)\nonumber
\end{align}
  One has $\redu(56)=(56)$, $\rem(56)=()$,

\end{enumerate}
\end{ex}
\begin{ex}

  \begin{enumerate}
  \item Let $n=8$.
  $t=5$, $l=11\le 16$, $5\le min\{11,16-11\}$, $l-t=6$ even. Let $a=(5,6, 10, 11, 15)\in SpT_{16 }(\varpi_5)$. One has, $l_1=2,$ $l_2=0$,
  $l_3=2$, $l_4=0$, $l_5=2$, $l_6=0$, $l_1+l_2+l_3+l_4+l_5+l_6+t=l=11$
  \begin{align}\nonumber
\rm{red}_{5}^{-1}:=\redu^{-1}_{|SpT_{6}(\varpi_2)}:SpT_{16}(\varpi_{11})&\rightarrow SST_{16}(\varpi_5)\nonumber\\
(56)&\mapsto\rm{red}_{2}^{-1}(5,6, 10, 11, 15):=T_0T_1 T_{2}T_3T_4T_5 \nonumber\\
&T_0T_1 T_{2}T_3T_4T_5=(12).(\mathbf{5}).{(\mathbf{6}.{78})}.({\bf 10}).{({\bf 11}.13.14)}.(\bf 15)\nonumber
\end{align}

   The inverse reduction with respect $l=11$, is $\redu^{-1}_2(56)=(1256)=T_0T_1T_2\in SST_6(\varpi_4)$ with $T_0=12$, $T_1=5$, $T_2=6$. One has $\redu(1256)=(56)$, $\rem(1256)=(12)$.

%\item $a=(4,7)$ $T_0T_1T_2=(12 47

\item Let $n=9$, $a=(3,4,9,10,11,15,16)\in SpT_{18}(\varpi_7)$,  $l=11$, $t=7\le min\{l=11,18-l=7\}$, $l-t=4\in 2\Z$

 The inverse reduction with respect $l=11$,
 $$\redu^{-1}_7(a)=({\bf 3},{\bf 4},5,6,{\bf 9}, {\bf 10},{\bf 11}, {\bf 15}, {\bf 16},17,18)\in SST_{18}(\varpi_{11})$$

 \item With $l=9$, $t=7$ in $(2)$, $l-t=2$, one has
 the inverse reduction with respect $l=9$,
 $$\redu^{-1}_7(a)=({\bf 3},{\bf 4},5,6,{\bf 9}, {\bf 10},{\bf 11}, {\bf 15}, {\bf 16})\in SST_{18}(\varpi_{9})$$

 \item With $l=7=t$ in $(2)$, $l-t=0$, one has
 the inverse reduction with respect $l=7$,
 $$\redu^{-1}_7(a)=({\bf 3},{\bf 4},{\bf 9}, {\bf 10},{\bf 11}, {\bf 15}, {\bf 16})=a\in SST_{18}(\varpi_{7})$$
\end{enumerate}
\end{ex}
\iffalse
\begin{cor}Let $l\in [0,2n]$ and $t\in[n]\cap 2\Z$ such that  $2\le t+r\le min\{l,2n-l\}$ and $l-t\in 2\Z$. Let $a_1\notin 2\Z$,  and   $\mathbf{a} = x_1x_2\cdots x_r (a_1, a_1+1,\dots , a_1+t-2,a_1+t-1)\in SpT_{2n}(\varpi_{t+r})$. Then
  $a_1\ge t+1$,  and
\begin{align}%\label{lem:inverseredx}
\rm{red}_{t}^{-1}:=\redu^{-1}_{|SpT_{2n}(\varpi_t)}:SpT_{2n}(\varpi_t)&\rightarrow SST_{2n}(\varpi_l)\quad\\
\mathbf{a}&\mapsto\rm{red}_{t}^{-1}(\mathbf{a})=T,\nonumber
\end{align}
where \begin{align}T=(1,2,\dots, l_1)\mathbf{a}(a_1+t-1+1,\dots, a_1+t-1+l_{t+1})%\label{lem:T}.
\end{align}
and $l_1$, $l_{t+1}$  are nonnegative even numbers defined by
\begin{align}%\label{lem0}
l_1=min\{a_1-t-1,l-t\},\quad
l_{t+1}=l-t-l_1.
\end{align}

\end{cor}
\fi
%%%%%%%%%%%%%%%%%%%%%%%%%%%%%%%%%%%Newsubsection%%%%%%%%%%%%%%%%%%%%%%%%%%%%%%%%%%%%%%%%%%%%%%%%%%%%%%%%%%%%%%%%%%%%%%%%%%%%%%%%
\subsection{Surjectivity and the algorithm computing the inverse of the quantum LR map } To prove that $\LRAII^{AII}$ is also a surjection we exhibit the right inverse $\widetilde R$ of $\LRAII^{AII}$.
Recall the reverse column insertion in Subsection \ref{subsec:insert}, and the properties of bumping and reverse bumping routes in Remark \ref{routes} and  Remark \ref{revroutes} in there. %Subsection \ref{subsec:insert}.

Let  $\mu\subseteq\lambda\in Par_{\le 2n}$ with  $\mu\in Par_{\le n}$. If $\lambda=\mu$,  $Rec_{2n}(\lambda/\lambda)=Rec_{2n}(())=\widetilde Rec_{2n}(())$ and  $\widetilde R(S, ())=S$,
\begin{align}%${\LRAII^{AII}}^{-1}
\widetilde R:SpT_{2n}(\mu)\times \widetilde Rec_{2n}(())&\longrightarrow SpT_{2n}(\mu)\subset SST_{2n}(\mu)\\
(S,())&\mapsto S,~\redu(S)=S
\end{align}

Let us consider $\mu\subset\lambda$ and  fix arbitrarily $S\in SpT_{2n}(\mu)$. In Theorem \ref{cor:generalhole1}, based on the slack statistics  introduced in \cite{azslack}, one defines

\begin{align}\label{Rtilde}
%&{LR_{|S}^{AII}}^{-1}
&\widetilde R_{|S}:\{S\}\times \widetilde Rec_{2n}(\lambda/\mu)\longrightarrow SST_{2n}(\lambda)\nonumber\\
&\qquad\qquad\qquad \quad(S,Q)\mapsto S^{\r^{(N)}\cdots\r^{(2)} \r^{(1)}}=  c \circ (\rm{red}_{t_0^{(1)}}^{-1},\rm{id})\circ (\underset{{\mu^{(1)}/{\mu^{(1)}}'}} \leftarrow
\bigg(\cdots  \nonumber\\
&(\underset{{\mu^{(N-2)}/{\mu^{(N-2)}}'}}\leftarrow\bigg( c \circ (\rm{red}_{t_0^{(N-1)}}^{-1},\rm{id})\circ (\underset{{\mu^{(N-1)}/{\mu^{(N-1)}}'}} \leftarrow
\bigg(c \circ (\rm{red}_{t_0^{(N)}}^{-1},\rm{id})\circ (\underset{{\mu^{(N)}/{\mu^{(N)}}'}}\leftarrow S)\bigg))\bigg))\cdots)\bigg))
\end{align}
where $N>0$ is the number of vertical strips $\{\mu^{(i-1)}/\mu^{(i)}\}_{i=1}^N$  of $Q$ with $|\mu^{(i-1)}/\mu^{(i)}|=Q[i]$, for $i=1,\dots,N$, $$\lambda=\mu^{(0)}\supset_{vert}\mu^{(1)}\supset_{vert}\cdots \supset_{vert} \mu^{(N)}=\mu,$$
 $\underline\t=(t_0^{(N)},\dots,t_0^{(1)})$ is the slack of $Q$, $\underline\r=[\r^{(N)},\cdots,\r^{(2)}, \r^{(1)}]$ is the slack vector sequence of $Q$ that encodes the slackness of its sequence of vertical strips, ${\mu^{(i)}}'=\mu^{(i)}-\delta_{\r_i}$, $i=1,\dots,N$, as defined  in \cite{azslack} and to be recalled and expanded in the next two subsections, $\rm{red}_{t_0^{(i)}}^{-1}$ is the inverse of the reduction map on $SpT_{2n}(\varpi_{t^{(i)}_0})$, or the expanding map via the reverse removal, defined in Lemma \ref{lem:reductionsurj}, Theorem \ref{lem:reductionsurj2}  and Theorem \ref{thm:nofactors}, and $\c$ is the concatenation in the plactic monoid.  The basic operation is defined for $N=1$, $\underline\t=(t_0)$, in Theorem \ref{thm:verygeneralstrip} in the next subsection,
$$\c \circ (\rm{red}_{t_0}^{-1},\rm{id})\circ (\underset{\mu^{(N)}/{{\mu^{(N)}}'}}\leftarrow S).$$

\subsection{ $1$-vertical strip quantum recording tableaux, slacks and  slack  vectors }\label{subsec:1verticalstrip}
%Let us consider the set $Rec_{2n}(\lambda/\mu)$ of recording tableaux  of shape $\lambda/\mu$  a vertical strip. Then %$\lambda=(\lambda_0,\varpi_{l-\ell(\mu)})$ for some partition

We now recall from \cite{azslack} some statistics for a vertical strip , called \emph{slack} \emph{data} or \emph{slack} \emph{statistics} of a vertical strip $\lambda/\mu$, with $\lambda\in Par_{\le 2n}$ and   $\mu\in Par_{\le n}$.
\begin{defi}\label{slackstrip}Let $\mu\subset_{vert}\lambda$  with $\lambda\in Par_{\le 2n}$  and   $\mu\in Par_{\le n}$. Let $0\le l_0\le\ell(\mu)$ be the number of cells in the vertical strip $\lambda/\mu$ with row coordinates in $[1,\ell(\mu)]$.
%Let $t_0=\ell(\mu)-l_0$ where $l_0$ is the number of cells in the vertical strip $\lambda/\mu$ with row coordinates in $[1,\ell(\mu)]$.
 We call  $t_0=\ell(\mu)-l_0$ the \emph{slack} of the vertical strip $\lambda/\mu$.
 \end{defi}
In other words, the  slack number $t_0$ is the number of gaps of the vertical strip $\lambda/\mu$ in the interval $[1,\ell(\mu)]$.

\begin{defi}\label{slackvectorstrip} With the same setting as above, let $t_0$ be the slack of the vertical strip $\lambda/\mu$.
We define  $\mathbf{r}=\{r_1<\dots< r_{t_0}\}\subseteq [1,\ell(\mu)]$ to be the complement of the set of the row coordinates of the  cells of the vertical strip $\lambda/\mu$ in $[1,\ell(\mu)]$. We call to $\r=(r_1<\dots< r_{t_0})$ the \emph{slack} \emph{row} \emph{index} \emph{vector} of the vertical strip $\lambda/\mu$, and define $\delta_\mathbf{r}\in\{0,1\}^{\ell(\mu)}$ to be  the  \emph{incidence} \emph{vector} of $\mathbf{r}$ by considering $\r$ as a \emph{subset}   of $ [1,\ell(\mu)]$, that is, $\delta_\r=(x_s)_{s\in [1,\ell(\mu)]}$, $x_s=1$ if $s\in\r$ and $0$ otherwise. We call to $\delta_\mathbf{r}$ the \emph{slack} \emph{incidence} \emph{vector} of the vertical strip $\lambda/\mu$, and write $|\delta_\r|:=\sum_{i=1}^{t_0}\delta_{r_i}=t_0$.
%We call to $\ell(\mu)-l_0$ the \emph{slack} of the vertical strip $\lambda^0/\mu$ (or the vertical strip  $\lambda/\mu$).
\end{defi}
Indeed the vertical strip $\lambda/\mu$ as a dual incidence vector with respect to the interval $[1,\ell(\lambda)]$. The dual of the slack incidence vector is  equal to $\varpi_{\ell(\lambda)}-\delta_\r$ where here we are adding to $\delta_\r$ a tail of zeroes of length $\ell (\lambda)-\ell(\mu)$, and $\mu+(\varpi_{\ell(\lambda)}-\delta_\r)=\lambda$.

Define $\mu'\subset_{vert} \mu$ such that $\mu'_i=\mu_i-1$ if $i\in\r=\{r_1,\dots, r_{t_0}\}$ and $\mu_i$, otherwise. That is $\mu'=\mu-\delta_\mathbf{r}\in Par_{\le n}$. Then the vertical strip $\mu/\mu'$ has $t_0=\ell(\mu)-l_0$ cells with row coordinates   $\r=(r_1<\dots< r_{t_0})$ the  \emph{slack} \emph{row} \emph{index vector} of vertical strip $\lambda/\mu$; and \begin{align}\label{dualslack}\lambda=\mu+(\varpi_{\ell(\lambda)}-\delta_\mathbf{r})=\mu'+\varpi_{\ell(\lambda)}\Leftrightarrow \varpi_{\ell(\lambda)}=\lambda-\mu'.
\end{align}

 The slack data is illustrated in Example \ref{ex:q}.

Let us write $\lambda=(\lambda_0,\varpi_{l-\ell(\mu)})$ where
 $\mu\subseteq_{vert}\lambda^0\in Par_{\le n}$ such that $\ell(\lambda^0)=\ell(\mu)$ and $l=\ell(\lambda)$. Thus,  $\lambda^0/\mu$ is a vertical strip with $l_0=|\lambda_0|-|\mu|\le \ell(\mu)$ cells, and slack $t_0=\ell(\mu)-l_0$.

  Let $Q\in
\widetilde Rec_{2n}((\lambda_0,\varpi_{l-\ell(\mu)})/\mu)$ defined by the vertical strip sequence $\mu^{(0)}=\lambda\supset_{vert}\mu^{(1)}=\mu$. Then $Q[1]+t_0=l$. Furthermore,
the  conditions $(R3)$, $(R4)$, in  Proposition %\ref{prop:tilderec},
impose relations between $l=\ell(\lambda)$ and the  slack $t_0$ of $\lambda/\mu$,
\begin{align}\label{conditions}&l\le 2n-t_0\Leftrightarrow l+t_0\le 2n\Leftrightarrow t_0\le 2n-l,\\
&~\mbox{ and }\nonumber\\
&~0\le l-t_0 \in 2\mathbb{Z}.
\end{align}
%\begin{align}\label{conditions}&l\le 2n-\ell(\mu)+l_0\Leftrightarrow l+\ell(\mu)-l_0\le 2n\Leftrightarrow \ell(\mu)-l_0\le %2n-l,\\
%&~\mbox{ and }\nonumber\\
%&~0\le l-\ell(\mu)+l_0 \in 2\mathbb{Z}.
%\end{align}

We  define the reverse column Schensted insertion  data of $Q$ to be %the vertical strip $\mu/\mu'$ or
the \emph{slack row} \emph{index} \emph{vector} $\r$. For simplicity, we often abuse notation and identify the \emph{slack row} \emph{index} \emph{vector} $\r$ with $\mu^{(1)}/{\mu^{(1)}}'$ and use the slack row index vector $\r$ to apply reverse column insertion to an $S\in SpT_{2n}(\mu)$. We also often  say the \emph{slack} $t_0$, the \emph{slack row} \emph{index} \emph{vector} $\r$ and the \emph{slack} \emph{incidence} \emph{vector} $\delta_\r$ of $Q$ in the sense that the vertical strip $\lambda/\mu^{(1)}$ defines $Q$.

Consequently, the $1$-vertical strip recording tableau of shape $\lambda/\mu$ with slack $t_0$ and $l=\ell(\lambda)$ is  characterized as
\begin{align} \label{rec:1vertical} \widetilde Rec_{2n}((\lambda_0,\varpi_{l-\ell(\mu)})/\mu)=\left\{Q=\YT{0.15in}{}{
{{},{},{},},
 {{},{},{}1},
 {\vdots,{}},
 {{},1},
 {{}},
 {{1}},
 {{1}},
 {{\vdots}},
 {{1}},
 {{1}},
},~\ell(\mu)\le l,~~ 0\le l- t_0\in 2\Z,~~ l\le 2n-t_0\right\}
\end{align}

%Let $\mu\subset_{vert}\lambda^0\in Par_{\le n}$ such that $\ell(\lambda^0)=\ell(\mu)$ and $\lambda^0/\mu$ is a vertical strip of length $l_0=|\lambda_0|-|\mu|$.
%Let $S\in Sp_{2n}(\mu)$ and  $Q\in \widetilde Rec_{2n}((\lambda^0,\varpi_{l-\ell(\mu)})/\mu)$ such that

%\begin{align}\label{conditions}l\le 2n-\ell(\mu)+l_0\Leftrightarrow l+\ell(\mu)-l_0\le 2n, ~\mbox{ and }~0\le %l-\ell(\mu)+l_0 \in 2\mathbb{Z}.
%\end{align}

\begin{ex}\label{ex:q}
For $n=6$, let
\begin{align*}& Q=\YT{0.15in}{}{
 {{},{},{},{}},
 {{},{},,1},
 {{},{},},
 {{},{},},
 {{},{1}},
 {{}},
 {{1}},
 {{1}},
}\in \widetilde Rec_{2n}((\lambda_0,\varpi_{l-\ell(\mu)})/\mu),\nonumber
\end{align*}
where $ \mu=\mu^{(1)}=(4,3,3,3,1,1)\subset_{vert}\lambda^0=(4,4,3,3,2,1)\subseteq \lambda=(\lambda_0,\varpi_{l-\ell(\mu)})=(\lambda_0,\varpi_{2})$, and $\ell(\lambda^0)=\ell(\mu)=6$, $ l_0= 2$, slack $t_0=\ell (\mu)-l_0=4$, $Q[1]+t_0=\ell(\lambda)=8\le 12-4$.

One has ${\mu^{(1)}}'=\mu^{(1)}-\delta_\mathbf{r}=\mu-(1,0,1,1,0,1)=(3,3,2,2,1,0)$ where $\mathbf{r}=\{1,3,4,6\}\subseteq [1,\ell(\mu)]\subseteq [1,\ell(\lambda)]=[1,8]$ is the slack row index vector of the vertical strip $\lambda/\mu^{(1)}$, and
%\begin{align}
$$\begin{array}{llllll}\lambda/\mu^{(1)}={\tiny\ydiagram{0,3+1,0,0,1+1,0,0+1,0+1}}&\quad \mu^{(1)}/{\mu^{(1)}}'={\tiny\ydiagram{3+1,0,2+1,2+1,0,0+1}}&\qquad \r=\YT{0.15in}{}{
 {1},
 {3},
 {4},
 {6},
}&\qquad  \delta_\r=\YT{0.15in}{}{
 {1},
 {0},
 {1},
 {1},
 {0},
 {1}
}
\end{array}$$
%\end{align}
\end{ex}
%For simplicity, we often abuse notation and identify the slack row index vector $\r$ with $\mu/\mu'$.
Indeed $\varpi_{\ell(\lambda)}=\lambda-{\mu^{(1)}}'$.

Let $S\in SpT_{2n}(\mu)$ and $Q\in \widetilde Rec_{2n}((\lambda_0,\varpi_{l-\ell(\mu)})/\mu)$.
Let  $%S'_\r=
(a_1,\dots,a_{t_0})\in Sp_{2n}(\varpi_{t_0})$ be the set of the $t_0$ bumped elements from the first column  of $S$ by applying successively the reverse column insertion to the  rows of $S$, as prescribed by slack row index vector $\r=(r_1<\dots<r_{t_0})$ of $Q$, going from the largest to the smallest row;   and let $S^1$ be  the remained tableau after the application of that reverse column insertion to  $S$.
 %Note that Remark \ref{routes} ensures that $S'_1$ is well defined column.
 For the returned pair $((a_1,\dots,a_{t_0}),S^1)$ obtained under the action of the reverse Schensted insertion to the entries in the $t_0$ cells  $\mu^{(1)}/{\mu^{(1)}}'$ of $S$, put
 \begin{align}\label{revschensted}((a_1,\dots,a_{t_0}),S^1)=:(\underset{{\mu^{(1)}/{\mu^{(1)}}'}}\leftarrow S), \mbox{  or $((a_1,\dots,a_{t_0}),S^1 )=:(\underset{{\r}}\leftarrow S)$}.
 \end{align}

Since $S$ is symplectic and $(a_1,\dots,a_{t_0})$ is contained in the first column of $S$,  $(a_1,\dots,a_{t_0})\in SpT_{2n}(\varpi_{t_0})$. (Any subset of a symplectic column is still symplectic.)

%\begin{obs}\label{reverse-operators}From \eqref{revschensted},
For $Q\in \widetilde Rec_{2n}((\lambda_0,\varpi_{l-\ell(\mu)})/\mu)$ \eqref{rec:1vertical}
\begin{align}\label{tildeR2}{\LRAII^{AII}}^{-1}(S, Q)=\tilde R(S,Q)&= \c \circ (\rm{red}_{t_0}^{-1},\rm{id})\circ (\underset{{\r}}\leftarrow S)\nonumber\\
&=\c\circ (\rm{red}_{t_0}^{-1},\rm{id})((a_1,\dots,a_{t_0}),S^1 )\nonumber\\
&=\c\circ (\rm{red}_{t_0}^{-1}((a_1,\dots,a_{t_0})),S^1)\nonumber\\
&:=S^\r
%\nonumber
%&=c\circ (T_0T_1\cdots T_{\ell(\mu)-l_0},S'_{\ge 2})\nonumber\\
%&=(T_0T_1\cdots T_{\ell(\mu)-l_0}).S'_{\ge 2}
\end{align}
where $\c$ means concatenation in the plactic monoid and $\rm{red}_{t_0}^{-1}$ means \emph{reverse or the inverse} \emph{reduction map} applied to a  column in $SpT_{2n}(\varpi_{t_0})$ a symplectic column of length the slack number  $t_0$ of $Q$ according to the rules below. The reduction map $\redu$ on $SST_{2n}(\varpi_l)$ is injective \cite[Proposition 4.3.6, Corollary 4.4.3]{watanabe} and returns symplectic columns of  length  $0\le k\le \min(l,2n-l)$ and $l-k\in 2\Z$. Its surjectivity and inverse is shown in Lemma \ref{lem:reductionsurj}, Theorem \ref{lem:reductionsurj2}, Theorem \ref{thm:nofactors}, Theorem \ref{thm:xAy} and more generally in \cite{azreduction}.  
\iffalse
The  inverse $\rm{red}_{t_0}^{-1}$ on $SpT_{2n}(\varpi_{t_0})$

\begin{align} \rm{red}_{t_0}^{-1}: SpT_{2n}(\varpi_{t_0})\rightarrow SST_{2n}(\varpi_{l})
\end{align}
is exhibited below \eqref{inversered} following  the rules %\eqref{suml}, \eqref{verygeneral0}, \eqref{verygeneral200}
in Lemma \ref{lem:reductionsurj}, Theorem \ref{lem:reductionsurj2}  and Theorem \ref{thm:nofactors}.
\fi

Let $(a_1,\dots,a_{t_0})\in SpT_{2n}(\varpi_{t_0})$  be the column of bumped entries  as in \eqref{revschensted}.
Taking into account the inverse of the reduction map $\rm{red}_{t_0}^{-1}$ on $SpT_{2n}(\varpi_{t_0})$, Section \ref{sec:inversereduction}, one has the  theorem below.

\begin{thm}\label{thm:verygeneralstrip} With the set up above where $S\in SpT_{2n}(\mu)$ and $Q\in  \widetilde Rec_{2n}((\lambda^0,\varpi_{l-\ell(\mu)})/\mu)$, $l=\ell(\lambda)$, has slack row index vector  $\mathbf{r}=\{r_1<\dots< r_{t_0}\}$, one has the following assertion:

\begin{align}\label{inverse}{\LRAII^{AII}}^{-1}(S,Q)={\widetilde R}(S, Q)&= c \circ (\rm{red}_{t_0}^{-1},\rm{id})\circ (\underset{{\r}}\leftarrow S)\nonumber\\
&=c\circ (\rm{red}_{t_0}^{-1},\rm{id})((a_1,\dots,a_{t_0}),S^1 )\nonumber\\
&=c\circ (\rm{red}_{t_0}^{-1}(a_1,\dots,a_{t_0}), S^1) \nonumber\\
&=\redu_{t_0}^{-1}((a_1,\dots,a_{t_0}))\bigcdot S^1=:S^\r\in SST_{2n}(\lambda).
\end{align}
where
$\redu_{t_0}^{-1}((a_1,\dots,a_{t_0}))=T_0(a_1)T_1\cdots (a_{t_0})T_{t_0}\in SST_{2n}(\varpi_{\ell(\lambda)})$  as in Section \ref{sec:inversereduction} and \cite{azreduction},
and $S^1\in SpT_{2n}({\mu^{(1)}}')$  with ${\mu^{(1)}}'=\mu^{(1)}-\delta_\r$ and  $\lambda=\mu^{(1)}-\delta_\r+\varpi_{\ell(\lambda)}$.
\end{thm}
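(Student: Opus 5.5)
To prove Theorem \ref{thm:verygeneralstrip} we must show two things. First, the tableau $S^\r:=\redu_{t_0}^{-1}((a_1,\dots,a_{t_0}))\bigcdot S^1$ is well defined and lies in $SST_{2n}(\lambda)$. Second, applying one step of Watanabe's algorithm to $S^\r$ returns the pair $(S,Q)$, that is,
\[
\suc(S^\r)=S \quad\text{and}\quad sh(S^\r)/sh(S)=\lambda/\mu .
\]
Since $S$ is symplectic, $\suc(S)=S$, so $P^{AII}(S^\r)=S$ and $Q^{AII}(S^\r)=Q$. The argument then runs backwards through the definition of $\suc$.

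\textbf{Step 1 (well-definedness of the pieces).} Reverse column insertion along the slack rows $\r$, taken from the largest row to the smallest, is handled by Remark \ref{revroutes}. It gives a column $(a_1<\dots<a_{t_0})$ whose entries are the bumped entries in the order they come out; the remark guarantees they come out strictly increasing when read bottom to top. It also gives a tableau $S^1$ of shape ${\mu^{(1)}}'=\mu-\delta_\r$, with $(a_1,\dots,a_{t_0})\cdot S^1=S$ by the reversibility \eqref{reverse} of the Pieri bijection.

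By Corollary \ref{symplecticcolumn}(c),(d), both $S^1$ and $(a_1,\dots,a_{t_0})$ are symplectic. The conditions \eqref{conditions}, namely $0\le l-t_0\in2\Z$ and $t_0\le\min\{l,2n-l\}$, are exactly the hypotheses needed to apply $\redu_{t_0}^{-1}$ from Section \ref{sec:inversereduction} (or \cite{azreduction}). This produces a column $C:=\redu_{t_0}^{-1}(a_1,\dots,a_{t_0})\in SST_{2n}(\varpi_l)$ with $\redu(C)=(a_1,\dots,a_{t_0})$ and $C\supseteq\{a_1,\dots,a_{t_0}\}$.

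\textbf{Step 2 (concatenation is a genuine tableau of shape $\lambda$).} We must show that $C$ can be placed as the first column in front of $S^1$ with no bumping, i.e. $C(i,1)\le S^1(i,1)$ for $i\le\ell({\mu^{(1)}}')$. Then $C\bigcdot S^1$ is plain concatenation, of shape ${\mu^{(1)}}'+\varpi_l=\lambda$ by \eqref{dualslack}.

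The natural route is to compare two columns against the first column $S_1$ of $S$:
\begin{enumerate}
\item[(i)] the first column of $S^1$ is $S_1$ with the bumped entries deleted and some entries pushed up from the second column, and each such entry is $\ge$ the entry it replaces;
\item[(ii)] $C$ is obtained from $(a_1,\dots,a_{t_0})$ by inserting the blocks $T_j$. Each block consists of small consecutive integers placed in the gaps below $a_{j+1}$; the minima defining the $l_j$ in Theorem \ref{thm:nofactors} and Lemma \ref{lem:reductionsurj} cap these so that they never exceed the available gap.
\end{enumerate}
I would prove $C(i,1)\le S^1(i,1)$ row by row by induction on $i$. The count of entries of $C$ up to a given value must be compared with the number of rows of $S^1$ whose first entry is at most that value. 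The slack rows are exactly the positions where $S_1$ loses an entry, and the non-slack rows of $\lambda/\mu$ are where $C$ gains one, so this counting should line up.

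I expect this to be the main obstacle. The entries of $C$ are forced (through the $\min$'s) to be as small as allowed, and the difficulty is showing that the rows freed by the slack deletions leave enough room. If the direct comparison gets messy, I would first try to reduce it to the statement $C(i,1)\le S_1(i,1)$. This is plausible because $C$ is the "lex-smallest" preimage and $S_1$ contains $(a_1,\dots,a_{t_0})$ as a symplectic column. Combined with (i), that would close the step.

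\textbf{Step 3 (applying $\suc$).} The first column of $S^\r$ is $C$, and the remainder of $S^\r$ is $S^1$. So
\[
\suc(S^\r)=\redu(C)\bigcdot S^1=(a_1,\dots,a_{t_0})\bigcdot S^1=S,
\]
by Step 1. Its shape is $\mu$, and the recording strip is $\lambda/\mu$, filled with $1$'s, which is $Q$. Since $S$ is symplectic, the iteration stops, so $LR^{AII}(S^\r)=(S,Q)$. Hence $S^\r={\LRAII^{AII}}^{-1}(S,Q)$, using the injectivity \eqref{lrIIsurj}. This yields the displayed chain of equalities.
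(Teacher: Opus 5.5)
Your outline follows the paper's route. Everything hinges on placing $C=\redu_{t_0}^{-1}(a_1,\dots,a_{t_0})$ in front of $S^1$ with no bumping, and your Step 3 (checking $\suc(S^\r)=(a_1,\dots,a_{t_0})\bigcdot S^1=S$ and that the recorded strip is $Q$) spells out what the paper leaves implicit; the paper's proof only asserts that the formulas for $\redu_{t_0}^{-1}$ ``push down'' the $a_i$.

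However, Step 2 is the only non-formal step, and you do not prove it. The reason you offer for it is empty. Since $\redu$ is injective on $SST_{2n}(\varpi_l)$, $C$ is the \emph{unique} preimage of length $l$, so being ``lex-smallest'' among preimages carries no information. What has to be compared is $C$ against the symplectic column $S_1$.

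The slack-row heuristic is also off. The entry $a_j$ is bumped from some row $p_j\ge r_j$ of $S_1$, in general $p_j\neq r_j$. For example, $S$ with rows $(2,3),(3)$ and $\r=\{1\}$ bumps the $3$ in row $2$. Moreover, that row of the first column is refilled by a weakly larger entry rather than emptied.

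Your fallback reduction is the right one, and it can be completed as follows.
\begin{itemize}
\item Each reverse bump into the first column either deletes its bottom cell or replaces an entry by a weakly larger one. Hence $S^1(i,1)\ge S_1(i,1)$ for $i\le\ell(\mu')$, and it suffices to show $\#(C\cap[1,v])\ge i$ for $v=S_1(i,1)$, $i\le\ell(\mu)$.
\item Under the hypotheses of Theorem \ref{thm:nofactors}, write $a=\{a_1,\dots,a_{t_0}\}$. Then $C=a\sqcup F$, where $F$ consists of the $l-t_0$ smallest elements of $[1,2n]\setminus(a\cup s(a))$.
\item If $F\subseteq[1,v]$, then $\#(C\cap[1,v])=\#(a\cap[1,v])+l-t_0\ge \#(a\cap[1,v])+\ell(\mu)-t_0\ge i$.
\item Otherwise $C\cap[1,v]=[1,v]\setminus s(a)$. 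Here $\#(s(a)\cap[1,v])$ equals $\#(a\cap[1,v])$, minus one if $v\in a$ is odd, plus one if $v+1\in a$ is even.
\item Combine this with symplecticity: $v\ge 2i-1$, refined to $v\ge 2i$ when $v$ is even. When $v+1\in a$ is even, $S_1(i+1,1)=v+1$, so $v\ge 2i+1$ and $v=s(v+1)\notin a$. In every case $\#(C\cap[1,v])\ge i$.
\item For Lemma \ref{lem:reductionsurj} the same conclusion follows positionally. Let $p_1$ be the row of $a_1$ in $S_1$; symplecticity gives $a_1\ge 2p_1+t_0-1$, which places $a_1$ in $C$ at row $\min\{a_1-t_0,\,l-t_0+1\}\ge p_1$.
\end{itemize}
With Step 2 established this way, your Step 3 completes the proof.
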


%\iffalse
\begin{proof}

It remains to prove that $S^\r$ is a semistandard tableau.
The  entries of $(a_1,\dots,a_{t_0})$, in the first column of $S$, are included in  $T_0(a_1)T_1\cdots (a_{t_0})T_{t_0}$. More precisely, for $i=1,\dots,t_0$, $a_i$ belongs to $(a_i)T_i$, and since  the first column of $S$ is a symplectic column  its row  coordinate as an entry of $T_0(a_1)T_1\cdots (a_{t_0})T_{t_0}$ is larger or equal than $r_i$. Conditions on the formulas in Lemma \ref{lem:reductionsurj}, Theorem \ref{lem:reductionsurj2}  and Theorem \ref{thm:nofactors} and \cite{azreduction} %\eqref{verygeneral0} and \eqref{verygeneral200}
force the push down of the $a_i$ row-coordinates as an entry of $T_0(a_1)T_1\cdots (a_{t_0})T_{t_0}$. Hence the column insertion of $T_0(a_1)T_1\cdots (a_{t_0})T_{t_0}$ in $S^1$ is just the concatenation $_0(a_1)T_1\cdots (a_{t_0})T_{t_0}\bigcdot S^1$.
\end{proof}

\begin{obs} \label{nullslack}
$(1)$ When  $t_0= 0 \Leftrightarrow l_0=\ell(\mu)\Leftrightarrow \r=()\Leftrightarrow \delta_\r=0$, then $\mu'=\mu$ and the bumped column $S'_\r=()$ is empty and $S'_{\ge 2}=S$. That is, $((),S):=(\underset{{\r}}\leftarrow S)$.
If  the slack of  $Q$ is  $t_0=0$, then $l\in 2\Z$ and $\rm{red}_{0}^{-1}$ means reverse reduction in $[1,l]$ applied to a column of length $0$. That is, $\redu_{0}^{-1}(())=(12\dots l)$ with $l\in 2\Z$ $\Leftrightarrow \redu(12\dots l)=()\Leftrightarrow \rm{rem}(12\dots l)=(12\dots l)$ and $l\le 2n$. Hence,

\begin{align}&%{LR^{AII}}^{-1}
\widetilde R=(S, Q)=\c \circ (\rm{red}_{0}^{-1},\rm{id})\circ (\underset{{\r}}\leftarrow S)
=\c\circ (\rm{red}_{0}^{-1},\rm{id})((),S)=\c\circ (\rm{red}_{0}^{-1}(),S)\\
&=\c\circ (\YT{0.15in}{}{
 {{1}},
 {{2}},
 {{\vdots}},
 {{l}},
 },S)=\YT{0.15in}{}{
 {{1}},
 {{2}},
 {{\vdots}},
 {{l}},
 }\bigcdot S=T_0\bigcdot S\in SST_{2n}(\lambda)
 \end{align}

 $\mu'=\mu-\delta_{()}=\mu$ and  $\lambda=\mu+\varpi_{\ell(\lambda)}$.

$(2)$
When $l_0=0\Leftrightarrow t_0=\ell(\mu)\Leftrightarrow\r=[1,\ell(\mu)]\Leftrightarrow\delta_\r=(1^{\ell(\mu)})$, then $\mu'=\mu-(1^{\ell(\mu)})$ and the bumped column $S_1=(a_1,\dots,a_{\ell(\mu)})\subseteq SpT_{2n}(\varpi_{\ell(\mu)})$ is the first column of $S$. That is,  $((a_1,\dots,a_{\ell(\mu)}),S^1):=(\underset{{\r}}\leftarrow S)$ where $S^1$ is the symplectic tableau $S\in SpT_{2n}(\mu)$ minus its first column $S_1$. Hence
 \begin{align}&%{LR^{AII}}^{-1}
&\widetilde R (S, Q)=\c \circ (\redu_{\ell(\mu)}^{-1},\rm{id})\circ (\underset{\r}\leftarrow S)
=c\circ (\rm{red}_{\ell(\mu)}^{-1},\rm{id})(S_1,S^1)=\rm{red}_{\ell(\mu)}^{-1}(S_1)\bigcdot S^1
\end{align}
In this case, $l-\ell(\mu)\in 2\Z$ and $l\le 2n-\ell(\mu)$, and  $\rm{red}_{\ell(\mu)}^{-1}(S_1)\in SST_{2n}(\varpi_l)$.

Let $n=4$, $S=\YT{0.13in}{}{
 {{1},{2},5},
 {4,4},
 {5},
 }
 \in SpT_8(3,2,1)$ and $Q=\YT{0.13in}{}{
 {{},{},},
 {,},
 {{}},
 {1},
 {1},
 }\in \widetilde Rec_8((\mu,1,1)/\mu)$, $t_0=3$, $\r=[1,3]$, $l=5$, $l-\ell(\mu)=2\in 2\Z$, $l=5\le 2n-\ell(\mu)=8-3$.
 Then
 $$\rm{red}_{\ell(\mu)}^{-1}(S_1)\bigcdot S_{\ge 2}=\rm{red}_{\ell(\mu)}^{-1}(145)\bigcdot S_{\ge 2}=14578\bigcdot\YT{0.13in}{}{
 {{2},5},
 {4},
 },\quad T_0T_1T_2T_3=()\bigcdot 1\bigcdot 4 \bigcdot 5 78\in SST_8(\varpi_5)
 $$

 For $S=\YT{0.13in}{}{
 {{1},{2},5},
 {6,6},
 {7},
 }
 \in SpT_8(3,2,1)$

 $$\rm{red}_{\ell(\mu)}^{-1}(S_1)\bigcdot S_{\ge 2}=\rm{red}_{\ell(\mu)}^{-1}(167)\bigcdot S^1=1 3467\bigcdot\YT{0.13in}{}{
 {{2},5},
 {6},
 },\quad T_0T_1T_2T_3=()\bigcdot 1 34 \bigcdot 5\bigcdot 7\in SST_8(\varpi_5)
 $$
\end{obs}
 For an illustration of Theorem \ref{thm:verygeneralstrip} see \cite{azslack}.

\subsection{Quantum recording tableaux and slack data}\label{sec:slackdata}

We now  recall and expand from \cite{azslack}  the \emph{slack} \emph{incidence} \emph{matrix} of a  tableau $Q\in \widetilde Rec_{2n}(\lambda/\mu)$ which encodes the information as described in  Proposition \ref{prop:slack} below to define ${\LRAII^{AII}}^{-1}$ on $SpT_{2n}\times \widetilde Rec_{2n}(\lambda/\mu)$. We follow closely \cite[Section 3.4]{azslack}.

 \begin{defi}\label{def:tei} Let
 $Q\in  \widetilde Rec_{2n}(\lambda/\mu)$  defined by the sequence of vertical strips
\begin{align}\label{slack}&\mu^{(0)}=\lambda\supset_{vert} \mu^{(1)}\supset_{vert} \cdots\supset_{vert}\mu^{(N-1)}\supset_{vert}\mu^{(N)} =\mu
\end{align}
such that $2\le |\mu^{(i-1)}/\mu^{(i)}|=Q[i]\in 2\Z$, $1\le i\le N$  and $\nu=( Q[1],\dots,Q[N])^t$ is an even partition. For   $1\le i\le N$,
let $t_0^{(i)}=\ell(\mu^{(i)})-l_0^{(i)}$ where $l_0^{(i)}$ is the number of cells in the vertical strip $\mu^{(i-1)}/\mu^{(i)}$ with row coordinates in $[1,\ell(\mu^{(i)})]$. Then,  for $i=1,\dots,N$,
\begin{enumerate}
\item $t^{(i)}_0\in\{0,1,\dots,\ell(\mu^{(i)}\}$ is the \emph{slack} of the \emph{vertical}  \emph{strip} $\mu^{(i-1)}/\mu^{(i)}$, and
    \item $\r^{(i)}\subseteq [1,\ell(\mu^{(i)})]$ with cardinal $|\r^{(i)}|=t_0^{(i)}$,
   the corresponding \emph{slack} (\emph{row} \emph{index}) \emph{vector}.   We write $\r^{(i)}=()$ or $ \emptyset$ when $t_0^{(i)}=0$, and in this case $\ell(\mu^{(i)})=Q[i]\in 2\Z$.
\end{enumerate}
The sequence of \emph{slack} \emph{numbers} $\underline\t=(t_0^{(N)},\dots,t_0^{(1)})$,  is called  the \emph{slack} \emph{sequence} of $Q$  uniquely defined by \eqref{slack}. Analogously, the corresponding sequence of slack (row index) vectors $\underline\r=[\r^{(N)},\dots,\r^{(1)}]$ is called the \emph{slack} (row index) \emph{vector} \emph{sequence} of $Q$.
\end{defi}

We have defined $\delta_{\r^{(i)}}\in \{0,1\}^{\ell(\mu^{(i)})}$, that is, considering  $\r^{(i)}\subseteq [1,\ell(\mu^{(i)})]$, $i=1,\dots,N$. Since $[1,\ell(\mu)]\subseteq [1,\ell(\mu^{(N-1)})] \subseteq \cdots \subseteq [1,\ell(\mu^{(1)})]\subseteq [1,\ell(\lambda)]$, we may consider
$\delta_{\r^{(i)}}\in \{0,1\}^{\ell(\lambda)}$ by adding to $\r^{(i)}$ a tail of $\ell(\lambda)-\ell(\r^{(i)})$ zeroes.
In this sense the  $\ell(\lambda)\times N$ matrix  $\delta_{\underline\r}=[\delta_{\r^{(N)}},\dots,\delta_{\r^{(1)}}]$ is called  the \emph{slack} \emph{incidence} \emph{matrix} of $Q$. We also write $|\delta_{\underline \r}|:=\sum_{i=1}^N|\delta_{\r^{(i)}}|=\sum_{i=1}^N t_0^{(i)}$.

Given the index matrix $\delta_{\underline\r}$ of a slack vector sequence $\underline\r=[\r^{(N)}\le_\r\cdots\le\r^{(1)}]$ of $Q\in  Rec_{2n}(\lambda/\mu)$, we define the submatrix $\delta_{\underline\r^+}$ to be the largest submatrix $\ell(\mu)\times k$ with $N\ge k\ge 1$, of $\delta_{\underline \r}$ such that $\mu-\delta_{\underline\r^+}\in \mathbb{Z}_{\ge 0}$, where $\underline\r^+=[\r^{(N)},{\r^{(N-1)}}',\dots,{\r^{(k)}}']$ and  ${\r^{(i)}}'\subseteq \r^{(i)}$, $ k\le i< N$. For $N=1$, $\underline\r^+=\underline\r$. We call to $\underline \r^+$ the \emph{nonnegative} \emph{part} of $\underline\r$

 From the slack incidence matrix,  one has $\mu^{(i-1)}=\mu^{(i)}-\delta_{\r^{(i)}}+\varpi_{\ell(\mu^{(i-1)})}$ with $\mu^{(i)}-\delta_{\r^{(i)}}\subset_{vert}\mu^{(i)}\in Par_{\le \ell(\mu^{(i)})}$, for $i=1,\dots,N$. (A generalization of \eqref{dualslack}.) Henceforth,
for $i=1,\dots,N$,
\begin{align}&\mu^{(i-1)}=\mu-\sum_{k=i}^N\delta_{\r^{(k)}}+\sum_{k=i}^N\varpi_{\ell(\mu^{(k-1)})}\\
&\mbox{ and }\nonumber\\
&\lambda=\mu-\sum_{k=1}^N\delta_{\r^{(k)}}+\sum_{k=1}^N\varpi_{\ell(\mu^{(k-1)})}\nonumber\\
&=\mu-\delta_{\underline\r}+\sum_{k=1}^N\varpi_{\ell(\mu^{(k-1)})},\nonumber\\
&=(\mu-\delta_{\underline\r^+})+\sum_{\r^{(i)}\notin \r^{+}}\delta_{\r^{(i)}\setminus {\r^{(i)}}'}+\sum_{k=1}^N\varpi_{\ell(\mu^{(k-1)})}
\end{align}
where we convention $\mu-\delta_{\underline\r}:=\mu-\sum_{k=1}^N\delta_{\r^{(k)}}$.

This slack data is illustrated in the next example and in Example \ref{ex:vectorslackn=3}. For additional illustrations see \cite{azslack}.

\begin{ex}In Example \ref{ex:q}, $Q\in \widetilde Rec_{12}(\lambda/\mu)$, $N=1$, $\mu^{(0)}=\lambda$, $\mu^{(1)}=\mu$ and $t_0^{(1)}=\ell(\mu)-l^{(1)}_0=6-2=4$ with $l_0^{(1)}=2$, and $Q[1]=4$. Thus $\ell(\mu^{(0)})=\ell(\lambda)=Q[1]+t_0^{(1)}=4+4$ and $Q[1]+2t_0^{(1)}\le 12$. The slack row index vector is $\r=(1,3,4,6)$, and the $\ell(\mu)\times 1$ slack incidence  matrix is $[\delta_{\r^{(1)}}]=[1\;0\;1\;1\;0\;1]^T$.

\end{ex}

Given $Q\in  \widetilde Rec_{2n}(\lambda/\mu)$, Definition \ref{def:tilde}, $(R3)$, $(R4)$, impose conditions to the slack sequence  of  $Q$, and  $(R5)$ impose conditions to its  \emph{slack} \emph{vector} \emph{sequence} $\underline\r=[\r^{(N)},\dots,\r^{(1)}]$ equivalently to the slack incidence matrix $\delta_{\underline \r}=[\delta_{\r^{(N)}},\dots,\delta_{\r^{(1)}}]$ of $Q$.

Given $x\in [1,m]^p$, $y\in[1,m]^q$, we write $x\le_{\r} y$ to mean $p\ge q\ge 0$, and $x_i\le y_i$ whenever $i\in[1,q]$. For $q=0$, $x\le_\r ()$. This means that $x\le_{\r} y$ if and only if  the $p\times 2$ incidence matrix  $[\delta_x,\delta_y]$ is such that for any $i\ge 1$, the sum of the first $i$ entries in the first column is always larger or equal than the  sum of the first $i$ entries in the second column. See Example \ref{ex:vectorslackn=3}.

\begin{prop}\cite{azslack}\label{prop:slack} Let
 $Q\in  \widetilde Rec_{2n}(\lambda/\mu)$ with vertical strip decomposition \eqref{slack}, and put $t^{(0)}_0:=0$ and $\r^{(0)}:=()$. Then, for each $1\le  i\le N$,

\begin{itemize}
%\item[(a)]$\ell(\mu^{\nu_1-1)})=Q[\nu_1]+t_{\nu_1}$ for some $0\le t_{\nu_1}\le \ell(\mu)$, and

\item[(a)]
 $\ell(\mu^{(i-1)})=Q[i]+t_0^{(i)}\ge \ell(\mu^{(i)}$,

\item[(b)] $0\le t^{(i-1)}_0\le t^{(i)}_{0} \le \ell(\mu^{(i)})\le \ell(\mu^{(i-1)})$. The slack sequence $\underline\t=(t^{(N)}_0\ge \dots \ge t^{(1)}_0)$ is weakly decreasing while the sequence $(\ell(\mu^{(N)})\le \dots \le \ell(\mu^{(1)})\le \ell(\mu^{(0)}) )$ is weakly increasing.

\item[(c)] $2n\ge Q[i]+2t_0^{(i)}=\ell(\mu^{(i-1)})+t_0^{(i)}\Leftrightarrow 2n-t_0^{(i)}\ge Q[i]+t_0^{(i)}=\ell(\mu^{(i-1)})$.

\item[(d)] $\r^{(i)}\le_\r \r^{(i-1)}$. The slack vector  sequence $\underline\r=[\r^{(N)}\le_\r\cdots\le_r\r^{(1)}]$ is weakly increasing.
    \item[(e)] $\r^{(i)}\subseteq[1,\ell(\mu^{(i)}]\subseteq [1,\ell(\mu^{(i-1)}]\subseteq[1,2n-t_0^{(i)}]$.
\end{itemize}
\end{prop}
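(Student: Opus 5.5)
The plan is to derive all five items directly from the vertical strip decomposition of $Q$ and the axioms $(R3)$--$(R5)$ of Definition \ref{def:tilderec}. Item (a) comes from counting along the strip $J_i=\mu^{(i-1)}/\mu^{(i)}$, which has at most one cell per row. Its cells in rows $[1,\ell(\mu^{(i)})]$ number $l_0^{(i)}=\ell(\mu^{(i)})-t_0^{(i)}$. The rows $\ell(\mu^{(i)})+1,\dots,\ell(\mu^{(i-1)})$ are new rows of $\mu^{(i-1)}$, so each of them holds exactly one cell of $J_i$ (in column $1$). Hence
\[
Q[i]=l_0^{(i)}+\ell(\mu^{(i-1)})-\ell(\mu^{(i)}),
\]
which rearranges to $\ell(\mu^{(i-1)})=Q[i]+t_0^{(i)}$, and trivially $\ell(\mu^{(i-1)})\ge\ell(\mu^{(i)})$. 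Item (c) follows by substituting (a) into $(R4)$: $Q[i]\ge 2(\ell(\mu^{(i-1)})-n)=2(Q[i]+t_0^{(i)}-n)$ is equivalent to $Q[i]+2t_0^{(i)}\le 2n$. Item (e) then combines $\r^{(i)}\subseteq[1,\ell(\mu^{(i)})]$, the monotonicity of lengths, and $\ell(\mu^{(i-1)})\le 2n-t_0^{(i)}$ from (c).

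For (b) and (d) I use $(R5)$, read as a comparison of the strips $J_i$ and $J_{i+1}$ row by row. The key identity is that the gaps of $J_i$ in $[1,\ell(\mu^{(i)})]$ are the rows $r\le\ell(\mu^{(i)})$ carrying no entry $i$. Therefore
\[
\#\{s\in\r^{(i)}: s\le r\}=r-Q_{\le r}[i]\qquad\text{for } r\le \ell(\mu^{(i)}).
\]
Applying $(R5)$ in the form $Q_{\le r}[i+1]\le Q_{\le r}[i]$ yields $\#(\r^{(i)}\cap[1,r])\le\#(\r^{(i+1)}\cap[1,r])$ for all $r\le\ell(\mu^{(i+1)})$. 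I must also check the range $\ell(\mu^{(i+1)})<r\le\ell(\mu^{(i)})$. There every row belongs to $J_{i+1}$, so every row also carries an $i$: $(R2)$ and $(R1)$ force the entries in column $1$ to be weakly decreasing downward, and the $i$'s fill the remaining rows first. Consequently $\r^{(i)}$ has no elements beyond $\ell(\mu^{(i+1)})$. Next I must translate the prefix-count domination into the entrywise statement $\r^{(i+1)}\le_\r\r^{(i)}$, that is, $|\r^{(i+1)}|\ge|\r^{(i)}|$ together with $r^{(i+1)}_j\le r^{(i)}_j$. This is the standard equivalence between dominance of prefix counts and termwise comparison of the sorted sets, exactly as phrased after the definition of $\le_\r$. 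Taking $r=\ell(\mu^{(i+1)})$ gives $t_0^{(i)}\le t_0^{(i+1)}$, which is (b) after reindexing. The remaining inequality $t_0^{(i)}\le\ell(\mu^{(i)})$ in (b) is immediate from the definition of slack, and the conventions $t_0^{(0)}=0$, $\r^{(0)}=()$ make the case $i=1$ trivial.

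I expect the main obstacle to be the row-range bookkeeping in (d). It needs care at the rows between $\ell(\mu^{(i+1)})$ and $\ell(\mu^{(i)})$, and in the claim that cells of $J_i$ in new rows lie in column $1$; this uses $(R1)$--$(R2)$ to see that the cells labelled $\ge i$ form the partition $\mu^{(i-1)}$. My first step there is to write $D(\mu^{(i-1)})$ explicitly via $(R4)$'s description and verify that the strips nest, after which the counting is mechanical.
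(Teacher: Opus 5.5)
Your arguments for (a), (c) and (e) are fine, and so is the prefix-count inequality for $r\le \ell(\mu^{(i+1)})$. The gap is in (b)/(d). You claim that every row $r$ with $\ell(\mu^{(i+1)})<r\le\ell(\mu^{(i)})$ carries an $i$, and conclude that $\r^{(i)}\subseteq[1,\ell(\mu^{(i+1)})]$. This is false.

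Those rows are the new rows of $\mu^{(i)}$, so each of them carries an $i+1$ in column $1$. Nothing in (R1)--(R2) forces an $i$ into column $2$ of such a row. Concretely, take $n=3$, $\mu=(1,1)$, $\lambda=(2,2,1,1)$, with $Q(1,2)=Q(2,2)=1$ and $Q(3,1)=Q(4,1)=2$.
\begin{itemize}
\item (R1)--(R3) are immediate.
\item For (R5): for $r=1,\dots,4$ one has $Q_{\le r}[2]=0,0,1,2$ and $Q_{\le r}[1]=1,2,2,2$.
\item (R4) holds with equality in both cases.
\end{itemize}
Here $\mu^{(1)}=(1,1,1,1)$, $\r^{(2)}=(1,2)$ and $\r^{(1)}=(3,4)$. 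So $\r^{(1)}$ lies entirely in rows below $\ell(\mu^{(2)})=2$. Evaluating your inequality at $r=\ell(\mu^{(2)})$ then yields only $0\le 2$, not $t_0^{(1)}=2\le t_0^{(2)}=2$.

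As written, therefore, neither the inequality $t_0^{(i)}\le t_0^{(i+1)}$ in (b) nor the length requirement $|\r^{(i+1)}|\ge|\r^{(i)}|$ in the definition of $\le_\r$ for (d) is proved. The prefix domination is missing exactly on the range where $\r^{(i)}$ can have elements.

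The repair uses the true fact you already stated: every row in that range carries an $i+1$. Hence for $\ell(\mu^{(i+1)})<r\le\ell(\mu^{(i)})$ you still have $\#(\r^{(i+1)}\cap[1,r])=t_0^{(i+1)}=r-Q_{\le r}[i+1]$. You also have $\#(\r^{(i)}\cap[1,r])=r-Q_{\le r}[i]$ there. So (R5) gives the prefix domination on all of $[1,\ell(\mu^{(i)})]$, and beyond $\ell(\mu^{(i)})$ both counts are constant.

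In particular, take $r=\ell(\mu^{(i)})$. Using $Q_{\le \ell(\mu^{(i)})}[i+1]=Q[i+1]$ and (a) for the index $i+1$, you get
\[
t_0^{(i)}=\ell(\mu^{(i)})-Q_{\le \ell(\mu^{(i)})}[i]\le \ell(\mu^{(i)})-Q[i+1]=t_0^{(i+1)},
\]
which is (b). With domination now holding for every $r$, your sorted-set translation gives (d).
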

%\begin{proof} $(\rm d)$ follows from Definition \ref{def:tei} and $(\rm a),(\rm b),(\rm c)$.
%\end{proof}

Condition $(\rm d)$ above guarantees that reverse Schensted column insertion can  correctly be $\r$-iterated to define ${\LRAII^{AII}}^{-1}$ as required in Remark 1 and Remark 2.

\iffalse
\begin{obs} \label{re:slack} \begin{enumerate}

\item When the slack sequence of $Q$ is the null vector then the conditions on $Q$ are just $\ell(\mu^{(i-1)})=Q[i]\ge \ell(\mu^{(i)})$ and
$2n\ge Q[i]$, for $i=1,\dots, N$. Therefore, $\lambda=\mu+\sum_{i=1}^N\varpi_{Q[i]}$.

\item The admissible slacks for  $n=1,2, 3$ are as follows. Let $N\ge 1$. Proposition \ref{prop:slack} $(b)$ forces $ t_0^{(i)}=0\Rightarrow t_0^{(j)}=0$ for all $i\ge j$.

For $n=1$,  Proposition \ref{prop:slack} $(c)$ implies $2\ge Q[i]+2t_0^{(i)}\ge 2 +2t_0^{(i)}\Leftrightarrow t_0^{(i)}=0$. Hence $Q[i]=2$ and $t_0^{(i)}=0$.

For $n=2$,  Proposition \ref{prop:slack} $(c)$ implies $4\ge Q[i]+2t_0^{(i)}\ge 2 +2t_0^{(i)}\Leftrightarrow 2\ge 2t_0^{(i)}\Leftrightarrow 1\ge t_0^{(i)}$.
Hence either $Q[i]=4$ and $t_0^{(i)}=0$ or $Q[i]=2$ and $t_0^{(i)}=0,1$. Note, $t_0^{(i)}=1\Rightarrow 2\times 2-1\ge Q[i]+1\Rightarrow Q[i]=2$. In particular $4\notin \r^{(i)}$.

For $n=3$,  Proposition \ref{prop:slack} $(c)$ implies $6\ge Q[i]+2t_0^{(i)}\ge 2 +2t_0^{(i)}\Leftrightarrow 2\ge t_0^{(i)} $.
Hence either $Q[i]=6$ and $t_0^{(i)}=0$ or $Q[i]=4$ and $t_0^{(i)}=0,1$ or { $Q[i]=2$} and $t_0^{(i)}=0,1,2$.  In particular $6\notin \r^{(i)}$.
\end{enumerate}
\end{obs}
\fi

\begin{ex}\label{ex:vectorslackn=3}

\begin{enumerate}

\item Let $n=3$.
Let $V=\YT{0.2in}{}{
 {1},
 {4},
 {5},
} \in SpT_6(\varpi_3)$, $\underline \t=(2,2,1,1,0)$, $\underline\r=[(2,3);(3,4);4;5;()]$ and $$Q=\YT{0.12in}{}{
 {,5,4,3,2,1},
 {,4,3,2,1},
 {,3,2,1},
 {5,2,1},
 {3,1},
 {1},
} \in \widetilde Rec_{6}((6,5,4,3,2,1)/(1,1,1)),$$
$$\rm\delta_{\underline\r}=\begin{bmatrix}
0&0&0&0&0\\
1&0&0&0&0\\
1&1&0&0&0\\
0&1&1&0&0\\
0&0&0&1&0\\
0&0&0&0&0\\
\end{bmatrix}_{6\times 5},\quad \r^{(5)}=(2,3)\le_{\r}\r^{(4)}=(3,4)\le_{\r}\r^{(3)}=4\le_{\r}\r^{(2)}=5\le_{\r}\r^{(1)}=().$$

Let $\mu=\varpi_3$. In this case $\underline\r^+=(\r^{(5)}=(2,3))$ and $\mu-\delta_{\underline \r^+}=(1)$.
\begin{align*}& \c \circ (\rm{red}_{2}^{-1},\rm{id})\circ (\underset{{(2,3)}}\leftarrow \YT{0.12in}{}{
 {1},
 {4},
 {5},
}
%\nonumber\\
%&
=\c\circ (\rm{red}_{2}^{-1},\rm{id})((45), 1)=\redu_2^{-1}(45).(1)=\YT{0.12in}{}{
 {1,1},
 {2},
 {4},
 {5},
}=S^{(2,3)},  \\
&\shape(S^{(2,3)})=\mu-\rm\delta_{\r^{(5)}}+\varpi_4 =(1)+\varpi_4\\
\end{align*}
\begin{align*}
&\c \circ (\rm{red}_{2}^{-1},\rm{id})\circ (\underset{{(3,4)}}\leftarrow S^{(2,3)})
%\nonumber\\
%&
=\c\circ (\rm{red}_{2}^{-1},\rm{id})((45), \YT{0.12in}{}{
 {1,1},
 {2},
})=\redu_{2}^{-1}(45).\YT{0.12in}{}{
 {1,1},
 {2},
}=\YT{0.12in}{}{
 {1,1,1},
 {2,2},
 {4},
 {5},
}=S^{(2,3),(3,4)}\\
&\shape(S^{(2,3),(3,4)})=(\mu-\rm\delta_{\r^{(5)}}+\varpi_4 )-\rm\delta_{\r^{(4)}}+\varpi_4\\
\end{align*}
\begin{align*}
& \c \circ (\rm{red}_{2}^{-1},\rm{id})\circ (\underset{{4}}\leftarrow S^{(2,3),(3,4)})
%\\
%&
=\c \circ (\rm{red}_{2}^{-1},\rm{id})(5,\YT{0.12in}{}{
 {1,1,1},
 {2,2},
 {4},
 }
 )=\redu_1^{-1}(5).\YT{0.12in}{}{
 {1,1,1},
 {2,2},
 {4},
 }\\
 &=
 \YT{0.12in}{}{
 {1,1,1,1},
 {2,2,2},
 {3,4},
 {4},
 {5},
}=S^{(2,3),(3,4),4}\\
&\shape(S^{(2,3),(3,4),4})=(\mu-\rm\delta_{\r^{(5)}}+\varpi_4 )-\rm\delta_{\r^{(4)}}+\varpi_4-\rm\delta_{\r^{(3)}}+\varpi_5
\end{align*}
\begin{align*}
& \c \circ (\rm{red}_{2}^{-1},\rm{id})\circ (\underset{{5}}\leftarrow S^{(2,3),(3,4),4})%\\
%&
=\c \circ (\rm{red}_{2}^{-1},\rm{id})(5,\YT{0.12in}{}{
 {1,1,1,1},
 {2,2,2},
 {3,4},
 {4},
 }
 )=\redu^{-1}(5).S^{(2,3),(3,4),4}\\
 &=\YT{0.12in}{}{
 {1,1,1,1,1},
 {2,2,2,2},
 {3,3,4},
 {4,4},
 {5},
}=S^{(2,3),(3,4),4,5}\\
&\shape(S^{(2,3),(3,4),4,5})=(\mu-\rm\delta_{\r^{(5)}}+\varpi_4 )-\rm\delta_{\r^{(4)}}+\varpi_4-\rm\delta_{\r^{(3)}}+\varpi_5-\rm\delta_{\r^{(2)}}+\varpi_5
\end{align*}
\begin{align*}
& \c \circ (\rm{red}_{0}^{-1},\rm{id})\circ (\underset{{()}}\leftarrow S^{(2,3),(3,4),4,5})=\redu_0^{-1}(()).S^{(2,3),(3,4),4,5}\\
&
 =\YT{0.12in}{}{
 {1,1,1,1,1,1},
 {2,2,2,2,2},
 {3,3,3,4},
 {4,4,4},
 {5,5},
 {6},
}=S^{(2,3),(3,4),4,5,()}\in SST_{6}(6,5,4,3,2,1).
\end{align*}
That is,
\begin{align}
S^{(2,3),(3,4),4,5,()}=\nonumber\\
=&\redu_0^{-1}(())\bigcdot\redu_1^{-1}(5)\bigcdot(\redu_1^{-1}(5)\setminus \{5\})\bigcdot(\redu_2^{-1}(45)\setminus \{5\})\bigcdot(\redu_2^{-1}(45)\setminus \{4,5\})\bigcdot(1),
\end{align}
where $(1)=\mu-\delta_{\r^{(5)}}$.

The output tableau is a $\k$-lowest weight tableau in $SST_{6}(6,5,4,3,2,1)$ of $\k$-weight $-\varpi_3$  as explained in Section \ref{sec:last}.
The shape of $S^{(2,3),(3,4),4,5,()}$ is

\begin{align*}\lambda&=(((\mu-\rm\delta_{\r^{(5)}})+\varpi_4)-\rm\delta_{\r^{(4)}}+\varpi_4)-\rm\delta_{\r^{(3)}}+\varpi_5)-\rm\delta_{\r^{(2)}}
+\varpi_5-\rm\delta_{\r^{(1)}}+\varpi_6\\
&=\mu-\sum_{i=1}^5\delta_{\r^{(i)}}+2\varpi_4+2\varpi_5+\varpi_6=(\mu- \delta_{\r^{+}})+\sum_{\r^{(i)}\notin \r^{+}}\delta_{\r^{(i)}}+2\varpi_4+2\varpi_5+\varpi_6
\end{align*}

\item Let $n=4$. Let $V=\YT{0.2in}{}{
 {1,1},
 {4},
 {5},
} \in SpT_8(2,1,1,0)$, $\ell(\mu)=3\le 4$, $\underline \t=(2,2)$, $\underline\r=[(1,3);(2,6)]$ and
\begin{align}&Q=\YT{0.12in}{}{
 {,,1},
 {,2},
 {,1},
 {2,1},
 {2,1},
 {2},
} \in \widetilde Rec_{8}((3,2,2,2,2,1)/(2,1,1)),\quad\rm\delta_{\underline\r}=\begin{bmatrix}
1&0\\
0&1\\
1&0\\
0&0\\
0&0\\
0&1\\
0&0\\
0&0
\end{bmatrix}_{8\times 2},\quad \underline\r=[\r^{(2)}=(1,3)\le_{\r}\r^{(2)}=(2,6)],
\end{align}
$\underline\r^+=[\r^{(2)}=(1,3),{\r^{(2)}}'=(2)]$ and $\rm\delta_{\underline\r^+}=\begin{bmatrix}
1&0\\
0&1\\
1&0\\
\end{bmatrix}_{\ell(\mu)\times 2}$, $\mu-\delta_{\underline\r^+}=(1)$.

\begin{align*}& \c \circ (\rm{red}_{2}^{-1},\rm{id})\circ (\underset{{(1,3)}}\leftarrow \YT{0.12in}{}{
 {1,1},
 {4},
 {5},
}
%\nonumber\\
%&
=\c\circ (\rm{red}_{2}^{-1},\rm{id})((15), (1,4))=\redu_2^{-1}(15)\bigcdot(1,5)=\YT{0.15in}{}{
 {\circled{1},1},
 {3,4},
 {4},
 {\circled{5}},
 {7},
 {8},
}=S^{(1,3)},  \\
%&\shape(S^{(1,3)})=\mu-\rm\delta_{\r^{(5)}}+\varpi_4 =(1)+\varpi_4\\
\end{align*}

\begin{align*}
&\c \circ (\rm{red}_{2}^{-1},\rm{id})\circ (\underset{{(2,6)}}\leftarrow S^{(1,3)})
%\nonumber\\
%&
=\c\circ (\rm{red}_{2}^{-1},\rm{id})((48), \YT{0.14in}{}{
 {1,1},
 {3},
 {4},
 {5}
 {7},
 })=\redu_{2}^{-1}(48)\bigcdot\YT{0.15in}{}{
 {1,1},
 {3},
 {4},
 {5},
 {7},
}=\YT{0.15in}{}{
 {1,\circled{1},1},
 {2,3},
 {\circled{4},4},
 {5,\circled{5}},
 {6,7},
 {\circled{8}},
}=S^{(1,3),(2,6)}\\
&\shape(S^{(1,3),(2,6)})=(\mu-\delta_{\underline\r^+})-\rm\delta_{\r^{(2)}\setminus {\r^{(2)}}'}+\varpi_6 +\varpi_6 \\
\end{align*}
That is,
\begin{align}
S^{(1,3),(2,6)}=(\redu_2^{-1}(48))\bigcdot(\redu_2^{-1}(15)\setminus \{8\})\bigcdot(1),
\end{align}
\end{enumerate}
\end{ex}

The following statement considers the case where no bumping is needed, that is, the $0$-slack sequence is the null vector, $\underline\t=(t^{(N)}_0\ge \dots \ge t^{(1)}_0)=\underbrace{(0,\dots,0)}_N$. In this case, then the \emph{}{slack} \emph{vector} {sequence} is written $\underline\r=(\underbrace{(),\dots,()}_N)$. When the slack number is $0$ the corresponding slack vector is just written $\r=()$. It follows from Proposition \ref{prop:slack} and the iteration of Remark \ref{nullslack}.

 \begin{lem}  \label{lem:recordhole0} Let $S\in SpT_{2n}(\mu)$ and  let $Q\in  \widetilde Rec_{2n}(\lambda/\mu)$ such that $\lambda=\mu+\sum_{i=1}^{\nu_1}\varpi_{Q[i]}$
 with $Q[1]\ge\cdots\ge Q[N]\ge \ell(\mu)$  and $\nu=( Q[1],\dots,Q[N])^t$ an even partition.

Then \begin{align}%{\LRAII^{AII}}^{-1}
\widetilde R(S, Q)=Y(\nu)\bigcdot S \in SST_{2n}(\lambda)
\end{align}
where $Y(\nu)$ is the Yamanouchi tableau of shape $\nu$.
\end{lem}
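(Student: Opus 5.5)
Since every vertical strip of $Q$ has zero slack, I would argue by induction on $N$, iterating Remark~\ref{nullslack}(1) and using the definition \eqref{Rtilde} of $\widetilde R$. First I check that the hypotheses force the slack sequence to be $\underline\t=(0,\dots,0)$. The strips satisfy $\mu^{(i-1)}=\mu^{(i)}+\varpi_{Q[i]}$ with $Q[i]\ge \ell(\mu^{(i)})$, because $\ell(\mu^{(i)})=\max(\ell(\mu),Q[i+1])\le Q[i]$. Hence every row in $[1,\ell(\mu^{(i)})]$ carries a cell of $\mu^{(i-1)}/\mu^{(i)}$, so $l_0^{(i)}=\ell(\mu^{(i)})$, $t_0^{(i)}=0$ and $\r^{(i)}=()$. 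This agrees with Proposition~\ref{prop:slack}(a), which gives $\ell(\mu^{(i-1)})=Q[i]$. Consequently each reverse insertion step $\underset{()}\leftarrow$ in \eqref{Rtilde} is the identity: it returns the pair $((),U)$. Each expansion step is $\redu_0^{-1}(())=(12\cdots Q[i])$, which is well defined because $Q[i]\in 2\Z$ by $(R3)$ and $Q[i]\le 2n$ by $(R4)$ and Corollary~\ref{empty0}.

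Unwinding \eqref{Rtilde} from the innermost strip $i=N$ outward therefore gives
\[
\widetilde R(S,Q)=(12\cdots Q[1])\bigcdot\big((12\cdots Q[2])\bigcdot\cdots\bigcdot\big((12\cdots Q[N])\bigcdot S\big)\cdots\big).
\]
The induction hypothesis would be that after processing strips $N,\dots,i$ one has the tableau $U_i=(12\cdots Q[i])\cdots(12\cdots Q[N])\,S$, given by plain concatenation of columns, of shape $\mu^{(i-1)}$. For the inductive step, column insertion of the column $(1,\dots,Q[i-1])$ into $U_i$ is just concatenation, by the Pieri-rule remark in Section~\ref{subsec:insert}. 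That remark applies as long as the column's entries are entrywise $\le$ the first column of $U_i$. This holds because the first column of $U_i$ is $(1,\dots,Q[i])$ and $Q[i-1]\ge Q[i]$. The base case compares $(1,\dots,Q[N])$ with the first column of $S$. Here the symplectic condition gives $S(k,1)\ge 2k-1\ge k$ for $k\le\ell(\mu)\le Q[N]$, so concatenation is again valid.

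Finally, the concatenated columns $(12\cdots Q[1])\cdots(12\cdots Q[N])$ have weakly decreasing lengths $Q[1]\ge\cdots\ge Q[N]$, and their $k$th row is filled entirely with $k$. This is exactly the Yamanouchi tableau $Y(\nu)$ with $\nu=(Q[1],\dots,Q[N])^t$. Since concatenation equals the plactic product $Y(\nu)\bigcdot S$, we get $\widetilde R(S,Q)=Y(\nu)\bigcdot S$. Its shape is $\mu+\sum_i\varpi_{Q[i]}=\lambda$, and semistandardness follows from the concatenation checks above.

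The main obstacle is only bookkeeping: verifying that the convention $\mu^{(i)}/{\mu^{(i)}}'=\emptyset$ makes \eqref{Rtilde} collapse to the displayed iterated product. The only nontrivial inequality is the base comparison with the first column of $S$.
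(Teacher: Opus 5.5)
Your proposal is correct and follows the same route as the paper, which proves the lemma only by invoking Proposition~\ref{prop:slack} and iterating Remark~\ref{nullslack}(1); you fill in what the paper leaves implicit, namely zero slack at every strip, $\redu_0^{-1}(())=(1,\dots,Q[i])$, and the concatenation checks. The one step you assert without argument, that $\lambda=\mu+\sum_i\varpi_{Q[i]}$ forces the $i$-th strip of $Q$ to occupy exactly rows $1,\dots,Q[i]$, does hold: it follows from $(R1)$ (each value appears at most once per row) by a short induction on $i$ that counts the cells in each row of $\lambda/\mu$.
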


Next one considers constant $1$-$0$-slack  sequences   $\underline\t=(t^{(N)}_0\ge \dots \ge t^{(1)}_0)=(1,1,\dots,1,0,\dots,0)$ possibly with a tail of zeroes. Henceforth, the slack vector sequence $\underline\r$  can be thought as an  increasing sequence of numbers possibly with a  tail of empty sets corresponding to the sequence of zeroes in $\underline\t$.
 From Theorem \ref{thm:verygeneralstrip}, \eqref{inverse}, Proposition \ref{prop:slack}, Lemma \ref{lem:recordhole0} and Remark 1  one has.

\begin{thm}\label{lem:recordhole1} Let $S\in SpT_{2n}(\mu)$ and let
 $Q\in   \widetilde Rec_{2n}(\lambda/\mu)$   with slack sequence $\underline\t=(\underbrace{1,1,\dots,1}_N)$ and slack vector sequence $\underline \r=[\r_{N}\le\cdots\le \r_1]$.  Then

\begin{enumerate}
\item for $1\le i\le N$,

\begin{itemize}
\item [(i)]$\mu^{(i-1)}=(\mu^{(i)}-\delta_{\r_i})+\varpi_{Q[i]+1}$ is such that
\begin{align*}&\lambda=\mu^{(0)}\supset_{vert} \mu^{(1)}\supset_{vert} \cdots\supset_{vert}\mu^{(N-1)}\supset_{vert}\mu^{(N)} =\mu
\end{align*}
\item[(ii)] $2n-1\ge Q[i]+1=\ell(\mu^{(i-1)})$ and $\ell(\mu^{(i-1)})\notin 2\Z$.
 \item [(iii)] $1\le \r_i\le \ell(\mu^{(i)})\le\ell(\mu^{(i-1)})\le 2n-1$.
%and $\r_i\le 2n-1$. % whenever $\r_i\neq ()$,
\end{itemize}
\item
\begin{align}%{\LRAII^{AII}}^{-1}
\widetilde R(S, Q)=S^{\underline \r}=S^{\r_N\cdots\r_2 \r_1}\in SST_{2n}(\lambda),
\end{align}
where $S^{\r_N\cdots\r_2 \r_1}:=
{({(\cdots{((S^{\r_N})}^{\r_{N-1}}){\cdots})}^{\r_2})}^{\r_1}$ and $S^{\r_N\cdots\r_i}\in SST_{2n}(\mu^{(i-1)})$ for $i=1,\dots,N$.
%, is recursively defined by
%$$S$$
\end{enumerate}

\end{thm}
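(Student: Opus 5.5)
The plan is to argue by induction on the number $N$ of vertical strips of $Q$, peeling strips from the innermost one, $\mu^{(N-1)}/\mu^{(N)}$, outward, exactly as the composite map \eqref{Rtilde} is built.

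\textbf{Part (1).} This is bookkeeping from Proposition \ref{prop:slack} specialised to $t_0^{(i)}=1$ for all $i$.

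\begin{itemize}
\item By Proposition \ref{prop:slack}(a), $\ell(\mu^{(i-1)})=Q[i]+1$. Since $Q[i]\in 2\Z$ by $(R3)$, this length is odd.
\item By Proposition \ref{prop:slack}(c), $Q[i]+2\le 2n$, which gives $\ell(\mu^{(i-1)})\le 2n-1$. This is (ii).
\item The slack vector $\r_i$ is a single row index in $[1,\ell(\mu^{(i)})]$ by Definition \ref{def:tei}. Together with Proposition \ref{prop:slack}(b),(e) this gives (iii).
\item The identity $\mu^{(i-1)}=\mu^{(i)}-\delta_{\r_i}+\varpi_{\ell(\mu^{(i-1)})}$ is the generalisation of \eqref{dualslack} recorded after Definition \ref{def:tei}. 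Substituting $\ell(\mu^{(i-1)})=Q[i]+1$ gives (i).
\end{itemize}

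\textbf{Part (2), the basic step.} For a single strip ($N=1$) the claim is Theorem \ref{thm:verygeneralstrip} with $t_0=1$. Reverse column insertion from row $\r_1$ bumps one entry $a$ out of the first column of $S$. Then $\redu_1^{-1}(a)\in SST_{2n}(\varpi_{Q[1]+1})$ is supplied by Theorem \ref{thm:nofactors}: a single entry trivially satisfies $s(a)\notin\{a\}$. Concatenating gives $S^{\r_1}\in SST_{2n}(\mu^{(0)})$.

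\textbf{Part (2), the inductive step.} The outer $N-1$ strips form a tableau $Q'\in\widetilde Rec_{2n}(\lambda/\mu^{(N-1)})$ with slack sequence $(1,\dots,1)$ of length $N-1$. The plan is to show that $S^{\r_N}$ can be fed into the induction hypothesis, by checking that the next reverse insertion out of row $\r_{N-1}$ of $S^{\r_N}$ is well defined and lands back in the symplectic framework.

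The inductive step is the main obstacle, because $S^{\r_N}$ is in general \emph{not} symplectic: its first column is $\redu^{-1}(a)$, which has odd length $Q[N]+1$ and may exceed $n$. Theorem \ref{thm:verygeneralstrip} as stated requires a symplectic input. I would therefore prove a strengthened inductive statement. The reverse insertion at row $\r_{N-1}\ge \r_N$ (using $\r_N\le_\r\r_{N-1}$ from Proposition \ref{prop:slack}(d)) removes one entry $a'$ of the first column of $S^{\r_N}$. I would then show the following:
\begin{itemize}
\item the remaining tableau has the shape $\mu^{(N-1)}-\delta_{\r_{N-1}}$;
\item the relevant input to $\redu^{-1}$ is again a single letter, so $\redu_1^{-1}(a')$ is defined;
\item concatenating $\redu_1^{-1}(a')$ in front of the remainder gives a semistandard tableau of shape $\mu^{(N-2)}$.
\end{itemize}

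For the last point I would compare the reverse bumping routes, via Remark \ref{revroutes}, with the first column of the remainder. The explicit form of $\redu_1^{-1}$ from Theorem \ref{thm:nofactors} is $T_0(a)T_1$, with $T_0=(1,\dots,l_1)$ an initial interval and $T_1$ consecutive integers. From this I expect row-by-row domination of the next column to follow. The key inequality is that the new first column has length $Q[N-1]+1\ge Q[N]+1$, with its entries pushed down weakly below those of the previous first column.

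If this direct comparison fails, I would fall back on Example \ref{ex:vectorslackn=3}, which suggests that $a'$ always equals the entry sitting at row $\r_{N-1}$ of the old $\redu^{-1}$ column. I would prove that directly and then read off semistandardness. The theorem then follows by iterating the step, giving $S^{\r_N\cdots\r_i}\in SST_{2n}(\mu^{(i-1)})$ for each $i$.
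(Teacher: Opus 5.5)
There is a genuine gap. Your Part (1), your base case, and the overall induction agree with the paper: you peel off the innermost strip and correctly notice that $S^{\r_N}$ is no longer symplectic. The problem is the inductive step, which you leave at ``I expect row-by-row domination to follow''.

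The inequality you call key is that $\redu_1^{-1}(a')$ sits row by row weakly below the current first column $C=\redu_1^{-1}(a)=(1,\dots,l_1,a,\dots)$. That is the desired semistandardness restated, not a reason for it. It also genuinely fails unless you control which letter is bumped. If $a'$ were one of $1,\dots,l_1$ (i.e.\ $a'<a$), then $\redu_1^{-1}(a')$ has a shorter initial interval and the concatenation is never semistandard; for instance, for $a'$ even, row $a'-1$ would compare $a'$ with $a'-1$. So a hypothesis like ``semistandard of shape $\mu^{(i-1)}$ with first column $\redu_1^{-1}(a)$'' cannot carry the induction.

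Your fallback does not help, because it is false that $a'$ is the entry in row $\r_{N-1}$ of the old column. Take $n=3$, $S$ with rows $(2,2),(4,5),(6)$, $\mu^{(1)}=(3,2,2,1,1)$, $\lambda=(4,3,2,2,2)$, $\r_2=2$, $\r_1=3$; one checks (R1)--(R5) for this $Q$. The first step bumps $4$ and produces the tableau with rows $(1,2,2),(2,5),(4,6),(5),(6)$. The reverse insertion from row $3$ then removes the $6$ in cell $(3,2)$, which bumps the $6$ in row $5$ of the first column $(1,2,4,5,6)$, not the $4$ in row $3$.

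The missing idea, which is the core of the paper's argument, is that successive bumped letters are weakly increasing: $a'\ge a$.
\begin{enumerate}
\item The part of the current tableau to the right of $C$ is the remainder $S^{N}$ of a reverse insertion from row $\r_N$.
\item The next reverse insertion enters it from row $\r_{N-1}\ge \r_N$. By Remark \ref{revroutes}, applied to $S$ before $C$ was prepended, the letter $z$ it pushes out of the old first column satisfies $z\ge a$.
\item Since $a\in C$, the largest entry of $C$ that is $\le z$ is some $a'\ge a$, sitting in a row $\ge l_1+1$.
\item Given $a'\ge a$, $a'\in C$ and $Q[N-1]+1\ge Q[N]+1$, use the explicit form of $\redu_1^{-1}$ from Theorem \ref{thm:nofactors}. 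There $l_1$ is weakly increasing in the letter and in the length, and beyond the distinguished letter row $i$ holds $i+1$. A short parity check then gives the row domination.
\item The same comparison, applied each time to the previous tableau, iterates to all $N$.
\end{enumerate}
In the example above, $6\ge 4$, and $\redu_1^{-1}(6)=(1,2,3,4,6)$ indeed concatenates to a semistandard tableau of shape $\lambda$.
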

\begin{proof}
The proof is by induction on $N$. It is enough to analyse the case $N=2$.
Recall $t_0^{(i)}=1$, for $i=1,\dots,N$.
Let  $P^{(N)}:=S$, $Q^{(N)}:=Q$ and define ${\mu^{(N)}}'\subset_{vert} \mu^{(N)}$ such that  ${\mu^{(N)}}'=\mu^{(N)}-\delta_{\r_N}$, that is, ${\mu^{(N)}}'_{r_{N}}=\mu_{r_{N}}-1$  and ${\mu_i^{(N)}}'=\mu_i$, $1\le i\neq r_{N}\le \ell(\mu)$. Then the vertical strip $\mu^{(N)}/{\mu^{(N)}}'$ has  the sole cell with row coordinate $\{\r_{N}\}$. Define
 \begin{align}&P^{(N-1)}:=\c \circ (\rm{red}_{1}^{-1},\rm{id})\circ (\underset{\r_N}\leftarrow S)\nonumber\\\
 &=c \circ (\rm{red}_{1}^{-1},\rm{id})\circ (b_{r'_{N}}, S^{N})\nonumber\\
&=c\circ (\rm{red}_{1}^{-1}(b_{r'_{N}}), S^{N})\nonumber\\\
&=\c\circ(S_{\r_N}, S^{N})=S_{\r_N}\bigcdot S^{N}=:S^{\r_N} \in SST_{2n}(\mu^{(N-1)}), \label{srn}\\
&~ S^N\in SpT_{2n}({\mu^{(N)}}'),~ S_{\r_N}\in SST_{2n}(\varpi_{\ell(\mu^{(N-1)})}),~\mu^{(N-1)}={\mu^{(N)}}-\delta_{\r_N}+\varpi_{\ell(\mu^{(N-1)})} \label{shaperedu}
\end{align}
where $S(r'_{N},1)=b_{r'_{N}}$ is the entry bumped from the cell $(r'_{N},1)$ in the first column of $S$ with

\begin{align*}r'_{N}\ge \r_{N}
\end{align*}
and $S^{N}$  is the tableau obtained from $S$ after applying the reverse column insertion to the cell of $\mu/{\mu^{(N)}}'$ in $S$. Hence $S^{N}$ has shape ${\mu^{(N)}}' $, and, in particular,  $ S^{N}(r'_{N},1)\ge b_{r'_{N}}$ and $S^{N}(i,1)=S(i,1)$, for $i\neq r'_{N}$. Since $S$ and $S^{N}$ are symplectic,
from Proposition \ref{prop:symplectic1}, $(2)$,

 $$\r_{N}\le r'_{N}\le \lfloor \frac{b_{r'_{N}}+1}{2}\rfloor.$$

The expansion
 $\rm{red}_{1}^{-1}(b_{r'_{N}})=S_{\r_N}$ is a column of length $Q[N]+1$ obtained  according to the rule in Theorem \ref{thm:nofactors}

 \begin{align}S_{\r_N}=:
\begin{cases} (1,2,\dots, \mathbf{b}_{r'_{N}}-2, \mathbf{b}_{r'_{N}}, \mathbf{b}_{r'_{N}}+1,\mathbf{b}_{r'_{N}}+2,\dots, \mathbf{b}_{r'_{N}}+(Q[N]-(\mathbf{b}_{r'_{N}}-2))), &  \mathbf{b}_{r'_N}\in 2\mathbb{Z}\\
(1,2,\dots \mathbf{b}_{r'_{N}}-1, \mathbf{b}_{r'_{N}}, \mathbf{b}_{r'_{N}}+1+1,\mathbf{b}_{r'_{N}}+1+2,\dots, \mathbf{b}_{r'_{N}}+1+(Q[N]-(\mathbf{b}_{r'_{N}}-1))),& \mathbf{b}_{r'_{N}}\notin 2\mathbb{Z}.
\end{cases}
\end{align}

 %$\rm{red}_{0}^{-1}$ means reverse reduction in $[1,Q[\nu_1]]$ applied to a column of length $0$. That is, $\rm{red}_{0}^{-1}(())=(12\dots %Q[\nu_1])$ with $Q[\nu_1]\in 2\Z$ $\Leftrightarrow \rm{red}(1,2,\dots, Q[\nu_1])=()\Leftrightarrow \rm{rem}(12\dots Q[\nu_1])=(12\dots %Q[\nu_1])$ providing   $Q[\nu_1]\in 2\Z$ which is the case.

 Let $\mathbf{b}_{r'_{N }}\in 2\mathbb{Z}$. Indeed $S'_{\r_N }(i,1)=i\le S^{N}(i,1)$ for $1\le i\le \mathbf{b}_{r'_{N}}-2$. Note $ S'_{\r_N}(b_{r'_{N}}-1,1)=\mathbf{b}_{r'_{N}}\le S^{N}(r'_{N},1)$.

To ensure that $ S_{\r_N}\bigcdot S^{N}$ is a semi-standard tableau \emph{we} \emph{need} \emph{to} \emph{prove} \emph{that} \emph{the} \emph{index} \emph{row} $\mathbf{b}_{r'_{N}}-1$ \emph{is} \emph{at} \emph{least} $r'_{N}$, that is, $\mathbf{b}_{r'_{N}}-1\ge r'_{N}$ when $r'_{N}= \lfloor \frac{b_{r'_{N}}+1}{2}\rfloor=\frac{b_{r'_{N}}}{2}$. In fact $\mathbf{b}_{r'_{N}}-1\ge \frac{b_{r'_{N}}}{2}\Leftrightarrow \mathbf{b}_{r'_{N}}\ge 2.$

Furthermore, $S_{\r_N}(i,1)=i+1 $ for $i\ge b_{r'_{N}}\ge r'_{N}+1 $, and

\begin{align*}\mathbf{b}_{r'_{N}}&=S_{\r_N}(b_{r'_{N}}-1,1)\\
&\le S^{N}(r'_{N},1)<S^{N}(r'_{N}+1,1)\\
&\Rightarrow \mathbf{b}_{r'_{N}}+1=S_{\r_N}(b_{r'_{N}},1)\\
&\le S^{N}(r'_{N}+1,1)\\
&<S^{N}(r'_{N}+2,1).
\end{align*}
Hence, $\mathbf{b}_{r'_{N}}+k\ge r'_{N}+k+1$, and
$$\mathbf{b}_{r'_{N}}+k+1=S'_{\r_N}(b_{r'_{N}}+k,1)\le S^{N}(r'_{N}+k+1,1),~k\ge 0$$

Let $\mathbf{b}_{r'_{N}}\notin 2\mathbb{Z}$. Indeed $S_{\r_N}(i,1)=i\le S^{N}(i,1)$ for $1\le i\le \mathbf{b}_{r'_{N}}-1$. Note $ S'_{N}(b_{r'_{N}},1)=\mathbf{b}_{r'_{N}}\le S^{N}(r'_{N},1)$.

To ensure that $ S_{\r_N}\bigcdot S^{N}$ is a semi-standard tableau \emph{we} \emph{need} \emph{to} \emph{prove} that the \emph{index} row $\mathbf{b}_{r'_{N}}$ \emph{is} \emph{at} \emph{least} $r'_{N}$, that is, $\mathbf{b}_{r'_{N}}\ge r'_{N}$ when $r'_{N}= \lfloor \frac{b_{r'_{N}}+1}{2}\rfloor=\frac{b_{r'_{N}}+1}{2}$. In fact $\mathbf{b}_{r'_{N}}\ge \frac{b_{r'_{N}}+1}{2}\Leftrightarrow \mathbf{b}_{r'_{N}}\ge 1.$

On the other hand,  $S^{N}(r'_{N},1)=\mathbf{b}_{r'_{N}}=r'_{N}\notin 2\mathbb{Z}\Rightarrow S^{N}(i,1)=i$, for $1\le i\le r'_{N}$. Since $S^{N}$ is symplectic, one has $r'_{N}=1=\mathbf{b}_{r'_{N}}$ otherwise $\rem(1,\dots,r'_{N}-1)=(1,\dots,r'_{N}-1)\neq ()$ which is a contradiction. In this case, indeed,
\begin{align*}S_{\r_N}&=( \mathbf{b}_{r'_{N}}=1, \mathbf{b}_{r'_{N}}+1+1=3,\mathbf{b}_{r'_{N}}+1+2=4,\dots, \mathbf{b}_{r'_{N}}+1+Q[N]=1+1+Q[N])\\
&=(1,3,4,\dots,1+Q[N], 2+Q[N]), \mbox{ with $\rem(S_{\r_N})=(3,4,\dots,1+Q[N], 2+Q[N])$}.
\end{align*}
Since $ S^{N}(1,1)=1=S'_{\r_N}(1,1)$ and $ S^{N}$ is symplectic, $ S^{N}(i,1)\ge 2i-1\ge 2i-(i-1)=S'_{N}(i,1)$, $i\ge 2$

It remains to consider the case, $S^{N}(r'_{N},1)>\mathbf{b}_{r'_{N}}=S_{\r_N}(\mathbf{b}_{r'_{N}},1)\ge r'_{N} $
\bigskip

Furthermore, $S_{\r_N}(i,1)=i+1 $ for $i\ge b_{r'_{N}}+1\ge r'_{N}+1 $, and since

$$\mathbf{b}_{r'_{N}}=S'_{\r_N}(b_{r'_{N}},1)< S^{N}(r'_{N},1)<S^{N}(r'_{N}+1,1)\Rightarrow \mathbf{b}_{r'_{N}}+1=S_{\r_N}(b_{r'_{N}},1)\le S^{N}(r'_{N}+1,1)<S^{N}(r'_{N}+2,1)$$
Hence, $\mathbf{b}_{r'_{N}}+k\ge r'_{N}+k$, and
$$\mathbf{b}_{r'_{N}}+k+1=S_{\r_N}(b_{r'_{N}}+k,1)\le S^{N}(r'_{N}+k+1,1),~k\ge 0$$

Define also $Q^{(N-1)}\in \widetilde Rec_{2n}(\lambda/\mu^{(N-1)})$ obtained from $Q^{(N)}$ by deleting the $Q[N]$ entries $N$.

We now start  with $P^{(N-1)}= S_{\r_N}\bigcdot S^{N}\in SST_{2n}(\mu^{(N-1)})$ and $Q^{(N-1)}$. Note the $\r_{N-1}\ge \r_{N}$ is the row index of the cell of $\mu^{(N-1)}/{\mu^{(N-1)}}'$.

Define
 \begin{align*}P^{(N-2)}&:=\c \circ (\rm{red}_{1}^{-1},\rm{id})\circ (\underset{\r_{N-1}}\longleftarrow S_{\r_N}\bigcdot S^{N})\\
&=\c \circ (\rm{red}_{1}^{-1},\rm{id})\circ (b_{r'_{N-1}}, (S_{\r_N}\bigcdot S^{N})^{N-1})\\
&=\c\circ (\rm{red}_{1}^{-1}(b_{r'_{N-1}}), (S_{\r_N}\bigcdot S^{N})^{N-1})\\
&=\c\circ (\rm{red}_{1}^{-1}(b_{r'_{N-1}}), (S^{\r_N})^{N-1}), \mbox{ by \eqref{srn} }\\
&=\c\circ({(S^{\r_N}})_{\r_{N-1}}, (S^{\r_N})^{N-1})\\
&=(S^{\r_N})_{\r_{N-1}}\bigcdot (S_{\r_N}\bigcdot S^{N})^{N-1}=:S^{\r_N\r_{N-1}} \in SST_{2n}(\mu^{(N-2)})\\
&\mu^{(N-2)}={\mu^{(N)}}-\delta_{\r_N}-\delta_{\r_{N-1}}+\varpi_{\ell(\mu^{(N-1)})}+\varpi_{\ell(\mu^{(N-2)})}
\end{align*}
where $S_{\r_N}(r'_{N-1},1)=b_{r'_{N-1}}$ is the entry bumped from the cell $(r'_{N-1},1)$ in the  column $S_{\r_N}$ with $r'_{N-1}\ge \mathbf{b}_{r'_{N}}-1\ge r'_{N}$, if $\mathbf{b}_{r'_{N}}\in 2\Z$ and $r'_{N-1}\ge \mathbf{b}_{r'_{N}}\ge r'_{N}$, if $\mathbf{b}_{r'_{N}}\notin 2\Z$. Therefore $b_{r'_{N-1}}\ge b_{r'_{N}}$ Since, from $(i)$  $\r_{N}\le\cdots\le \r_1$, the previous assertion is guaranteed by Remark \ref{revroutes} on  reverse bumping routes.

$(S_{\r_N}\bigcdot S^{N})^{N-1}$  is the tableau obtained from $S_{\r_N}\bigcdot S^{N}$ after applying the reverse column insertion to the cell of $\mu^{(N-1)}/\mu^{(N-1)'}$ in $S_{r_N}\bigcdot S^{N}$. Hence $(S_{\r_N}\bigcdot S^{N})^{N-1}$ has shape $\mu^{(N-1)'} $, and, in particular,  $ (S_{\r_N}\bigcdot S^{N})^{N-1}(r'_{N-1},1)\ge b_{r'_{N-1}}$ and $(S_{\r_N}\bigcdot S^{N})^{N-1}(i,1)=S'_{\r_N}\bigcdot S^{N}(i,1)$, for $i\neq r'_{N-1}$.

The expansion
 $\rm{red}_{1}^{-1}(b_{r'_{N-1}})=S_{\r_{N-1}}
 $ is a column of length $Q[N-1]+1\ge Q[N]$ obtained by the expansion of order  $[Q[N-1]]$  of the column $(b_{r'_{N-1}})$ of length $1$ according to the rule

 \begin{align}&S_{\r_{N-1}}=:\\
&=:\begin{cases}& (1,2,\dots, \mathbf{b}_{r'_{N-1}}-2, \mathbf{b}_{r'_{N-1}}, \mathbf{b}_{r'_{N-1}}+1,\mathbf{b}_{r'_{N-1}}+2,\dots, \mathbf{b}_{r'_{N-1}}+(Q[N-1]-(\mathbf{b}_{r'_{N-1}}-2))), \\
&  \mathbf{b}_{r'_{N-1}}\in 2\mathbb{Z}\\
&(1,2,\dots \mathbf{b}_{r'_{N-1}}-1, \mathbf{b}_{r'_{N-1}}, \mathbf{b}_{r'_{N-1}}+1+1,\mathbf{b}_{r'_{N-1}}+1+2,\dots, \mathbf{b}_{r'_{N-1}}+1+( Q[N-1]-(\mathbf{b}_{r'_{N-1}}-1))),\\
& \mathbf{b}_{r'_{N-1}}\notin 2\mathbb{Z}
\end{cases}
\end{align}

Since $ b_{r'_{N-1}}\ge b_{r'_{N}}$, it follows that $S_{\r_{N-1}}\bigcdot (S_{r_N}\bigcdot S^{N})^{N-1} \in SST_{2n}(\mu^{(N-2)})$.
\end{proof}

More generally from Theorem  \ref{thm:verygeneralstrip}, \eqref{inverse},  Proposition \ref{prop:slack}, Lemma \ref{lem:recordhole0}, Theorem \ref{lem:recordhole1} and Remark 1, the next assertion follows.

\begin{thm}\cite{azslack}\label{cor:generalhole1} Let $S\in SpT_{2n}(\mu)$ and let
 $Q\in   \widetilde Rec_{2n}(\lambda/\mu)$  with slack sequence $\underline \t=(t^{(N)}_0\ge \dots \ge t^{(1)}_0)$ and slack (row index) vector sequence $\underline \r=[\r^{(N)}\le_\r\dots\le_\r\r^{(1)}]$.  Then

\begin{enumerate}
\item for $1\le i\le N$,
\begin{itemize}

\item[(i)]$\mu^{(i-1)}=(\mu^{(i)}-\delta_{\r_i})+\varpi_{Q[i]+t_0^{(i)}}$   is such that

\begin{align*}&\lambda=\mu^{(0)}\supset_{vert} \mu^{(1)}\supset_{vert} \cdots\supset_{vert}\mu^{(N-1)}\supset_{vert}\mu^{(N)} =\mu.
\end{align*}

\item[(ii)]  $\r^{(i)}\subseteq[1,\ell(\mu^{(i)}]\subseteq [1,\ell(\mu^{(i-1)}]\subseteq[1,2n-t_0^{(i)}]$.

\item [(iii)]
$2n-t_0^{(i)}\ge Q[i]+t_0^{(i)}=\ell(\mu^{(i-1)}$.

\end{itemize}
\item
\begin{align}%{\LRAII^{AII}}^{-1}
\widetilde R(S, Q)=S^{\underline \r}=S^{\r^{(N)}\cdots\r^{(2)} \r^{(1)}}:=
{\bigg({\bigg(\cdots{\big(\big(S^{\r^{(N)}}\big)}^{\r^{(N-1)}}\big)\cdots\bigg)}^{\r^{(2)}}   \bigg)}^{\r^{(1)}}\in SST_{2n}(\lambda).
\end{align}
\end{enumerate}

\end{thm}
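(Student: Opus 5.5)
The proof goes by induction on the number $N$ of vertical strips of $Q$, peeling off the strip labelled $N$ first; this is exactly the nesting in \eqref{Rtilde}. Part (1) is shape bookkeeping that comes straight from Proposition \ref{prop:slack}. Let $t_0^{(i)}$ and $\r^{(i)}$ be the slack and slack vector of $\mu^{(i-1)}/\mu^{(i)}$. Then item (a) of that proposition gives $\ell(\mu^{(i-1)})=Q[i]+t_0^{(i)}$. Relation \eqref{dualslack}, applied to the single strip $\mu^{(i-1)}/\mu^{(i)}$, gives $\mu^{(i-1)}=(\mu^{(i)}-\delta_{\r^{(i)}})+\varpi_{Q[i]+t_0^{(i)}}$. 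Items (c) and (e) of the same proposition give (ii) and (iii). So (1) is a restatement of Proposition \ref{prop:slack} applied strip by strip.

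For part (2), set $P^{(N)}:=S$ and $P^{(i-1)}:=(P^{(i)})^{\r^{(i)}}=\redu_{t_0^{(i)}}^{-1}(\mathbf{b}^{(i)})\bigcdot (P^{(i)})^{i}$. Here $(\mathbf{b}^{(i)},(P^{(i)})^{i})=(\underset{\r^{(i)}}\leftarrow P^{(i)})$. We must show that each $P^{(i-1)}$ is a semistandard tableau of shape $\mu^{(i-1)}$. The base step $i=N$ is Theorem \ref{thm:verygeneralstrip}, with $S$ symplectic and $Q$ restricted to its last strip, a one-strip element of $\widetilde Rec_{2n}$. For the inductive step, $P^{(i)}$ is no longer symplectic, so Theorem \ref{thm:verygeneralstrip} cannot be quoted directly. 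Instead I would copy the structure of the proof of Theorem \ref{lem:recordhole1}.

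First, the reverse bumping removes, in decreasing order of rows, the cells of $\mu^{(i)}$ indexed by $\r^{(i)}$. Because $\r^{(i)}\le_\r\r^{(i+1)}$ (Proposition \ref{prop:slack}(d)) and by Remark \ref{revroutes}, these reverse routes lie weakly below the routes of step $i+1$. Hence the bumped entries all come from the first column of $P^{(i)}$, which is the freshly concatenated column $\redu^{-1}_{t_0^{(i+1)}}(\mathbf{b}^{(i+1)})$. They occupy rows at least the rows where the $\mathbf{b}^{(i+1)}$ entries were placed, and they dominate $\mathbf{b}^{(i+1)}$ entrywise.

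Second, I would show that $\mathbf{b}^{(i)}$ is symplectic, so that $\redu^{-1}_{t_0^{(i)}}$ is defined on it via Lemma \ref{lem:reductionsurj}, Theorem \ref{lem:reductionsurj2}, Theorem \ref{thm:nofactors}, or in general \cite{azreduction}. The argument is that each $b^{(i)}_j$ sits at a row at least the row of $b^{(i+1)}_j$ and has value at least $b^{(i+1)}_j\ge 2j-1$.

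Third, I would check that $\redu^{-1}_{t_0^{(i)}}(\mathbf{b}^{(i)})\bigcdot (P^{(i)})^{i}$ is a plain concatenation. This needs two things: the first column $\redu^{-1}(\mathbf{b}^{(i)})$ is entrywise at most the first column of $(P^{(i)})^{i}$, and its length $Q[i]+t_0^{(i)}$ is at least $\ell((\mu^{(i)})')$.

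The third step is the main obstacle. Once $P^{(i)}$ is no longer symplectic, the row-position argument in Theorem \ref{thm:verygeneralstrip} must be replaced by a comparison between two expanded columns: $\redu^{-1}(\mathbf{b}^{(i)})$, of length $Q[i]+t_0^{(i)}$, against the remainder of $\redu^{-1}(\mathbf{b}^{(i+1)})$ after bumping, of length $Q[i+1]+t_0^{(i+1)}$. The plan is to prove a monotonicity lemma for the expanding map: if $\mathbf{b}\le\mathbf{b}'$ entrywise and $l\ge l'$, then $\redu^{-1}_l(\mathbf{b})$ is entrywise at most $\redu^{-1}_{l'}(\mathbf{b}')$ on the common rows. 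I would try to prove this directly from the min-formulas for the gap lengths $l_j$ in Theorem \ref{thm:nofactors} and its analogues, since larger allowed length and smaller entries only push the inserted runs $1,2,\dots$ and $a_j+1,\dots$ upward. Here $\mathbf{b}=\mathbf{b}^{(i)}$, $\mathbf{b}'=\mathbf{b}^{(i+1)}$, $l=Q[i]+t_0^{(i)}$ and $l'=Q[i+1]+t_0^{(i+1)}$; the hypothesis $l\ge l'$ holds by $Q[i]\ge Q[i+1]$ together with (1). Finally, the shape identity from (1) finishes the induction and gives $P^{(0)}=S^{\underline\r}\in SST_{2n}(\lambda)$.
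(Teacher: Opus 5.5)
Your outline matches the paper's. Part (1) is read off strip by strip from Proposition \ref{prop:slack} and \eqref{dualslack}. Part (2) iterates Theorem \ref{thm:verygeneralstrip} along $\underline\r$, using Remark \ref{revroutes} to place each new family of reverse routes weakly below the previous one, as in the proof of Theorem \ref{lem:recordhole1}. The gap is in the step you single out as the crux.

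\textbf{The monotonicity lemma is false.} Take $n\ge 2$, $l=l'=3$, and $\mathbf{b}=(2)\le\mathbf{b}'=(3)$. Theorem \ref{thm:nofactors} gives $\redu^{-1}_1(2)=(2,3,4)$, since $l_1=\min\{0,2\}=0$. It gives $\redu^{-1}_1(3)=(1,2,3)$, since $l_1=\min\{2,2\}=2$. So $(2,3,4)$ is not entrywise below $(1,2,3)$. Your heuristic fails on the initial run $T_0=(1,\dots,l_1)$: $l_1$ is capped by $a_1-2$ or $a_1-1$, so a smaller core entry shortens the run of small fillers and raises the column. Moreover, the hypothesis points the wrong way. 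Your first step shows that $\mathbf{b}^{(i)}$ dominates $\mathbf{b}^{(i+1)}$, the opposite of $\mathbf{b}\le\mathbf{b}'$. So even a correct lemma of that shape could not be applied with $\mathbf{b}=\mathbf{b}^{(i)}$ and $\mathbf{b}'=\mathbf{b}^{(i+1)}$.

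\textbf{What is actually needed.} The comparison has to run in the direction the paper uses; see the end of the proof of Theorem \ref{lem:recordhole1}: ``since $b_{r'_{N-1}}\ge b_{r'_N}$, it follows that \dots''. Write $C':=\redu^{-1}_{l'}(\mathbf{b}^{(i+1)})$. One wants $\redu^{-1}_l(\mathbf{b}^{(i)})$ to be entrywise at most $C'$ on the common rows. Plain concatenation then follows, because reverse bumping only replaces first-column entries by larger ones or deletes bottom cells. Hence the first column of $(P^{(i)})^{i}$ dominates $C'$ on its rows.

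Domination $\mathbf{b}^{(i)}\ge\mathbf{b}^{(i+1)}$ alone does not suffice. For $n\ge 3$, $l=l'=5$, $b'=3$ and $b=4$, one gets $\redu^{-1}_1(3)=(1,2,3,5,6)$ and $\redu^{-1}_1(4)=(1,2,4,5,6)$, which fails in row $3$. This case is excluded only because $4\notin(1,2,3,5,6)$. So the lemma you need is structural: if $\mathbf{b}$ consists of entries of $C'$ lying at rows at or below those of the core $\mathbf{b}'$, and $l\ge l'$, then $\redu^{-1}_l(\mathbf{b})\le C'$ on the common rows. It has to be proved from the explicit formulas of Section \ref{sec:inversereduction} (or \cite{azreduction}). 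The paper sketches it only for one-element slacks.

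Two minor slips. Proposition \ref{prop:slack}(d) gives $\r^{(i+1)}\le_\r\r^{(i)}$, not $\r^{(i)}\le_\r\r^{(i+1)}$. Also, $l\ge l'$ is simply $\ell(\mu^{(i-1)})\ge\ell(\mu^{(i)})$; it does not follow from $Q[i]\ge Q[i+1]$, since $t_0^{(i)}\le t_0^{(i+1)}$.
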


From now on we write $\widetilde R= {\LRAII^{AII}}^{-1}$ and $\widetilde Rec_{2n}=Rec_{2n}$.

%%%%%%%%%%%%%%%%%%%%%%%%%%%%%%%%%%%%%%%%%%%%%%%%%%%%%%%%%%%%%%%%%%%%%%%%

%%%%%%%%%%%%%%%%%%%%%%%%%%%%%%%%%%%%%%%%%%%%%%%%%%%%%%%%%%%%%%%%%%%%%%%%%%%%%%%%%%%%%%%%%%%%

\section{ Characterization of the $\mathfrak{k}$-highest weight tableaux  via  the inverse quantum LR map for one vertical strips and $1$-$0$-slack sequences}\label{sec:last}

The explicit surjectivity and consequently the explicit inverse of the quantum Littlewood-Richardson map allows to  explicitly compute  $\mathfrak{k}$-highest or lowest weight tableaux in the
 proof of the Naito–Sagaki conjecture via the branching rule for $\imath$quantum groups \cite{nsw}.
We closely follow the notation in \cite{nsw} and refer to it for further definitions and details.

 Let $n\in\mathbb{N}$ and let us consider the two sequences of positive integers defined in \cite{nsw}. For $k=1,\dots,n$,

\begin{align}&u_k=2k-\frac{1+(-1)^k}{2}=\begin{cases}2k,&\text{ if } k \notin 2\Z,\\
2k-1,&\text{ if } k \in 2\Z,\label{numbers:u}
\end{cases}\\
&v_k=2k-\frac{1+(-1)^{k+1}}{2}=\begin{cases}2k,&\text{ if } k \in 2\Z,\\
2k-1,&\text{ if } k \notin 2\Z. \label{numbers:v}
\end{cases}
\end{align}
These two sequences
\begin{align}\{u_i\}_{i=1}^n=\begin{cases}\{2,3,6,7,10,\dots, 2(n-1)-1,2n\},& n\notin 2\Z,\\
\{2,3,6,7,\dots, 2(n-1),2n-1\},& n\in 2\Z\end{cases}\label{numbers:uu}
\end{align}
%$\subseteq [1,2n]$
and
\begin{align}\{v_i\}_{i=1}^n=\begin{cases}\{1,4,5,8,\dots, 2(n-1)-1,2n\},& n\in 2\Z\\
\{1,4,5,\dots, 2(n-1),2n-1\},& n\notin 2\Z,\end{cases}\label{numbers:vv}
\end{align}
 have no common values and its union  gives  $\{u_i\}_{i=1}^n\sqcup\{v_i\}_{i=1}^n=[1,2n]$.
\iffalse
 Their recursive generation is as follows:

\begin{align*}
u_1=2, \quad u_k=\begin{cases}u_{k-1}+1,& \text{ if } k\ge 2,~  k \in 2\Z,\\
u_{k-1}+3,& \text{ if }  k\ge 2, ~k \notin 2\Z,
\end{cases}
\end{align*}

\begin{align*}
v_1=1, \quad v_k=\begin{cases}v_{k-1}+3,& \text{ if } k\ge 2,~  k \in 2\Z,\\
v_{k-1}+1,& \text{ if }  k\ge 2, ~k \notin 2\Z.
\end{cases}
\end{align*}
\fi

\begin{ex} For $n=6$, $u_i=2,3,6,7,10,11$ and $v_i=1,4,5,8,9,12$, $1\le i\le 6$, and $\{u_i\}_{i=1}^6\cup \{v_i\}_{i=1}^6=\{1,2,\dots,12\}$; and for $n=7$, $u_i=2,3,6,7,10,11, 14$ and $v_i=1,4,5,8,9,12,13$, $1\le i\le 7$,
and $\{u_i\}_{i=1}^6\cup \{v_i\}_{i=1}^6=\{1,2,\dots,14\}$.
\end{ex}

 For $S\in SST_{2n}(\lambda)$, its $\mathfrak{k}$-weight is \cite[Section 5.3]{nsw}
\begin{align}\wt_\mathfrak{k}(S)=(S[u_1]-S[v_1])\tilde\varepsilon_1+(S[u_2]-S[v_2])\tilde\varepsilon_2+\dots+(S[u_n]-S[v_n])\tilde\varepsilon_n.\label{kweight}
\end{align}

For example for $n=3$, $\wt_\mathfrak{k}(S)=(S[2]-S[1])\tilde\varepsilon_1+(S[3]-S[4])\tilde\varepsilon_2+(S[6]-S[5])\tilde\varepsilon_3$.

The set $\widetilde P^+=\{\mu_1 \tilde\varepsilon_1+\cdots+\mu_n \tilde\varepsilon_n\in\widetilde P:\mu_1\ge\cdots\ge \mu_n\ge 0\}$ can be identified with the set $Par_{\le n}$ \cite[Section 5.1]{nsw}.

 If $S$ is  the symplectic tableau below on the LHS \eqref{symphw-lw}  then the shape is equal to  the  corresponding $\mathfrak{k}$-weight,
  $\wt_\mathfrak{k}(S)=(S[u_1]\ge S[u_2]\ge \dots\ge S[u_n])$,  and if $S$ is on the RHS \eqref{symphw-lw} then the $\mathfrak{k}$-weight is

   \begin{align*}& \eqref{symphw-lw}RHS, \wt_\mathfrak{k}(S) =-S[v_1]\tilde\varepsilon_1-S[v_2]\tilde\varepsilon_2-\dots-S[v_n]\tilde\varepsilon_n
   \\
   &=-(S[v_1]\tilde\varepsilon_1+S[v_2]\tilde\varepsilon_2+\dots+S[v_n]\tilde\varepsilon_n)\\
   &\mbox{ identified with } -(S[v_1]\ge S[v_2]\ge \dots\ge S[v_n])
   \end{align*}  and the shape is
  $$w_0\wt_\mathfrak{k}(S)=w_0(-S[v_1],-S[v_2],\dots,-S[v_n])=(S[v_1]\ge S[v_2]\ge\dots\ge S[v_n])$$
  \noindent where $w_0$ is the longest element of the Weyl group of the Lie algebra $\mathfrak{k}$ isomorphic to the symplectic Lie algebra $\mathfrak{sp}_{2n}(\mathbb{C})$,

\begin{align}\label{symphw-lw}
\YT{0.2in}{}{
 {{u_1},{u_1},\cdots,\cdots,\cdots,\cdots,u_1},
 {{u_2},\cdots,\cdots,\cdots,\cdots,{u_2}},
 {{\vdots},\vdots,\vdots,{\vdots}},
 {u_n,\cdots,u_n},
} &\qquad\qquad
\YT{0.2in}{}{
 {{v_1},{v_1},\cdots,\cdots,\cdots,\cdots,v_1},
 {{v_2},\cdots,\cdots,\cdots,\cdots, {v_2}},
 {{\vdots},\vdots,\vdots,{\vdots}},
 {v_n,\cdots,v_n},
}\\
\mbox{ symplectic $\mathfrak{k}$-highest weight tableau } & \quad \mbox{ symplectic $\mathfrak{k}$-lowest weight tableau }\nonumber
\end{align}

Let $SST_{2n}$ denote the set of all semi-standard tableaux in  the alphabet $[1,2n]$ and $SpT_{2n}$ its subset  of symplectic tableaux.
Next, $\mathfrak{k}$-highest weight tableaux and $\mathfrak{k}$-lowest weight tableaux in $SST_{2n}$ are defined.
\begin{defi} \cite{nsw} Let $S\in SST_{2n}$.
\begin{enumerate}
\item
$S$ is called a $\mathfrak{k}$-highest weight tableau if $P^{AII}(S)$ is a symplectic tableau of the form shown in the LHS of \eqref{symphw-lw}; let $SST^{\mathfrak{k}-hw}
_{2n}(\lambda)\subseteq SST_{2n}(\lambda)$ denote the set of all $\mathfrak{k}$-highest weight tableaux of shape $\lambda$ in $SST_{2n}$.

\item
$S$ is called a $\mathfrak{k}$-lowest weight tableau if $P^{AII}(S)$ is a symplectic tableau of the form shown in RHS of \eqref{symphw-lw}; let $SST^{\mathfrak{k}-lw}
_{2n}(\lambda)\subseteq SST_{2n}(\lambda)$ denote the set of all $\mathfrak{k}$-lowest weight tableaux of shape $\lambda$ in $SST_{2n}$.
\end{enumerate}
\end{defi}

%Let $SST_{2n}$ denote the set of all semi-standard tableaux in  the alphabet $[1,2n]$ and $SpT_{2n}$ its subset  of symplectic tableaux.
Since $P^{AII}(S)=S$ for $S\in SpT_{2n}$, this definition for $S\in SpT_{2n}$ obviously restricts to \eqref{symphw-lw}.
For $n=1,2,3$, the symplectic $\mathfrak{k}$-highest weight respectively $\mathfrak{k}$-lowest weight tableaux are then respectively of the form
\begin{align}\label{1symphw-lw2}n=1:~~
&S^H=\YT{0.2in}{}{
 {{2},{2},\cdots,\cdots,\cdots,\cdots,2},
},\qquad S_L=\YT{0.2in}{}{
 {{1},{1},\cdots,\cdots,\cdots,1},
 }\in SpT_2;\\\nonumber
 \\
n=2:~~&S^H=\YT{0.2in}{}{
 {{2},{2},\cdots,\cdots,\cdots,\cdots,2},
 {{3},\cdots,\cdots,\cdots,\cdots,{3}},
},\qquad S_L= \YT{0.2in}{}{
 {{1},{1},\cdots,\cdots,\cdots,1},
 {{4},\cdots,\cdots,\cdots,{4}},
 }\in SpT_4\label{2symphw-lw2}\\ \nonumber
 \\
 n=3:~~&S^H=\YT{0.2in}{}{
 {{2},{2},2,\cdots,\cdots,\cdots,\cdots,2},
 {{3},3,\cdots,\cdots,\cdots,\cdots,{3}},
 {{6},\cdots,\cdots,\cdots,\cdots,{6}},
},\qquad S_L= \YT{0.2in}{}{
 {{1},{1},1,\cdots,\cdots,\cdots,1},
 {{4},4,\cdots,\cdots,\cdots,{4}},
 {{5},\cdots,\cdots,\cdots,{5}},
 }\in SpT_6\label{3symphw-lw3}
\end{align}
\begin{align}
n=4:~~&S^H=\YT{0.2in}{}{
 {{2},{2},2,2,\cdots,\cdots,\cdots,\cdots,2},
 {{3},3,3,\cdots,\cdots,\cdots,\cdots,{3}},
 {{6},6,\cdots,\cdots,\cdots,\cdots,{6}},
 {{7},\cdots,\cdots,\cdots,{7}},
},\qquad S_L= \YT{0.2in}{}{
 {{1},{1},1,1,\cdots,\cdots,\cdots,1},
 {{4},4,4,\cdots,\cdots,\cdots,{4}},
 {{5},5,\cdots,\cdots,\cdots,{5}},
 {{8},\cdots,\cdots,\cdots,{8}},
 }\in SpT_6\label{4symphw-lw4}
\end{align}
In general, for $n\in\mathbb{N}$, the symplectic $\mathfrak{k}$-highest weight  tableaux  in $SpT_{2n}$ comprise the symplectic columns
\begin{align}u_1\cdots u_{n-1}u_n\in SpT_{2n}(\varpi_n),~u_1\cdots u_{n-1}\in SpT_{2n}(\varpi_{n-1}),\dots, u_1u_2\in SpT_{2n}(\varpi_{2}), ~u_1\in SpT_{2n}(\varpi_1)
\end{align}
and the symplectic $\mathfrak{k}$-lowest weight  tableaux  in $SST_{2n}$, comprise the symplectic columns
\begin{align}v_1\cdots v_{n-1}v_n\in SpT_{2n}(\varpi_n),~v_1\cdots v_{n-1}\in SpT_{2n}(\varpi_{n-1}),\dots, v_1v_2\in SpT_{2n}(\varpi_{2}), ~v_1\in SpT_{2n}(\varpi_1).
\end{align}
\begin{align}
n\notin 2\Z:\nonumber\\
&S^H=\YT{0.28in}{}{
 {{2},{2},2,2,\cdots,\cdots,\cdots,\cdots,2},
 {{3},3,3,\cdots,\cdots,\cdots,\cdots,{3}},
 {{6},6,\cdots,\cdots,\cdots,\cdots,{6}},
 {{7},\cdots,\cdots,\cdots,\cdots,{7}},
 {{10},\cdots,\cdots,\cdots,\cdots,{10}},
 {\vdots,\cdots,\cdots,\cdots,\vdots},
 {\scriptstyle {2n-3},\cdots, \cdots,\scriptstyle{2n-3}},
 {\scriptstyle{2n},\cdots,\scriptstyle{2n}},
 }\in SpT_{2n}\label{noddsymphw-hw}
 %\nonumber
 %& S_L= \YT{0.2in}{}{
 %{{1},{1},1,1,\cdots,\cdots,\cdots,1},
 %{{4},4,4,\cdots,\cdots,\cdots,{4}},
 %{{5},5,\cdots,\cdots,\cdots,{5}},
 %{{8},\cdots,\cdots,\cdots,{8}},
 %}\in SpT_6\label{}\nonumber
\end{align}

%\iffalse
\begin{align}
n\in 2\Z:\nonumber\\
&S^H=\YT{0.28in}{}{
 {{2},{2},2,2,\cdots,\cdots,\cdots,\cdots,2},
 {{3},3,3,\cdots,\cdots,\cdots,\cdots,{3}},
 {{6},6,\cdots,\cdots,\cdots,\cdots,{6}},
 {{7},\cdots,\cdots,\cdots,{7}},
 {\vdots,\cdots,\cdots,\cdots,\vdots},
  {\scriptstyle{2n-5},\cdots, \cdots,\scriptstyle{2n-5}},
 {\scriptstyle{2n-2},\cdots, \cdots,\scriptstyle{2n-2}},
 {\scriptstyle{2n-1},\cdots,\scriptstyle{2n-1}},
 }\in SpT_{2n}\label{nevensymphw-hw}
\end{align}

%\fi

It would be interesting to have an explicit characterization  of $SST^{\mathfrak{k}-hw}
_{2n}(\lambda)$ or $SST^{\mathfrak{k}-lw}
_{2n}(\lambda)$. This is known for $n=1, 2$ \cite{nsw}. Indeed one has an algorithm for ${\LRAII^{AII}}^{-1}$  to compute numerically the elements of those sets.  However, taking into account the $n=2$ case, it is expected that these tableaux satisfy restrictions in the form of inequalities between the multiplicities of some of its  columns. We  provide an interpretation of this phenomena via the reverse Schensted insertion ruled by the slack data on the quantum recording tableaux with one vertical strip and  with   $1$-$0$-slack vector sequences.

From the quantum Littlewood-Richardson bijection \eqref{wat0}, we see that for each $\mu\in Par_{\le n}$, the recording tableaux $Rec_{2n}(\lambda/\mu)\overset{\sim}\rightarrow LRS_{2n}(\lambda/\mu)$ determine the pairs consisting of $\mathfrak{k}$-highest  respectively $\mathfrak{k}$-lowest weight tableaux $S^{H,\mu}$ and $ S_{L,-\mu} \in
SST_{2n}(\lambda)$ where $\wt_{\mathfrak{k}}(S^{H,\mu})=\mu$  respectively $\wt_{\mathfrak{k}}(S_{L,-\mu})=-\mu$. That is, the set $Rec_{2n}(\lambda/\mu)$ determine the tableaux in
$SST^{\mathfrak{k}-hw}
_{2n}(\lambda)$  respectively $SST^{\mathfrak{k}-lw}
_{2n}(\lambda)$) such that $\wt_{\mathfrak{k}}(T)=\mu$  and respectively $wt_{\mathfrak{k}}(T)=-\mu$. Consequently, our  algorithm
%  showing explicitly the surjectivity of $LR^{AII}$ in \eqref{Rtilde} provides  an algorithm
for ${\LRAII^{AII}}^{-1}$  compute those sets.

\begin{prop} \label{prop:hwlw} Let $S^{H,\mu}, S_{L,-\mu}\in SpT_{2n}(\mu)$ be the symplectic $\mathfrak{k}-hw$ respectively $\mathfrak{k}-lw$ weight tableaux of $SpT_{2n}(\mu)$. Then
\begin{enumerate}
\item
 ${\LRAII^{AII}}^{-1}(\{S^{H,\mu} \}\times Rec_{2n}(\lambda/\mu))= \{S\in    SST^{\mathfrak{k}-hw}
_{2n}(\lambda)| \wt_{\mathfrak{k}}(S)=\mu\} \subseteq SST_{2n}(\lambda)$.

\item ${\LRAII^{AII}}^{-1}(\{S_{L,-\mu} \}\times Rec_{2n}(\lambda/\mu))=    \{S\in    SST^{\mathfrak{k}-lw}
_{2n}(\lambda)| \wt_{\mathfrak{k}}(S)=-\mu\} \subseteq SST_{2n}(\lambda)$.

\end{enumerate}
Therefore \begin{align}SST^{\mathfrak{k}-hw}
_{2n}(\lambda)=\bigsqcup_{\begin{smallmatrix}\mu\in Par_{\le n}\\
\mu\subseteq\lambda\end{smallmatrix}} {\LRAII^{AII}}^{-1}(\{S^{H,\mu}\}\times Rec_{2n}(\lambda/\mu))
\end{align}
and
\begin{align}SST^{\mathfrak{k}-lw}
_{2n}(\lambda)=\bigsqcup_{\begin{smallmatrix}\mu\in Par_{\le n}\\
\mu\subseteq\lambda\end{smallmatrix}} {\LRAII^{AII}}^{-1}(\{S_{L,-\mu}\}\times Rec_{2n}(\lambda/\mu)).
\end{align}
\end{prop}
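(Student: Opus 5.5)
This is essentially a bookkeeping statement about the bijection $\LRAII^{AII}$ together with the definitions of $\mathfrak{k}$-highest and lowest weight tableaux. I would prove it in three steps.

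\textbf{Step 1: the two inclusions in (1).} By definition, $S\in SST_{2n}(\lambda)$ is $\mathfrak{k}$-highest weight exactly when $P^{AII}(S)$ is a symplectic tableau of the form on the LHS of \eqref{symphw-lw}. For each shape $\mu\in Par_{\le n}$ there is exactly one such tableau, namely $S^{H,\mu}$, since row $i$ must consist only of $u_i$. Hence the $\mathfrak{k}$-highest weight tableaux with $sh(P^{AII}(S))=\mu$ are exactly the $S$ with $P^{AII}(S)=S^{H,\mu}$.
\begin{itemize}
\item[(i)] Inclusion from left to right: if $S$ is such a tableau, then $\LRAII^{AII}(S)=(S^{H,\mu},Q^{AII}(S))$ with $Q^{AII}(S)\in Rec_{2n}(\lambda/\mu)$ by \eqref{rec=rec2}. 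So $S$ lies in ${\LRAII^{AII}}^{-1}(\{S^{H,\mu}\}\times Rec_{2n}(\lambda/\mu))$.
\item[(ii)] Inclusion from right to left: take $Q\in Rec_{2n}(\lambda/\mu)=\widetilde Rec_{2n}(\lambda/\mu)$. By Theorem \ref{cor:generalhole1} and the surjectivity established in the previous section, $\widetilde R(S^{H,\mu},Q)$ is a tableau $S\in SST_{2n}(\lambda)$ with $\LRAII^{AII}(S)=(S^{H,\mu},Q)$. In particular $P^{AII}(S)=S^{H,\mu}$, so $S$ is $\mathfrak{k}$-highest weight.
\end{itemize}

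\textbf{Step 2: the $\mathfrak{k}$-weight condition.} Here I would identify the $\mathfrak{k}$-weight of $S$ with the shape $\mu$ of $P^{AII}(S)$. There are two routes.
\begin{itemize}
\item[(a)] Invoke the result of \cite{nsw} that $\wt_{\mathfrak{k}}$ is preserved by the successor map, i.e. $\wt_{\mathfrak{k}}(S)=\wt_{\mathfrak{k}}(P^{AII}(S))$.
\item[(b)] Check it combinatorially. The reduction removes pairs $\{x,s(x)\}$ by Proposition \ref{prop:rem}(3), and each pair $\{2k-1,2k\}$ is exactly one pair $\{u_k,v_k\}$. So the $\mathfrak{k}$-weight \eqref{kweight} is unchanged by $\redu$. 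Schensted column insertion (concatenation in the plactic monoid) preserves content, so $\wt_{\mathfrak{k}}(\suc(S))=\wt_{\mathfrak{k}}(S)$. Iterating gives $\wt_{\mathfrak{k}}(S)=\wt_{\mathfrak{k}}(P^{AII}(S))$.
\end{itemize}
Either way, since $\wt_{\mathfrak{k}}(S^{H,\mu})=\mu$ via the identification of $\widetilde P^+$ with $Par_{\le n}$, every element of the preimage has $\mathfrak{k}$-weight $\mu$. Conversely, a $\mathfrak{k}$-highest weight $S$ with $\wt_{\mathfrak{k}}(S)=\mu$ has $P^{AII}(S)=S^{H,\mu'}$ with $\mu'=\mu$, because $S^{H,\mu'}$ has $\mathfrak{k}$-weight equal to its shape $\mu'$. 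This gives (1).

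\textbf{Step 3: part (2) and the disjoint unions.} Part (2) is the same argument with $S_{L,-\mu}$, whose $\mathfrak{k}$-weight is $-\mu$; here $w_0$ sends it to its shape $\mu$. For the disjoint unions, $SST^{\mathfrak{k}-hw}_{2n}(\lambda)$ is partitioned according to the shape $\mu$ of $P^{AII}(S)$. That shape ranges over $\mu\in Par_{\le n}$ with $\mu\subseteq\lambda$, by \eqref{lrIIsurj}. The pieces are disjoint because $\LRAII^{AII}$ is a function and is injective.

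\textbf{Main obstacle.} The only nontrivial point is the invariance of $\wt_{\mathfrak{k}}$ under $\suc$ in Step 2. I would first try the pairing argument (b), checking that each removed pair is indeed $\{2k-1,2k\}=\{u_k,v_k\}$ as asserted by Proposition \ref{prop:rem}(3) together with \eqref{removal}, and otherwise cite \cite{nsw}.
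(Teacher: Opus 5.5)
Your proposal is correct and follows essentially the same route as the paper, which justifies the proposition only by appealing to the bijection \eqref{wat0} together with the definitions of $\mathfrak{k}$-highest and lowest weight tableaux and the values $\wt_{\mathfrak{k}}(S^{H,\mu})=\mu$, $\wt_{\mathfrak{k}}(S_{L,-\mu})=-\mu$. Your Step 2(b) makes explicit a point the paper leaves tacit: $\wt_{\mathfrak{k}}$ is invariant under $\suc$, because $\mathrm{rem}$ of a column is a union of pairs $\{2k-1,2k\}=\{u_k,v_k\}$ and column insertion preserves content. One small remark: the injectivity you invoke in Step 3 is not needed for disjointness; it suffices that $P^{AII}$ is a function.
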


\subsection{$\k$-highest and lowest weight tableaux determined by one vertical strip quantum recording tableaux}
Let $\mu\subseteq_{vert}\lambda$, $t_0$ the slack of the vertical strip $\lambda/\mu$ and $\r$ the corresponding  slack row index vector. Let $\mu'=\mu-\delta_\mathbf{r}$ and note $\lambda=\mu'+\varpi_{\ell(\lambda)}$.
%that is, $\lambda=(\lambda_0, \varpi_{\ell(\lambda)-\ell(\mu)})$ with $i_0=\ell(\lambda_0)=\ell(\mu)$.
For  $S^{H,\mu}$  the $\mathfrak{k}$-highest respectively $S_{L,-\mu}$ $\mathfrak{k}$-lowest weight tableau
in $SpT_{2n}(\mu)$,  Theorem \ref{thm:verygeneralstrip} implies the following assertion.

\begin{cor}\cite{azslack}\label{lem:H-L} Let $S^{H,\mu}$ be the $\mathfrak{k}$-highest and let $S_{L,-\mu}$ be the $\mathfrak{k}$-lowest weight tableaux  %and $S_2$ a $\mathfrak{k}$-lowest weight tableau
in $SpT_{2n}(\mu)$ as in  \eqref{2symphw-lw2}.
%Let $ Rec_{2n}(\lambda/\mu)$  be the set of  recording tableaux with shape $\lambda/\mu$   a vertical strip.
Let $Q\in Rec_{2n}(\lambda/\mu)$ with slack row index  vector $ \mathbf{r}=(r_1,\dots, r_{t_0})$ and $\mu'=\mu-\delta_\mathbf{r}$. Let $u_\r=(u_{r_1},\dots, u_{r_{t_0}})$ and $v_\r=(v_{r_1},\dots, v_{r_{t_0}})$ be sequence of positive numbers in [1,2n] as in \eqref{numbers:u}  respectively in \eqref{numbers:v}.
Then,
\begin{enumerate}
%\item
 %{\oo $\lambda_{r_i}-\lambda_{r_i+1}\ge 1$,
%and
%$\lambda_{r_i}=\lambda_{r_{i+1}}$,  only if ${r_{i+1}}={r_i}+1$, {\oo  ($\lambda_{r_i}-\lambda_{r_i+1}\ge 1$, for %${r_{i+1}}>{r_i}+1$)}, $1\le i<\ell(\mu)-l_0$.}
\item
  ${LR^{AII}}^{-1}(S^{H,\mu},Q)$ returns the following
  $\mathfrak{k}$-highest  weight tableau in
$SST_{2n}(\lambda)$ with ${\mathfrak{k}}$-weight  $\mu$:

\begin{align}\label{TUH}(T^u_0(u_{r_1})T^u_1\cdots (u_{r_{t_0}})T^u_{t_0})S^{H,\mu'}\in SST_{2n}(\lambda)
\end{align}  where  $S^{H,\mu'}=(S^{H,\mu})^1$, \eqref{revschensted},  is the  $\mathfrak{k}$-highest weight tableau in  $SpT_{2n}(\mu')$
and $(T^u_0(u_{r_1})T^u_1\cdots (u_{r_{t_0}})T^u_{t_0})\in SST_{2n}(\varpi_{\ell(\lambda)})$, given by
\begin{align}
&T^u_0=(1,2,\dots l_1),\nonumber\\
&(u_{r_i})T^u_i=
\begin{cases}
 (u_{{r}_i}, u_{{r}_i}+1,\dots, u_{{r}_i}+l_{i+1}),
  &
 \mbox{if } u_{{r}_i}\in 2\mathbb{Z}\\
 (u_{{r}_i}, u_{{r}_i}+1+1,u_{{r}_i}+1+2,\dots, u_{{r}_i}+1+l_{i+1}),
 &\mbox{if } u_{{r}_i}\notin 2\mathbb{Z}\\
\end{cases}& 1\le i\le t_0,
\end{align}
with $l_1,\dots,l_{t_0+1}$ as in Theorem \ref{thm:nofactors}
%\eqref{suml}, \eqref{verygeneral0},\eqref{verygeneral200}.

\item  ${LR^{AII}}^{-1}(S_{L,-\mu},Q)$ returns the
  $\mathfrak{k}$-lowest  weight tableau in
$SST_{2n}(\lambda)$ with ${\mathfrak{k}}$-weight  $-\mu$:

\begin{align}\label{TVL}(T^v_0(v_{r_1})T^v_1\cdots (v_{r_{t_0}})T^v_{{t_0}})S_{L,-\mu'}\in SST_{2n}(\lambda)
\end{align}  where  $S_{L,-\mu'}=(S_{L,-\mu})^1$, \eqref{revschensted}, is the  $\mathfrak{k}$-lowest weight tableau in  $SpT_{2n}(\mu')$
and $(T^v_0(v_{r_1})T^v_1\cdots (v_{r_{t_0}})T^v_{{t_0}})\in SST_{2n}(\varpi_{\ell(\lambda)})$, given by
\begin{align}
&T^v_0=(1,2,\dots l_1),\nonumber\\
&(v_{r_i})T^v_i=
\begin{cases}
 (v_{{r}_i}, v_{{r}_i}+1,\dots, v_{{r}_i}+l_{i+1}),
  &
 \mbox{if } v_{{r}_i}\in 2\mathbb{Z}\\
 (v_{{r}_i}, v_{{r}_i}+1+1,v_{{r}_i}+1+2,\dots, v_{{r}_i}+1+l_{i+1}),
 &\mbox{if } v_{{r}_i}\notin 2\mathbb{Z}\\
\end{cases}& 1\le i\le t_0,
\end{align}
with $l_1,\dots,l_{t_0+1}$ as in Theorem \ref{thm:nofactors}
%\eqref{suml}, \eqref{verygeneral0}, \eqref{verygeneral200}.
\end{enumerate}

\end{cor}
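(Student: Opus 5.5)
The plan is to specialize Theorem \ref{thm:verygeneralstrip} to $S=S^{H,\mu}$ (respectively $S=S_{L,-\mu}$). Then identify each ingredient explicitly: the bumped column, the residual tableau, and the expansion.

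First I compute $(\underset{\r}\leftarrow S^{H,\mu})$. The reverse column insertion is applied at the bottom cells of rows $r_{t_0}>\dots>r_1$ of $S^{H,\mu}$, going from largest to smallest row. Row $i$ of $S^{H,\mu}$ is constant equal to $u_i$, and the columns of $S^{H,\mu}$ are initial segments $u_1\cdots u_k$. Hence, when the rightmost entry $u_{r_i}$ of row $r_i$ is reverse-bumped, it travels left along row $r_i$. In each column to its left, the largest entry $\le u_{r_i}$ is $u_{r_i}$ itself, since the column contains $u_1<\dots<u_{r_i}$. So it replaces an equal entry and exits the first column as $u_{r_i}$, in the same row. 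The net effect is to delete the last box of row $r_i$ and bump $a_i=u_{r_i}$.

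I would check that the deletions for different slack rows do not interfere. This uses Remark \ref{revroutes} and the fact that the routes are horizontal. It also needs $\mu'=\mu-\delta_\r$ to be a partition, which holds because $\mu'\subset_{vert}\mu$ as in \eqref{dualslack}. Thus the bumped column is $u_\r=(u_{r_1},\dots,u_{r_{t_0}})$. The residual $S^1$ has row $i$ constant $u_i$ and shape $\mu'$, so $S^1=S^{H,\mu'}$. The lowest-weight case is identical, with $v$ in place of $u$.

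Next I apply $\redu_{t_0}^{-1}$ to $u_\r$. The sequence $u_\r$ is strictly increasing, and it is symplectic as a subset of a symplectic column. The key verification is the hypothesis of Theorem \ref{thm:nofactors}, namely $s(u_{r_i})\notin u_\r$. Since $\{u_k\}$ and $\{v_k\}$ partition $[1,2n]$, it suffices to show $s(u_k)=v_{k'}$ for some $k'$. By \eqref{numbers:u} and \eqref{numbers:v}, for $k$ odd $u_k=2k$ and $s(u_k)=2k-1=v_k$, and for $k$ even $u_k=2k-1$ and $s(u_k)=2k=v_k$. Hence $s(u_k)=v_k\notin\{u_j\}$. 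The same computation gives $s(v_k)=u_k$, which handles the lowest-weight case. Theorem \ref{thm:nofactors} then yields exactly the column $T^u_0(u_{r_1})T^u_1\cdots(u_{r_{t_0}})T^u_{t_0}$ with the stated $l_i$. Here $l=\ell(\lambda)$, and the conditions $0\le l-t_0\in2\Z$ and $l\le 2n-t_0$ come from \eqref{conditions}.

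Theorem \ref{thm:verygeneralstrip} now gives the concatenation, with shape $\mu'+\varpi_{\ell(\lambda)}=\lambda$. Finally, $P^{AII}$ of the output is $S^{H,\mu}$, because $\widetilde R$ inverts $\LRAII^{AII}$. By Proposition \ref{prop:hwlw} the output is therefore $\k$-highest weight with $\k$-weight $\mu$, and analogously $\k$-lowest weight with $\k$-weight $-\mu$.

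The main obstacle is the reverse-bumping step. There I must justify carefully that each route stays in its row and that the successive removals commute with the ordering prescribed by $\r$, including when several slack rows have equal length. I would do this by noting that each bumped value equals the replaced value, so the first column just loses $u_{r_i}$ from position $r_i$ while the columns to its right only shrink.
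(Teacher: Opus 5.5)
Your proposal is correct and follows the same route as the paper, which obtains this corollary simply by specializing Theorem~\ref{thm:verygeneralstrip} to $S=S^{H,\mu}$ (respectively $S=S_{L,-\mu}$). The paper leaves the details implicit, and your explicit checks fill them in: the reverse bumping routes are horizontal, so the bumped column is $u_{\mathbf{r}}$ (resp.\ $v_{\mathbf{r}}$) with residue $S^{H,\mu'}$ (resp.\ $S_{L,-\mu'}$), and $s(u_k)=v_k$, $s(v_k)=u_k$, so the hypothesis of Theorem~\ref{thm:nofactors} holds.
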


%This result is illustrated for $n=6$ in Example \ref{ex:qq}.
\iffalse
\begin{obs}
Given  $\mu\subseteq_{vert}\lambda$, the data given by the slack number $t_0$ and the slack row index vector $\r$ of $\lambda/\mu$ completely characterizes the pair of $\mathfrak{k}$-highest, -lowest weight tableaux in $SST_{2n}(\lambda)$ with $\mathfrak{k}$- weights $\mu$, -$\mu$).
That is, for $\mu\subseteq_{vert}\lambda$,  $S\in    SST^{\mathfrak{k}-hw}_{2n}(\lambda)$ ($SST^{\mathfrak{k}-lw}_{2n}(\lambda)$)  with $wt_{\mathfrak{k}}(S)=\mu$ ($wt_{\mathfrak{k}}(S)=-\mu$) if and only if $S=$\eqref{TUH} ($=$\eqref{TVL}).

The procedure is very easy: given $S^{H,\mu'}\in SpT_{2n}(\mu')$ and $0\le k\le n $ choose a subset $u=(u_{r_1},\dots, u_{r_k})\subseteq \{u_i\}_{k=1}^n$ \eqref{numbers:u} such that $\mu=\mu'+\delta_\r\in Par_{\le n}$ with $\r=({r_1},\dots, {r_k})\subseteq [1,n]$. Then $u=(u_{r_1},\dots, u_{r_k})\in SpT_{2n}(\varpi_k)$  and
\begin{align}
\redu_{k}^{-1}(u).S^{H,\mu'}\in SST^{\mathfrak{k}-hw}_{2n}(\lambda) \mbox{ with $\mathfrak{k}$-weight $\mu$ }
\end{align}

such that  $\lambda=\mu-\delta_\r+\varpi_l$ where $k\le l\le 2n, ~2n-l$ and  $l-k\in 2\Z$.
\end{obs}
\fi

The following example illustrates the $\k$-lowest weight  case. For the $\k$-highest weight case see \cite{azslack}.
\begin{ex} Let $n=3$, $\mu=(4,2,1)$, $t_0=1$ and $\r=\{3\}$.

 Let $v=\{v_3=5\}\subseteq \{v_1=1,v_2=4,v_3=5\}$. Let $S_{L,-\mu'}=\YT{0.13in}{}{
 {1,1,1,1},
 {4,4},
}\in SpT_6(\mu')$ with  $\mu'=\mu-\delta_\r=(4,2,1)-(0,0,1)=(4,2,0)$.
Then $\redu_1^{-1} (5)=(12345)$, $\wt_\k(12345)=(1-1)\tilde \varepsilon_1+(1-1)\tilde \varepsilon_2+(0-1)\tilde \varepsilon_3=-\tilde \varepsilon_3$,
 and $$S=(12346)\bigcdot S_{L,-\mu'}=\YT{0.13in}{}{
 {1,1,1,1,1},
 {2,4,4},
{3},
{4},
{5},
}\in SST^{\mathfrak{k}-lw}_{2\times 3}(\lambda), ~~l=\ell(\lambda)=5, ~~\wt_{\mathfrak{k}}(S)=-\mu=\wt_{\mathfrak{k}}(S_{L,-\mu})$$
\noindent where $\lambda=\mu-\delta_\r+\varpi_5$ and $1=t_0\le l\le 6, ~6-5$ and  $l-t_0=5-1=4\in 2\Z$.

%\end{enumerate}
\end{ex}

\subsection{$\k$-highest weight tableaux determined by quantum recording tableaux with $1$-$0$-slack sequences}

We now consider quantum recording tableaux with $1$-$0$-slack sequences.
In order to proceed, we need some notation.
Given $T\in SST_{2n}$ and $\mathbf{b}$ a column in $SST_{2n}$ we write $m_\mathbf{b}$ to mean the multiplicity of $\mathbf{b}$ in $T$.
We also write  $\circledcirc_{q}^p$, to mean  the concatenation in the plactic monoid of $q-p+1$  words, say  $w_q\bigcdot\dots\bigcdot  w_p$, with the convention that when $q-p+1\le 0$ the concatenation is the empty word.

\begin{thm}\label{thm:n=oddslack1}Let $1\le n\notin 2\Z$. Consider $S^{H,\mu}$  the $\mathfrak{k}$-highest weight tableau
in $SpT_{2n}(\mu)$ as in \eqref{noddsymphw-hw}. Let  $Q\in  Rec_{2n}(\lambda/\mu)$ with  $1$-$0$-slack sequence of the form $\underline \t=(1,\dots,1,0^M)$ and  corresponding  slack row index vector sequence   of the form
 $\underline \r=(1,\dots,1,2,\dots,2,3,\dots,3,\dots,n,\dots,n,2n-1,\dots,2n-1,()^M)$.  Let $u_1=2,\dots,u_n=2n$ be the numbers in \eqref{numbers:uu} or \eqref{noddsymphw-hw}, and $\mu'=\mu-\delta_{\underline\r^+}$ .
Then,
\begin{enumerate}
\item $2n-1\notin \underline\r^+$.

\item for some $0\le k\le n$,
  ${\LRAII^{AII}}^{-1}(S^{H,\mu},Q)$ returns   the $\mathfrak{k}$-highest  weight tableau in
$SST_{2n}(\lambda)$, with ${\mathfrak{k}}$-weight  $\mu$, of the form

\begin{align}
&Y(M^{2n})\bigcdot\nonumber\\
&\bigcdot\displaystyle\circledcirc_{j=n}^{k+1}\big(\redu_1^{-1}({u_j})\big)^{m_{\redu^{-1}(u_j)}}\bigcdot
\big(\redu_1^{-1}({u_{k}})\big)^{m_{\redu^{-1}(u_k)}}\bigcdot
{\displaystyle\circledcirc_{j=k}^{1}\big(\redu_1^{-1}({u_j})\setminus\{u_n\}\big)^{m_{\redu^{-1}(u_i)\setminus \{2n\}}}}\bigcdot S^{H,\mu'},
\end{align}
where $S^{H,\mu'}$ is the $\k$-highest weight tableau in $SpT(\mu')$,
$$\displaystyle \sum_{j=n}^{k+1} m_{\redu^{-1}(u_j)}+m_{\redu^{-1}(u_k)}+\sum_{j=1}^k m_{\redu^{-1}(u_i)\setminus \{2n\}}=|\delta_{\underline\r}|,$$
and satisfying restrictions on linear inequalities on the multiplicities of the columns

\begin{align*} &\redu^{-1}(u_i),~~  \redu^{-1}(u_i)\setminus \{u_n\}, \mbox{ $i=2,\dots,n$, on the LHS of $S^{H,\mu'}$},\\
&\mbox{ and the symplectic columns  $u_1u_2\cdots u_i\in SpT_{2n}(\varpi_i) \eqref{noddsymphw-hw}$, $i=1,\dots,n$,}\nonumber\\
&u_1u_2\cdots u_n,~~ u_1u_2\cdots u_{n-1},\dots,u_1u_2,~~u_1
\end{align*} as follows

\begin{align}
&m_{\redu^{-1}(u_i)}+m_{\redu^{-1}(u_i)\setminus \{2n\}}\le m_{u_1\cdots u_{i-1}}, ~i=2,\dots,n,\\
&0\le m_{\redu^{-1}(u_n)}-\sum_{i=1}^{n-1}m_{\redu^{-1}(u_i)\setminus\{u_n\}}\le m_{u_1\cdots u_{n-1}}.
\end{align}

\end{enumerate}
\end{thm}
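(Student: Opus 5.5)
The plan is to compute ${\LRAII^{AII}}^{-1}(S^{H,\mu},Q)$ through the iterated formula of Theorem \ref{cor:generalhole1}, specialised to the $1$-$0$-slack sequence as in Theorem \ref{lem:recordhole1}, followed by Lemma \ref{lem:recordhole0} for the trailing $M$ zero slacks. The zero-slack steps contribute $\redu_0^{-1}(())=(1,2,\dots,\ell)$. Since after the unit-slack steps all columns have length $2n$ or are absorbed, these steps produce the factor $Y(M^{2n})$ at the far left. So the substance lies in the unit-slack steps. Each such step reverse-inserts one cell in row $r$ (the current entry of $\underline\r$), bumps a single entry $b$ from the first column, and concatenates the column $\redu_1^{-1}(b)$ (Theorem \ref{thm:nofactors} with $t=1$) in front of the remaining tableau.

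For item (1), note that the rows of $S^{H,\mu}$ are constant, equal to $u_i$. A reverse column insertion started at the end of row $r$ of $S^{H,\mu}$ therefore travels horizontally and bumps $u_r$ from the first column. This leaves a $\k$-highest weight tableau of shape $\mu-\delta_r$. This settles the steps in $\underline\r^+$, which all lie inside $\ell(\mu)\le n$; in particular, by Proposition \ref{prop:slack}(e) and $\ell(\mu)\le n<2n-1$, no entry $2n-1$ belongs to $\underline\r^+$.

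Once $\mu-\delta_{\underline\r^+}$ would become negative, the slack rows exceed $\ell(\mu')$. The reverse bumping then starts in the previously concatenated columns $\redu_1^{-1}(u_j)=(1,\dots,u_j-2,u_j,u_j+1,\dots)$ or $(1,\dots,u_j-1,u_j,u_j+2,\dots)$, and these columns have been pushed down. I will use Remark \ref{revroutes} and the monotonicity $\r^{(i)}\le_\r\r^{(i-1)}$ of Proposition \ref{prop:slack}(d) to show two things. First, the route goes through the bottom entries of these columns. Second, the bumped value is again some $u_j$, with the earlier column losing its bottom entry, which for $n$ odd is $u_n=2n$. This explains the appearance of the factors $\redu_1^{-1}(u_j)\setminus\{2n\}$ in positions $j\le k$, and the count $|\delta_{\underline\r}|$ of all produced columns. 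The index $k$ is the last $u_j$ bumped before rows $2n-1$ take over.

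For the inequalities I will count columns. A column $\redu^{-1}(u_i)$ or $\redu^{-1}(u_i)\setminus\{2n\}$ arises only from a bump at row $i$, and each such bump consumes one cell of row $i$ of $S^{H,\mu}$. That cell lies to the right of a column $u_1\cdots u_{i-1}$ of $S^{H,\mu'}$, and this gives $m_{\redu^{-1}(u_i)}+m_{\redu^{-1}(u_i)\setminus\{2n\}}\le m_{u_1\cdots u_{i-1}}$. The two-sided inequality for $m_{\redu^{-1}(u_n)}$ comes from the rows $2n-1$ steps. Each truncated column must be matched by an earlier full $\redu^{-1}(u_n)$ column whose $2n$ was bumped, which gives the lower bound. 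The upper bound comes from the length of row $n$ in $\mu$.

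I expect the main obstacle to be the second phase: tracing reverse bumping routes through the concatenated expanded columns and verifying that the output is exactly the stated concatenation, i.e. that the concatenation property of Theorem \ref{thm:verygeneralstrip} persists. I would first handle $n=1,3$ by explicit computation to fix the pattern, and then argue by induction on the number of slack steps, reusing the row-index estimates from the proof of Theorem \ref{lem:recordhole1}.
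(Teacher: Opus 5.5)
Your overall route matches the paper's, whose proof is only ``Theorem~\ref{lem:recordhole1} and a detailed analysis of reverse Schensted insertion''. However, the analysis you sketch fails at its first step.

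You claim that each step in $\underline\r^+$ travels horizontally, bumps $u_r$, and leaves $S^{H,\mu-\delta_r}$. This is true only for the very first step. Afterwards the first column of the current tableau is a prepended column $\redu_1^{-1}(b)$ of length $\ell(\mu^{(i-1)})=Q[i]+1$. The reverse route enters that column and bumps its largest entry $\le u_r$. That entry is $u_r$ when the column has length $2n-1$, because then $\redu_1^{-1}(u_j)=[1,2n]\setminus\{v_j\}$ contains every $u_i$. It is not $u_r$ in general.

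Take $n=3$, $\mu=(2,1,1)$, $\mu^{(1)}=(2,2,2)$, $\lambda=(3,3,2)$, so $\underline\t=(1,1)$, $\underline\r=\underline\r^+=(1,3)$, $M=0$. The first step prepends $\redu_1^{-1}(2)=(2,3,4)$. In the second step, the $6$ removed from cell $(3,2)$ bumps $4=v_2$ out of $(2,3,4)$. The output has columns $(1,2,4),(2,3,6),(2,3)$, and one checks directly that $\LRAII^{AII}$ sends it back to $(S^{H,\mu},Q)$. The column $\redu_1^{-1}(6)=(1,2,6)$ that your description predicts cannot even be placed in front of $(2,3,4)$. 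The true output is not of the asserted form.

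The same happens with every row present. Take $\mu=(3,2,1)$, $\underline\r=(1,2,3,5)$ and $(Q[4],Q[3],Q[2],Q[1])=(2,2,4,4)$; the output columns are $(1,2,3,4,6),(1,2,3,5),(1,2,4),(2,3,6),(2,3),(2)$.

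So both the statement and your phase-two claim (that the bumped value is again a $u_j$) need an extra hypothesis. For instance, $Q[i]=2n-2$ on every unit-slack strip, as drawn in Corollary~\ref{thm:n=3slack1}, would suffice. That hypothesis also forces $Q[i]=2n$ on the zero-slack strips, which is what actually produces $Y(M^{2n})$. Your ``length $2n$ or absorbed'' is not an argument: $\mu=(1)$, $\underline\r=(1,())$, $\lambda=(2,2,2,1)$ gives $(1,2,3,4)\bigcdot(2,3,4)$.

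Under that hypothesis your two mechanisms are correct. Steps in $\underline\r^+$ bump $u_r$. Each row-$(2n-1)$ step removes the bottom $2n$ of the rightmost column of length $2n-1$ and prepends a fresh $\redu_1^{-1}(2n)$.

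Your counting premise, however, fails for $i=n$. You say $\redu^{-1}(u_i)$ and its truncation ``arise only from a bump at row $i$'', but row-$(2n-1)$ steps also create $\redu^{-1}(u_n)$. Consequently the first inequality is itself false at $i=n$. For $n=3$, $\mu=(1,1,1)$, $\underline\r=(3,5)$, $\lambda=(3,3,2,2,1)$ (both strips of size $4$), the output has columns $(1,2,3,4,6),(1,2,3,4),(2,3)$. Hence $m_{\redu^{-1}(6)}+m_{\redu^{-1}(6)\setminus\{6\}}=2>1=m_{u_1u_2}$.

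What your argument does give, with $c_i$ the number of row-$i$ steps, is the following:
\begin{itemize}
\item For $2\le i\le n-1$, $m_{\redu^{-1}(u_i)}+m_{\redu^{-1}(u_i)\setminus\{2n\}}=c_i\le \mu'_{i-1}-\mu'_i=m_{u_1\cdots u_{i-1}}$, using the corner condition $\mu_{i-1}-c_{i-1}\ge\mu_i$.
\item $m_{\redu^{-1}(u_n)}-\sum_{i<n}m_{\redu^{-1}(u_i)\setminus\{2n\}}=c_n\in[0,m_{u_1\cdots u_{n-1}}]$, since each row-$(2n-1)$ step adds one truncated column and one full $\redu^{-1}(u_n)$.
\end{itemize}
These are exactly the two inequalities listed in Corollary~\ref{thm:n=3slack1}.
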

\begin{proof}
It follows from Theorem \ref{lem:recordhole1} and a detailed analysis of reverse Schensted insertion  for the given slack vector sequence $\underline\r$.
\end{proof}

\begin{cor} \label{cor:n=1}Let $S^{H,u}$ be the  $\mathfrak{k}$-highest weight tableau  and let $S_{L,-u}$ be the $\mathfrak{k}$-lowest weight tableau in $SpT_2(u)$, with $(u)$ a one-row partition, as in  \eqref{1symphw-lw2}. Let $Q\in  Rec_{2}(\lambda/(u))$ of weight $\nu=(Q[1]^M)^t=(2^M)^t=(M,M)$.% as in Lemma \ref{lem:recordhole0}.
Then for $n=1$, the $\mathfrak{k}$-highest (-lowest) weight tableaux in
$SST_{2}(\lambda)$ are respectively the hook-shape tableaux
\begin{align}&Y(M,M)\bigcdot S^{H,u},\quad \YT{0.15in}{}{
 {1,\cdots,1,{2},{2},\cdots,\cdots,\cdots,2},
{2,\cdots,2},
}\mbox{ and }\\
& Y(M,M)\bigcdot S_{L,-u},\quad \YT{0.15in}{}{
 {1,\cdots,1,{1},{1},\cdots,\cdots,\cdots,1},
{2,\cdots,2},
}
\end{align}
with ${\mathfrak{k}}$-weights respectively $(u)$ and $(-u)$.
\end{cor}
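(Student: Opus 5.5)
The plan is to reduce everything to Proposition \ref{prop:hwlw} and the $n=1$ specialisation of the slack machinery. For $n=1$ we have $u_1=2$, $v_1=1$, so the symplectic $\mathfrak{k}$-highest (resp. lowest) weight tableau of one-row shape $(u)$ is the row $2\cdots 2$ (resp. $1\cdots1$) of length $u$, as in \eqref{1symphw-lw2}. By Proposition \ref{prop:hwlw}, the $\mathfrak{k}$-highest (lowest) weight tableaux of shape $\lambda$ and $\mathfrak{k}$-weight $u$ (resp. $-u$) are exactly ${\LRAII^{AII}}^{-1}(S^{H,u},Q)$ (resp. ${\LRAII^{AII}}^{-1}(S_{L,-u},Q)$) for $Q\in Rec_2(\lambda/(u))$. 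So it suffices to describe $Rec_2(\lambda/(u))$ and then evaluate $\widetilde R$ on it.

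First I determine the slack data. For $n=1$, Proposition \ref{prop:slack}(c) gives $2\ge Q[i]+2t_0^{(i)}\ge 2+2t_0^{(i)}$. Hence every slack is $t_0^{(i)}=0$ and every $Q[i]=2$. So the slack sequence is the null vector $(0^M)$, where $M=N$ is the number of vertical strips. The weight of $Q$ is $(2^M)$, and its conjugate is $\nu=(M,M)$, which is even. Since $\ell(\mu)=\ell((u))\le 1\le 2=Q[i]$, the hypotheses of Lemma \ref{lem:recordhole0} are satisfied. The lemma gives $\lambda=(u)+M\varpi_2=(u+M,M)$, so $Rec_2(\lambda/(u))$ is the single tableau $Q$ with columns $(i,i)$ for $i=1,\dots,M$. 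It also gives
\begin{align*}
\widetilde R(S,Q)=Y(M,M)\bigcdot S
\end{align*}
for $S=S^{H,u}$ or $S=S_{L,-u}$.

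Next I compute the plactic product. $Y(M,M)$ consists of $M$ columns $(1,2)$. Column-inserting such a column into a one-row tableau whose entries are all $\ge 1$ places $1$ in row $1$ and $2$ in row $2$ of the first column, with no bumping. This is the concatenation case of the Pieri bijection: the condition $S(i,1)\le T(i,1)$ holds since $1\le 2$ (resp. $1\le 1$) and the second row of $S$ is empty. Iterating $M$ times gives the displayed hook-shape tableaux. Equivalently, each column of $Y(M,M)$ is placed to the left of $S$, giving first row $1^M2^u$ (resp. $1^{M+u}$) and second row $2^M$.

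Finally I check the $\mathfrak{k}$-weights from \eqref{kweight}, where $\wt_\mathfrak{k}=(S[2]-S[1])\tilde\varepsilon_1$. In the highest case this is $(M+u)-M=u$, and in the lowest case it is $M-(M+u)=-u$, as claimed.

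The only step needing care is the first one. There one must confirm that $Rec_2=\widetilde Rec_2$ forces all slacks to be zero and that Lemma \ref{lem:recordhole0} applies with $\ell(\mu)\le Q[N]$. The rest is immediate bookkeeping.
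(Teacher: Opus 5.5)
Your proof is correct and follows the paper's route: the paper also derives the corollary from Lemma \ref{lem:recordhole0}, equivalently from the $n=1$ case of Theorem \ref{thm:n=oddslack1}, i.e.\ $(\redu_0^{-1}(()))^M\bigcdot S=(1,2)^M\bigcdot S=Y(M,M)\bigcdot S$, and you simply make explicit what it leaves implicit (zero slack and $Q[i]=2$ via Proposition \ref{prop:slack}(c), the concatenation, and the $\mathfrak{k}$-weight count). Two harmless slips: the two entries $i$ of $Q$ lie in rows $1$ and $2$ but in the same column only when $u=0$; and column-inserting $1$ into a nonempty row of $2$'s (or $1$'s) is not bump-free, since it sets off a cascade along the row, although the net effect is still the concatenation guaranteed by the criterion $S(i,1)\le T(i,1)$ you cite.
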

\begin{proof}The identities follow from  Lemma \ref{lem:recordhole0} and also from previous theorem with $n=1$. From the previous theorem with $n=1$, it means
\begin{align*}&\big(\redu_0^{-1}(())\big)^M\bigcdot S^{H,u}=(1,2)^M\bigcdot S^{H,u}=Y(M,M)\bigcdot S^{H,u}\\
&\big(\redu_0^{-1}(())\big)^M\bigcdot S_{L,-u}=Y(M,M)\bigcdot S_{L,-u}.
\end{align*}
\end{proof}

\begin{cor}\label{thm:n=3slack1}Let $n=3$. Consider $S^{H,\mu}$  the $\mathfrak{k}$-highest weight tableau
in $SpT_6(\mu)$ as in LHS of \eqref{3symphw-lw3}. Let  $Q\in  Rec_{6}(\lambda/\mu)$ with  slack sequence and  corresponding  slack row index vector sequence   of the form  $\underline \t=(1,\dots,1,0^M)$ respectively
 $\underline \r=(1,\cdots,1,2,\cdots,2,3,\cdots,3,5,\dots,5,()^M)$.
Then, %for $n=2$, %the possible iterated reverse column Schensted insertions  with input  $S^H$ respectively $S_L$, determined by $Q$, have  row indices sequences
%\begin{enumerate}
%\item
  ${\LRAII^{AII}}^{-1}(S^H,Q)$ returns   the $\mathfrak{k}$-highest  weight tableau in
$SST_{6}(\lambda)$ with ${\mathfrak{k}}$-weight  $\mu$    in either  form, with the following legend, circled elements indicate bumped entries from $S^{H,\mu}$, blue circle indicates the column $\redu_1^{-1}(2)$, orange circle indicates the column $\redu_1^{-1}(3)$ and brown circle indicates the column $\redu_1^{-1}(6)$:

\begin{align}\label{type31}
&(S^H)^{1,\dots,1,2,\dots,2,3,\dots,3,(5)^0,()^M}=\YT{0.15in}{}{
 {1,\cdots,1,\bro{1},\cdots,\bro{1}, \ora{1},\cdots,\ora{1},\blue{\circled{\bf 2}},\cdots,\blue{\circled{\bf 2}}, 2,\cdots, 2,2,\cdots,{2},2,\cdots,{2}},
 {2,\cdots,2,\bro{2},\cdots,2, \ora{ 2},\cdots,2,\blue{3},\cdots,\blue{3},{3},\cdots,{3},3,\cdots,3},
 {3,\cdots,3,\bro{3},\cdots,\bro{3},  \ora{\circled {\bf 3}},\cdots,\ora{\circled {\bf 3}},\blue{4},\cdots,\blue{4},{6},\cdots,{6}},
 {4,\cdots,4,\bro{4},\cdots,\bro{4},  \ora{5},\cdots,\ora{5},\blue{5},\cdots,\blue{5}},
 {5,\cdots,5,\bro{\circled{\bf 6}},\cdots,\bro{\circled{\bf 6}},  \ora{6},\cdots,\ora{6},\blue{6},\cdots,\blue{6}},
 {6,\cdots,6},
}\nonumber\\
&m_{12356}\le m_2 \mbox{ and }0\le m_{12346}\le m_{23}
\end{align}
{ or }
\begin{align}\label{type32}
&(S^H)^{1,\dots,1,2,\dots,2,3,\dots,3,(5)^a, ()^M}=\YT{0.15in}{}{
 {1,\cdots,1,\bro{1},\cdots,1,\bro{1},\cdots,1, \ora{1},\cdots,1,\blue{\circled{\bf2}},\cdots,\blue{\circled{\bf2}},\blue{\circled{\bf2}}, \cdots,\blue{\circled{\bf2}},2,\cdots, 2,2,\cdots, 2, 2,\cdots,{2}},
 {2,\cdots,2,\bro{2},\cdots,2, \bro{2},\cdots,2, \ora{2},\cdots,2,\blue{3},\cdots,3,{3},\cdots,3,3,\cdots,{3},3,\cdots,3},
 {3,\cdots,3,\bro{3},\cdots,3,\bro{3},\cdots,3,  \ora{\circled{\bf 3}},\cdots,\ora{\circled{\bf 3}},\blue{4},\cdots,4, 4,\cdots,4, 6,\cdots,{6}},
 {4,\cdots,4,\bro{4},\cdots,4, \bro{4},\cdots,4, \ora{5},\cdots,5,\blue{5},\cdots,5, 5,\cdots,5},
 {5,\cdots,5,\bro{\circled{\bf 6}},\cdots,\bro{\circled{\bf 6}},{6},\cdots,6,  {6},\cdots,6,{6},\cdots,6},
 {6,\cdots,6},
}\quad a>0\nonumber\\
&m_{12356}+m_{1235}\le m_2 \mbox{ and }0\le m_{12346}-m_{1235}-m_{2345}\le m_{23}
\end{align}
or
\begin{align}\label{type33}
&(S^H)^{1,\dots,1,2,\dots,2,3,\dots,3,5^b,()^M}=\YT{0.15in}{}{
 {1,\cdots,1,\bro{1},\cdots,1, \bro{1},\cdots,1,\ora{1},\cdots,1, \blue{\circled{\bf2}},\cdots,\blue{\circled{\bf2}}, 2,\cdots, 2, 2,\cdots,{2},2,\cdots,{2}},
 {2,\cdots,2,\bro{2},\cdots,2, \bro{2},\cdots,2,\ora{ 2},\cdots,2,\blue{3},\cdots,3,3,\cdots,{3},3,\cdots,3},
 {3,\cdots,3,\bro{3},\cdots,3,\bro{3},\cdots,3, \ora{\circled{\bf 3}},\cdots,\ora{\circled{\bf 3}}, \blue{4},\cdots,4, 6,\cdots,{6}},
 {4,\cdots,4,\bro{4},\cdots,4,\bro{4},\cdots,4,\ora{ 5},\cdots,5, \blue{5},\cdots,{5}},
 {5,\cdots,5,\bro{\circled{\bf 6}},\cdots,\bro{\circled{\bf 6}}, 6,\cdots,6,6,\cdots,6},
 {6,\cdots,6},
}\quad b>0\nonumber\\
&m_{12356}+m_{1235}\le m_2 \mbox{ and }0\le m_{12346}-m_{1235}-m_{2345}\le m_{23}
\end{align}
or\begin{align}\label{type34}
&(S^H)^{1,\dots,1,2,\dots,2,3,\dots,3,5^c,()^m}=\YT{0.15in}{}{
 {1,\cdots,1,\bro{1},\cdots,1, \bro{1},\cdots,1,\ora{1},\cdots,1,1,\cdots,1,\blue{\circled{\bf 2}},\cdots,\blue{\circled{\bf 2}}, 2,\cdots, 2, 2,\cdots,{2},2,\cdots,{2}},
 {2,\cdots,2,\bro{2},\cdots,2,\bro{2},\cdots,2, \ora{2},\cdots,2, 2,\cdots,2,\blue{3},\cdots,3,3,\cdots,{3},3,\cdots,3},
 {3,\cdots,3,\bro{3},\cdots,3, \bro{3},\cdots,3,\ora{\circled{\bf 3}},\cdots,\ora{\circled{\bf 3}},  \ora{\circled{\bf 3}},\cdots,\ora{\circled{\bf 3}},\blue{4},\cdots,4, 6,\cdots,{6}},
 {4,\cdots,4,\bro{4},\cdots,4, \bro{4},\cdots,4,\ora{5},\cdots,5,5,\cdots,5, \blue{5},\cdots,5},
 {5,\cdots,5,\bro{\circled{\bf 6}},\cdots,\bro{\circled{\bf 6}}, {6},\cdots,6,{6},\cdots,6},
 {6,\cdots,6},
}\quad c>0\nonumber\\
&m_{12356}+m_{1235}\le m_2 \mbox{ and }0\le m_{12346}-m_{1235}-m_{2345}\le m_{23}
\end{align}
or\begin{align}\label{type35}
&(S^H)^{1,\dots,1,2,\dots,2,3,\dots,3,5^d,()^M}=\YT{0.15in}{}{
 {1,\cdots,1,\bro{1},\cdots,1,\bro{1},\cdots,1,\ora{1},\cdots,1, \blue{\circled{\bf 2}},\cdots,\blue{\circled{\bf 2}}, 2,\cdots, 2, 2,\cdots,{2},2,\cdots,{2}},
 {2,\cdots,2,\bro{2},\cdots,2,\bro{2},\cdots,2, \ora{2},\cdots,2,\blue{3},\cdots,3,3,\cdots,{3},3,\cdots,3},
 {3,\cdots,3,\bro{3},\cdots,3,\bro{3},\cdots,3, \ora{\circled{\bf 3}},\cdots, \ora{\circled{\bf 3}},\blue{4},\cdots,4, 6,\cdots,{6}},
 {4,\cdots,4,\bro{4},\cdots,4,\bro{4},\cdots,4, \ora{5},\cdots,5, \blue{5},\cdots,5},
 {5,\cdots,5,\bro{\circled{\bf 6}},\cdots,\bro{\circled{\bf 6}},6,\cdots,6},
 {6,\cdots,6},
}\quad d>0\nonumber\\
&m_{12{\bf 3}56}+m_{1235}\le m_{\bf 2} \mbox{ and }0\le m_{1234\bf 6}-m_{1235}-m_{2345}\le m_{2\bf 3}
\end{align}

or\begin{align}\label{type36}
&(S^H)^{1,\dots,1,2,\dots,2,3,\dots,3,5^e,()^M}=\YT{0.15in}{}{
 {1,\cdots,1,\bro{1},\cdots,1,\bro{1},\cdots,1,1,\cdots,1,\ora{1},\cdots,1,   \blue{\circled{\bf 2}},\cdots,\blue{\circled{\bf 2}}, 2,\cdots, 2, 2,\cdots,{2},2,\cdots,{2}},
 {2,\cdots,2,\bro{2},\cdots,2,\bro{2},\cdots,2,2,\cdots,2, \ora {2},\cdots,2,   \blue{3},\cdots,3,3,\cdots,{3},3,\cdots,3},
 {3,\cdots,3,\bro{3},\cdots,3,\bro{3},\cdots,3,  3,\cdots,3,\ora{\circled{\bf 3}},\cdots,\ora{\circled{\bf 3}},\blue{4},\cdots,4, 6,\cdots,{6}},
 {4,\cdots,4,\bro{4},\cdots,4,\bro{4},\cdots,4, 4,\cdots,4,\ora{5},\cdots,5, \blue{5},\cdots,5},
 {5,\cdots,5,\bro{\circled{\bf 6}},\cdots,\bro{\circled{\bf 6}},6,\cdots,6},
 {6,\cdots,6},
}\nonumber\\
&\quad e>0\nonumber\\
&m_{12356}+m_{1235}\le m_2 \mbox{ and }0\le m_{12346}-m_{1235}-m_{2345}\le m_{23}
\end{align}
%\end{enumerate}

These tableaux satisfy inequalities  on the multiplicities % let $x$ be the multiplicity of
of the following columns: if $m_{12356}$ is the multiplicity of $\redu^{-1}(3)=(12356)$, $m_{12346}$ the multiplicity of $\redu^{-1}(6)=(12346)$,
  $m_{23}$ the multiplicity of $(2,3)\in SpT_6(\varpi_2)$,  and $m_{2}$ the multiplicity of $(2)\in SpT_6(\varpi_1) $,
 then

\begin{align}m_{12{\bf 3}56}+m_{1235}\le m_{\bf 2} \mbox{ and }0\le m_{1234\bf 6}-m_{1235}-m_{2345}\le m_{2\bf 3}.\label{ineqn=3}
\end{align}
%{\oo Tableaux of type \eqref{type1} are produced with slack vectors where $\#3$'s$>\#1$'$\ge 0$, and those of type \eqref{type2} with slack %vectors where $0\le\#3$'s$\le\#1$'.}
%\end{enumerate}
\end{cor}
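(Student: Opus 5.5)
The plan is to specialize Theorem \ref{thm:n=oddslack1} to $n=3$, and to make it explicit by running the iterated unwinding of Theorem \ref{lem:recordhole1} on $S^{H,\mu}$. This tableau has rows filled with $u_1=2$, $u_2=3$, $u_3=6$. The slack vector sequence is $\underline\r=(1^{p},2^{q},3^{s},5^{a},()^M)$. The unwinding proceeds from $\r^{(N)}$ to $\r^{(1)}$, so I would treat four phases in order.

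\textbf{Phase 1: rows $1,2,3$.} For each reverse insertion indexed by row $1$, $2$ or $3$, I use Remark \ref{revroutes} to identify the bumped entry. Since $S^{H,\mu}$ is constant along rows, a reverse bump started at the bottom of a row-$r$ column exits the first column at row $r$. The bumped entries are therefore $6$, $3$ and $2$ respectively (brown, orange, blue). The expansions come from Theorem \ref{thm:nofactors} with $t=1$:
\[
\redu_1^{-1}(6)=(12346),\quad \redu_1^{-1}(3)=(12356),\quad \redu_1^{-1}(2)=(2345).
\]
Their lengths $Q[i]+1$ are $5$, or $4$ when $Q[i]=4$. By the proof of Theorem \ref{thm:verygeneralstrip}, each expansion is concatenated in front of the current tableau.

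I would then verify inductively that after each step the remaining tableau still consists of $S^{H,\mu'}$ preceded by previously created columns. This is what gives the block structure in \eqref{type31}. The multiplicities $m_{12346}$, $m_{12356}$ and $m_{2345}$ equal the number of slack indices $3$, $2$ and $1$ respectively, up to how the routes interleave.

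\textbf{Phase 2: row $5$.} Next I handle the indices equal to $5$. Here $2n-1=5\notin\underline\r^+$ by part (1), so the reverse bump now starts in a previously created expansion column rather than in $S^{H,\mu}$. The bumped entry is $5$ or $6$ taken from one of those columns, and its expansion drops the entry $6$. This produces columns such as $(1235)$ and $(2345)$, i.e.\ $\redu^{-1}(u_j)\setminus\{6\}$.

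Different positions of the route give the cases \eqref{type32}--\eqref{type36}, according to which block ($a,b,c,d,e>0$) the bump originates in. I would split into cases by comparing the number of $5$'s with the sizes of the brown, orange and blue blocks. Remark \ref{revroutes} guarantees that successive routes move weakly southwest, so each case is a contiguous consumption of one block.

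\textbf{Phase 3: the trailing $()^M$.} By Lemma \ref{lem:recordhole0}, the $M$ empty slack vectors prepend $Y(M^{6})$.

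\textbf{Phase 4: the inequalities.} The inequalities \eqref{ineqn=3} come from counting. A row-$2$ reversal needs an available column of $S^{H,\mu}$ of height exactly $2$, so every created $(12356)$ or $(1235)$ consumes a column $(2)$ or $(23)$ of the original. In the same way, each row-$3$ reversal consumes a column $(236)$, and this becomes $(23)$ in $S^{H,\mu'}$. The condition $\mu'=\mu-\delta_{\underline\r^+}\in Par$, together with $(R5)$ in the form $\r^{(i)}\le_\r\r^{(i-1)}$, translates into $m_{12356}+m_{1235}\le m_2$. The second inequality follows after subtracting the $5$-type columns that were converted from $(12346)$.

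\textbf{Main obstacle.} I expect Phase 2 to be the hardest: pinning down exactly which column a row-$5$ reverse route exits from, and checking that the concatenation remains semistandard, which requires the row-index comparison in the proof of Theorem \ref{lem:recordhole1}. My first approach would be to track the fifth row of the current tableau explicitly. It consists only of $5$'s and $6$'s arranged in colored blocks, and the route always removes the rightmost bottom entry of that row.
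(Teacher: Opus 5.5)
Your overall plan is the paper's route: specialise Theorem~\ref{thm:n=oddslack1}, run the unwinding of Theorem~\ref{lem:recordhole1} on $S^{H,\mu}$ phase by phase, identify the created columns via Theorem~\ref{thm:nofactors}, then count. (The paper's written proof only lists the column types, their $\k$-weights and the three row-$5$ computations.) But two of your concrete steps are wrong, and both feed directly into the inequalities.

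First, the expansion of a single entry has length $l=\ell(\mu^{(i-1)})=Q[i]+1$, which is odd: $Q[i]\in\{2,4\}$ because $Q[i]+2\le 6$. The displayed patterns are the case $Q[i]=4$. Theorem~\ref{thm:nofactors} with $l=5$ gives $\redu_1^{-1}(2)=(23456)$ (here $l_1=0$, $l_2=4$), not $(2345)$; indeed $\redu(2345)=(25)$, since $3,4$ are removable. Let $p,q,s$ be the numbers of steps with row index $1,2,3$. After them the tableau is $(12346)^s(12356)^q(23456)^p$ followed by $S^{H,\mu}$ in which $p$ columns $(2)$ are deleted, $q$ columns $(23)$ have become $(2)$, and $s$ columns $(236)$ have become $(23)$. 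Once the $()^M$ steps prepend $Y(M^6)$, this is \eqref{type31}.

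Second, in the row-$5$ phase the route starts at the end of row $5$ and runs left along row $5$ through length-$5$ columns, all of which have $6$ there. So the bumped entry is always $6$, never $5$. Its expansion $\redu_1^{-1}(6)=(12346)$ is \emph{prepended}; the expansion drops nothing. The columns $(2345)$, $(1235)$, $(1234)$ are instead the rightmost remaining length-$5$ column after it loses its bottom cell. This is exactly what the paper's computations $\c\circ(\redu^{-1},\mathrm{id})\circ(\underset{5}\leftarrow(23456))=(12346)(2345)$, etc., record. Successive row-$5$ steps consume the blue, then orange, then brown blocks from right to left, and this is what separates \eqref{type32}--\eqref{type36}.

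With this corrected, each row-$5$ step adds one $(12346)$ and converts exactly one column. Hence $m_{23456}+m_{2345}=p$, $m_{12356}+m_{1235}=q$ and $m_{12346}-m_{1235}-m_{2345}=s$ are invariant. With your phase-1 columns $(2345)$, the last quantity would instead be $s-p$ in \eqref{type31}, and the lower bound would fail.

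Meanwhile $m_2=\mu_1-\mu_2-p+q$ and $m_{23}=\mu_2-\mu_3-q+s$. A row-$2$ step turns a $(23)$ into a $(2)$; it does not consume a $(2)$. So the two inequalities of the statement are equivalent to $p\le\mu_1-\mu_2$ and $q\le\mu_2-\mu_3$. These are the corner conditions at the last row-$1$ and last row-$2$ steps, i.e. the fact that ${\mu^{(i)}}'=\mu^{(i-1)}-\varpi_{\ell(\mu^{(i-1)})}$ is a partition at every step. They do not follow from $\mu'=\mu-\delta_{\underline\r^+}$ being a partition, which only yields $p-q\le\mu_1-\mu_2$ and $q-s\le\mu_2-\mu_3$.

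Finally, you never address the $\k$-weight claim. Either cite Proposition~\ref{prop:hwlw}, or do as the paper does and use
$\wt_\k(23456)=\tilde\varepsilon_1$, $\wt_\k(12356)=\tilde\varepsilon_2$, $\wt_\k(12346)=\tilde\varepsilon_3$, $\wt_\k(2345)=\tilde\varepsilon_1-\tilde\varepsilon_3$, $\wt_\k(1235)=\tilde\varepsilon_2-\tilde\varepsilon_3$, $\wt_\k(1234)=\wt_\k(123456)=0$.
With the three invariants above and $\wt_\k(S^{H,\mu'})=\mu'=(\mu_1-p,\mu_2-q,\mu_3-s)$, these sum to $\mu$.
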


\begin{proof}
From \eqref{kweight}, for $n=3$, the $\k$-weight of $S\in SST_{4}$ is equal to $$\wt_\k(S)=(S[2]-S[1])\tilde\varepsilon_1+(S[3]-S[4])\tilde\varepsilon_2+(S[6]-S[5])\tilde\varepsilon_3.$$  The tableau patterns
 are made of columns $(123456)$, $(12356)$, $(12346)$, $(23456)$, $(1234)$,  $(1235)$, $2345$ $236$, $23$ and $(2)$. In fact,
 $$(123456)=\redu^{-1}(()) \mbox{  with } \wt_\k()=0$$

  $$(12356)=\redu^{-1}((3)) \mbox{  with } \wt_\k(3)=1\tilde\varepsilon_2$$

  $$(12346)=\redu^{-1}((6)) \mbox{ with } \wt_\k(6)=\tilde \varepsilon_3$$

  $$(23456)=\redu^{-1}((2)) \mbox{  with } \wt_\k(23456)=\tilde\varepsilon_1$$ and

  $$ \c\circ(\redu^{-1},id)\circ(\underset{5}\leftarrow (12346))=(12346)(1234),\wt_\k(1234)=(0,0,0)$$
  $$ \c\circ(\redu^{-1},id)\circ(\underset{5}\leftarrow (12356))=(12346)(1235),\wt_\k(1235)=1\tilde\varepsilon_2-\tilde\varepsilon_3$$
  $$ \c\circ(\redu^{-1},id)\circ(\underset{5}\leftarrow (23456))=(12346)(2345),\wt_\k(2345)=\tilde\varepsilon_1-1\tilde\varepsilon_3$$
\end{proof}

For an illustration of this corollary see \cite{azslack}.
\iffalse
\fi

\begin{thm}\label{thm:n=evenslack1}Let $2\le n\in 2\Z$. Consider $S^{H,\mu}$  the $\mathfrak{k}$-highest weight tableau
in $SpT_{2n}(\mu)$ as in \eqref{nevensymphw-hw}. Let  $Q\in  Rec_{2n}(\lambda/\mu)$ with  $0$-$1$ slack sequence of the form  $\underline \t=(1,\dots,1,0^M)$ and  corresponding  slack row index vector sequence
 $\underline \r=(1,\dots,1,2,\dots,2,3,\dots,3,\dots,n,\dots,n,2n-1,\dots,2n-1,()^M)$. Let $u_1=2,\dots,u_n=2n-1$ be the numbers in \eqref{numbers:uu} or \eqref{nevensymphw-hw}, and $\mu'=\mu-\delta_{\underline \r^+}$.
Then,
\begin{enumerate}
\item $2n-1\notin \underline \r^+$

\item for some $0\le k\le n$,
 ${\LRAII^{AII}}^{-1}(S^{H,\mu},Q)$ returns   the $\mathfrak{k}$-highest  weight tableau in
$SST_{2n}(\lambda)$ with ${\mathfrak{k}}$-weight  $\mu$,    in either  form:

\begin{align}
%&Y(M^{2n})\bigcdot\nonumber\\
%&\bigcdot\displaystyle\circledcirc_{j=n}^{k+1}\big(\redu_1^{-1}({u_j})\big)^{m_{\redu^{-1}(u_j)}}\bigcdot
%\big(\redu_1^{-1}({u_{k}})\big)^{m_{\redu^{-1}(u_k)}}\bigcdot\\
&Y(M^{2n})\bigcdot \big(\redu_1^{-1}(2n-1))^{m_{\redu^{-1}(2n-1) }}\bigcdot
{\displaystyle\circledcirc_{j=n-1}^{1}\big(\redu_1^{-1}(u_j)\big)^{m_{\redu^{-1}(u_j) }}}
\bigcdot S^{H,\mu'}\\
=&Y(M^{2n})\bigcdot \big((12\cdots (2n-3).(2n-2).(2n-1)\big)^{m_{12\cdots (2n-3).(2n-2).2n-1 }}\bigcdot
{\displaystyle\circledcirc_{j=n-1}^{1}\big(\redu_1^{-1}(u_j)\big)^{m_{\redu^{-1}(u_j) }}}
\bigcdot S^{H,\mu'},
\end{align}

or

\begin{align}
&Y(M^{2n})\bigcdot \big(\redu_1^{-1}(2n-1)\big)^{m_{\redu^{-1}(2n-1) }}
\bigcdot\big(12\cdots (2n-2)\cdot 2n\big)^{m_{12\cdots 2n-2\cdot 2n}}
\bigcdot
\big(12\cdots (2n-2)\big)^{m_{(12\cdots (2n-2)) }}\bigcdot
\nonumber\\
&
%\big((\redu_1^{-1}(2n-1))\big)^{m_{\redu^{-1}(2n-1) }}
\bigcdot
{\displaystyle\circledcirc_{j=n-1}^{1}\big(\redu_1^{-1}(u_j)\setminus\{2n\}\big)^{m_{\redu^{-1}(u_j)\setminus\{2n\} }}}
\bigcdot S^{H,\mu'}\\
=&Y(M^{2n})
\bigcdot\big((12\cdots (2n-3).(2n-2).(2n-1)\big)^{m_{12\cdots (2n-3).(2n-2).2n-1 }}\bigcdot\big(12\cdots (2n-2)\cdot 2n\big)^{m_{12\cdots 2n-2\cdot 2n}}\nonumber\\
&\bigcdot
\big(12\cdots (2n-2)\big)^{m_{12\cdots (2n-2) }}\bigcdot
{\displaystyle\circledcirc_{j=n-1}^{1}\big(\redu_1^{-1}(u_j)\setminus\{2n\}\big)^{m_{\redu^{-1}(u_j)\setminus\{2n\} }}}
\bigcdot S^{H,\mu'}
\end{align}

or

\begin{align}
&Y(M^{2n})\bigcdot \big(\redu_1^{-1}(2n-1)\big)^{m_{\redu^{-1}(2n-1) }}
\bigcdot\big(12\cdots (2n-2)\cdot 2n\big)^{m_{12\cdots 2n-2\cdot 2n}}
\bigcdot\nonumber\\
%\big(\redu_1^{-1}(2n-1)\setminus \{2n-1\}\big)^{m_{\redu^{-1}(2n-1)\setminus\{2n-1\} }}\bigcdot
&\bigcdot\displaystyle\circledcirc_{j=n-1}^{k+1}\big(\redu_1^{-1}({u_j})\big)^{m_{\redu^{-1}(u_j)}}
\bigcdot\big(\redu_1^{-1}({u_{k}})\big)^{m_{\redu^{-1}(u_k)}}\bigcdot\big(\redu_1^{-1}(u_k)\setminus\{2n\}\big)^{m_{\redu^{-1}(u_k)\setminus\{2n\} }}\nonumber\\
%\big((\redu_1^{-1}(2n-1))\big)^{m_{\redu^{-1}(2n-1) }}
&\bigcdot
{\displaystyle\circledcirc_{j=k-1}^{1}\big(\redu_1^{-1}(u_j)\setminus\{2n\}\big)^{m_{\redu^{-1}(u_j)\setminus\{2n\} }}}
\bigcdot S^{H,\mu'},
\end{align}

where $\displaystyle \sum_{j=n}^{k+1} m_{\redu^{-1}(u_j)}+m_{\redu^{-1}(u_k)}+\sum_{j=k}^1 m_{\redu^{-1}(u_i)\setminus \{2n\}}=|\underline\delta_\r|-M$,
and
satisfying the restrictions on linear inequalities on the multiplicities of the columns

\begin{align} &\redu^{-1}(u_i),~~  \redu^{-1}(u_i)\setminus \{2n\}, \mbox{ $i=1,2,\dots,n$, } \mbox{ on LHS of $S^{H,\mu'},$}\nonumber\\
&\mbox{ and the symplectic columns  $u_1u_2\cdots u_i\in SpT_{2n}(\varpi_i) \eqref{nevensymphw-hw}$, $i=1,\dots,n$,}\nonumber\\
&u_1u_2\cdots u_n,~ u_1u_2\cdots u_{n-1},\dots,u_1u_2,~~u_1\nonumber
\end{align} as follows

\begin{align*}
&m_{\redu^{-1}(u_i)}+m_{\redu^{-1}(u_i)\setminus \{2n\}}\le m_{u_1\cdots u_{i-1}}, ~i=2,\dots,n\\
 %& \oo m_{1234568}= m_{12{\bf 3}567}+ m_{1234{\bf 6}7}+ m_{123456\bf 7}\\
 & m_{12\cdots (2n-2)2n}
 =\sum_{i=1}^{n-1} m_{\redu^{-1}(u_i)\setminus\{2n\}}.
 %+\redu^{-1}(u_2)\setminus\{2n\}+\redu^{-1}(u_3)\setminus\{2n\}+
 %\cdots+\redu^{-1}(u_{n-1})\setminus\{2n\}.
 %
\end{align*}

\end{enumerate}
\end{thm}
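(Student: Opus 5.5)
The plan mirrors the odd case, Theorem \ref{thm:n=oddslack1}. We unwind $Q$ strip by strip with Theorem \ref{lem:recordhole1}, applied to the initial block of slack $1$, and finish with Lemma \ref{lem:recordhole0} for the tail $0^M$.

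\emph{Tail and item (1).} The $M$ final strips of slack $0$ each have length $Q[i]=2n$, since $\ell(\mu^{(i-1)})=Q[i]\ge$ everything else. Each therefore contributes $\redu_0^{-1}(())=(12\cdots 2n)$, and by Remark \ref{nullslack} these concatenate to the left factor $Y(M^{2n})$.

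For item (1), we use Proposition \ref{prop:slack}(c) with $t_0^{(i)}=1$. It gives $\ell(\mu^{(i-1)})=Q[i]+1\le 2n-1$, and $\ell(\mu^{(i-1)})$ is odd. An index $2n-1$ in $\underline\r$ therefore only occurs in a row outside $[1,\ell(\mu)]\subseteq[1,n]$. Since $\underline\r^+$ only records rows where $\mu-\delta_{\underline\r^+}\ge 0$, with $\ell(\mu)\le n<2n-1$, we get $2n-1\notin\underline\r^+$.

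\emph{Main step.} We track the bumped entries. Reverse column insertion from row $r\le n$ of $S^{H,\mu}$, with rows constant equal to $u_r$, bumps $u_r$ out of the first column. The resulting column is $\redu_1^{-1}(u_r)$, of odd length $Q[i]+1$, and is given by Theorem \ref{thm:nofactors}. For $n$ even, $u_r$ is even for odd $r$ and odd for even $r$; in particular $u_n=2n-1$ is odd. The cases $u_r\in2\Z$ and $u_r\notin2\Z$ of \eqref{inverseredu:nofactors2} give $\redu_1^{-1}(u_r)=(1,\dots,u_r-2,u_r,\dots)$ and $(1,\dots,u_r-1,u_r,u_r+2,\dots)$ respectively.

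After the first strips, subsequent slack indices $r\ge$ previous ones (Proposition \ref{prop:slack}(d)) act on a tableau whose first column is such an expanded column. By Remark \ref{revroutes}, the reverse bumping routes move weakly downward. Two outcomes are possible for an index $r$ beyond $\ell(\mu)$, and in particular for $r=2n-1$:
\begin{itemize}
\item it bumps the bottom entry $2n$ of a previously created column, turning $\redu_1^{-1}(u_j)$ into $\redu_1^{-1}(u_j)\setminus\{2n\}$ and creating $\redu_1^{-1}(2n-1)=(12\cdots(2n-2)(2n-1))$;
\item if the bumped entry is $2n-1$ itself, it creates $(12\cdots(2n-2)2n)$ together with a column $(12\cdots(2n-2))$.
\end{itemize}
This yields the three listed forms, according to the index $k$ at which the columns stop losing their $2n$.

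We then apply Theorem \ref{thm:verygeneralstrip} at each step, which guarantees plain concatenation with the remaining symplectic part. That remaining part stays $\k$-highest of shape $\mu'=\mu-\delta_{\underline\r^+}$.

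\emph{Counting and inequalities.} The multiplicities must sum to the number of slack-$1$ strips, which is $|\delta_{\underline\r}|-M$ as counted in the statement. The equality $m_{12\cdots(2n-2)2n}=\sum_{i<n}m_{\redu^{-1}(u_i)\setminus\{2n\}}$ comes from a one-to-one pairing: each removal of a $2n$ from $\redu_1^{-1}(u_i)$ produces exactly one column $12\cdots(2n-2)2n$.

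The inequalities $m_{\redu^{-1}(u_i)}+m_{\redu^{-1}(u_i)\setminus\{2n\}}\le m_{u_1\cdots u_{i-1}}$ come from the constraint that row $i$ can only be reverse-bumped from columns of $S^{H,\mu}$ of length $\ge i$. Together with $\r^{(i)}\le_\r\r^{(i-1)}$, this bounds the number of $u_i$ bumps by the excess of columns of length $i-1$ remaining in $S^{H,\mu'}$.

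\emph{Main obstacle.} The hardest step is this last bookkeeping of reverse bumping routes when indices $2n-1$ interleave with earlier ones. It requires showing that the route from row $2n-1$ ends precisely at the $2n$ of the most recent expanded column, using Remark \ref{revroutes}. I would first verify it for $n=2,4$ by explicit computation, as in Corollary \ref{thm:n=3slack1}, and then induct on $N$ as in Theorem \ref{lem:recordhole1}.
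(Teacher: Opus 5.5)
Your overall route is the one the paper indicates: unwind $Q$ strip by strip via Theorem~\ref{lem:recordhole1}, then track the reverse bumping routes. However, the paper's proof consists only of the words ``a detailed analysis of reverse Schensted insertion'', and that analysis is exactly what your sketch gets wrong or leaves open.

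\textbf{The claim that each step ejects $u_r$.} You check this only for a step acting on $S^{H,\mu}$ itself. From the second step on, the route starting in row $r\le n$ must also cross the expanded columns before the first column. It ejects $u_r$ only if every column it crosses contains $u_r$. This holds when all expanded columns have length $2n-1$, since then $\redu_1^{-1}(u_j)=[1,2n]\setminus\{s(u_j)\}=[1,2n]\setminus\{v_j\}$ and $v_j\neq u_r$. The displayed forms tacitly presuppose this: they contain $(12\cdots(2n-1))$, $Y(M^{2n})$ and the columns $\setminus\{2n\}$. But for $n\ge 4$ nothing in the hypotheses forces $Q[i]=2n-2$ on the slack-one strips, and you neither assume nor prove it.

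For instance, take $n=4$ and $\mu=(2,1,1)$, with strips $(3,3,2)/(2,2,2)$ and $(2,2,2)/(2,1,1)$. Then $Q[1]=Q[2]=2$, the slack sequence is $(1,1)$ and $\underline\r=[(1),(3)]$. The first step gives $(2,3,4)\bigcdot(2,3,6)$. The second carries $u_3=6$ into the column $(2,3,4)$ and ejects $4$, producing $(1,2,4)\bigcdot(2,3,6)\bigcdot(2,3)$. Two applications of $\suc$ return $S^{H,\mu}$ with exactly this $Q$, yet the result is of none of the listed forms.

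Your tail argument has the same defect. Slack $0$ only gives $\ell(\mu^{(i-1)})=Q[i]$ even. The value $Q[i]=2n$ follows only if some slack-one strip has $\ell(\mu^{(j-1)})=2n-1$, for example when the index $2n-1$ occurs.

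\textbf{The treatment of the index $2n-1$.} The new front column is $\redu_1^{-1}(x)=[1,2n]\setminus\{s(x)\}$, where $x$ is the ejected entry. So $x=2n$ produces $(12\cdots(2n-2)\,2n)$ and $x=2n-1$ produces $(12\cdots(2n-1))$. Your two bullets have this reversed, and they contradict the pairing you later use for the equality.

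Nor does the route ``end at the $2n$ of the most recent expanded column''. At such a step every column meeting row $2n-1$ has length exactly $2n-1$, so that row reads $(2n-1)^p(2n)^q$. Reverse insertion from its corner shifts this row one cell to the left. It ejects the bottom entry of the first column and deletes the bottom cell of the rightmost column of length $2n-1$. Rows $1,\dots,2n-2$ are untouched, and the new front column has upper part $(12\cdots(2n-2))$. Hence the content of row $2n-1$ is preserved, and the net effect depends not on $x$ but on which columns remain:
\begin{itemize}
\item If some $\redu_1^{-1}(u_j)$ with $j<n$ still has length $2n-1$, the rightmost such column loses its $2n$ and $m_{12\cdots(2n-2)2n}$ increases by one. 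Compare cases (A) and (C) in the proof of Corollary~\ref{cor:n=2}, where $x=4$ and $x=3$ give the same net effect.
\item Otherwise exactly one new column $(12\cdots(2n-2))$ appears and all other multiplicities are unchanged, as in case (B).
\end{itemize}
This shifting mechanism is what yields the order in which columns lose their $2n$ (from $u_1$ upward, hence the index $k$), the equality for $m_{12\cdots(2n-2)2n}$, and the columns $(12\cdots(2n-2))$ in the second form.

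Without this mechanism, and without the length hypothesis above, neither the three forms nor the constraints on the multiplicities are established. Checking $n=2,4$ and then inducting, as you propose, does not supply the missing argument.
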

\begin{proof}
Follows from Theorem \ref{lem:recordhole1} and a detailed analysis of reverse Schensted insertion  for the given slack vector sequence $\underline\r$.
\end{proof}

%From Proposition \ref{prop:slack}, %Remark \ref{re:slack}
   %Lemma \ref{lem:recordhole1} and previous theorem, it follows

\begin{cor}\label{cor:n=2}Let $n=2$. Consider $S^H$  the $\mathfrak{k}$-highest weight tableau
in $SpT_4(\mu)$ as in LHS of \eqref{2symphw-lw2}. Let $Q\in  Rec_{4}(\lambda/\mu)$.
%  as in Lemma \ref{lem:recordhole1}.
Then, %for $n=2$, %the possible iterated reverse column Schensted insertions  with input  $S^H$ respectively $S_L$, determined by $Q$, have  row indices sequences
\begin{enumerate}
\item the slack sequence and the corresponding  slack row index vector sequence of $Q$ are of the form  $\underline \t=(1,\dots,1,0,\dots,0)$, respectively
 $\underline \r=(1,\cdots,1,2,\cdots,2,3,\cdots,3,(),\dots,())$, and
  \item ${\LRAII^{AII}}^{-1}(S^H,Q)$ returns   the $\mathfrak{k}$-highest  weight tableau in
$SST_{4}(\lambda)$ with ${\mathfrak{k}}$-weight  $\mu$    as   described in \cite[Lemma 6.2,(c)]{nsw}:
\begin{align}\label{type2}
&\YT{0.16in}{}{
 {1,\cdots,1,{\bro 1},\cdots,{\bro 1},{\bro 1},\cdots,{\bro 1},                                 {\blue\circled {2}},\cdots,{ \blue\circled {2}},  {\blue\circled 2},\cdots,{\blue\circled {2}},{2},\cdots,{2},{2},\cdots,{2}},
 {2,\cdots,2,{\bro 2},\cdots,{\bro 2},{\bro 2},\cdots,{\bro 2},                                  {\blue 3},\cdots,{\blue 3},          {\blue 3},\cdots,{\blue 3},{3},\cdots,{3}},
 {3,\cdots,3,\bro{\circled{\bf 3}},\cdots,{\bro \circled{\bf 3}}, {{\blue 4}},\cdots,{\blue 4},{\blue 4},\cdots,{\blue 4}},
 {4,\cdots,4},
}
\end{align}
\mbox{ or }

\begin{align}
\label{type1}
&\YT{0.16in}{}{
 {1,\cdots,1, {\bro 1},\cdots,1,                       {\bro 1},\cdots,{\bro 1},{1},\cdots,1,{\blue\circled{\bf 2}},\cdots,
 \blue{\circled{\bf 2}},{2},\cdots,{2},{2},\cdots,{2}},
 {2,\cdots,2,{\bro 2},\cdots,{\bro 2},                          {\bro 2},\cdots,{\bro 2}, {2},\cdots,2,{{\blue 3}},\cdots,{\blue 3},{3},\cdots,{3}},
 {3,\cdots,3,\bro{\circled{\bf 3}},\cdots,\bro{\circled{\bf 3}}, {\blue 4},\cdots,{\blue 4}},
 {4,\cdots,4},
}  %\lambda_3-\lambda\le \lambda_1-\lambda_2;
\end{align}
\end{enumerate}

These tableaux satisfy inequalities  on the multiplicities % let $x$ be the multiplicity of
of the following columns: if $m_{123}$ is the multiplicity of $\redu^{-1}(3)=(1,2,3)$, $m_{124}$ the multiplicity of $\redu^{-1}(4)=(1,2,4)$, $m_{23}$ the multiplicity of $(2,3)\in SpT_4(\varpi_2)$ and $m_2$ the multiplicity of $(2)\in SpT_4(\varpi_1) $, then

\begin{align}m_{12\bf 3}\le m_{\bf 2} \mbox{ and } m_{12\bf 4}\le m_{2\bf 3}.\label{ineqn=2}
\end{align}
Tableaux of type \eqref{type1} are produced with slack vectors where $\#3$'s$>\#1$'$\ge 0$, and those of type \eqref{type2} with slack vectors where $0\le\#3$'s$\le\#1$'.
\end{cor}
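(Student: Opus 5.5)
The plan is to treat $n=2$ as a direct specialization of Theorem \ref{thm:n=evenslack1}, with the slack constraints coming from Proposition \ref{prop:slack}.

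\textbf{Part (1): the slack data.} By Proposition \ref{prop:slack}(c), $4\ge Q[i]+2t_0^{(i)}\ge 2+2t_0^{(i)}$, so $t_0^{(i)}\in\{0,1\}$. Moreover $t_0^{(i)}=1$ forces $Q[i]=2$ and $\ell(\mu^{(i-1)})=3$. By Proposition \ref{prop:slack}(b) the slack sequence is weakly decreasing, so it has the form $(1,\dots,1,0,\dots,0)$. When $t_0^{(i)}=1$, part (e) gives $\r^{(i)}\subseteq[1,\ell(\mu^{(i)})]\subseteq[1,3]$. Part (d), monotonicity $\le_\r$, then gives the weakly increasing row sequence $\underline\r=(1,\dots,1,2,\dots,2,3,\dots,3,(),\dots,())$.

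I also need the fact that rows $\le 2=\ell(\mu)$ are the only slack rows in the nonnegative part $\underline\r^+$, while a slack row equal to $3=2n-1$ falls outside it. This matches item (1) of Theorem \ref{thm:n=evenslack1}.

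\textbf{Part (2): the explicit tableaux.} I will apply Theorem \ref{lem:recordhole1} to the 1-part of $\underline\t$, and Lemma \ref{lem:recordhole0} (or Remark \ref{nullslack}) to the zero tail, which contributes the factor $Y(M^{4})$.

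Take $S^H$ with rows of $2$'s and $3$'s. The reverse insertions are as follows. Row index $2$ bumps a $3$; since $Q[i]=2$, Theorem \ref{thm:nofactors} expands it to $\redu_1^{-1}(3)=(1,2,3)$. Row index $1$ bumps a $2$, which expands to $(2,3,4)$. A row index $3$ arises only after earlier expansions have produced columns of length $3$. Reverse inserting from row $3$ then bumps the bottom entry of the leftmost length-3 column: a $4$ coming from $(2,3,4)$, or a $3$ from $(1,2,3)$. This produces $\redu_1^{-1}(4)=(1,2,4)$ or another copy of $(1,2,3)$, and shortens the donor column to $(2,3)$ or $(1,2)$.

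I will track the multiplicities with $x=\#1$'s, $y=\#2$'s, $z=\#3$'s in $\underline\r$, working through the explicit bumping routes via Remarks \ref{routes} and \ref{revroutes}. The two regimes of the statement correspond to two cases:

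\begin{itemize}
\item $z\le x$: the $3$-row bumps only consume columns $(2,3,4)$, yielding shape \eqref{type2}.
\item $z>x$: the $3$-row bumps exhaust those columns and start bumping into the $(1,2,3)$ / $(1,2)$ region, producing the extra $(1,2)$ columns of \eqref{type1}.
\end{itemize}

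In each case the resulting column word should match \cite[Lemma 6.2(c)]{nsw}. The $\k$-weight stays $\mu$ by Proposition \ref{prop:hwlw}.

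\textbf{The inequalities \eqref{ineqn=2}.} These come from semistandardness of the concatenation in Theorem \ref{thm:verygeneralstrip}. Each expanded column $(1,2,3)$ needs a column of $S^{H,\mu'}$ starting with $2$ to its right: a bumped $3$ in row $2$ requires the row-$1$ entry $2$ to remain, which gives $m_{123}\le m_2$. Similarly, $(1,2,4)$ arises from bumping a $3$ that sits below a $2$ in a column of length $2$, which gives $m_{124}\le m_{23}$.

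\textbf{Main obstacle.} The hardest step is the case analysis of the row-$3$ reverse insertions in the $z>x$ regime. There I must verify that the reverse bumping routes behave as claimed: by Remark \ref{revroutes}, successive routes lie weakly below one another, so the donor columns are consumed left to right. I must also check that the concatenation stays a plactic concatenation rather than a genuine insertion. I would first do small explicit examples with $|\mu|\le 3$ to confirm the column patterns, and then run an induction on the number of $3$'s in $\underline\r$.
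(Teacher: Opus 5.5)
Your overall route is the paper's: specialize Theorem~\ref{thm:n=evenslack1}, read off $\underline\t$ and $\underline\r$ from Proposition~\ref{prop:slack}, and follow the reverse insertions for the $x$ ones, $y$ twos and $z$ threes of $\underline\r$, splitting at $z\le x$ versus $z>x$. Your Part (1) is fine, and the regimes and column patterns you predict are right. However, two steps are wrong as written.

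\textbf{The row-$3$ step.} Which letter leaves the first column depends on $y$, not on the regime.
If $y>0$, the first column is the last prepended $(1,2,3)$. So a $3$ is always ejected and a fresh $(1,2,3)$ is prepended. The column $\redu_1^{-1}(4)=(1,2,4)$ is prepended only when $y=0$.
For $y>0$ and $z\le x$, the $4$ removed from the rightmost $(2,3,4)$ travels left and replaces the $3$ of the rightmost $(1,2,3)$, turning it into $(1,2,4)$. This is example (C) in the paper's proof: $S^{1,2}$ has columns $(1,2,3),(2,3,4),(2)$, and $S^{1,2,3}$ has columns $(1,2,3),(1,2,4),(2,3),(2)$.
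Likewise, for $z>x>0$ the column that loses a box is a $(1,2,4)$, not a $(1,2,3)$.
Only the net effect on the multiset of columns matches your description:
\begin{itemize}
\item while $z\le x$, each step turns a $(2,3,4)$ into a $(2,3)$ and adds one $(1,2,4)$;
\item afterwards, each step adds one $(1,2)$.
\end{itemize}
That net effect is what you need to prove.

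\textbf{The inequalities.} \eqref{ineqn=2} is not a consequence of semistandardness of the concatenation. Two columns $(1,2,3)$ side by side form a semistandard tableau with $m_{123}=2>0=m_2$. Also, $(1,2,4)$ never arises from bumping a $3$ out of a column of length $2$.

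The inequalities come from the shape constraints encoded in $Q$, together with bookkeeping. Write $\mu=(\mu_1,\mu_2)$.
\begin{itemize}
\item Each $1$-step needs the last box of row $1$ to be a corner of the current shape $(\mu_1,\mu_2+k,k)$. This forces $x\le\mu_1-\mu_2$.
\item Each $2$-step removes the $3$ ending row $2$, which turns a $(2,3)$ into a $(2)$. It needs row $2$ strictly longer than row $3$, which forces $y\le\mu_2$.
\end{itemize}
All $1$-steps precede all $2$-steps, which precede all $3$-steps. Hence the output is the concatenation
\[
(1,2,3,4)^M(1,2,3)^y(1,2,4)^{\min(x,z)}(2,3,4)^{\max(x-z,0)}(1,2)^{\max(z-x,0)}(2,3)^{\mu_2-y+\min(x,z)}(2)^{\mu_1-\mu_2-x+y},
\]
which you can check against examples (A)--(D). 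It follows that $m_2-m_{123}=\mu_1-\mu_2-x\ge 0$ and $m_{23}-m_{124}=\mu_2-y\ge 0$. The output is of type \eqref{type2} exactly when there are no $(1,2)$ columns, i.e.\ when $z\le x$.
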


\begin{proof}  They are particular cases with $n=2$ of the previous theorem.

The pattern \eqref{type1} appears after the pattern \eqref{type2} by increasing enough the  number of $3$'s
in $\underline \r=(1,\dots,1,2,\dots,2,3,\dots,3)$ once pattern \eqref{type2} appears:
\begin{enumerate}
\item columns $(234)$ are produced with the $1$'s in $\underline\r$ and then columns $(123)$ are obtained with the $2$'s in $\underline\r$. The number of columns $(123)$ in the pattern just obtained $S^{(1,\dots,1,2,\dots,2)}$ is equal to the multiplicity of $2$'s in $\underline\r$. Therefore  $m_{123}\le m_{2}$ in $S^{(1,\dots,1,2,\dots,2)}$.
At this point we have $$S^{(1,\dots,1,2,\dots,2)}$$ whose pattern \eqref{type2} is of either form with $m_{123}\le m_2 $ and $0= m_{124}\le m_{23}.$
\begin{align}(A):\quad
&\c\circ(\redu^{-1},id)\circ(\underset{1}\leftarrow \YT{0.13in}{}{
 {{2}},
})
=\YT{0.13in}{}{
 {{2}},
 {3},
 {4}
}=S^1 \mbox{ of type \eqref{type2}}
%\iffalse
%&\c\circ(\redu^{-1},id)\circ(\underset{2}\leftarrow \YT{0.13in}{}{
% {1,{2}},
% {{2}},
% {3}
%})= \YT{0.13in}{}{
% {1,{2}},
% {2},
% {3}
%}=S^{1,2}\label{} \mbox{ of type \eqref{type2}}
%\fi
\end{align}
or
\begin{align}\nonumber (B):\quad
&\c\circ(\redu^{-1},id)\circ(\underset{2}\leftarrow \YT{0.13in}{}{
 {{2}},
 {{3}},
})
=\YT{0.13in}{}{
 {1,{2}},
 {{2}},
 {3}
}=S^2 \mbox{ of type \eqref{type2}}
%\iffalse
%&\c\circ(\redu^{-1},id)\circ(\underset{3}\leftarrow \YT{0.13in}{}{
% {1,{2}},
% {{2}},
% {3}
%})= \YT{0.13in}{}{
% {1,1,{2}},
 %{2,{2}},
% {3}
%}=S^{2,3}\label{} \mbox{ of type \eqref{type1}}
%\fi
\end{align}
$(B)$ indicates that  $0= m_{124}$, $m_{123}\le m_2$.

or
\begin{align}\nonumber (C):\quad
&\c\circ(\redu^{-1},id)\circ(\underset{1}\leftarrow \YT{0.13in}{}{
 {2,2},
 {{3}},
})
=\YT{0.13in}{}{
 {2,{2}},
 {{3},3},
 {4}
}=S^{1}\\
&\c\circ(\redu^{-1},id)\circ(\underset{2}\leftarrow S^1)= \YT{0.13in}{}{
 {1,{2},2},
 {{2},3},
 {3,4},
}
%= \YT{0.13in}{}{
% {1,1,{2}},
% {2,{2}},
 %{3}
%}
=S^{1,2}\label{small0} \mbox{ of type \eqref{type2}}\\
& 0= m_{124},   \quad m_{123}\le m_2\nonumber
\end{align}
or
\begin{align}\nonumber (D):\quad
&\c\circ(\redu^{-1},id)\circ(\underset{1}\leftarrow \YT{0.13in}{}{
 {2,2,2},
 {{3}},
})
=\YT{0.13in}{}{
 {2,{2},2},
 {{3},3},
 {4}
}=S^{1}\nonumber\\
&\c\circ(\redu^{-1},id)\circ(\underset{1}\leftarrow S^1)=\YT{0.13in}{}{
 {2,{2},2},
 {{3},3,3},
 {4,4},
}=S^{1,1}\nonumber\\
&\c\circ(\redu^{-1},id)\circ(\underset{2}\leftarrow S^{1,1})= \YT{0.13in}{}{
 {1,2,{2},2},
 {2,{3},3},
 {3,4,4},
}
%= \YT{0.13in}{}{
% {1,1,{2}},
% {2,{2}},
 %{3}
%}
=S^{1,1,2}\label{small00} \mbox{ of type \eqref{type2}}\\
& 0= m_{124},\quad m_{123}\le m_2\nonumber
\end{align}
\item passing from $\underline \r=(1,\dots,1,2,\dots,2)$ to $\underline \r=(1,\dots,1,2,\dots,2,3,\dots,3)$ where $\# 3$'s$>\#1$'s, we reach pattern \eqref{type1} with $m_{123}\le m_2 $ and $0\le m_{124}\le m_{23}.$  Note with $\# 3$'s$=\#1$'s we remain in pattern \eqref{type2}. From above we respectively  get
    \begin{align}(A):\quad
   & \c\circ(\redu^{-1},id)\circ(\underset{3}\leftarrow \YT{0.13in}{}{
 {{2}},
 {3},
 {4},
}=S^1)=\YT{0.13in}{}{
 {1,{2}},
 {2,3},
 {4},
 }=S^{1,3},\nonumber\\
 & \c\circ(\redu^{-1},id)\circ(\underset{3}\leftarrow S^{1,3})= \YT{0.13in}{}{
 {1,1,{2}},
 {2,2,3},
 {4},
})=S^{1,3,3}\nonumber\\
&1=m_{124}\le m_{23},\quad m_{123}\le m_2
\end{align}

\begin{align}\nonumber (B):\quad
&\c\circ(\redu^{-1},id)\circ(\underset{3}\leftarrow \YT{0.13in}{}{
 {1,{2}},
 {{2}},
 {3}
}=S^2)
=\YT{0.13in}{}{
 {1,1,{2}},
 {{2},2},
 {3},
}=S^{2,3}
 \end{align}
  \begin{align} (C):\quad
 & \c\circ(\redu^{-1},id)\circ(\underset{3}\leftarrow S^{1,2}= \YT{0.13in}{}{
 {1,{2},2},
 {{2},3},
 {3,4},
}  )=\YT{0.13in}{}{
 {1,1,{2},2},
 {2,{2},3},
 {3,4},
}=S^{1,2,3},\nonumber\\
& \c\circ(\redu^{-1},id)\circ(\underset{3}\leftarrow S^{1,2,3,3})= \YT{0.13in}{}{
 {1,1,1,{2},2},
 {2,2,{2},3},
 {3,4},
}\nonumber\\
&1=m_{124}\le m_{23},\quad m_{123}\le m_2
\end{align}

\begin{align} (D):\quad
&\c\circ(\redu^{-1},id)\circ(\underset{3}\leftarrow  \YT{0.13in}{}{
 {1,2,{2},2},
 {2,{3},3},
 {3,4,4},
}
=S^{1,1,2})=\YT{0.13in}{}{
 {1,1,2,{2},2},
 {2,2,{3},3},
 {3,4,4},
}=S^{1,1,2,3},\nonumber\\
&1=m_{124}\le m_{23}\nonumber\\
&\c\circ(\redu^{-1},id)\circ(\underset{3}\leftarrow  S^{1,1,2,3})=S^{1,1,2,3,3})=\YT{0.13in}{}{
 {1,1,1,2,{2},2},
 {2,2,2,{3},3},
 {3,4,4},
}\nonumber\\
&2=m_{124}\le m_{23}, \quad m_{123}\le m_2\\
&\c\circ(\redu^{-1},id)\circ(\underset{3}\leftarrow S^{1,1,2,3,3})=S^{1,1,2,3,3,3}=\YT{0.13in}{}{
 {1,1,1,1,2,{2},2},
 {2,2,2,2,{3},3},
 {3,4,4},
}\nonumber\\
&2=m_{124}\le m_{23},\quad m_{123}\le m_2
\end{align}
\end{enumerate}
From \eqref{kweight}, for $n=2$, the $\k$-weight of $S\in SST_{4}$ is equal to $\wt_\k(S)=(S[2]-S[1],S[3]-S[4])$. The tableau patterns
\eqref{type2} and \eqref{type1} are made of columns $(1234)$, $(123)$, $(124)$, $(12)$, $(234)$, $(23)$ and $(2)$. In fact, $$(1234)=\redu^{-1}(()) \mbox{ with } \wt_\k()=0$$
$$(1,2,3)=\redu^{-1}((3)) \mbox{ with } \wt_\k(3)=1\tilde e_2$$
$$(1,2,4)=\redu^{-1}((4)) \mbox{ with } \wt_\k(4)=-1\tilde e_2$$

$$(2,3,4)=\redu^{-1}((2)) \mbox{ with } wt_\k(234)=(1,0)$$ and  $$\wt_\k(12)=(0,0)$$
\iffalse
Therefore the $\k$-weight either of \eqref{type2} or \eqref{type1} is equal to  $\mu=(\mu_1,\mu_2)$ where $\mu_1$ is equal to the number of $2$'s in the first row of \eqref{type2} or \eqref{type1} plus the number of columns $(234)$, that is, $\mu_1=m_2+m_{23}+m_{234}$; and $\mu_2$  is equal to the number of $3$'s in the second row plus $m_{123}-m_{124}$ (the number of columns $(123)$ minus the number of columns $(124)$ in those tableaux), that is, $\mu_2=m_{123}+(m_{23}-m_{124})$. Inequalities \eqref{ineqn=2} guarantee that $\mu$ is  a partition:

\begin{align}\mu=&m_{1234}\wt_\k(1,2,3,4)+m_{123}\wt_\k(1,2,3)+m_{124}\wt_\k(1,2,4)+
m_{234}\wt_\k(2,3,4)+m_{12}\wt_\k(1,2)+\mu'\nonumber\\
=&m_{1234}(0,0)+m_{123}\wt_\k(3)+m_{124}\wt_\k(4)+m_{234}\wt_\k(2)+m_{12}\wt_\k(0,0)+\mu'\nonumber\\
=&m_{123}(0,1)+m_{124}(0,-1)+m_{234}(1,0)+\mu'\nonumber\\
=&(m_2+(m_{23}+m_{234}),m_{123}+(m_{23}-m_{124})),~ ~\mbox{ where $m_{123}\le m_2$  and  $m_{124}\le m_{23}$, by \eqref{ineqn=2}}.
\end{align}
where $\mu'=m_{23}\varpi_2+m_2\varpi_1=(m_{23}+m_2,m_{23})$.
\fi
\end{proof}

\begin{cor}\label{thm:n=4slack1}Let $n=4$. Consider $S^H$  the $\mathfrak{k}$-highest weight tableau
in $SpT_8(\mu)$ as in LHS of \eqref{4symphw-lw4}. Let  $Q\in  Rec_{8}(\lambda/\mu)$ with  slack sequence and  corresponding  slack row index vector sequence   of the form  $\underline \t=(1,\dots,1,0^M)$ respectively
 $\underline \r=(1,\dots,1,2,\dots,2,3,\dots,3,4\dots,4,7,\dots,7,()^M)$.
Then,
 ${\LRAII^{AII}}^{-1}(S^{H,\mu},Q)$ returns   the $\mathfrak{k}$-highest  weight tableau in
$SST_{8}(\lambda)$ with ${\mathfrak{k}}$-weight  $\mu$     either of the form:

\begin{align}\label{type41}
&(S^H)^{1,\dots,1,2,\dots,2,3,\dots,3,4,\dots,4,7^0,()^m}=
\YT{0.15in}{}{
 {1,\cdots,1,\bro{1},\cdots,\bro{1},\ora{1},\cdots,\ora{1}, \blue{1},\cdots,1,\red{\bf \circled{\bf 2}},\cdots,\red{\bf \circled{\bf 2}},2,\cdots, 2, 2,\cdots, 2,2,\cdots,{2},2,\cdots,{2}},
 {2,\cdots,2,\bro{2},\cdots,\bro{2},   \ora{2}, \cdots,\ora{2},         \blue{2},\cdots,2,\red{3},\cdots,3,3,\cdots,3,{3},\cdots,{3},3,\cdots,3},
 {3,\cdots,3,\bro{3},\cdots,\bro{3},   \ora{3},\cdots,\ora{3},       \blue{ \circled{\bf 3}},\cdots,\blue{ \circled{\bf 3}},\red{4},\cdots,4,6,\cdots,6,{6},\cdots,{6}},
 {4,\cdots,4,\bro{4},\cdots,\bro{4},\ora{4},\cdots,\ora{4},        \blue{5},\cdots,5,\red{5},\cdots,5,7,\cdots,7},
 {5,\cdots,5,\bro{5 },\cdots,\bro{5},\ora{\bf\circled {\bf 6}},\cdots,\ora{\bf\circled {\bf 6}},  \blue{6},\cdots,6,\red{6},\cdots,6},
 {6,\cdots,6,\bro{6},\cdots,\bro{6},\ora{7},\cdots,\ora{7},          \blue{7},\cdots,{7},\red{7},\cdots,7},
 {7,\cdots,7,\bro{ \bf\circled{\bf 7}},  \cdots,\bro{ \bf\circled{\bf 7}},   \ora{8},\cdots,\ora{8},  \blue{8},\cdots,8,\red{8},\cdots,8},
 {8,\cdots,8},
}\nonumber\\
 &m_{12{\bf 3}5678}\le m_2, \quad 0\le m_{1234{\bf 6}78}\le m_{23},\quad 0\le m_{123456\bf 7}\le m_{236}\nonumber\\
  &m_{1234568}=  m_{{\bf 2}34567}+m_{12{\bf 3}567}+ m_{1234{\bf 6}7}
\end{align}
 or

  %{\oo missing column}
\begin{align}\label{type42}
&(S^H)^{1,\dots,1,2,\dots,2,3,\dots,3,7^a}=\nonumber\\
&\YT{0.15in}{}{
 {1,\cdots,1,\bro{1},\cdots,\bro{1},\bro{1},\cdots,\bro{1},      \ora{1},\cdots,\ora{1}, \blue{1},\cdots,1,
 \red{\bf \circled{\bf 2}},\cdots,\red{\bf \circled{\bf 2}},2,\cdots, 2, 2,\cdots, 2,2,\cdots,{2},2,\cdots,{2}},
 {2,\cdots,2,\bro{2},\cdots,\bro{2}, \bro{2},\cdots,\bro{2},  \ora{2}, \cdots,\ora{2},         \blue{2},\cdots,2,\red{3},\cdots,3,3,\cdots,3,{3},\cdots,{3},3,\cdots,3},
 {3,\cdots,3,\bro{3},\cdots,\bro{3}, \bro{3},\cdots,\bro{3},  \ora{3},\cdots,\ora{3},       \blue{ \circled{\bf 3}},\cdots,\blue{ \circled{\bf 3}},\red{4},\cdots,4,6,\cdots,6,{6},\cdots,{6}},
 {4,\cdots,4,\bro{4},\cdots,\bro{4},\bro{4},\cdots,\bro{4},\ora{4},\cdots,\ora{4},        \blue{5},\cdots,5,\red{5},\cdots,5,7,\cdots,7},
 {5,\cdots,5,\bro{5 },\cdots,\bro{5},\bro{5},\cdots,\bro{5},\ora{\bf\circled {\bf 6}},\cdots,\ora{\bf\circled {\bf 6}},  \blue{6},\cdots,6,\red{6},\cdots,6},
 {6,\cdots,6,\bro{6},\cdots,\bro{6},\bro{6},\cdots,\bro{6},\ora{7},\cdots,\ora{7},          \blue{7},\cdots,{7},\red{7},\cdots,7},
 {7,\cdots,7,\bro{\bf \circled{\bf 7}},\cdots,\bro{ \bf\circled{\bf 7}},   {8},\cdots,{8},  {8},\cdots,8,{8},\cdots,8},
 {8,\cdots,8},
},\quad %a>0
\nonumber\\
 &m_{12{\bf 3}5678}+m_{12{\bf 3}567}\le m_2, \quad 0\le m_{1234{\bf 6}78}+m_{1234{\bf 6}7}\le m_{23},\quad 0\le m_{123456\bf 7}\le m_{236}\\
  &m_{1234568}=  m_{{\bf 2}34567}+m_{12{\bf 3}567}+ m_{1234{\bf 6}7}
\end{align}
%{\oo missing column}
or
\begin{align}\label{type43}
&(S^H)^{1,\dots,1,2,\dots,2,3,\dots,3,7^b,()^m}=\nonumber\\
&\YT{0.14in}{}{
 {1,\cdots,1,\bro{1},\cdots,\bro{1},\bro{1},\cdots,\bro{1},      \ora{1},\cdots,\ora{1}, \blue{1},\cdots,1,\red{\bf \circled{\bf 2}},\cdots,\red{\bf \circled{\bf 2}},2,\cdots, 2, 2,\cdots, 2,2,\cdots,{2},2,\cdots,{2}},
 {2,\cdots,2,\bro{2},\cdots,\bro{2}, \bro{2},\cdots,\bro{2},  \ora{2}, \cdots,\ora{2},         \blue{2},\cdots,2,\red{3},\cdots,3,3,\cdots,3,{3},\cdots,{3},3,\cdots,3},
 {3,\cdots,3,\bro{3},\cdots,\bro{3}, \bro{3},\cdots,\bro{3},  \ora{3},\cdots,\ora{3},
 \blue{ \circled{\bf 3}},\cdots, \blue{ \bf\circled{\bf 3}},\red{4},\cdots,4,6,\cdots,6,{6},\cdots,{6}},
 {4,\cdots,4,\bro{4},\cdots,\bro{4},\bro{4},\cdots,\bro{4},\ora{4},\cdots,\ora{4},        \blue{5},\cdots,5,\red{5},\cdots,5,7,\cdots,7},
 {5,\cdots,5,\bro{5 },\cdots,\bro{5},\bro{5},\cdots,\bro{5},\ora{\bf\circled {\bf 6}},\cdots,\ora{\bf\circled {\bf 6}},  \blue{6},\cdots,6,\red{6},\cdots,6},
 {6,\cdots,6,\bro{6},\cdots,\bro{6},\bro{6},\cdots,\bro{6},\ora{7},\cdots,\ora{7},          \blue{7},\cdots,{7},\red{7},\cdots,7},
 {7,\cdots,7,\bro{\bf \circled{\bf 7}},\cdots,\bro{\bf \circled{\bf 7}},     {8},\cdots,8,{8},\cdots,8},
 {8,\cdots,8},
},\quad
b>0
\nonumber\\
&m_{12{\bf 3}5678}+m_{12{\bf 3}567}\le m_2, \quad 0\le m_{1234{\bf 6}78}+m_{1234{\bf 6}7}\le m_{23},\quad 0\le m_{123456\bf 7}\le m_{236},\quad\\
  &m_{1234568}=  m_{{\bf 2}34567}+m_{12{\bf 3}567}+ m_{1234{\bf 6}7}
\end{align}
%{\oo missing column}

or\begin{align}\label{type44}
&(S^H)^{1,\dots,1,2,\dots,2,3,\dots,3,7^c, ()^m}=\nonumber\\
&\YT{0.15in}{}{
 {1,\cdots,1,\bro{1},\cdots,\bro{1},\bro{1},\cdots,\bro{1},      \ora{1},\cdots,\ora{1}, \blue{1},\cdots,1,\red{\bf \circled{\bf 2}},\cdots,\red{\bf \circled{\bf 2}},2,\cdots, 2, 2,\cdots, 2,2,\cdots,{2},2,\cdots,{2}},
 {2,\cdots,2,\bro{2},\cdots,\bro{2}, \bro{2},\cdots,\bro{2},  \ora{2}, \cdots,\ora{2},         \blue{2},\cdots,2,\red{3},\cdots,3,3,\cdots,3,{3},\cdots,{3},3,\cdots,3},
 {3,\cdots,3,\bro{3},\cdots,\bro{3}, \bro{3},\cdots,\bro{3},  \ora{3},\cdots,\ora{3},
  \blue{ \circled{\bf 3}},\cdots, \blue{\bf \circled{\bf 3}},\red{4},\cdots,4,6,\cdots,6,{6},\cdots,{6}},
 {4,\cdots,4,\bro{4},\cdots,\bro{4},\bro{4},\cdots,\bro{4},\ora{4},\cdots,\ora{4},        \blue{5},\cdots,5,\red{5},\cdots,5,7,\cdots,7},
 {5,\cdots,5,\bro{5 },\cdots,\bro{5},\bro{5},\cdots,\bro{5},\ora{\circled {\bf 6}},\cdots,\ora{\circled {\bf 6}},  \blue{6},\cdots,6,\red{6},\cdots,6},
 {6,\cdots,6,\bro{6},\cdots,\bro{6},\bro{6},\cdots,\bro{6},\ora{7},\cdots,\ora{7},          \blue{7},\cdots,{7},\red{7},\cdots,7},
 {7,\cdots,7,\bro{\bf \circled{\bf 7}},\cdots,\bro{\bf \circled{\bf 7}},     {8},\cdots,8},
 {8,\cdots,8},
},\quad
c>0
\nonumber
\\
 &m_{12{\bf 3}5678}+m_{12{\bf 3}567}\le m_2, \quad 0\le m_{1234{\bf 6}78}+m_{1234{\bf 6}7}\le m_{23},\quad 0\le m_{123456\bf 7}\le m_{236}\quad\\
  &m_{1234568}=m_{{\bf 2}34567}+ m_{12{\bf 3}567}+ m_{1234{\bf 6}7}
\end{align}
%{\oo missing column}

or\begin{align}\label{type45}
&(S^H)^{1,\dots,1,2,\dots,2,3,\dots,3,7^d,()^m}=\nonumber\\
&\YT{0.15in}{}{
 {1,\cdots,1,\bro{1},\cdots,\bro{1},   \bro{1},\cdots,\bro{1}, \bro{1},\cdots,\bro{1}, \ora{1},\cdots,\ora{1}, \blue{1},\cdots,1,
 \red{\bf \circled{\bf 2}},\cdots, \red{\bf \circled{\bf 2}},2,\cdots, 2, 2,\cdots, 2,2,\cdots,{2},2,\cdots,{2}},
 {2,\cdots,2,\bro{2},\cdots,\bro{2}, \bro{2},\cdots,\bro{2},   \bro{2},\cdots,\bro{2},   \ora{2}, \cdots,\ora{2},         \blue{2},\cdots,2,\red{3},\cdots,3,3,\cdots,3,{3},\cdots,{3},3,\cdots,3},
 {3,\cdots,3,\bro{3},\cdots,\bro{3}, \bro{3},\cdots,\bro{3},  \bro{3},\cdots,\bro{3},    \ora{3},\cdots,\ora{3},
  \blue{\bf \circled{\bf 3}},\cdots,\blue{\bf \circled{\bf 3}},\red{4},\cdots,4,6,\cdots,6,{6},\cdots,{6}},
 {4,\cdots,4,\bro{4},\cdots,\bro{4},\bro{4},\cdots,\bro{4},  \bro{4},\cdots,\bro{4},      \ora{4},\cdots,\ora{4},        \blue{5},\cdots,5,\red{5},\cdots,5,7,\cdots,7},
 {5,\cdots,5,\bro{5 },\cdots,\bro{5},\bro{5},\cdots,\bro{5}, \bro{5},\cdots,\bro{5},        \ora{\bf\circled {\bf 6}},\cdots,\ora{\bf\circled {\bf 6}},  \blue{6},\cdots,6,\red{6},\cdots,6},
 {6,\cdots,6,\bro{6},\cdots,\bro{6},\bro{6},\cdots,\bro{6},  \bro{6},\cdots,\bro{6},                  \ora{7},\cdots,           \ora{7},          \blue{7},\cdots,{7},\red{7},\cdots,7},
 {7,\cdots,7,\bro{ \bf\circled{\bf 7}},\cdots,\bro{ \bf\circled{\bf 7}},     {8},\cdots,8},
 {8,\cdots,8},
},\quad d>0
\nonumber\\
&m_{12{\bf 3}5678}+m_{12{\bf 3}567}\le m_2, \quad 0\le m_{1234{\bf 6}78}+m_{1234{\bf 6}7}\le m_{23},\quad 0\le m_{123456\bf 7}\le m_{236}\quad\\
  &m_{1234568}=m_{{\bf 2}34567}+ m_{12{\bf 3}567}+ m_{1234{\bf 6}7}
\end{align}

These tableaux satisfy inequalities  on the multiplicities % let $x$ be the multiplicity of
of the following columns:  $m_{2345678}$ is the multiplicity of $\redu^{-1}(2)=(2345678)$, $m_{1235678}$ is the multiplicity of $\redu^{-1}(3)=(1235678)$, $m_{1234678}$ the multiplicity of $\redu^{-1}(6)=(1234678)$,
$m_{2}$ the multiplicity of $(2)\in SpT_6(\varpi_1)$, $m_{23}$ the multiplicity of $(23)\in SpT_6(\varpi_2) $, $m_{236}$ the multiplicity of $(236)\in SpT_6(\varpi_3)$.
\end{cor}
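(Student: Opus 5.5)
The plan is to derive the statement as the case $n=4$ of Theorem \ref{thm:n=evenslack1}, made explicit by running the iterated reverse insertion of Theorem \ref{lem:recordhole1} on $S^{H,\mu}$ along the slack vector sequence $\underline\r=(1^{a_1},2^{a_2},3^{a_3},4^{a_4},7^{a_7},()^M)$. This is the same method as in Corollary \ref{cor:n=2} and Corollary \ref{thm:n=3slack1}. The first step is to list the building blocks. With $u=(2,3,6,7)$, Theorem \ref{thm:nofactors} with $t=1$ and $l=7$ gives
\[
\redu_1^{-1}(2)=(2345678),\quad \redu_1^{-1}(3)=(1235678),\quad \redu_1^{-1}(6)=(1234678),\quad \redu_1^{-1}(7)=(1234567).
\]
The $\k$-weights follow from \eqref{kweight}: they are $\tilde\varepsilon_1,\tilde\varepsilon_2,\tilde\varepsilon_3,\tilde\varepsilon_4$ respectively. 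Since $\redu_0^{-1}(())=(12345678)$, the block $Y(M^{8})$ contributes $\k$-weight zero.

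The second step treats the rows $1,\dots,4$ of $\underline\r$. These lie in $\underline\r^+$; by item (1) of Theorem \ref{thm:n=evenslack1}, $7\notin\underline\r^+$. A reverse insertion from row $r$ of the current tableau bumps out of the first column an entry of the symplectic part, and this entry is $u_r$. Remark \ref{revroutes} together with $\r_N\le\cdots\le\r_1$ guarantees that the bumped entries come out weakly increasing. Each bumped $u_r$ is then replaced by the column $\redu_1^{-1}(u_r)$, placed by concatenation as in Theorem \ref{thm:verygeneralstrip}. The remaining part is the $\k$-highest weight tableau $S^{H,\mu'}$, with $\mu'=\mu-\delta_{\underline\r^+}$. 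This produces the coloured blocks in \eqref{type41}. The inequalities $m_{\redu^{-1}(u_i)}\le m_{u_1\cdots u_{i-1}}$ hold because bumping at row $i$ can only happen in columns of $S^{H,\mu}$ of length $\ge i$; this is the counting argument of Corollary \ref{cor:n=2}, applied to the columns $2$, $23$ and $236$.

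The third step treats the $7$'s. Row $7$ lies in the expanded columns, not in $S^{H,\mu'}$. A reverse insertion from row $7$ therefore bumps the entry $8$ out of a column $\redu_1^{-1}(u_j)$, which turns that column into $\redu_1^{-1}(u_j)\setminus\{8\}$. Re-expanding the bumped entry $8$ gives the column $(1234568)$. This accounts for the identity $m_{1234568}=m_{234567}+m_{123567}+m_{12346 7}$.

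By Remark \ref{revroutes}, the successive reverse routes move weakly below one another. The bumping therefore depletes the blocks from the rightmost one leftwards: first the $\redu^{-1}(2)$ columns, then $\redu^{-1}(3)$, then $\redu^{-1}(6)$. This ordering yields the case split \eqref{type42}--\eqref{type45}, indexed by how far the depletion reaches. It also yields the modified inequalities, in which $m_{\redu^{-1}(u_i)}+m_{\redu^{-1}(u_i)\setminus\{8\}}$ replaces $m_{\redu^{-1}(u_i)}$. Semistandardness of each concatenation follows from Theorem \ref{thm:verygeneralstrip}. Finally, the $\k$-weight equals $\mu$ by Proposition \ref{prop:hwlw}, which can also be checked directly from the column weights.

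The main obstacle is the third step. One must show that each reverse insertion from row $7$ really removes an $8$ from the appropriate expanded column, rather than disturbing $S^{H,\mu'}$ or the $(1234567)$ columns. One must also track exactly where the depletion boundary sits. I would handle this by induction on the number of $7$'s, using the explicit first-column contents of the blocks and Remark \ref{revroutes} at each step.
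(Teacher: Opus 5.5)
Your route is the paper's: the corollary is the case $n=4$ of Theorem \ref{thm:n=evenslack1}, obtained by running the slack-ruled reverse insertions of Theorem \ref{lem:recordhole1} on $S^{H,\mu}$. Your building blocks $\redu_1^{-1}(u_j)$, their $\k$-weights, and your treatment of the rows $1,\dots,4$ of $\underline\r$ are correct. Each such step shortens row $r$ of the symplectic part by one cell, the entry $u_r$ passes through every column as an equal entry and exits unchanged, and $\redu_1^{-1}(u_r)$ is prepended.

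The genuine problem is your third step whenever $\underline\r$ contains $4$'s, i.e.\ when $(1234567)$-columns are present. The $8$ deleted from row $7$ of the rightmost column $\redu_1^{-1}(u_j)$, $j\le 3$, does travel left replacing $8$ by $8$. But when it reaches the rightmost $(1234567)$-column, the largest entry $\le 8$ there is the $7$ in row $7$. That column therefore becomes $(1234568)$, and the entry leaving the first column is a $7$, not an $8$. So the column to prepend is $\redu_1^{-1}(7)=(1234567)$. Prepending $(1234568)$, as you propose, would put an $8$ to the left of a $7$ in row $7$, so the result would not even be semistandard. The invariant you plan to prove by induction, that the $(1234567)$-columns are not disturbed, is therefore false.

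What survives is the bookkeeping, so the conclusion can be rescued. At each $7$-step, one column $\redu_1^{-1}(u_j)$ with $j\le 3$ loses its $8$, $m_{1234568}$ increases by one, and $m_{1234567}$ is unchanged (one such column is converted and one is created). This gives the identity $m_{1234568}=m_{234567}+m_{123567}+m_{123467}$ and the modified inequalities. It also explains why the $(1234568)$-columns sit immediately to the right of the $(1234567)$-block in \eqref{type42}--\eqref{type45} and in Theorem \ref{thm:n=evenslack1}, rather than at the far left. When there are no $4$'s, your description is exactly right: the $8$ exits and $(1234568)$ is prepended. Set up the induction on this corrected mechanism.

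Two smaller points:
\begin{itemize}
\item Derive the inequalities $m_{\redu^{-1}(u_i)}\le m_{u_1\cdots u_{i-1}}$ from the corner condition at the last row-$(i-1)$ step. Writing $a_i$ for the number of $i$'s in $\underline\r$, that condition forces $\mu_{i-1}-a_{i-1}\ge\mu_i$, hence $m_{u_1\cdots u_{i-1}}=(\mu_{i-1}-a_{i-1})-(\mu_i-a_i)\ge a_i$ in $S^{H,\mu'}$. Your remark about column lengths in $S^{H,\mu}$ does not by itself give this.
\item Like the displayed patterns, you tacitly take $\ell(\mu^{(i-1)})=Q[i]+1=7$ for every slack-one strip. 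Proposition \ref{prop:slack} also allows $3$ or $5$; for example $\mu=(1)$, $\underline\r=(1)$, $Q[1]=2$ returns the single column $(234)$. This restriction should be stated explicitly.
\end{itemize}
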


\bibliography{sample17}

\begin{thebibliography}{KTW04}

\bibitem[ACM09]{acm09}
O.~Azenhas, A.~Conflitti, and R.~Mamede.
\newblock Linear time equivalent {L}ittlewood--{R}ichardson coefficient maps.
\newblock {\em Discrete Math. Theor. Comput. Sci. Proceedings, 21st
  International Conference on Formal Power Series and Algebraic Combinatorics
  (FPSAC 2009)}, pages 127--144, 2009.

\bibitem[ACM25]{azkoma25}
O.~Azenhas, A.~Conflitti, and R.~Mamede.
\newblock A uniform action of the dihedral group on {L}ittlewood--{R}ichardson
  coefficients.
\newblock {\em arXiv:2501.01947}, pages 1--59, 2025.

\bibitem[AKT16]{akt16}
O.~Azenhas, R.~C. King, and I.~Terada.
\newblock The involutive nature of the commutativity of
  {L}ittlewood-{R}ichardson coefficients.
\newblock {\em arXiv:1603.05037}, pages 1--109, 2016.

\bibitem[Ale26]{alexandersson}
P.~Alexandersson.
\newblock The symmetric functions catalog.
\newblock {\em https://www.symmetricfunctions.com/littlewoodRichardson.htm},
  4-4-2026.

\bibitem[Aze99]{az99}
O.~Azenhas.
\newblock The admissible interval for the invariant factors of a product of
  matrices.
\newblock {\em Linear Multilinear Algebra}, (46):51--99, 1999.

\bibitem[Aze25]{az18v5}
O.~Azenhas.
\newblock Skew {RSK} and the switching on ballot tableau pairs.
\newblock {\em arXiv:1808.06095v5}, pages 1--47, 2018, 2025.

\bibitem[Aze26a]{azreduction}
O.~Azenhas.
\newblock The inverse reduction map in the quantum {L}ittlewood-{R}ichardson
  bijection.
\newblock {\em arXiv:2606.24840v2}, pages 1--27, 2026.

\bibitem[Aze26b]{azslack}
O.~Azenhas.
\newblock The slack data of the recording tableaux in the quantum
  {L}ittlewood-{R}ichardson map determines its inverse: some applications.
\newblock {\em arXiv:2606.24840v2}, pages 1--18, 2026.

\bibitem[Aze26c]{az26}
O.~Azenhas.
\newblock The symplectic left companion of a
  {L}ittlewood-{R}ichardson-{S}undaram tableau and the {K}won property.
\newblock {\em arXiv:2601.06930v2}, page~20, 2026.

\bibitem[Buc00]{fultonbuch}
A.~S. Buch.
\newblock The saturation conjecture (after {A. Knutson} and {T. Tao}). {W}ith
  an appendix by {W}illiam {F}ulton.
\newblock {\em Enseign. Math.}, 46(1–2)(2):43–--60, 2000.

\bibitem[BZ88]{BZphy}
A.~D. Berenstein and A.~V. Zelevinsky.
\newblock Tensor product multiplicities and convex polytopes in partition
  space.
\newblock {\em J. Geom. Phys.}, 5:453--472, 1988.

\bibitem[BZ92]{bz}
A.~D. Berenstein and A.~V. Zelevinsky.
\newblock Triple multiplicities for {$sl(r + 1)$} and the spectrum of the
  exterior algebra of the adjoint representation.
\newblock {\em J. Algebraic Combin.}, (1):7--22, 1992.

\bibitem[CZ18]{lrvolume}
R.~Coquereaux and J.-B. Zuber.
\newblock From orbital measures to {L}ittlewood--{R}ichardson coefficients and
  hive polytopes.
\newblock {\em Ann. Inst. Henri Poincaré Comb. Phys. Interact.}, (5):339--386,
  2018.

\bibitem[DW02]{lrpolynomial}
H.~Derksen and J.~Weyman.
\newblock On the {L}ittlewood–{R}ichardson polynomials.
\newblock {\em J. Algebr}, (255 (2)):247--257, 2002.

\bibitem[Ehr62]{ehrhart}
E.~Ehrhart.
\newblock Sur les poly\`edres rationnels homoth\`etiques \`a {$n$} dimensions.
\newblock {\em C. R. Acad. Sci. Paris}, 24:A714(254):616--618, 1962.

\bibitem[Ful97]{fulton}
W.~Fulton.
\newblock {\em {Y}oung {T}ableaux with {A}pplications to {R}epresentation
  {T}heory and { G}eometry}, volume~35 of {\em Cambridge Univ. Press}.
\newblock London Math. Soc. Student Texts, 1997.

\bibitem[GT50]{gt50}
I.~M. Gelfand and M.~L. Tsetlin.
\newblock Finite-dimensional representations of the group of unimodular
  matrices.
\newblock {\em Dokl. Akad. Nauk SSSR (in Russian), English transl. in: I.M.
  Gelfand, Collected papers. Vol II, Berlin: Springer-Verlag}, (71):653--656,
  1950.

\bibitem[GZ85]{GZ85}
M.~Gelfand and A.~V. Zelevinsky.
\newblock Polyhedra in a space of diagrams and the canonical basis in
  irreducible representations of $gl_3$.
\newblock {\em Funct. Anal. Appl.}, 19:41--144, 1985.

\bibitem[GZ86]{gzpolyed}
I.~M. Gelfand and A.~V. Zelevinsky.
\newblock Multiplicities and proper bases for ${gl}_n $.
\newblock {\em Group Theoretical Methods in Physics, Proc. 3rd Yurmala Seminar
  1985, VNU Sci. Press, Utrecht}, II:147--159, 1986.

\bibitem[HK06a]{HK2}
A.~Henriques and J.~Kamnitzer.
\newblock Crystals and coboundary categories.
\newblock {\em Duke Math. J.}, 132(2):191--216, 2006.

\bibitem[HK06b]{HenKam}
A.~Henriques and J.~Kamnitzer.
\newblock The octahedron recurrence and $\mathfrak{gl}_n$-crystals.
\newblock {\em Adv. Math.}, 206(1):211--249, 2006.

\bibitem[Kin76]{king76}
R.~C. King.
\newblock Weight multiplicities for the classical groups.
\newblock {\em Group theoretical methods in physics, (Fourth Internat. Colloq.,
  Nijmegen, 1975), Lecture Notes in Phys., Springer, Berlin}, Vol.
  50:490–499, 1976.

\bibitem[KT99]{kt1}
A.~Knutson and T.~Tao.
\newblock The honeycomb model of ${GL}_n(\mathbb{C})$ tensor products {I}:
  Proof of the saturation conjecture.
\newblock {\em J. Amer. Math. Soc.}, (12):1055--1090, 1999.

\bibitem[KT25]{sathishtorres}
V.~S. Kumar and J.~Torres.
\newblock The branching models of {K}won and {S}udaram via flagged hives.
\newblock {\em Journal of Algebraic Combinatorics}, 62(5):1--14, 2025.

\bibitem[KTT06]{KTT1}
R.~C. King, C.~Tollu, and F.~Toumazet.
\newblock The hive model and the polynomial nature of stretched
  littlewood--richardson coefficients.
\newblock {\em S\'eminaire Lotharingien Comb.}, (54A), 2006.

\bibitem[KTT09]{KTT2}
R.~C. King, C.~Tollu, and F.~Toumazet.
\newblock Factorisation of littlewood--richardson coefficients.
\newblock {\em J. Combin. Theory A}, (116):314--333, 2009.

\bibitem[KTW04]{knutson}
A.~Knutson, T.~Tao, and C.~Woodward.
\newblock The honeycomb model of ${GL}_n(\mathbb{C})$ tensor products {II}:
  {P}uzzles determine facets of the {L}ittlewood--{R}ichardson cone.
\newblock {\em J. Amer. Math. Soc.}, 17:19--48, 2004.

\bibitem[Kwo09]{kwon09}
Jae-Hoon Kwon.
\newblock Cystal graphs and the combinatorics of {Y}oung tableaux in { M}.
  {H}azewinkel (ed.).
\newblock {\em Handbook of Algebra}, Vol 6:473--504, 2009.

\bibitem[Kwo18]{kwon18}
J.-H. Kwon.
\newblock {C}ombinatorial extension of stable branching rules for classical
  groups.
\newblock {\em Transactions of the American Mathematical Society},
  370(9):6125--6152, 2018.

\bibitem[Let99]{letzter}
G.~Letzter.
\newblock Symmetric pairs for quantized enveloping algebras.
\newblock {\em Journal of Algebra}, 220(2):729--767, 1999.

\bibitem[Lit44]{littlewood}
D.~E. Littlewood.
\newblock On invariant theory under restricted groups.
\newblock {\em Philos. Trans. Roy. Soc. London Ser. A, Math. Phys. Sci.},
  (239--809):387–417, 1944.

\bibitem[LL20]{leclen}
C.~Lecouvey and C.~Lenart.
\newblock Combinatorics of generalized exponents.
\newblock {\em International Mathematics Research Notices}, (16):4942--4992,
  2020.

\bibitem[LR34]{LR}
D.~E. Littlewood and A.~Richardson.
\newblock Group characters and algebra.
\newblock {\em Phil. Trans. Roy. Soc. London Ser. A}, (233):99--141, 1934.

\bibitem[Mol06]{molevq}
A.~I. Molev.
\newblock Representations of the twisted quantized enveloping algebra of type
  {$C_n$}.
\newblock {\em Mosc. Math. J.}, 6(3):531–--551, 2006.

\bibitem[Mun25]{muniz}
B.~Muniz.
\newblock Symplectic branching through crystals.
\newblock {\em arXiv:2505.21738}, 2025.

\bibitem[NSW26]{nsw}
S.~Naito, Y.~Suzuki, and H.~Watanabe.
\newblock A proof of the { Naito}–{Sagaki} conjecture via the branching rule
  for {$\imath $}quantum groups.
\newblock {\em Journal of Algebra}, 691:32--87, 2026.

\bibitem[Ras04]{rassartlrpolynomial}
E.~Rassart.
\newblock A polynomiality property for {L}ittlewood–-{R}ichardson
  coefficients.
\newblock {\em J. Comb. Theory Ser. A}, (107 (2)):161--179, 2004.

\bibitem[ST18]{schumanntorres}
B.~Schumann and J.~Torres.
\newblock A non-{L}evi branching rule in terms of {L}ittelmann paths.
\newblock {\em Proc. Lond. Math. Soc.}, 117(5):1077--1100, 2018.

\bibitem[Sta98]{stanley}
R.~P. Stanley.
\newblock {\em Enumerative Combinatorics}, volume~62 of {\em Cambridge Studies
  in Advanced Mathematics}.
\newblock Cambridge University Press, Cambridge, 1998.

\bibitem[Sun86]{sundaram}
Sheila Sundaram.
\newblock {\em On the combinatorics of representations of ${S}p (2n,
  \mathbb{C})$. {P}hD thesis}.
\newblock Massachusetts Institute of Technology. 1986.

\bibitem[Sun90]{sundaram90}
S.~Sundaram.
\newblock Tableaux in the representation theory of the classical {L}ie groups.
\newblock {\em Invariant theory and tableaux (Minneapolis, MN, 1988) IMA Vol.
  Math. Appl., 19, Springer, New York,}, pages 191--225, 1990.

\bibitem[Tho78]{tho78}
G.~P. Thomas.
\newblock On {S}chensted's construction and the multiplication of {S}chur
  functions.
\newblock {\em Adv. Math.}, 30:8--32, 1978.

\bibitem[Wan23]{wang}
W.~Wang.
\newblock {\em Quantum Symmetric pairs in Proc. Int. Cong. Math. 2022},
  volume~4.
\newblock EMS Press, Berlin, 2023.

\bibitem[Wat25a]{watanabe25}
H.~Watanabe.
\newblock Berele row-insertion and quantum symmetric pairs.
\newblock {\em arXiv:2509.00853v2}, page~32, 2025.

\bibitem[Wat25b]{watanabe}
H.~Watanabe.
\newblock Symplectic tableaux and quantum symmetric pairs.
\newblock {\em J. Comb. Algebra DOI 10.4171/JCA/113}, page~57, 2025.

\end{thebibliography}
\bibliographystyle{alpha}

\end{document}